\newcolumntype{x}[1]{>{\arraybackslash\hspace{0pt}}p{#1}}
\patchcmd{\subsection}{\bfseries}{\itshape}{}{}
\def\@seccntformat#1{%
  \protect\textup{\protect\@secnumfont
    \ifnum\pdfstrcmp{subsection}{#1}=0 \bfseries\fi% subsection # in \bfseries
    \csname the#1\endcsname
    \protect\@secnumpunct
  }%
}  
\renewcommand\part{%
   \if@noskipsec \leavevmode \fi
   \par
   \addvspace{4ex}%
   \@afterindentfalse
   \secdef\@part\@spart}
\renewcommand{\thepart}{\Roman{part}} 
\def\@part[#1]#2{%
    \ifnum \c@secnumdepth >\m@ne
      \refstepcounter{part}%
      \addcontentsline{toc}{section}{\textbf{{\thepart.\hspace{1em}#1}}}%
    \else
      \addcontentsline{toc}{part}{#1}%
    \fi
    {\center 
     \interlinepenalty \@M
     \normalfont
     \ifnum \c@secnumdepth >\m@ne
       \large\sc \partname\nobreakspace\thepart: 
       \nobreak
     \fi
     \large \sc #2%
     %%%\markboth{}{}\par}% removing redefinition of headings
     \par}%
    \nobreak
    \vskip 3ex
    \@afterheading}
\def\@spart#1{%
    {\parindent \z@ \raggedright
     \interlinepenalty \@M
     \normalfont
     \huge \bfseries #1\par}%
     \nobreak
     \vskip 3ex
     \@afterheading}
\def\swappedhead#1#2#3{%
  % original definition:
% \thmnumber{\@upn{\the\thm@headfont#2\@ifnotempty{#1}{.~}}}%
  % change:
  \thmnumber{\@upn{\the\thm@headfont#2\@ifnotempty{#1}{.~}}}%
  \thmname{#1}%
  \thmnote{ {\the\thm@notefont(#3)}}}
\newtheorem{theoremalpha}{Theorem}
\numberwithin{equation}{subsection}
\newtheorem*{lemma*}{Lemma}
\newtheorem*{proposition*}{Proposition}
\newtheorem*{theorem*}{Theorem}
\newcommand{\ZZ}{\mathbb{Z}}
\DeclareMathOperator{\Lie}{Lie}
\DeclareMathOperator{\Der}{Der}
\DeclareMathOperator{\Aut}{Aut}
\DeclareMathOperator{\id}{id}
\DeclareMathOperator{\End}{End}
\DeclareMathOperator{\Int}{Int}
\DeclareMathOperator{\Br}{Br}
\DeclareMathOperator{\im}{im}
\DeclareMathOperator{\tr}{tr}
\DeclareMathOperator{\Str}{Str}
\DeclareMathOperator{\Char}{char}
\DeclareMathOperator{\GL}{GL}
\DeclareMathOperator{\GO}{GO}
\DeclareMathOperator{\PGO}{PGO}
\DeclareMathOperator{\PGL}{PGL}
\DeclareMathOperator{\SL}{SL}
\DeclareMathOperator{\ad}{ad}
\DeclareMathOperator{\Ad}{Ad}
\DeclareMathOperator{\Skew}{Skew}
\DeclareMathOperator{\Nuc}{Nuc}
\DeclareMathOperator{\cor}{cor}
\DeclareMathOperator{\res}{res}
\DeclareMathOperator{\Sim}{Sim}
\DeclareMathOperator{\Iso}{Iso}
\DeclareMathOperator{\KP}{KP}
\DeclareMathOperator{\Inv}{Inv}
\DeclareMathOperator{\Spin}{Spin}
\DeclareMathOperator{\ed}{{e.}{d.}}
\DeclareMathOperator{\Pf}{Pf}
\newtheorem{theorem}[subsection]{Theorem}
\newtheorem{proposition}[subsection]{Proposition}
\newtheorem{lemma}[subsection]{Lemma}
\newtheorem{corollary}[subsection]{Corollary}
\theoremstyle{definition}
\newtheorem*{definition*}{Definition}
\newtheorem*{examples*}{Examples}
\newtheorem*{example*}{Example}
\def\thm@space@setup{%
	\thm@preskip=\parskip \thm@postskip=6pt
}
\DeclareFontFamily{OMX}{MnSymbolE}{}
\DeclareSymbolFont{MnLargeSymbols}{OMX}{MnSymbolE}{m}{n}
\DeclareFontShape{OMX}{MnSymbolE}{m}{n}{
    <-6>  MnSymbolE5
   <6-7>  MnSymbolE6
   <7-8>  MnSymbolE7
   <8-9>  MnSymbolE8
   <9-10> MnSymbolE9
  <10-12> MnSymbolE10
  <12->   MnSymbolE12
}{}
\DeclareFontShape{OMX}{MnSymbolE}{b}{n}{
    <-6>  MnSymbolE-Bold5
   <6-7>  MnSymbolE-Bold6
   <7-8>  MnSymbolE-Bold7
   <8-9>  MnSymbolE-Bold8
   <9-10> MnSymbolE-Bold9
  <10-12> MnSymbolE-Bold10
  <12->   MnSymbolE-Bold12
}{}
\let\llangle\@undefined
\let\rrangle\@undefined
\DeclareMathDelimiter{\llangle}{\mathopen}%
                     {MnLargeSymbols}{'164}{MnLargeSymbols}{'164}
\DeclareMathDelimiter{\rrangle}{\mathclose}%
                     {MnLargeSymbols}{'171}{MnLargeSymbols}{'171}
\newcommand{\bigperp}{%
  \mathop{\mathpalette\bigp@rp\relax}%
  \displaylimits
}
\newcommand{\bigp@rp}[2]{%
  \vcenter{
    \m@th\hbox{\scalebox{\ifx#1\displaystyle2.1\else1.5\fi}{$#1\perp$}}
  }%
}
\title[Groups acting on the tensor product of composition algebras]{Galois cohomology of algebraic groups acting on the tensor product of two composition algebras}
\date{\today}
\author{Simon W. Rigby}
\address{Department of Mathematics: Algebra and Geometry, Ghent University, Belgium}
\email{simon.rigby@ugent.be}
\subjclass[2020]{Primary: 11E72; Secondary: 20G15, 17D99}
\begin{document}

\begin{abstract}
	This is a study of algebras with involution that become isomorphic over a separable closure of the base field to a tensor product of two composition algebras. We classify these algebras, provide criteria for isomorphism and isotopy, and determine their automorphism groups, structure groups, and norm-similitude groups.  The most interesting cases of these algebras are the $64$-dimensional \emph{bi-octonion algebras}, on which groups of absolute type $(G_2 \times G_2)\rtimes \ZZ/2\ZZ$ and $\mathbf{Spin}_{14}$ act faithfully by automorphisms and norm-preserving isotopies, respectively.  We classify the cohomological invariants of these algebras, characterise when they are division algebras, and study their 14-dimensional Albert forms and 64-dimensional octic norms. The main application that we reach is a classification of the mod 2 cohomological invariants of 14-dimensional quadratic forms in $I^3$, and of the group $\mathbf{Spin}_{14}$. This extends Garibaldi's classification of cohomological invariants for lower-dimensional Spin groups.	\end{abstract}

\maketitle

\tableofcontents

\section{Introduction}

Despite the importance and rich history of composition algebras, their tensor products are only well-understood in the associative case: for example, bi-quaternion algebras play an important role in the theory of central simple associative algebras \cite{knus1998book}. On the other extreme, tensor products of two octonion algebras (which we call bi-octonion algebras) are highly nonassociative and seriously difficult to work with in an elementary way. But there are good reasons to study them: they are representations of important algebraic groups such as $\mathbf{Spin}_{14}$, and for this reason they appear in some of the most famous constructions of the largest exceptional simple algebraic group, $E_8$.  For instance, bi-octonion algebras are a main ingredient in the Tits construction  of $E_8$ \cite{tits1966algebres,vinberg2005construction}, the Tits--Kantor--Koecher construction \cite{allison1979models}, and the very general Allison--Faulkner construction from \cite{allison1993nonassociative} (described also in \cite[\S11]{gross2021minuscule} and \cite[\S1]{petrov2021allison}).

 Often, these algebras and their relation to algebraic groups have been studied using the tools of representation theory rather than nonassociative algebra. Examples where the representation theoretic point of view is emphasised include \cite{abuaf2021gradings,garibaldi2017spinors, popov1980classification, rost14dim}, although there are some noteworthy articles such as \cite{allison1988tensor,aranda2020gradings, morandi2001tensor} where the treatment is more purely algebraic.
Tensor products of pairs of composition algebras belong to a class of algebras called \emph{structurable algebras}. The theory of structurable algebras, initiated by Allison in \cite{allison1978class}, has well-developed notions of invertibility, isotopy, norms, and traces \cite{allison1992norms, allison1981isotopes, schafer1989invariant}, which we make full use of in this work. A certain Tits--Kantor--Koecher construction can be applied to any structurable algebra, and this has also featured in recent research in connection with isotropic simple algebraic groups, 5-graded simple Lie algebras, and Moufang sets \cite{boelaert2019moufang,stavrova2018classification}.

To understand the value in the nonassociative algebraic approach, it is fitting to draw a comparison to the exceptional 27-dimensional simple Jordan algebras (commonly known as Albert algebras). Thanks to more than half a century of research, we understand a lot about these exceptional Jordan algebras, including information about constructing and classifying them, the division algebras among them, their cubic norms, quadratic traces, special subalgebras, isotopes, automorphism groups, structure groups, and cohomological invariants \cite{hooda2020survey,mccrimmon1970freudenthal,parimala1998classification,petersson2019survey,petersson1995invariants,petersson1996elementary,thakur2021cyclicity,thakur1999isotopy}. If we had no substantial theory of Jordan algebras, we would still know about the existence of a 27-dimensional representation for $F_4$ and $E_6$ and a certain cubic invariant polynomial. But so much else would likely have remained a mystery, such as the answer to the Kneser--Tits problem for groups of type $E_{7,1}^{78}$ and $E_{8,2}^{78}$ \cite{alsaody2019tits,thakur2019albert}. With this kind of motivation in mind, we aim to advance the theory of bi-octonion algebras.

The main application that this paper puts forward is the classification of cohomological invariants  of $14$-dimensional quadratic forms in $I^3$ and $\mathbf{Spin}_{14}$-torsors.    It is interesting to put these results in context.
The simple algebraic group $\mathbf{Spin}_n$ is a simply connected   double cover of the special orthogonal group $\mathbf{O}^+_n$. Roughly speaking, if we understand the Galois cohomology of spin groups, we understand the nature of quadratic forms in $I^3_n$ (as defined in \cite[\S3]{chernousov2014essential}). There are various ways to measure the complexity of $I^3_n$, such as the essential dimension    and the Pfister number \cite{brosnan2010essential}. Following some remarkable work by many mathematicians, we know the essential dimensions of $\mathbf{Spin}_{n}$ and $I_n^3$ for all $n$ \cite{brosnan2010essential,chernousov2014essential,garibaldi2017spinors}. Unlike $\mathbf{O}^+_n$ whose essential dimension grows linearly with $n$, the essential dimension of $\mathbf{Spin}_n$ grows exponentially with $n$. It begins to explode dramatically after $n = 14$, going from $\ed(\mathbf{Spin}_{14}) =  7$ to $\ed(\mathbf{Spin}_{15}) = 23$, $\ed(\mathbf{Spin}_{16}) = 24$, and $\ed(\mathbf{Spin}_{17}) = 120$. The Pfister number  $\Pf(3,n)$ is the least number of  general Pfister forms one needs to sum to get any form in $I_{n}^3$. These numbers also grow at least exponentially with $n$ \cite{brosnan2010essential} but exact values are not known. Looking at the situation around 14, we have $\Pf(3,12) = 2$, $\Pf(3,14) = 3$, and $\Pf(3,16) \ge 4$ with no known upper bound \cite{karpenko2017around}. In short, $\mathbf{Spin}_{14}$ is a boundary case among the spin groups.

The essential dimension  of $\mathbf{Spin}_{n}$ has implications for rational parameterisations and cohomological invariants of quadratic forms in $I_{n}^3$. There is a rational parameterisation for quadratic forms in $I_{14}^3$ that takes 7 parameters (this is the content of Rost's Theorem \cite{rost14dim}, which we reprove in Corollary \ref{cor:Rost}), and there exists a degree 7 cohomological invariant of $I_{14}^3$ (defined by Garibaldi \cite{garibaldi2009cohomological}, and appearing here in \ref{sec:I14-known-invariants}); both of these results are consistent with the value $\ed(\mathbf{Spin}_{14})= \ed(I_{14}^3) = 7$. In contrast, if there even exists a rational parameterisation for quadratic forms in $I_{16}^3$, it would need to have at least $24$  parameters. It is conjectured that no such parameterisation exists \cite[Conjecture~4.5]{merkurjev2017invariants}. And for all we know, there could exist cohomological invariants of   $\mathbf{Spin}_{16}$ in degree  as high as  24. So we have basically no hope (at least with current methods) of classifying or even finding the high-degree cohomological invariants of $\mathbf{Spin}_n$ beyond the frontier of $n = 14$. Cohomological invariants of $\mathbf{Spin}_n$ for $n \le 12$ were classified by Garibaldi in \cite[Table~23B]{garibaldi2009cohomological}, with the help of methods developed years ago mainly by Rost and Serre. The complete classification of invariants of $\mathbf{Spin}_{14}$ seemed within reach and yet was still unknown until now -- this is what made it an alluring problem.

Let us summarise the contents of this paper. The majority of the sections are broad in scope and do not limit themselves to the study of bi-octonion algebras. These are the most interesting and (in most cases) the most difficult, but for completeness we include results on  other tensor products whose factors are composition algebras of dimensions $(2^a,2^b)$ for $0 \le a,b \le 3$, $(a,b) \ne (1,1)$. The paper contains three tables. The first two tables are populated with data on groups and Lie algebras related to these algebras. Table~\ref{table.main} is a version of the magic square: it presents the absolute type of the Lie algebra  constructed from one of these algebras  using the Tits--Kantor--Koecher (TKK) construction, as well as the type of its grading expressed by a labelled Dynkin diagram. Table~\ref{table.structure-groups} contains data that completely determines the structure groups of these algebras, and hence the anisotropic kernels of the TKK constructions in Table \ref{table.main}. Table~\ref{table.summary-of-invariants} contains our new results classifying the cohomological invariants of bi-octonion algebras and other functors such as $I^3_{14}$, $PI_{14}^3$ ($= I^3_{14}$ modulo similitude), and $\mathbf{Spin}_{14}$-torsors.

In the rest of the introduction, we highlight only the most interesting results on bi-octonion algebras and their cohomological invariants, whereas results on the lower-dimensional algebras can be found in the tables and body of the paper. 
 In all of the theorems below, we assume that $k$ is a field with $\Char(k) \ne 2,3$. We say that a $k$-algebra with involution $(A,-)$ is a bi-octonion algebra if there exists a field extension $L/k$ and octonion algebras $C_1, C_2$ over $L$ such that $(A\otimes_k L, -)$ is isomorphic to $C_1 \otimes_L C_2$  with its canonical involution. If there is already such an isomorphism over $L = k$, then we call $(A,-)$ decomposable.
 
  One of the first main theorems about bi-octonion algebras that we obtain is the following. It is a direct consequence of the more detailed and more general statements in  Theorems \ref{thm.equiv2}, \ref{thm.aut2}, \ref{thm:aut-schemes-involution}, and Corollary \ref{cor:involution-dependence}, and makes reference to a well-known corestriction construction which we describe in \ref{sec:corestriction}.
  
\begin{theoremalpha}
	The corestriction construction $C \mapsto \cor_{E/k}(C)$ induces a one-to-one correspondence between $k$-isomorphism classes of octonion algebras over quadratic \'etale extensions $E/k$, and $k$-isomorphism classes of bi-octonion algebras over $k$.
	The group scheme $\mathbf{Aut}(\cor_{E/k}(C))$ of $k$-automorphisms of $\cor_{E/k}(C)$ is smooth with two connected components, and is isomorphic to the group scheme $\mathbf{Aut}(C_{/k})$ of $k$-automorphisms of $C$.
\end{theoremalpha}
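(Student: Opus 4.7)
The plan is a Galois descent argument: I would recast both sides of the correspondence as sets of twisted $k$-forms classified by the Galois cohomology of a common group scheme, then verify that corestriction realises the comparison. By definition, a bi-octonion algebra $(A,-)$ over $k$ is a twisted $k$-form of the split bi-octonion $(\mathbb{O}_s \otimes_k \mathbb{O}_s, -)$, where $\mathbb{O}_s$ denotes the split octonion algebra; consequently, its $k$-isomorphism classes are classified by $H^1(k, \mathbf{G})$, where $\mathbf{G}$ is the $k$-group scheme of involution-preserving automorphisms of the split object. By Theorem~\ref{thm.aut2}, $\mathbf{G} \cong (\mathbf{G}_2 \times \mathbf{G}_2) \rtimes \ZZ/2\ZZ$, with the $\ZZ/2$ factor acting by swapping the two tensor factors.

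Dually, a pair $(E, C)$ consisting of a quadratic \'etale $E/k$ and an octonion algebra $C/E$ is a twisted $k$-form of the split pair $(k \times k,\, \mathbb{O}_s \times \mathbb{O}_s)$. Its automorphism $k$-group scheme is $\mathbf{H} := R_{(k \times k)/k}\bigl(\mathbf{Aut}(\mathbb{O}_s)\bigr) \rtimes \mathbf{Aut}_k(k \times k) \cong (\mathbf{G}_2 \times \mathbf{G}_2) \rtimes \ZZ/2\ZZ$, so these pairs are classified by $H^1(k, \mathbf{H})$. The corestriction is a morphism of $k$-groupoids from the pair-groupoid to the bi-octonion groupoid, taking the split pair to the split bi-octonion; it therefore induces a canonical $k$-homomorphism $\mathbf{H} \to \mathbf{G}$. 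To see this is an isomorphism, it suffices to check bijectivity on $k^s$-points, which is the content of Theorems~\ref{thm.equiv2} and~\ref{thm.aut2}, and Galois descent then yields the stated one-to-one correspondence.

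For the second claim, both $\mathbf{G}$ and $\mathbf{H}$ have identity component $\mathbf{G}_2 \times \mathbf{G}_2$, which is smooth of dimension $28$, and $\pi_0 = \ZZ/2\ZZ$; these properties transfer to the twisted form $\mathbf{Aut}(\cor_{E/k}(C))$, yielding smoothness and two connected components. The isomorphism $\mathbf{Aut}(\cor_{E/k}(C)) \cong \mathbf{Aut}(C_{/k})$ then falls out of the groupoid equivalence, once we identify the automorphism $k$-group scheme of the pair $(E, C)$ with $\mathbf{Aut}(C_{/k})$: every $k$-algebra automorphism of the octonion $C$ preserves its centre $E$ and so automatically yields an automorphism of the pair (either fixing $E$ pointwise or applying its nontrivial Galois involution).

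The chief obstacle will be verifying the bijectivity of $\mathbf{H}(k^s) \to \mathbf{G}(k^s)$, i.e.\ proving that every involution-preserving automorphism of the split bi-octonion algebra either preserves or swaps the two tensor factors. Equivalently, one must recover the tensor factors intrinsically from $(\mathbb{O}_s \otimes \mathbb{O}_s, -)$; I expect this to rest on a structural characterisation in the spirit of Theorem~\ref{thm.aut2}, identifying them as maximal composition subalgebras that centralise each other and are stable under the canonical involution.
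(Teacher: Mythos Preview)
Your Galois-descent framing is correct and would work, but it differs in organisation from the paper's route. The paper does \emph{not} reduce to the split case and invoke $H^1$; instead, Theorem~\ref{thm.equiv2} establishes the equivalence of categories $\mathsf{Comp}_8\mathsf{\acute{E}t}_2(k)\to \mathsf{Prod}_{8,8}(k)$ directly over an arbitrary $k$, by analysing the Malcev algebra $\Skew(A,-)^-$. The key structural input is Kuzmin's Theorem~\ref{thm.malc7}: from the centroid of $\Skew(A,-)^-$ one reads off $E$, and the unique octonion structure on $E1\oplus \Skew(A,-)$ compatible with the Malcev bracket recovers $C$. This is more constructive than your proposed characterisation via ``maximal composition subalgebras that centralise each other'', and it handles the indecomposable case (where no such $k$-subalgebras exist) uniformly. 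The upgrade to group schemes in Theorem~\ref{thm.aut2} then needs the separate smoothness argument of Lemma~\ref{lem:smoothness} (a dimension count on $\Der(A,-)$), which you implicitly assume when you say bijectivity on $k^s$-points suffices.

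One small omission: the statement concerns $\mathbf{Aut}(\cor_{E/k}(C))$ as automorphisms of the bare algebra, not the algebra with involution. Your argument treats involution-preserving automorphisms throughout. The paper closes this gap with Theorem~\ref{thm:aut-schemes-involution}, showing $\mathbf{Aut}(A,-)=\mathbf{Aut}(A)$ for bi-octonion algebras, via the first-order characterisation $C_1+C_2=\{a\in A\mid (a,x,y)=-(x,a,y)=(x,y,a)\ \forall x,y\}$ from \cite{morandi2001tensor}; you should cite this as well.
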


To each bi-octonion algebra $(A,-)$ is associated a  quadratic form $Q$, called the Albert form, which is the unique quadratic form (up to similitude)  on $S = \Skew(A,-) = \{s \in A \mid \bar s = -s \}$ such that $Q(s) \ne 0$ if and only if the left-multiplication operator $L_s \in \End A$ is invertible. The Albert form $Q$ is a $14$-dimensional quadratic form in $I^3(k)$, because it is an additive transfer of the norm $n\in I^3(E)$ on the corresponding octonion algebra over some quadratic \'etale extension $E/k$.
 
 Based on evidence in characteristic 0, the Albert form is related to the equivalence relation of isotopy on the class of bi-octonion algebras. We find a good explanation for this by showing that the structure group (the group of self-isotopies) of a bi-octonion algebra is a reductive envelope of the spin group of its Albert form. Also interesting is that  the structure group is no less than the group of linear transformations that preserve the norm up to a scalar. The following theorem is distilled from Proposition \ref{main1} and Theorems~\ref{main2} and \ref{thm:Str-GO}.
   
 \begin{theoremalpha}
	The structure group $\Str(A,-)$, as an abstract group, is isomorphic to the quotient $\Omega(S,Q)/T$ where $\Omega(S,Q)$ is the extended Clifford group of the Albert form and $T = \{ce_1 \mid c \in k^\times\} \simeq k^\times$ is the subgroup of scalar multiples of a primitive central idempotent  $e_1 \in C^+(S,Q)$.
	As an algebraic group, $\mathbf{Str}(A,-)$ is the connected reductive group generated by its derived subgroup, which is isomorphic to $\mathbf{Spin}(S,Q)$, and its centre, which is the one-dimensional torus consisting of invertible scalar matrices.
	Moreover, $\mathbf{Str}(A,-)$ is equal to the group scheme $\mathbf{GO}(N_A)$ of similitudes of the octic norm of $(A,-)$.
\end{theoremalpha}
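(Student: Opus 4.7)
The plan is to realise the $64$-dimensional bi-octonion algebra $A$ as a half-spinor representation of $\mathbf{Spin}(S,Q)$ and, in parallel, to construct the action of the extended Clifford group $\Omega(S,Q)$ on $(A,-)$ by isotopies. Once this is done, all three parts of the theorem will follow simultaneously from a dimension count that sandwiches $\mathbf{Str}(A,-)$ between a reductive subgroup of dimension $92$ and $\mathbf{GO}(N_A)$. Since $Q \in I^3(k)$ has trivial discriminant and trivial Clifford invariant, the even Clifford algebra $C^+(S,Q)$ splits as a product of two central simple algebras, with primitive central idempotents $e_1, e_2$.

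The core technical step is a Clifford-type identity $L_s^2 = \mu\, Q(s)\, \id_A$ for $s \in S$ and some fixed $\mu \in k^\times$, so that $s \mapsto L_s$ extends to an algebra homomorphism $\rho \colon C(S,Q) \to \End_k(A)$ identifying $A$ with the half-spinor module $C^+(S,Q) \cdot e_1$. This identity can be established either directly from the structurable algebra identities together with the characterisation of $Q$ via invertibility of $L_s$, or by reducing to a separable closure where $(A,-) \cong C_1 \otimes C_2$ decomposes into octonion factors and the identity reduces to an analogous fact for the two octonion norms. Given $\rho$, elements of $\Omega(S,Q)$ act on $A$ through their embedding in the Clifford algebra, and one verifies that this action preserves the structurable operator $V_{x,y}$, so the image lies in $\Str(A,-)$. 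A direct computation in $C^+(S,Q)$ then identifies the kernel of the induced map as exactly $T$, yielding an injection of abstract groups $\Omega(S,Q)/T \hookrightarrow \Str(A,-)$.

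To finish, I would combine two dimension estimates. The image of $\Omega(S,Q)/T$ already contains a copy of $\mathbf{Spin}(S,Q) = \mathbf{Spin}_{14}$ and a one-dimensional central torus of scalars in $\GL(A)$, generating a connected reductive subgroup of dimension $92$. On the other side, the general theory of norms on structurable algebras gives the inclusion $\mathbf{Str}(A,-) \subseteq \mathbf{GO}(N_A)$, and a representation-theoretic bound shows $\dim \mathbf{GO}(N_A) \le 92$: the half-spin module of $\mathbf{Spin}_{14}$ is simple of dimension $64$ over a separable closure, and its space of semi-invariants of degree $8$ is one-dimensional, spanned by $N_A$, forcing any similitude of $N_A$ into $\mathbf{Spin}_{14} \cdot \mathbb{G}_m$. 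These inclusions of connected reductive groups of the same dimension $92$ must all be equalities, proving all three conclusions together. The main obstacles I anticipate are the Clifford identity for $L_s$, in particular tracking the correct scalar $\mu$ so as to match the idempotent $e_1$ rather than $e_2$, and the semi-invariant bound on $\mathbf{GO}(N_A)$, which requires fairly explicit control over the representation ring of $\mathbf{Spin}_{14}$ on its half-spin module in degrees up to $8$.
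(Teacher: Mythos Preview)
Your overall architecture is close to the paper's, but the core technical step you propose is wrong: the identity $L_s^2 = \mu\,Q(s)\,\id_A$ does not hold. Over a separable closure with $A = C_1 \otimes C_2$ and $s = s_1 + s_2 \in (C_1)_0 \oplus (C_2)_0$, the operators $L_{s_1}$ and $L_{s_2}$ \emph{commute} (since $[C_1,C_2]=0$), so $L_s^2 = -n_1(s_1) - n_2(s_2) + 2L_{s_1 s_2}$, which is not a scalar. Hence $s \mapsto L_s$ does \emph{not} extend to a homomorphism from the full Clifford algebra $C(S,Q)$, and there is no half-spin module structure arising in the way you describe.

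What the paper does instead is introduce an auxiliary involution $\natural$ on $S$ (namely $s_1 + s_2 \mapsto s_1 - s_2$) and prove $L_s L_{s^\natural} = -Q(s)\,\id_A$. By the universal property of the \emph{even} Clifford algebra, this yields a homomorphism $\theta\colon C^+(S,Q) \to \End_k(A)$ with $\theta(st) = -L_s L_{t^\natural}$, and it is this $\theta$ that identifies $A$ with a half-spin module. The verification that $\theta$ carries $\Omega(S,Q)$ into $\Str(A,-)$ then rests on the fact that each $L_s$ for invertible skew $s$ is an isotopy, so products $-L_s L_{t^\natural}$ are isotopies. The kernel computation uses that the main involution $\tau$ on $C^+(S,Q)$ is of the second kind and swaps the two matrix factors.

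For surjectivity the paper does not use a pure dimension count; it constructs a homomorphism $\gamma\colon \mathbf{Str}(A,-) \to \mathbf{GO}(S,Q)$ by $\alpha \mapsto \alpha_S$ and fits $\theta'$ and $\gamma$ into a commutative diagram over the exact sequence $\mathbf{GL}_1(Z) \to \mathbf{\Omega}(S,Q) \to \mathbf{PGO}^+(S,Q)$, then applies the five lemma. Your dimension-$92$ sandwich would also work for the connected components, but you would still need an argument ruling out extra components of $\mathbf{GO}(N_A)$; the paper handles this via a normaliser computation showing $N_{\mathbf{GL}(A)}(\mathbf{Spin}(S,Q)) = \mathbf{Spin}(S,Q)\cdot Z(\mathbf{GL}(A))$, using that the only outer automorphism of $\mathbf{Spin}_{14}$ acts nontrivially on its centre.
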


Taking a closer look at the Albert forms of bi-octonion algebras, we find that they can in fact be similar to any 14-dimensional quadratic form in $I^3(k)$, and that the Albert forms classify the algebras up to isotopy.  The first part of Theorem \ref{main-theorem:albert_form} is a result which was originally discovered by Rost \cite{rost14dim}, but until now the proof has only been available in sketch (e.g., \cite[Theorem~21.3]{garibaldi2009cohomological}). The second part of Theorem \ref{main-theorem:albert_form} was previously known in characteristic 0 only \cite[Theorem~5.4~(iii)]{allison1988tensor}, and its proof depended on Lie algebraic methods which do not work well in positive characteristics. We find new proofs for these results in Corollaries~\ref{cor:isotopic-similar} and \ref{cor:Rost}.
 
 \begin{theoremalpha} \label{main-theorem:albert_form}
 Every $14$-dimensional quadratic form whose Witt class is in $I^3(k)$ is similar to the Albert form of a bi-octonion algebra, and the bi-octonion algebra is uniquely determined up to isotopy by the similitude class of its Albert form.
 \end{theoremalpha}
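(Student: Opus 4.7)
The plan is to deduce both assertions from the structural theorem for $\mathbf{Str}(A,-)$ proved above, by a Galois cohomology argument modelled on the classical $\mathbf{GSpin}/\mathbf{GO}^+$ exact sequence.

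Fix a split bi-octonion algebra $(A_0,-)$ over $k$ with Albert form $q_0$. By the structural theorem, $\mathbf{Str}(A_0,-)$ is the connected reductive group generated by its derived subgroup $\mathbf{Spin}(q_0)$ and its central one-dimensional torus; in other words, $\mathbf{Str}(A_0,-)$ is isomorphic as a $k$-group scheme to $\mathbf{GSpin}(q_0)$. The action on $\Skew(A_0,-) = (S,q_0)$ preserves $q_0$ up to scalars, giving a homomorphism $\phi$ into $\mathbf{GO}^+(q_0)$ (the connected component, because $\mathbf{Str}(A_0,-)$ is connected). This fits into a short exact sequence of smooth $k$-group schemes
\begin{equation*}
	1 \to \mu_2 \to \mathbf{Str}(A_0,-) \xrightarrow{\phi} \mathbf{GO}^+(q_0) \to 1,
\end{equation*}
where $\mu_2$ is the $2$-torsion of the central $\mathbb{G}_m$ (equivalently, the kernel of the spinor cover sitting inside $\mathbf{Spin}(q_0) \subset \mathbf{Str}(A_0,-)$).

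Passing to Galois cohomology, and using that $H^1(k,\mathbf{Str}(A_0,-))$ classifies isotopy classes of bi-octonion algebras over $k$ (since $\mathbf{Str}(A,-)$ is the group scheme of self-isotopies, this is the standard twist principle) while $H^1(k,\mathbf{GO}^+(q_0))$ classifies similitude classes of $14$-dimensional quadratic forms of trivial discriminant, I obtain the pointed sequence
\begin{equation*}
	H^1(k,\mu_2) \to H^1(k,\mathbf{Str}(A_0,-)) \xrightarrow{\phi_*} H^1(k,\mathbf{GO}^+(q_0)) \xrightarrow{\delta} H^2(k,\mu_2) = \Br_2(k).
\end{equation*}
Under the interpretations above, $\phi_*$ sends an isotopy class $[(A,-)]$ to the similitude class $[Q_A]$ of its Albert form. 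The key step is to identify the connecting map $\delta$ with the Clifford invariant $e_2$; this is the classical content of the $\mathbf{GSpin}/\mathbf{GO}^+$ sequence, and once in place, exactness shows that the image of $\phi_*$ consists of precisely those similitude classes of $14$-dimensional quadratic forms with trivial discriminant and trivial Clifford invariant, i.e.\ with Witt class in $I^3(k)$. This establishes the existence half of the theorem.

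For the uniqueness half, I show that $\phi_*$ has trivial kernel. Since $\mu_2$ sits inside the central torus $\mathbb{G}_m \subset \mathbf{Str}(A_0,-)$, the map $H^1(k,\mu_2) \to H^1(k,\mathbf{Str}(A_0,-))$ factors through $H^1(k,\mathbb{G}_m)$, which vanishes by Hilbert's Theorem~90. A standard twisting argument, applied with $(A_0,-)$ replaced by any bi-octonion algebra $(A,-)$ (whose structure group still has $\mu_2$ as the $2$-torsion of its central $\mathbb{G}_m$), promotes triviality of the kernel to injectivity on every fibre, so that the Albert form similitude class is a complete isotopy invariant. The main technical obstacle I expect is the careful identification of $\delta$ with $e_2$, which requires matching the concrete realisation of $\mathbf{Str}(A_0,-)$ coming from the previous theorem with the abstract $\mathbf{GSpin}(q_0)$ appearing in the classical sequence, and in particular verifying that the central $\mu_2$ in our exact sequence is the spinor $\mu_2$ and not some other central subgroup.
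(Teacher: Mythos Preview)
Your approach is essentially the same as the paper's: both deduce the theorem from the Galois cohomology of a central extension of a projective/similitude orthogonal group by $\mathbf{Str}(A,-)$, using Hilbert~90 for injectivity and an identification of the connecting map for surjectivity. The paper uses the sequence
\[
1 \to \mathbf{G}_m \to \mathbf{Str}(A,-)^\circ \xrightarrow{\gamma''} \mathbf{PGO}^+(S,Q) \to 1
\]
rather than your $1 \to \mu_2 \to \mathbf{Str}(A,-) \to \mathbf{GO}^+(q_0) \to 1$; the two are related by the obvious map of extensions, and either works.

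Two cautions about your write-up. First, your identification $\mathbf{Str}(A_0,-)\simeq \mathbf{GSpin}(q_0)$ is misleading: the paper shows $\mathbf{Str}(A,-)^\circ \simeq \mathbf{\Omega}(S,Q)/\mathbf{G}_m$, and this is \emph{not} isomorphic to $\mathbf{\Gamma}^+(S,Q)$ (the usual meaning of $\mathbf{GSpin}$), since $\mathbf{Spin}_{14}$ meets the central torus in $\bm\mu_4$ in the former and in $\bm\mu_2$ in the latter. So your appeal to a ``classical $\mathbf{GSpin}/\mathbf{GO}^+$ sequence'' is not available as stated. Second --- and you rightly flag this as the main obstacle --- the identification of the connecting map must be proved for \emph{this specific} extension, not imported from the literature. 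The paper accomplishes this (its Proposition on the connecting map $\Delta$) by comparing with the top row $1\to \mathbf{GL}_1(Z)\to\mathbf{\Omega}(S,Q)\to\mathbf{PGO}^+(S,Q)\to 1$, for which the connecting map is computed in \cite[VII, Exercise~15]{knus1998book} to send $[(B,\sigma,C_+,C_-)]\mapsto [C_+\times C_-]\in\Br(k\times k)$; projecting onto one factor gives $\Delta$. In your setup the analogous move would be to compare with the standard $1\to\bm\mu_2\to\mathbf{Spin}\to\mathbf{O}^+\to 1$ via the inclusion of the derived subgroup and check that $\gamma|_{\mathbf{Spin}}$ is the vector representation; this is true (it is the content of $\gamma''\circ\theta'=\chi'$) but it is exactly the kind of matching you anticipated having to do.
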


In \S\ref{sec:cohom_inv_of_bi-oct_alg} we introduce, interpret, and examine the values and symbol lengths of three cohomological invariants of bi-octonion algebras taking values in mod 2 Galois cohomology. These invariants have very elementary descriptions. If $(A,-)$ is a bi-octonion algebra,  $\Skew(A,-)$ has the structure of a Malcev algebra whose centroid $E$ is a quadratic \'etale extension of $k$. The first invariant, which we call the \emph{discriminant}, is
\[
b_1(A,-) = [E] =  \text{ the class of the centroid of } \Skew(A,-).
\]
The second invariant, which we call the \emph{Albert form invariant}, is 
\[
b_3(A,-) = e_3(Q) = \text{ the Arason invariant of the Albert form of }(A,-).
\]
    The trace form $T_A$ is a 64-dimensional symmetric bilinear form on $A$ defined by $T_A(x,y) = \tr(L_{x\bar y+ y \bar x})$. The form $
	\langle 128\rangle T_A \perp 4\langle -1\rangle N_{E/k}$, where $N_{E/k}$ is the 2-dimensional norm of the centroid of $\Skew(A,-)$, is Witt equivalent to a 6-Pfister form. This defines the third invariant, which we call the \emph{quadratic trace form invariant}:
	\[
	b_6(A,-) = e_6(\langle 128 \rangle T_A - 4  N_{E/k}).
	\]
Summarised, here are some interesting facts about these invariants -- see Example \ref{example:split-case}, Theorem \ref{thm:symbols}, and Proposition \ref{prop:isotopy-invariants}:

\begin{theoremalpha}
	There exist non-trivial cohomological invariants $b_1, b_3, b_6$ (as defined above) each of which assigns a bi-octonion algebra $(A,-)$ over $k$ to a unique element
	\[
	b_i(A,-) \in H^i(k,\ZZ/2\ZZ)
	\]
	which depends only on the isomorphism class of $A$ as an algebra, and which is compatible with base change to field extensions.	Moreover,
	\begin{enumerate}[\rm (1)]
	\item 	The invariant $b_1$ detects decomposability in the sense that $b_1(A,-) = 0$ if and only if $(A,-)$ is decomposable.
	\item The symbol length of $b_3(A,-)$ can be $0$, $1$, $2$, or $3$.
	\item The symbol length of $b_6(A,-)$ is $0$ or $1$.
	\item If $(A,-)$ and $(A',-)$ are isotopic then $b_3(A,-) = b_3(A',-)$.
	\item If $b_3(A,-) = 0$ then $(A,-)$ is isotopic to the split bi-octonion algebra.
	\item If $(A,-)$ and $(A',-)$ are isotopic, $b_6(A,-) = b_6(A',-)$ modulo $(-1){\cdot}(-1){\cdot} H^4(k,\ZZ/2\ZZ)$.
	\end{enumerate}
\end{theoremalpha}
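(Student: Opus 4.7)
My plan is to establish existence and well-definedness for each invariant, then verify (1)--(5) using the structural results already recorded, and finally flag (6) as the principal difficulty.

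For $b_1$, the centroid of the Malcev algebra $\Skew(A,-)$ is a natural quadratic \'etale $k$-algebra; invoking Corollary \ref{cor:involution-dependence} to pass from $\mathbf{Aut}(A,-)$ to $\mathbf{Aut}(A)$ shows $E$ depends only on $A$. For $b_3$, the Albert form $Q$ is determined up to similitude by Theorem \ref{main-theorem:albert_form}, and since $Q \in I^3(k)$ a scalar multiple $\lambda Q$ differs from $Q$ by $\langle\langle \lambda \rangle\rangle Q \in I^4$, so $e_3(Q)$ is similitude-invariant; well-definedness on algebra-isomorphism classes again follows from Corollary \ref{cor:involution-dependence}. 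For $b_6$, I would first verify (by reduction to the decomposable case over $k_s$ and Galois descent) that $\langle 128 \rangle T_A \perp 4 \langle -1 \rangle N_{E/k}$ is Witt equivalent to a 6-Pfister form, and then apply $e_6$. Naturality under base change is immediate from the constructions, and non-triviality in each degree is witnessed by generic bi-octonion algebras over iterated Laurent series fields.

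For (1), Theorem~A identifies the bi-octonion algebra with $\cor_{E/k}(C)$, and this is decomposable exactly when $E$ is split, equivalently when $[E] = 0$ in $H^1(k, \ZZ/2\ZZ)$. For (2), the Pfister number $\Pf(3,14) = 3$ recalled in the introduction bounds the symbol length of $b_3$ by $3$, and each of the values $0, 1, 2, 3$ is realised by explicit examples: the split algebra gives $0$, and judicious choices of $E$ and of the octonion algebra $C$ over $E$ realise $1, 2, 3$. For (3), $b_6$ is by definition the $e_6$-image of a 6-fold Pfister form, which is either zero or a single symbol in $H^6(k, \ZZ/2\ZZ)$. Statement (4) is immediate from Theorem \ref{main-theorem:albert_form} together with the similitude-invariance of $e_3$ observed above. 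For (5), if $e_3(Q) = 0$ then $Q \in I^4(k)$; since $\dim Q = 14 < 16 = 2^4$, the Arason--Pfister Hauptsatz forces $Q$ to be hyperbolic, and the split bi-octonion algebra also has hyperbolic Albert form, so by Theorem \ref{main-theorem:albert_form} these two algebras lie in the same isotopy class.

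Statement (6) is the hard part. The centroid $E$ is an algebra invariant, so under an isotopy only $T_A$ can change; an isotopy factors through the structure group $\mathbf{Str}(A,-)$ and is governed by a norm similitude $u$ with some multiplier $\mu \in k^\times$. The expectation is that the shift $\langle 128 \rangle(T_{A'} - T_A)$ lies in the Witt-ideal generated by $\langle\langle -1, -1\rangle\rangle$, so that its $e_6$-image lands in $(-1)\cdot(-1)\cdot H^4(k, \ZZ/2\ZZ)$. The main obstacle is exactly this Witt-ring identity: one needs an explicit formula expressing $T_{A'}$ in terms of $T_A$ and the similitude data, and then one must show that the discrepancy decomposes as $\langle\langle -1, -1\rangle\rangle$ times a 4-Pfister. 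I expect to exploit Theorem~B, which identifies $\mathbf{Str}(A,-)$ with $\mathbf{GO}(N_A)$ and as a reductive envelope of $\mathbf{Spin}(Q)$, together with polarisation identities linking $N_A$, $T_A$, and $Q$.
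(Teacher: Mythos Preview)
Your treatment of (1), (2), (4), (5) is essentially the same as the paper's. There are two genuine problems, both concerning $b_6$.

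\textbf{Well-definedness of $b_6$ and (3).} Your plan to show that $\langle 128\rangle T_A \perp 4\langle -1\rangle N_{E/k}$ is Witt equivalent to a 6-Pfister by ``reduction to the decomposable case over $k^s$ and Galois descent'' does not work: over $k^s$ every Pfister form is hyperbolic, so there is nothing to descend, and being a Pfister form is not a property that descends along Galois extensions in any useful sense. The paper instead works with the equivalent description $b_6(A,-) = e_6(N_{E/k}(n_C) - 4n_E)$, where $n_C$ is the norm of the octonion algebra $C$ over $E$, and invokes an explicit formula of Rost and Wittkop for the multiplicative transfer of a Pfister form: if $n_C = \llangle c_1, c_2, c_3\rrangle$ with all $\tr_{E/k}(c_i) \ne 0$, then $N_{E/k}(n_C) - 4\llangle d\rrangle$ equals $\bigotimes_{i=1}^3 \llangle \tr_{E/k}(c_i), -d N_{E/k}(c_i)\rrangle$ in $W(k)$. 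This simultaneously gives well-definedness and the symbol statement (3).

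\textbf{Part (6): the centroid is not isotopy-invariant.} Your opening move, ``the centroid $E$ is an algebra invariant, so under an isotopy only $T_A$ can change,'' is false. The centroid depends only on $A$ as an algebra (this is Corollary~\ref{cor:involution-dependence}), but isotopy is strictly weaker than isomorphism: over $\mathbb{R}$, the algebras $\mathbb{O}_{\rm div}\otimes\mathbb{O}_{\rm div}$ and $\cor_{\mathbb{C}/\mathbb{R}}(\mathbb{O}_{\mathbb{C}})$ are isotopic (both have hyperbolic Albert form) but have $b_1 = 0$ and $b_1 = (-1)$ respectively. So both $E$ and $T_A$ can change under isotopy, and your proposed strategy of tracking $T_{A'} - T_A$ through the structure group collapses. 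The paper's route is entirely different: it introduces an auxiliary invariant $a_6$ of quadratic forms in $I^3_{14}$, defined by $a_6(Q) = e_6(P_3(Q))$ where $P_3(q) = \pi(\lambda^2(\hat q)) - 4q$ is a natural map $I^3 \to I^6$. One computes directly that $a_6(\langle c\rangle Q) - a_6(Q) = (-1){\cdot}(-1){\cdot}(c){\cdot}a_3(Q)$, so $a_6$ is well-behaved under similitude modulo $(-1){\cdot}(-1){\cdot}H^4$. The substantial work is then to show $a_6(Q) - b_6(A,-) \in (-1){\cdot}(-1){\cdot}H^4$ whenever $Q$ is similar to an Albert form of $(A,-)$; this is a Witt-ring computation using another Rost--Wittkop identity, $\lambda^2(T_{E/k}(x)) = T_{E/k}(\lambda^2(x)) + \langle d\rangle N_{E/k}(x)$, which links the additive transfer (governing $Q$), the multiplicative transfer (governing $b_6$), and the exterior square (governing $P_3$). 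Statement (6) then follows by telescoping.
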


The invariants also bear relevance to the property of being a structurable division algebra. We give the first cohomological obstructions to the existence of noninvertible elements in a bi-octonion algebra, namely (7) and (8) of Theorem \ref{main-theorem:division}, and also prove the equivalence of a long list of algebraic conditions, namely items (1) to (6) of the theorem. The equivalence of (1), (2), and (5) was previously known in characteristic 0 only \cite{allison1988tensor} and the rest are new results in all characteristics. The statement is a combination of Theorems~\ref{thm:divisionness}, \ref{thm:divisionness0}, and \ref{thm:symbols} in the text.

\begin{theoremalpha} \label{main-theorem:division}
	Let $(A,-)$ be a bi-octonion algebra over $k$. The following are equivalent:
	\begin{enumerate}[\rm (1)]
		\item $(A,-)$ is not a structurable division algebra.
		\item $(A,-)$ has a non-invertible skew element.
		\item $(A,-)$ has a non-division biquaternion subalgebra stabilised by the involution.
		\item $(A,-)$ has a non-division associative subalgebra stabilised by the involution.
		\item  The Albert form of $(A,-)$ is isotropic and similar to $\phi_1' \perp \langle -1 \rangle \phi_2'$ where $\phi_1, \phi_2$ are a pair of $3$-Pfister forms with a common slot.
		\item $(A,-)$ is isotopic to a decomposable bi-octonion algebra $C_1\otimes C_2$ where $C_1$ and $C_2$ are octonion algebras whose norms have a common slot.
	\end{enumerate}
	Moreover, if {\rm (1)} holds, then
	\begin{enumerate}[\rm (1)] \setcounter{enumi}{6}
		\item The symbol length of $b_3(A,-)$ is at most $2$.
		\item $b_6(A,-) \in (-1){\cdot} H^5(k,\ZZ/2\ZZ)$.
	\end{enumerate}
\end{theoremalpha}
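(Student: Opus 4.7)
\emph{Proof plan.}
My plan is to establish the equivalences (1)--(6) as a cycle passing through the Albert form $Q$ and its structural decomposition, then to deduce (7) and (8) from the explicit shape of $Q$ and the trace form. The equivalence $(1)\Leftrightarrow(2)$ is immediate from the defining property of $Q$ (that $L_s$ is invertible iff $Q(s) \ne 0$) combined with the general fact from Allison's theory of structurable algebras that $(A,-)$ is division iff $L_s$ is invertible for every nonzero skew $s$; this fact will be invoked from the literature. In particular, (2) is equivalent to isotropy of $Q$. The ``soft'' implications $(5)\Rightarrow(2)$ and $(6)\Rightarrow(3)\Rightarrow(4)\Rightarrow(2)$ proceed as follows: the form $\phi_1' \perp \langle -1\rangle \phi_2'$ is isotropic because the common slot $\langle\langle c\rangle\rangle$ factors out after writing $\phi_i = \langle\langle c\rangle\rangle\otimes\psi_i$, and tensoring by $\langle\langle c \rangle\rangle$ doubles the Witt index of the isotropic form $\psi_1 \perp \langle -1\rangle\psi_2$; a common slot of the octonion norms $n_1, n_2$ yields quaternion subalgebras $H_i\subset C_i$ sharing a common square root of $c$, so that $H_1\otimes H_2 \subset C_1\otimes C_2$ is an involution-stable biquaternion whose underlying associative algebra is non-division; $(3)\Rightarrow(4)$ is trivial; and $(4)\Rightarrow(2)$ follows by extracting a nonzero skew zero divisor from any involution-stable non-division associative subalgebra, a standard manoeuvre using the involution to pass from a zero divisor to its skew component.

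The nontrivial implications are $(2)\Rightarrow(5)$ and $(5)\Leftrightarrow(6)$. For $(2)\Rightarrow(5)$: the $14$-dimensional isotropic form $Q \in I^3(k)$ splits as $\mathbb{H} \perp q$ with $\dim q = 12$ and $q \in I^3(k)$, and by the Pfister number bound $\Pf(3,12) \le 2$ recalled in the introduction, the anisotropic part of $q$ is Witt-equivalent to a difference $\phi_1 - \phi_2$ of two $3$-fold Pfister forms. To upgrade this Witt-ring identity into the required isometric decomposition with a common slot, I apply the classical common-slot lemma: the isotropy of $\phi_1 \perp \langle -1\rangle \phi_2$ forces $\phi_1$ and $\phi_2$ to represent a common nonzero value $c$, after which the roundness of Pfister forms gives $\phi_i = \langle\langle c\rangle\rangle \otimes \psi_i$ for suitable $2$-fold Pfister forms $\psi_i$, producing the common slot $\langle 1,-c\rangle$. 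For $(5)\Leftrightarrow(6)$ I use Theorem~\ref{main-theorem:albert_form}: a decomposable bi-octonion $C_1 \otimes C_2$ has Albert form similar to $n_1' \perp \langle -1\rangle n_2'$ (spelled out in the body), so common slots of the octonion norms correspond under this theorem to common slots of the $3$-fold Pfister forms, up to isotopy.

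Once (5) and (6) are in hand, the cohomological conclusions follow by direct computation. For (7), writing $\phi_i = \langle\langle a_i, b_i, c\rangle\rangle$ with shared slot, and using that $e_3$ is scaling-invariant on $I^3(k)$ modulo $I^4(k)$,
\[
b_3(A,-) \;=\; e_3(Q) \;=\; (c) \cup \bigl((a_1)(b_1) + (a_2)(b_2)\bigr) \;\in\; H^3(k,\ZZ/2\ZZ),
\]
and since the inner class in $H^2$ has symbol length at most $2$, so does $b_3(A,-)$. For (8), I reduce via the isotopy-invariance of $b_6$ modulo $(-1)(-1) H^4(k,\ZZ/2\ZZ)$ to the decomposable case and note that $T_A = n_1 \otimes n_2$ with common slot, then apply the Witt-ring identity $\langle\langle c,c\rangle\rangle = \langle\langle -1,c\rangle\rangle$ to obtain
\[
n_1 \otimes n_2 \;\equiv\; \langle\langle -1,c,a_1,b_1,a_2,b_2\rangle\rangle \pmod{I^7(k)},
\]
forcing $b_6(A,-) = (-1)\cdot(c)(a_1)(b_1)(a_2)(b_2) \in (-1)\cdot H^5(k,\ZZ/2\ZZ)$; the isotopy ambiguity $(-1)(-1)H^4 \subset (-1)H^5$ is absorbed into this ideal. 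The main obstacle is the common-slot step in $(2)\Rightarrow(5)$, where one must promote a Witt-ring decomposition to an isometric one sharing a Pfister factor; the rest of the proof reduces to standard Witt-ring manipulations and applications of Theorem~\ref{main-theorem:albert_form}.
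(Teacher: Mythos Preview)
Your cycle has two genuine gaps. First, $(6)\Rightarrow(3)$: you locate the non-division biquaternion $H_1\otimes H_2$ inside the \emph{isotope} $C_1\otimes C_2$, not inside $(A,-)$ itself, and isotopy does not transport subalgebras. The paper instead proves the implication to (3) directly from isotropy of $Q$: writing $(A,-)=\cor_{E/k}(C)$, isotropy of $Q$ forces the pure norm $n'$ of $C$ to represent some $c\in k$ (Corollary~\ref{cor:cor}); one then picks a quaternion subalgebra $D\subset C$ containing a trace-zero element of norm $c$, and $\cor_{E/k}(D)\subset(A,-)$ is the required involution-stable biquaternion with isotropic Albert form. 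Second, your $(4)\Rightarrow(2)$ by ``extracting a skew zero divisor'' fails: an involution-stable non-division associative subalgebra need not have any non-invertible skew element. For instance $k\times k$ with the swap involution embeds involution-compatibly in any split bi-octonion algebra via $e\otimes 1$ and $(1-e)\otimes 1$ for a primitive idempotent $e$ in a split octonion factor, yet its only nonzero skew elements are the scalar multiples of $(2e-1)\otimes 1$, which has norm $-1\ne 0$. The paper's route is $(4)\Rightarrow(1)$: conjugate-invertibility in a structurable subalgebra agrees with conjugate-invertibility in the ambient algebra (\S\ref{sec.conj.inverse} together with Lemma~\ref{lem:nuclear-isotopes}), so a non-division associative subalgebra already witnesses failure of (1).

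Two further points of comparison. The equivalence $(1)\Leftrightarrow(2)$ is not a ``general fact from Allison's theory'' but the deepest input here: the paper's proof (Theorem~\ref{thm:albert-form-division-algebra}) shows that anisotropy of $Q$ makes $\mathbf{Spin}(S,Q)$ anisotropic of rank $7$ inside $\mathbf{Aut}(K(A,-))$, forcing this $E_8$ group to have $k$-rank $1$, whence $(A,-)$ is division by \cite[Theorem~4.3.1]{boelaert2019moufang}; Allison's argument covers only characteristic $0$. For $(8)$, your isotopy-reduction is valid but leans on the isotopy behaviour of $b_6$ modulo $(-1)^2H^4$, itself a substantial computation (Theorem~\ref{thm:big-calc}); the paper argues more directly: Corollary~\ref{cor:cor} again gives $n=\llangle c,c_1,c_2\rrangle$ with $c\in k^\times$, and the Rost--Wittkop formula (Theorem~\ref{thm:norm}) then produces a factor $(-dc^2)=(-1)+(d)$ in $b_6(A,-)$, with the $(d)$ term killed by a norm relation.
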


The set $\Inv(F)$ of mod 2 cohomological invariants of a functor $F:\mathsf{Fields}_{/k} \to \mathsf{Sets}$ is an $H(k)$-algebra, where $H(k) = \bigoplus_{i \ge 0} H^i(k, \ZZ/2\ZZ)$. When we speak of $\Inv(G)$ for an algebraic group $G$, this means $\Inv(F)$ where $F = H^1(*,G)$. We prove in Theorem \ref{thm:inv-bioctonions}:

\begin{theoremalpha}
	The invariants $1$, $b_1$, $b_3$, and $b_6$ are linearly independent generators for the free $H(k)$-module $\Inv((G_2 \times G_2) \rtimes \ZZ/2\ZZ)$ of mod $2$ invariants of bi-octonion algebras.
\end{theoremalpha}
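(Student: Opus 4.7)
The plan is to split the argument into \emph{linear independence} of $\{1, b_1, b_3, b_6\}$ over $H(k)$ and \emph{spanning} of the module $\Inv(G)$, where $G = (G_2 \times G_2) \rtimes \ZZ/2\ZZ$. Since $b_1, b_3, b_6$ live in distinct positive degrees, homogeneity reduces every relation to a fixed total degree, and the Theorem~A correspondence between bi-octonion algebras and octonion algebras over quadratic \'etale extensions is the key bridge between $\Inv(G)$ and the well-understood module $\Inv(G_2) = H(k) \oplus H(k)\cdot f_3$ classified by Serre (see \cite{garibaldi2009cohomological}).

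For linear independence, I would test a hypothetical relation $c_0 + c_1 b_1 + c_3 b_3 + c_6 b_6 = 0$ on explicit bi-octonion algebras defined over rational function field extensions of $k$. Evaluation on the split algebra kills every $b_i$ and gives $c_0 = 0$. Taking $\cor_{k(\sqrt{t_1})/k(t_1)}$ of the split octonion algebra produces a bi-octonion algebra with $b_1 = (t_1)$ and $b_3 = b_6 = 0$, and the regularity of $(t_1)$ in $H^\ast(k(t_1),\ZZ/2\ZZ)$ then forces $c_1 = 0$. Two further generic constructions, using decomposable bi-octonions whose octonion factors have norms $\langle\!\langle t_2,t_3,t_4\rangle\!\rangle$ and $\langle\!\langle t_5,t_6,t_7\rangle\!\rangle$, separate $c_3$ and $c_6$ in turn via standard residue arguments at the divisors $t_i = 0$.

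For spanning, I would study the restriction map
\[
\rho\colon \Inv(G) \longrightarrow \Inv(G_2 \times G_2)^{\ZZ/2\ZZ}
\]
that evaluates an invariant on decomposable bi-octonion algebras. A product argument (building on Serre's classification $\Inv(G_2) = H(k)\oplus H(k)f_3$) yields $\Inv(G_2 \times G_2)$ free on $1, f_3^{(1)}, f_3^{(2)}, f_3^{(1)}\smile f_3^{(2)}$, whose $\ZZ/2\ZZ$-fixed submodule is free on $1$, $f_3^{(1)} + f_3^{(2)}$, $f_3^{(1)}\smile f_3^{(2)}$. A direct calculation on a decomposable algebra $C_1\otimes C_2$ shows that its Albert form is similar to $n_{C_1}\perp \langle -1\rangle n_{C_2}$, whence $\rho(b_3) = f_3^{(1)} + f_3^{(2)}$; a parallel computation on $T_{C_1\otimes C_2} = T_{C_1}\otimes T_{C_2}$ modulo $I^4$ shows $\rho(b_6)$ is a nonzero multiple of $f_3^{(1)}\smile f_3^{(2)}$. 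Hence $\rho$ is surjective.

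The main obstacle will be to show $\ker \rho = H(k)\cdot b_1$, after which the theorem follows by a subtraction argument combined with the linear independence of step one. Given $\alpha \in \ker \rho$ and a versal bi-octonion algebra $(A,-)$ over some extension $K/k$, write $b_1(A,-) = (a) \in H^1(K,\ZZ/2\ZZ)$; the algebra becomes decomposable over $K(\sqrt{a})$, because the quadratic \'etale extension defining $b_1$ splits there and the corestriction construction of Theorem~A reduces to a tensor product. Hence $\alpha_{K(\sqrt{a})} = 0$. By the consequence of Voevodsky's norm residue theorem that the kernel of $H^\ast(K,\ZZ/2\ZZ) \to H^\ast(K(\sqrt{a}),\ZZ/2\ZZ)$ is the principal ideal $(a)\cdot H^{\ast-1}(K,\ZZ/2\ZZ)$, one obtains $\alpha(A,-) = b_1(A,-) \smile \beta$ for some $\beta \in H^{n-1}(K,\ZZ/2\ZZ)$. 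The technical heart is then to verify that $\beta$ is functorial in the field extension and thereby defines a genuine invariant $\beta \in \Inv(G)$ of degree $n-1$; this is guaranteed by injectivity of $\Inv(G)$ into the cohomology of the function field of a versal $G$-torsor, and an induction on degree completes the proof.
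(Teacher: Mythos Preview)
Your overall strategy aligns with the paper's: restrict along $G_2\times G_2 \hookrightarrow (G_2\times G_2)\rtimes S_2$, identify the image in $\Inv(G_2\times G_2)^{S_2}$ as the free module on $\{1, r_3, r_6\}$, verify $b_3\mapsto r_3$ and $b_6\mapsto r_6$ so that $\rho$ is split surjective, then analyse $\ker\rho$. Your linear-independence argument via residues is correct but unnecessary once the sequence is split exact with free terms. The genuine gap is the kernel step. Writing $\alpha(A_{\mathrm{vers}},-) = (a)\cdot\beta$ for some $\beta\in H^{n-1}(K_{\mathrm{vers}})$ is fine, but $\beta$ is only determined modulo $\operatorname{Ann}_{H(K)}\big((a)\big)$, and the detection principle you cite says merely that $\Inv(G)\hookrightarrow H^*(K_{\mathrm{vers}})$; it gives no criterion for a class in the target to lie in the image. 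There is no mechanism here for choosing a functorial representative of $\beta$, so the claim that $\beta$ defines an invariant is unjustified and the induction never starts.

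The paper's resolution is different in kind and does not attempt to divide by $b_1$. For each quadratic field extension $E/k$ one passes to the classifying torus $T = R_{E/k}(\mathbf{G}_m)^3$ for $R_{E/k}(\bm{\mu}_2)^3$, which surjects onto $H^1(*,R_{E/k}(G_2))$. Because $R_{E/k}(G_2)_E\cong (G_2)_E^2$, any $\alpha\in\ker\rho$ evaluated at the generic torsor lands in $\ker\big(A^0(T,H^d)\to A^0(T_E,H^d)\big)$. A lemma of Gille, proved by building an explicit partial compactification $T\hookrightarrow U\subset\mathbf{A}^r(E)$ whose boundary divisor is an $E$-variety, identifies this kernel with the constants $\ker\big(H^d(k)\to H^d(E)\big)$. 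Thus $\alpha$ is \emph{constant} on each fibre of $b_1$ and hence factors through $\Inv(S_2)=H(k)\oplus H(k){\cdot}b_1$, giving $\ker\rho=H(k){\cdot}b_1$ directly.
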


Theorem \ref{thm:inv-bioctonions} also contains a classification of invariants of $\mathbf{PGO}_4$, which until now does not seem to have been done. The above classification of invariants of bi-octonion algebras is one of the key steps in the sought-after classification of invariants of $\mathbf{Spin}_{14}$. Another key step is the classification of invariants of $\mathbf{Spin}_{12}$. This was already done in \cite{garibaldi2009cohomological} under the assumption that $\sqrt{-1} \in k$, but removing that assumption is not  difficult: the same strategy works as a proof, only the calculations are slightly more technical. Because of this,  in Theorem~\ref{thm:I12-Spin12} we have redone the classification of invariants of $\mathbf{Spin}_{12}$. Despite  efforts, we were unable to remove the assumption $\sqrt{-1} \in k$ when it came to $\mathbf{Spin}_{14}$.  We prove in Theorem \ref{main}: 	
\begin{theoremalpha}
	If $\sqrt{-1} \in k$ then the invariants $1$, $a_3$ (The Arason invariant), $a_6$, and $a_7$ from \cite[\S22]{garibaldi2009cohomological} are linearly independent generators for the free $H(k)$-module $\Inv(I_{14}^3)$.
	Every mod $2$ invariant of $\mathbf{Spin}_{14}$ factors through $I_{14}^3$, so $\Inv(\mathbf{Spin}_{14})$ is a free $H(k)$-module having a basis of invariants of degrees $0$, $3$, $6$, and $7$. \end{theoremalpha}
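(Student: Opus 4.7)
The plan is to split the theorem into two parts: (i) every mod-$2$ invariant of $\mathbf{Spin}_{14}$ factors through the Albert-form functor to $I^3_{14}$, and (ii) $\Inv(I^3_{14})$ is free on $1, a_3, a_6, a_7$. For (i), I would use the central isogeny
\[
1 \to \mu_2 \to \mathbf{Spin}_{14} \to \mathbf{O}^+_{14} \to 1,
\]
whose associated long exact sequence identifies the image of $H^1(k, \mathbf{Spin}_{14}) \to H^1(k, \mathbf{O}^+_{14})$ with $I^3_{14}(k)$ and whose fibres are orbits of a central twist by $H^1(k, \mu_2)$ modulo spinor norms. Following the detection principle used by Garibaldi for $\mathbf{Spin}_n$ with $n \leq 12$, it suffices to show that no mod-$2$ invariant can separate two torsors in the same fibre: restrict an alleged invariant to the split torsor and its $\mu_2$-twists, and check that the resulting value is independent of the twist. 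This is a residue/vanishing calculation of the standard type, made cleaner by $\sqrt{-1} \in k$.

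For (ii), linear independence of $1, a_3, a_6, a_7$ is the easier half: each $a_i$ is a known invariant from \cite[\S22]{garibaldi2009cohomological}, and they can be separated by specialising to carefully chosen forms, for example orthogonal sums of generic Pfister forms over a purely transcendental extension, on which exactly one of the higher $a_i$ is nonzero. The main obstacle is spanning, which I would attack via the Albert-form correspondence of Theorem \ref{main-theorem:albert_form}: every $14$-dimensional form in $I^3$ is similar to the Albert form of some bi-octonion algebra $(A,-)$. Given $\alpha \in \Inv(I^3_{14})$, its pullback along this correspondence is an invariant of bi-octonion algebras, and Theorem \ref{thm:inv-bioctonions} forces
\[
\alpha^{*} \;=\; c_0 + c_1 {\cdot} b_1 + c_3 {\cdot} b_3 + c_6 {\cdot} b_6, \qquad c_i \in H(k).
\]

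The decisive step is then to pin down which such combinations actually arise from an $\alpha \in \Inv(I^3_{14})$, and to match the survivors with the proposed basis. The descent condition is that $\alpha^*$ must be constant on each fibre of the map $(A,-) \mapsto [Q_A]$; by Theorem \ref{main-theorem:albert_form} these fibres are precisely the isotopy classes, so the (near-)isotopy-invariance items of Proposition \ref{prop:isotopy-invariants} bound the allowable $c_i$. A degree-matching argument should then identify $a_3$ with the pullback of $b_3$ (both encoding the Arason invariant), $a_6$ with $b_6$, and $a_7$ with a canonical combination such as $b_1 {\cdot} b_6$; the hypothesis $\sqrt{-1} \in k$ is needed to control the residual sign ambiguities and, via Theorem \ref{thm:I12-Spin12}, to invoke the companion classification of $\Inv(\mathbf{Spin}_{12})$. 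A restriction argument along an embedding $\mathbf{Spin}_{12} \hookrightarrow \mathbf{Spin}_{14}$, paired with the $\mathbf{Spin}_{12}$ classification, should then give a matching upper bound on the rank of $\Inv(I^3_{14})$ and rule out extra generators in degrees $\geq 8$.
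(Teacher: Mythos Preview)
There is a genuine structural gap in part (ii). The Albert form of a bi-octonion algebra is only well-defined up to similitude, so there is no morphism of functors from bi-octonion algebras to $I_{14}^3$; the paper states this explicitly before Theorem~\ref{thm:big-calc}. Hence you cannot pull back an arbitrary $\alpha \in \Inv(I_{14}^3)$ to an invariant of $(G_2 \times G_2)\rtimes S_2$. The bi-octonion classification only computes $\Inv(PI_{14}^3)$, and the paper does exactly that in Corollary~\ref{cor-InvPI in InvI}: the image is $H(k)\cdot 1 \oplus H(k)\cdot a_3 \oplus J_2(k)\cdot a_6$. In particular $a_7$ does \emph{not} lie in this submodule (Lemma~\ref{lem:d} gives $\partial a_7 = a_6 \ne 0$), so it cannot correspond to any combination of the $b_i$'s. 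Your guess $a_7 \leftrightarrow b_1\cdot b_6$ fails concretely: $b_1\cdot b_6$ vanishes on every decomposable bi-octonion algebra, whereas $a_7$ is nonzero on the form $\langle t_7\rangle(\llangle t_1,t_2,t_3\rrangle' \perp \langle -1\rangle\llangle t_4,t_5,t_6\rrangle')$, which is the Albert form of a decomposable algebra.

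The paper's route is genuinely different from your outline and does not separate the problem into your parts (i) and (ii). It uses the fibration $H^1(*,\bm{\mu}_4)\rightsquigarrow H^1(*,\mathbf{Spin}_{14})\twoheadrightarrow PI_{14}^3$ (the full centre, not $\bm{\mu}_2$), whose residue map $\partial:\Inv(\mathbf{Spin}_{14})\to\Inv(PI_{14}^3)$ sits in an exact sequence $0\to\Inv(PI_{14}^3)\to\Inv(\mathbf{Spin}_{14})\xrightarrow{\partial}\Inv(PI_{14}^3)$. Given $a\in\Inv(\mathbf{Spin}_{14})$, one writes $\partial a$ and the restriction $F(a)\in\Inv(\mathbf{Spin}_{12})=\Inv(I_{12}^3)$ in the known bases, and the commutative square of Lemma~\ref{lem:commutes} linking $\partial$, $F$, and the analogous residue $\partial'$ for $\mathbf{Spin}_{12}$ forces most coefficients to vanish. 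One then subtracts a combination of $1, a_3, a_7$ to land in $\ker\partial = \Inv(PI_{14}^3)$ and finishes with one more restriction to $\mathbf{Spin}_{12}$. The isomorphism $\Inv(I_{14}^3)\simeq\Inv(\mathbf{Spin}_{14})$ is thus a \emph{consequence} of the spanning argument, not a preliminary lemma. Your idea of restricting to $\mathbf{Spin}_{12}$ is right and essential, but it must be combined with the $PI_{14}^3$ residue, not the bi-octonion pullback.
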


This theorem resolves a question mark in the last row of \cite[Table 23B]{garibaldi2009cohomological}. Our approach to classifying the invariants of $\mathbf{Spin}_{14}$ is outlined in more detail in the introduction to Part~\ref{pt:II}.
 
\let\oldaddcontentsline\addcontentsline% Store \addcontentsline
\renewcommand{\addcontentsline}[3]{}% Make \addcontentsline a no-op
\section*{Acknowledgements}
I would like to thank Tom De Medts and Skip Garibaldi for helpful feedback, and Philippe Gille for very good  discussions, and for generously allowing the use of Lemma~\ref{lem:gille}. This research was supported by FWO project number G004018N.
\let\addcontentsline\oldaddcontentsline% Restore \addcontentsline

\part{The algebras and the algebraic groups}

\section{Preliminaries}

We work over an arbitrary field $k$ whose characteristic is not 2 or 3, and we write $k^s$ and ${k}^a$ for a separable and algebraic closure of $k$, respectively.

%The assumptions on the characteristic really are necessary for almost all of our results because of the lack of foundational material on structurable algebras in very low characteristics, and because statements in \ref{sec.adj.simple} and Theorem~\ref{thm.malc7} rely on them.% The letter $R$ usually stands for a commutative $k$-algebra.

\subsection{Algebraic groups}

We take the functorial view of (affine) algebraic groups, in the style of \cite{milne} and \cite{knus1998book}.
The notation $G^\circ$ refers to the connected component of the identity, $\pi_0(G) = G/G^\circ$ is called the group of components, and $G^{\rm der}$ refers to the derived subgroup of~$G$. 
The notation $C_G(H)$ stands for the (scheme-theoretic)  centraliser of a subgroup $H$ of $G$, and $Z(G)$ stands for the centre of $G$. If $\lambda$ is a homomorphism taking values in $G$, we write $C_G(\lambda)$ for the centraliser of the image of $\lambda$ in~$G$.

 We make use of several well-known algebraic groups and their standard notations, like the multiplicative and additive groups $\mathbf{G}_m$ and $\mathbf{G}_a$, the automorphism group scheme $\mathbf{Aut}(A)$ of a $k$-algebra $A$, the orthogonal group  $\mathbf{O}(V,q)$ of a quadratic space $(V,q)$ over~$k$, the special orthogonal group $\mathbf{O}^+(V,q)$, the group of similitudes $\mathbf{GO}(V,q)$,  and the group of projective similitudes $\mathbf{PGO}(V,q)$. The $k$-group scheme of $n$-th roots of unity is $\bm{\mu}_n$, or $\bm{\mu}_{n,k}$ if we want to be precise about the field in question.
 
 We count on background knowledge of root systems, associative central simple algebras with involution, and their relation to simple algebraic groups of classical type (for which excellent references are \cite[\S\S23--28]{knus1998book} or \cite[\S24]{milne}).

 \subsection{Homomorphisms and exact sequences}  \label{sec.exact} 
Short exact sequences of algebraic groups are  defined in \cite[Definition 1.61]{milne}.
 %; they are made of a quotient map and its kernel.
  We often construct short exact sequences the following way. Suppose $\varphi: G \to H$ is a morphism of algebraic groups, $G$ is smooth, and $C$ is a subgroup of $H$ such that $\varphi_{k^a}( G(k^a) )\subset C(k^a)$. Then $\varphi$ factors through the inclusion $C \subset H$ (so we can and sometimes do redesignate $\varphi$ as a morphism $G \to C$). If $N$ is another algebraic group with a morphism $\nu: N \to G$, the sequence of algebraic groups
 \[
 \begin{tikzcd}
	1 \arrow[r] & N 	\arrow[r,"\nu"] & G \arrow[r,"\varphi"] & C \arrow[r] & 1
 \end{tikzcd}
 \]
 is {exact} if and only if $C$ is smooth, $\varphi_{k^a}( G(k^a) )=C(k^a)$, and
 \[
 \begin{tikzcd}
	1 \arrow[r] & N(R) 	\arrow[r,"\nu_R"] & G(R) \arrow[r,"\varphi_R"] & C(R)
 \end{tikzcd}
 \]
is an exact sequence for all commutative $k$-algebras $R$. In this case, we say that $\varphi: G \to C$ is surjective.

\subsection{Galois cohomology functors}

We write $\Gamma_k = \mathcal{G}al(k^s/k)$ for the absolute Galois group of~$k$.
If $M$ is a group on which $\Gamma_k$ acts continuously by automorphisms (a $\Gamma_k$-group, for short), the nonabelian cohomology sets $H^0(k,M) = M^{\Gamma_k}$ and $H^1(k,M) = H^1(\Gamma_k, M)$ are defined as in \cite[I.\S5.1]{serre1997galois}. If $M$ is abelian (i.e.\ a $\Gamma_k$-module)  the abelian cohomology groups $H^n(k,M) = H^n(\Gamma_k, M)$ are defined for all $n \ge 0$. The cohomology is functorial in the second argument in the sense that any morphism of $\Gamma_k$-groups $M\to N$ induces a morphism $H^n(k,M)\to H^n(k,N)$.

If $M = G(k^s)$ for a smooth algebraic group $G$ over $k$, we write $H^n(k,G) = H^n(\Gamma_k,G(k^s))$. If $L_1/k \to L_2/k$ is any morphism of field extensions, there is a natural way to define $H^n(L_i,G)$  and  a restriction map 
$\res_{L_2/L_1}: H^n(L_1,G) \to H^n(L_2,G)$, which makes $H^n(*,G)$ into a functor $\mathsf{Fields}_{/k} \to \mathsf{Sets_*}$ (i.e., from the category of field extensions of $k$ to the category of pointed sets) \cite[II.\S1.1]{serre1997galois}. The set $H^1(L,G)$ is naturally isomorphic to the set of $L$-isomorphism classes of principal homogeneous spaces of $G$ (also known as $G$-torsors); see \cite[Proposition~28.14]{knus1998book}, \cite[Definition~2.66, Proposition~3.50]{milne},  or  \cite[I.\S5.2]{serre1997galois}.

Often, $G$-torsors can be viewed as $k$-forms of some algebraic structure. Say, if $G= \mathbf{Aut}(A)$ is the automorphism group scheme of an algebra $A$ over $k$, the cohomology set $H^1(k, G)$ is in bijection with the set of isomorphism classes of algebras $A'$ over $k$ such that $A'\otimes_k k^s \simeq A \otimes_k k^s$.  
Similarly,  if $(A,-)$ is an algebra with involution, $H^1(\mathbf{Aut}(A,-))$ classifies the isomorphism classes of algebras with involution that become isomorphic to $(A,-)$ over~$k^s$.

%Quadratic \'etale extension is a synonym for 2-dimensional composition algebra.

\subsection{Cohomology and short exact sequences} \label{sec:cohomology-sequences}
An exact sequence of smooth algebraic groups
\begin{equation*}
\begin{tikzcd}
	1 \ar[r] & A \ar[r,"f"] & B \ar[r,"g"] & C \ar[r] & 1
\end{tikzcd}
\end{equation*}
gives rise to a first connecting map $\delta$ and an exact sequence of pointed sets \cite[I.\S5 Proposition~38]{serre1997galois}
\begin{equation} \label{eq:long exact}
\begin{tikzcd}[sep=small]
1 \ar[r] & A(k) \ar[r] & B(k) \ar[r] & C(k)  \ar[r,"\delta"] &	H^1(k,A) \ar[r,"f_*"] & H^1(k,B) \ar[r,"g_*"] & H^1(k,C).
\end{tikzcd}
\end{equation}
If $A$ is abelian and $f(A)\subset Z(B)$, there is a second connecting map $\Delta: H^1(k,C)\to H^2(k,A)$ and appending this to \eqref{eq:long exact} gives a slightly longer exact sequence \cite[I.\S5.6 Proposition~42]{serre1997galois}.

\subsection{Cohomological invariants in \emph{mod 2} Galois cohomology} \label{sec:cohomological-invariants} The constant Galois module $\ZZ/2\ZZ$ (alternatively written as $S_2$ or as $\bm{\mu}_2$ since $\Char(k) \ne 2$) is important to us because the functor $H: \mathsf{Fields}_{/k} \to \mathsf{Ab}$,
\[
H(L) = \bigoplus_{i \ge 0}H^i(L, \ZZ/2\ZZ)
\] will be the target of all our cohomological invariants. There is a canonical isomorphism $k^\times/k^{\times 2} \simeq H^1(k,\ZZ/2\ZZ)$ where a square class $dk^{\times 2}$ corresponds to the symbol $ (d)$. 
  Since $\ZZ/2\ZZ \otimes \ZZ/2\ZZ \simeq \ZZ/2\ZZ$ and $H(k)$ is 2-torsion, the cup product (which we denote by $\cdot$) gives a commutative $\ZZ$-graded ring structure to $H(k)$ with nice properties \cite[Proposition III.9.15]{berhuy}.   Consequent to the Milnor conjectures, the entire cohomology ring $H(L)$ is additively generated by symbols; i.e., elements of the form $(a_1){\cdot} \cdots {\cdot}(a_m)$. For a cohomology class $\alpha \in H^i(L,\ZZ/2\ZZ)$, the symbol length of $\alpha$ is the least $\ell \ge 0$ such that $\alpha$ is a sum of $\ell$ symbols.

\begin{definition*} Let $F: \mathsf{Fields}_{/k} \to \mathsf{Sets}$ be a functor. A \emph{mod $2$ cohomological invariant} of $F$ is a natural transformation $F \to H$. The set of all mod 2 cohomological invariants of $F$ is denoted by $\Inv(F)$. We take some liberties with the notation: if $G$ is an algebraic group we write $\Inv(G) = \Inv(F)$ where $F = H^1(*,G)$.
\end{definition*}

The set $\Inv(F)$ is a commutative unital $\ZZ$-graded $H(k)$-algebra:
	$
	\Inv(F) = \bigoplus_{i \ge 0} \Inv^i(F)
	$
	where $\Inv^i(F)$ is the submodule of invariants taking values in $H^i(*,\ZZ/2\ZZ)$. Note that $\Inv(*)$ is a contravariant functor $\mathsf{Sets}^{\mathsf{Fields}_{/k}} \to \mathsf{Commutative~Rings}$, so  morphisms of functors $j:F_1 \to F_2$ come with homomorphisms $j^*:\Inv(F_2) \to \Inv(F_1)$, denoted $j^*(y) = y \circ j$. In particular, a homomorphism of algebraic groups $G_1 \to G_2$ comes with a homomorphism $\Inv(G_2) \to \Inv(G_1)$.
An invariant $x\in \Inv(G)$ is called \emph{normalised} if it takes the value 0 at the trivial torsor in $H^1(k,G)$, and it is called \emph{nontrivial} if $x \ne 1$ and for all field extensions $L/k$ there exists an extension $L'/L$ and some $\alpha \in X(L')$ such that $x(\alpha) \ne 0$.

\subsection{Quadratic forms and the Witt ring} \label{sec:quadratic-forms}
We denote by $\mathbb{H} = \langle 1, -1 \rangle$ the hyperbolic quadratic form. For $n \in \mathbb{N}$, we write $nq = q \perp \dots \perp q$ ($n$ times). For $n$-Pfister forms, we use the notation $\llangle c_1, \dots, c_n \rrangle = \bigotimes_{i = 1}^n\langle 1, -c_i \rangle$. If $\phi$ is a Pfister form, we write $\phi'$ for its pure part, i.e.\ the unique quadratic form such that $\phi \simeq \langle 1 \rangle \perp \phi'$. Two quadratic forms are called similar if one is isometric to a scalar multiple of the other.

The fundamental ideal in the Witt ring $W(k)$ is the ideal $I(k)$ consisting of the Witt classes of all even-dimensional quadratic forms. We write $I^n(k) = I(k)^n$; this ideal is generated as a subgroup of $W(k)$ by the classes of $n$-Pfister forms. 
 The ideal $I^2(k)$ is the set of classes of forms with trivial discriminant. 
According to Merkurjev's Theorem of 1981 (see \cite[\S V.3]{lam}), $I^3(k)$ is the set of  classes of forms whose Clifford algebra is split. We define $I^n_j(k)$ to be the set of isometry classes of $j$-dimensional forms whose Witt class is in the ideal $I^n(k)\subset W(k)$, and we define $PI^n_j(k)$ to be the  set $I^n_j(k)$ modulo the equivalence relation of similitude.

We write $\widehat{W}(k)$ for the Grothendieck--Witt ring of $k$, which comes equipped with a natural homomorphism $\pi: \widehat{W}(k) \to W(k)$ whose kernel is generated by~$\mathbb{H}$.

\subsection{Additive transfers} \label{sec:transfers-and-squares} Let $E/k$ be a quadratic \'etale extension, and let $(V,q)$ be an $n$-dimensional quadratic space over $E$. Let $s: E \to k$ be a linear functional. The \emph{additive  transfer}, or Scharlau transfer, of $(V,q)$ along $s$ is the $2n$-dimensional $k$-quadratic space $(V,s_*(q))$ where $s_*(q)(v) = s(q(v))$ for all $v \in V$. We write $T_{E/k}$ for the operation $(\tr_{E/k})_*$.
%No generality is lost if we focus only on $T_{E/k}$, for if $s:E\to k$ is any linear functional, there is an $e \in E$ such that $s(x) = \tr_{E/k}(ex)$ for all $x \in E$ \cite[proof of Corollary~20.7]{elman2008algebraic}, in which case $s_*(q) = T_{E/k}(\langle e \rangle q)$.

In the split case where $E = k\times k$, maintaining the notation from before, $V$ is a direct sum of $n$-dimensional $k$-vector spaces $V = V_1 \oplus V_2$ with the $E$-module structure $(e_1, e_2)\cdot (v_1, v_2) = (e_1 v_1, e_2 v_2)$, and there are quadratic spaces $(V_i, q_i)$ over $k$ such that $q: V \to k$ is the function $q((v_1, v_2)) = (q_1(v_1), q_2(v_2))$. In this case, $T_{E/k}(q) \simeq q_1 \perp q_2$.

%\begin{theorem} \cite[Exact Triangle Theorem 3.5]{lam} \label{thm:exact-triangle} Let $E = k (\sqrt{d})$ be a quadratic field extension, let $r^*: W(k) \to W(E)$ be the natural extension-of-scalars map, let $s_*: W(E) \to W(k)$ be the additive transfer corresponding to the linear functional $s:E \to k$ where $s(1) = 0$ and $s(\sqrt{d}) = 1$, and let $m_{\llangle d \rrangle}: W(k) \to W(k)$ be multiplication by $\llangle d \rrangle$. The following triangle is exact:
%\begin{equation*} \label{diag:exact-triangle}
%\begin{tikzcd}[column sep=small]
%	W(k) \ar[rr, "r^*"] & & W(E)\ar[ld,"s_*"]\\
%	& W(k) \ar[lu,"m_{\llangle d \rrangle}"]
%\end{tikzcd}
%\end{equation*}
%\end{theorem}

 \subsection{Algebras} \label{sec.algebras}
 
 All algebras in this paper are finite-dimensional and not necessarily associative.  Scalar extension of a $k$-algebra $A$ is denoted by $A_R = A \otimes_k R$. (The subscript notation is also used for scalar extension of quadratic forms, algebraic groups, etc.)  
 
 We write commutators and associators as
 $[x,y] = xy - yx$ and $[x,y,z] = (xy)z-x(yz)$. We write $\Nuc(A) = \{a \in A \mid [a,A,A]=[A,a,A]=[A,A,a] = 0\}$ for the nucleus and $Z(A) = \{a \in \Nuc(A) \mid [a,A] = 0\}$ for the centre of $A$. If $n \in \Nuc(A)$, the inner automorphism implemented by $n$ is denoted by $\Int(n) \in \Aut(A)$, $\Int(n)(x) = nxn^{-1}$. The centroid of $A$ is the centre of the subalgebra of $\End(A)$ generated by left- and right-multiplications by elements of $A$.
  We say that $A$ is \emph{central} if its centroid is $k\id$, and \emph{simple} if $A^2 \ne 0$ and $A$ has no two-sided ideals besides $\{0\}$ and $A$.
  
  Tensor products of algebras are usually taken over $k$ unless specified otherwise or clear from context. The internal tensor product of subalgebras is characterised as follows: for subalgebras $B, C \subset A$, $A = B \otimes C$ if and only if $A = BC$, $[B,C]=0$, and $\dim_k A = (\dim_k B)(\dim_k C)$; see \cite[Proposition 4.8]{jacobson2012basicII}. One can easily check that $Z(A\otimes B) = Z(A) \otimes Z(B)$ and $\Nuc(A\otimes B) = \Nuc(A)\otimes \Nuc(B)$ if $A$ and $B$ are unital algebras.

\subsection{Involutions}

An involution on a $k$-algebra is an anti-automorphism of order 2. If $(A,-)$ is an algebra with involution, since $\Char(k) \ne 2$, there is a decomposition into subspaces $A = \Skew(A, -) \oplus \operatorname{Herm}(A, -)$, where $\Skew(A, -) = \{a \in A \mid \bar a = -a\}$ and $\operatorname{Herm}(A, -) = \{a \in A \mid \bar a = a \}$. It is plain to see that $\Nuc(A)$ and $Z(A)$ are stabilised by the involution. We define $\Nuc(A,-) = \Nuc(A)\cap \operatorname{Herm}(A,-)$ and $Z(A, -) = Z(A) \cap \operatorname{Herm}(A, -)$.
 A unital $k$-algebra with involution $(A, -)$ is \emph{central} if $Z(A, -) = k1$ and \emph{simple} if the only two-sided ideals stabilised by the involution are $\{0\}$ and~$A$.
 
 There is a group scheme $\mathbf{Aut}(A, -)$ consisting of automorphisms that commute with the involution, and a Lie algebra $\Der(A, -) = \Lie(\mathbf{Aut}(A,-))$ consisting of derivations that commute with the involution.  If $(A,-)$ is a unital central simple associative algebra with involution, we write $\Sim(A,-) = \{x \in A \mid \bar x x = x \bar x \in k^\times 1 \}$ and $\Iso(A,-) = \{x \in A \mid \bar x x = x \bar x =1\}$. For the corresponding definitions of $\mathbf{Sim}(A,-)$ and $\mathbf{Iso}(A,-)$ as group schemes, we refer to \cite[23.A]{knus1998book}.

\subsection{Composition algebras}

We refer to \cite{jacobson1958composition} and \cite{springer2000exceptional} for the definition and main properties of composition algebras, and this includes the assumption that composition algebras are unital. If $C$ is a composition algebra then we write $C_0$ for the kernel of the trace map; equivalently, $C_0= \Skew(C)$ relative to the standard involution. This subspace equipped with the commutator product  is a central simple Malcev algebra denoted by $C_0^-$.

\subsection{Structurable algebras}
Let $R$ be a commutative ring in which $2, 3 \in R^\times$. Given a unital $R$-algebra with involution $(A,-)$, we define linear endomorphisms $V_{x,y} \in \End(A)$:
	\begin{align*}
	V_{x,y} z = \{x,y,z\} = (x\bar y)z + (z\bar y)x - (z \bar x) y && \text{for } x,y,z \in A.
	\end{align*}
	The algebra $(A,-)$ is called a \emph{structurable algebra} if the identity
	\begin{align*}
		[V_{x,y}, V_{z,w}] = V_{\{x,y,z\},w} - V_{z,\{y,x,w\}}
	\end{align*}
	holds in $\End_R(A)$, for all $x,y,z,w \in A$.
	We refer to \cite{allison1978class,boelaert2019moufang} for the main properties of {structurable algebras} and various facts about the key operators $L_x, R_x, T_x, U_{x,y}, D_{x,y} \in \End_R(A)$, defined as follows:
	\begin{align*}  &L_x(z) = xz, & & T_x(z) = V_{x,1}(z), && D_{x,y}(z) = \tfrac{1}{3}\big[[x,y] +[\bar x, \bar y],z\big] + [z,y,x] - [z,\bar x, \bar y], \\ \notag
	&R_x(z) = zx,	& & U_{x,y}(z) = V_{x,z}(y)
	\end{align*}
	for all $x,y,z \in A$.	 We also write $U_{x,x} = U_{x}$ for short.
% The scalar $\frac{1}{3}$ in the $D$-operators is not necessary for this paper, but it does play a role in~\cite{allison1978class}.
Given a subspace $B \subset A$, we write $V_{B,B}$ for the span of $\{V_{x,y} \mid x, y \in B\}$, $L_BL_B$ for the span of $\{L_x L_y \mid x, y \in B \}$, and similarly for $T_B$, $D_{B,B}$, and so on.  We further define the antisymmetric bilinear mapping
 \begin{align} \label{eq:psi}
 	\psi: A \times A \to S, && \psi(x,y) = x \bar y - y \bar x
 \end{align}
 which plays an important role in the definition of the TKK Lie algebra of $(A,-)$ (see \ref{sec:TKK-construction}).

\subsection{Examples of structurable algebras}
 The class of structurable algebras is vast and varied. Any alternative algebra with any involution whatsoever is a structurable algebra \cite[p.\! 411]{schafer1985structurable}. All Jordan algebras equipped with trivial involutions are structurable algebras. If $(A,\sigma)$ is any associative algebra with involution and $\mu \in Z(A,\sigma)$ is invertible, then the algebra $(\operatorname{KD}(A,\mu),*) = (A \oplus A,*)$ obtained by the \emph{classical} Cayley--Dickson construction \cite[\S2.5]{mccrimmon2006taste} is a structurable algebra if one considers it with the nonstandard involution $*: (a,a') \mapsto (\sigma(a),a')$.

Of course, simple structurable algebras are of interest for the same reason that the simple objects are interesting in any variety of algebras, especially a variety that admits a Wedderburn-type theorem like \cite[Theorem~7]{schafer1985structurable}. But simple structurable algebras are interesting also because they are linked to simple Lie algebras and simple algebraic groups.

An erroneous classification of simple structurable algebras over fields of characteristic 0 appeared in Allison's original paper from 1978 \cite[Theorem 25]{allison1978class}. This classification was later corrected and extended to fields of characteristic 0 or $p > 5$ by Smirnov~\cite[Theorem~3.8]{smirnov1990simple}.

 \subsection{Conjugate inverses and isotopes} \label{sec.conj.inverse} \label{sec.isotop}
 Suppose $x \in A$ for a structurable $R$-algebra $(A,-)$. Then $x$ is called \emph{conjugate-invertible} if there exists a \emph{conjugate inverse} $y \in A$ such that $V_{x,y} = \id$. The conjugate inverse of any element is unique if it exists, so we write it as $y = \hat x$. In fact, if $x$ is conjugate-invertible then $U_x$ is invertible and $\hat x = U_x^{-1}(x)$. It follows that if $(B,-)$ is a subalgebra of $(A,-)$ and $b \in B$, then $b$ is conjugate-invertible in $(A,-)$ if and only if it is conjugate-invertible in $(B,-)$. For more details on conjugate-invertibility, we refer to  \cite{allison1981isotopes}. We say that $(A,-)$ is a structurable division algebra if  every nonzero element is conjugate-invertible.
 
 A linear bijection $\alpha: A \to B$ between two structurable $R$-algebras is called an \emph{isotopy} if $\alpha V_{x,y} \alpha^{-1} = V_{\alpha(x), \beta(y)}$ for some linear bijection $\beta: A \to B$. The map $\beta$ is uniquely determined if it exists, so we write it as $\beta = \hat \alpha$. Two structurable algebras are called \emph{isotopic} if there exists an isotopy between them.

 \subsection{Norms of structurable algebras} \label{sec:traces-and-norms}
 In most structurable $k$-algebras $(A,-)$, there exists a homogeneous polynomial function $N_A: A \to k$ that detects invertibility in the sense that $N_A(x) \ne 0$ if and only if $x$ is conjugate-invertible. If $N_A$ is of minimal degree and is normalised so that $N_A(1) = 1$, then it is unique and called \emph{the norm} of $A$. The norm certainly exists and equals the generic norm if $(A,-)$ is Jordan or alternative, and it also exists as long as $(A,-)$ is simple or $\Char(k)=0$.  In some familiar examples, particularly when $N_A$ coincides with the generic norm, it depends only on the algebra structure of $A$ and not on its involution. However, we caution that in general $N_A$ does depend on the involution as well as the algebra structure. Further details about norms on structurable algebras can be found in \cite{allison1992norms}.

\subsection{Algebraic groups acting on a structurable algebra}

Let $(A, -)$ be a structurable algebra over $k$ with norm $N_A$.  There are several important algebraic groups acting on $A$:
\[
\mathbf{Aut}(A, -)\quad \subset\quad  \mathbf{Str}(A, - )\quad  \subset \quad \mathbf{GO}(N_A)\quad \subset \quad \mathbf{GL}(A).
\]

The \emph{structure group} $\mathbf{Str}(A, - )$ is an algebraic group that can be defined in any of the following ways (see \cite{allison1981isotopes}):
\begin{enumerate}[(i)] 
	\item The group whose $R$-points are isotopies from $(A_R,-)$ to itself.
	\item The group whose $R$-points are  involution-preserving $R$-algebra isomorphisms from $(A_R,-)$ to an isotope $(A_R^{\langle u\rangle},-^{\langle u\rangle})$ for some $u \in {A_R}^*$. (For the definition of the $u$-isotope, we refer to \cite{allison1981isotopes} or \cite[\S2.5]{boelaert2019moufang}.)
	\item The image of  the homomorphism $\pi_1: \mathbf{Aut}_{\rm gr}(K(A, -)) \to \mathbf{GL}(A)$ that sends a graded automorphism to its $+1$-component.
	\item The image of  the homomorphism $\pi_{-1}: \mathbf{Aut}_{\rm gr}(K(A, -)) \to \mathbf{GL}(A)$ that sends a graded automorphism to its $-1$-component.
\end{enumerate}

By \cite[Proposition 12.3]{allison1981isotopes}, every graded automorphism of $K(A, - )$ is uniquely determined by its restriction to the $+1$-component or the $-1$-component, so  $\pi_1$ and $\pi_{-1}$ are in fact isomorphisms onto their images. The map $\pi_{-1} \circ \pi_{1}^{-1} = \pi_{1} \circ \pi_{-1}^{-1}: \alpha \mapsto \hat \alpha$ is an order~2 automorphism of $\mathbf{Str}(A, -)$ that fixes $\mathbf{Aut}(A, - )$ pointwise; it is the same as the map $\alpha \mapsto \hat \alpha$ from \ref{sec.isotop}. We can identify $\mathbf{Aut}(A, -)$ as the subgroup of $\mathbf{Str}(A, - )$ that fixes $1 \in A$ \cite[Corollary~8.6]{allison1981isotopes}.

%The following proposition is a variant of \cite[Ch IV. Lemma 4.2]{seligman1976rational}.
%
%\begin{proposition} \label{isotopy-prop}
%	Let $(A, - )$ be a central simple structurable algebra and let $\alpha \in \GL(A)$. If $\alpha V_{A,A} \alpha^{-1} = V_{A,A}$ and there exists a nonzero associative bilinear form $b$ on $K(A, -)$ such that $b(X_0,Y_0) = b(\alpha X_0 \alpha^{-1}, \alpha Y_0 \alpha^{-1})$ for all $X_0, Y_0 \in V_{A,A}$, then $\alpha \in \Str(A, -)$.
%\end{proposition}
%
%\begin{proof}
%	Note that  $b$ is nondegenerate, since it is nonzero and $K(A, -)$ is simple. Since $\Char(k) \ge 5$, the subspaces $K_i$ and $K_j$ of $K(A, -)$ are orthogonal with respect to $b$, except when $i = -j$.  We may therefore define $\hat \alpha \in \GL(A)$ so that $b(x_{+},y_{-}) = b( \alpha(x)_+, \hat \alpha(y)_-)$ for all $x_{+} \in K_{ 1}$, $y_- \in K_{-1}$. Then for all $X_0 \in K_0 = V_{A,A}$, we have \begin{align*}
%			b\big(\alpha X_0 \alpha^{-1},[  \alpha(x)_+, \hat \alpha(y)_-]\big) &= b\big([\alpha X_0 \alpha^{-1}, \alpha(x)_+],\hat \alpha(y)_-\big) = b \big(\alpha X_0(x)_+,\hat \alpha(y)_-\big) \\
%			&= b\big(X_0(x)_+,  y_-\big) = b\big(X_0,[x_+,y_-]\big) = b\big(\alpha X_0 \alpha^{-1}, \alpha[x_+,y_-]\alpha^{-1} \big).
%		\end{align*}
%		This implies that $V_{\alpha(x), \hat \alpha(y)} =[  \alpha(x)_+, \hat \alpha(y)_-] = \alpha[x_+,y_-]\alpha^{-1} = \alpha V_{x,y} \alpha^{-1}$ for all $x, y \in A$, which is the definition of being an isotopy.
%\end{proof}
%

The group $\mathbf{GO}(N_A)$ of \emph{norm-similitudes} of $A$ is the group whose $R$-points are all $\beta \in \GL(A_R)$ such that there exists $\mu_R(\beta) \in R^\times$ with $N_A(\beta(x)) = \mu_R(\beta) N_A(x)$ for all $x \in A_R$.  It is proved in \cite[Proposition 4.7]{allison1992norms}  that $\mathbf{Str}(A,-)$ is a subgroup of $\mathbf{GO}(N_A)$. The scalar $\mu_R(\beta)$ is called the {multiplier} of $\beta$ and the map $\mu: \mathbf{GO}(N_A) \to \mathbf{G}_m$ is a homomorphism.
The kernel of $\mu$ is the {norm-preserving group} of $A$, denoted by $\mathbf{Iso}(N_A)$.

\subsection{Tensor products of two composition algebras}

Let $C(m)$ be the split composition algebra of dimension $m$ over $k^s$. We assume that $m_1, m_2 \in \{1,2,4,8\}$ but $(m_1, m_2) \ne (2,2)$ throughout the paper. The algebra $C(m_1) \otimes_{k^s} C(m_2)$ equipped with the canonical involution (obtained by tensoring the standard involutions on the two factors) is a central simple structurable algebra \cite[\S8]{allison1978class}.

Our reason for excluding $(m_1, m_2) = (2,2)$ is that $C(2)\otimes C(2) \simeq (k^s)^4$ is not a simple algebra with involution, so it will not produce simple Lie algebras through the TKK construction. However, these algebras are by no means uninteresting: the automorphism group of $C(2) \otimes C(2)$ as an algebra with involution is the dihedral group of order 8. Insights on  forms of $C(2)\otimes C(2)$ and their cohomological invariants can be found in \cite{knus2003quartic},  \cite[\S5.1]{hirsch2020decomposability}, and \cite[Exercise 3.9]{garibaldi2009cohomological}.

\begin{definition*}
	An algebra with involution $(A, -)$ is called an \emph{$(m_1, m_2)$-product algebra} if $(A, -)_{k^s} \simeq C(m_1) \otimes_{k^s} C(m_2)$ as $k^s$-algebras with involution. We say that $(A, -)$ is \emph{decomposable} if there are composition subalgebras $C_1$, $C_2 \subset A$ such that $\overline{C_i} = C_i$ and $A = C_1 \otimes_k C_2$.
\end{definition*}

%If $(A,-)$ is decomposable, the definition implies that the involution on $(A,-)$ restricts to the canonical involutions on $C_1$ and $C_2$. 

%Note that this terminology differs from  some of the references, for example \cite[pp.\! 670--671]{allison1988tensor}, because we find it more natural to make a distinction between decomposable and indecomposable algebras than between non-twisted and twisted algebras. Another subtle difference is that we have defined a decomposable product algebra as one that is the \emph{internal} tensor product of two composition subalgebras (see \ref{sec.algebras}). This makes it unnecessary to ``identify $C_1$ and $C_2$ with the subalgebras $C_1 \otimes 1$ and $1\otimes C_2$" as it was done in  \cite[p.\! 671]{allison1988tensor}.

By a theorem of Albert \cite[Theorem 16.1]{knus1998book}, every associative central simple algebra with involution of degree 4  is a tensor product of two quaternion subalgebras. These are called {biquaternion algebras}, so a $(4,4)$-product algebra is the same thing as a \emph{biquaternion algebra with orthogonal involution}. 
%If $(A,-)$ is one of these, a factorisation $A = C_1 \otimes C_2$ is not unique and it is not always possible to find a factorisation where $\overline{C_i} = C_i$. When this is possible, i.e.\! when $(A,-)$ is decomposable, then the factors $C_1, C_2$ are in fact unique (see Theorem \ref{thm.aut2} for a precise statement). 
%A $(4,4)$-algebra is usually known as a \emph{biquaternion algebra with orthogonal involution}, and every central simple algebra of degree 4 with orthogonal involution is one of these.
We sometimes call $(8,8)$-algebras \emph{bi-octonion algebras}. An $(m,1)$-product algebra is just a composition algebra with its standard involution, and an $(m,2)$-product algebra may be thought of as a composition algebra with an involution of the second kind.

An $(m_1,m_2)$-product algebra is associative if and only if $m_1, m_2 \le 4$, and it is alternative if and only if $(m_1, m_2) \ne (4,8)$ or $(8,8)$ \cite[\S1~Lemma~2]{jacobson1954kronecker}. Not only do $(4,8)$- and $(8,8)$-product algebras fail to be alternative, they also fail to be power-associative \cite[Corollary~1]{bremner2004identities}.
%Not every central simple alternative algebra with involution is an $(m_1, m_2)$-product algebra, the exceptions being associative algebras where the involution is not orthogonal or the degree is $>4$, and octonion algebras with non-standard involutions.
%The latter belong to a class of nonassociative structurable algebras constructed using hermitian forms over quaternion algebras.

\section{Automorphism groups} 

The purpose of this section is to determine the groups $\mathbf{Aut}(A,-)$ for all $(m_1, m_2)$-product algebras.

\subsection{Automorphisms and derivations of composition algebras}
\label{sec:aut-der-comp}

Let $(C, -)$ be composition algebra with its canonical involution, where $m = \dim C = 1$, $2$, $4$, or 8. Recall that $\mathbf{Aut}(C)= \mathbf{Aut}(C, -)$ \cite[p.\ 62]{jacobson1958composition} and this group is trivial if $m = 1$, the constant group~$S_2$ if $m = 2$, $\mathbf{PGL}_1(C)$ if $m = 4$ \cite[\S23]{knus1998book}, and a group of type $G_2$ if $m = 8$ \cite[Theorem 2.3.5]{springer2000exceptional}. In all cases, $\mathbf{Aut}(C)$ is smooth, and it is connected and absolutely simple except when $m = 2$. Moreover, $\Der(C) = 0$ if $m = 1$ or $2$, and $\Der(C)$ is simple and either 3- or 14-dimensional according as $m = 4$ or~$8$  \cite[Lemma 2.4.4]{springer2000exceptional}.

\subsection{Malcev algebras}
	A Malcev algebra over a field of characteristic not 2 is an anticommutative algebra $S$ that satisfies the identity
	\[
	(xy)(xz) = ((xy)z)x + ((yz)x)x + ((zx)x)y
	\]
	for all $x,y,z \in S$. Malcev algebras are a natural generalisation of Lie algebras, just like alternative algebras are a natural generalisation of associative algebras. The main results in the subject over the last half-century are neatly summarised in the editors' comments at the end of \cite{kuzmin2014structure}.
	We say that a $k$-algebra $S$ is an \emph{exceptional simple Malcev algebra} if it is a central simple  Malcev algebra that is not a Lie algebra.
%	Such algebras are always 7-dimensional (see Theorem~\ref{thm.malc7}).

\begin{theorem}[Kuzmin] \label{thm.malc7}
	Every exceptional simple Malcev algebra is $7$-dimensional and isomorphic to $C_0^-$ for some octonion algebra $C$,  unique up to isomorphism, and $\Aut(C_0^-) \simeq \Aut(C)$.
\end{theorem}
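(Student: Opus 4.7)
My plan divides the proof into three steps, with the first being the main obstacle. The assertion that every exceptional simple Malcev algebra is $7$-dimensional---and that over the algebraic closure there is a unique such algebra, namely $C(8)_0^-$ where $C(8)$ is the split octonion algebra---is the substance of Kuzmin's classical classification of simple non-Lie Malcev algebras in characteristic $\ne 2,3$. I would cite this wholesale rather than reprove it; it is the deep input that everything else rests on. In particular, an exceptional simple Malcev algebra $S$ over $k$ then satisfies $S_{k^s} \simeq C(8)_0^-$, so $S$ is a $k^s/k$-form of $C(8)_0^-$.

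The second step is an extension lemma: any isomorphism of Malcev algebras $\sigma \colon C_0^- \to C_0'^-$ between the trace-zero parts of two octonion algebras extends uniquely to a composition algebra isomorphism $\tilde\sigma \colon C \to C'$ with $\tilde\sigma(1) = 1$. The point is that for $x, y \in C_0$ one has
\[
xy = \tfrac{1}{2}[x, y] - n(x, y)\cdot 1,
\]
since $\bar x = -x$ implies $x^2 = -n(x)$ and polarising yields $xy + yx = -2n(x,y)\cdot 1$. So extending $\sigma$ by $\tilde\sigma(1) = 1$ defines an algebra homomorphism if and only if $\sigma$ intertwines the polarised norms. That in turn holds because the Malcev trace form $\kappa(x, y) = \tr(\ad_x \ad_y)$ is intrinsically defined, invariant, symmetric, and---by a direct calculation in characteristic $\ne 2, 3$---non-degenerate on the $7$-dimensional simple non-Lie Malcev algebra; by central simplicity the space of invariant symmetric bilinear forms is $1$-dimensional, so $\kappa = c\cdot n|_{C_0}$ for a nonzero constant $c$. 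Because this constant is computed after base change to $k^s$ where $C$ and $C'$ become the split octonion, it is the same for $C$ and $C'$, whence $\sigma$ preserves $n$ on the nose. Applying the extension lemma to $C = C'$ yields $\Aut(C_0^-) \simeq \Aut(C)$, and applying it to arbitrary $C, C'$ yields the uniqueness of $C$ up to $k$-isomorphism: if $C_0^- \simeq C_0'^-$ as Malcev algebras, then $C \simeq C'$ as octonion algebras.

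For the remaining existence of an octonion algebra $C/k$ realising $S$ as $S \simeq C_0^-$, I would upgrade the extension lemma to an isomorphism of group schemes $\mathbf{Aut}(C_0^-) \simeq \mathbf{Aut}(C)$ by running the same argument over every commutative $k$-algebra $R$. Galois descent then gives the bijection of pointed sets
\[
H^1(k,\mathbf{Aut}(C(8)_0^-)) \;=\; H^1(k,\mathbf{Aut}(C(8))),
\]
whose left side parameterises $k^s/k$-forms of $C(8)_0^-$ and whose right side parameterises isomorphism classes of octonion $k$-algebras. Unwinding the twist produced by the class of $S$ exhibits an octonion $C/k$ with $S \simeq C_0^-$, completing the proof.
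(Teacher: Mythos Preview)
Your proposal is correct but takes a different route from the paper. The paper's proof is a pure citation: Kuzmin's original results already establish the theorem over \emph{arbitrary} fields of characteristic $\ne 2,3$, by directly equipping $k \oplus S$ with an octonion multiplication built from the Malcev bracket and a suitably rescaled trace form. The automorphism statement is then cited from Kuzmin and from Morandi--Pumpl\"un. You instead cite Kuzmin only over an algebraically closed field, prove an extension lemma for isomorphisms $C_0^- \to C_0'^-$, upgrade it to a group-scheme isomorphism $\mathbf{Aut}(C_0^-) \simeq \mathbf{Aut}(C)$, and then invoke Galois descent via $H^1(k,G_2)$ to produce the octonion algebra $C$ over $k$.

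Two remarks. First, your Step~3 is more elaborate than needed: once you know from Step~2 that the norm on $C_0$ is a fixed scalar multiple of the intrinsic Malcev trace form, you can bypass descent entirely and construct $C$ directly as $k \oplus S$ with the product $xy = \tfrac12[x,y] - c^{-1}\kappa(x,y)\cdot 1$; this is precisely Kuzmin's construction, and verifying that it yields a composition algebra is no harder than verifying your extension lemma. Second, the step ``by central simplicity the space of invariant symmetric bilinear forms is $1$-dimensional'' deserves a word of caution: for Malcev algebras $\ad_x$ is not a derivation, so the usual Schur-type argument needs to be adapted. The cleanest fix is simply to compute $\kappa$ explicitly on $C_0^-$ and observe that it equals a specific nonzero scalar multiple of $n|_{C_0}$, which suffices for your purposes and avoids any appeal to uniqueness. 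What your approach buys is that the group-scheme isomorphism $\mathbf{Aut}(C_0^-)\simeq\mathbf{Aut}(C)$ becomes explicit, which is in the spirit of the paper's later use of automorphism group schemes; what the paper's approach (i.e.\ Kuzmin's) buys is a direct, descent-free construction.
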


\begin{proof}
Kuzmin proved that if $C$ is an octonion algebra over $k$, then $C_0^-$ is an exceptional simple Malcev algebra over $k$, and if $S$ is an exceptional simple Malcev algebra over $k$ then there is a unique octonion structure on $k \oplus S$ such that $(1,0)$ is the unit and $(k\oplus S)_0^- = S$ (see \cite[Theorems 11--13]{kuzmin1968maltsev} or \cite[Theorems 3.11 \& 3.12]{kuzmin2014structure}).  Moreover, if $C$ and $D$ are octonion algebras then every  isomorphism $C \overset{\sim}{\to} D$ restricts to an isomorphism  $C_0^- \overset{\sim}{\to} D_0^-$, and every isomorphism  $C_0^- \overset{\sim}{\to} D_0^-$ is the restriction of a unique isomorphism $C \overset{\sim}{\to} D$ (see \cite[Remark~3.5]{morandi2001tensor} or \cite[Theorem 3.12]{kuzmin2014structure}).
\end{proof}

\subsection{Some equivalent categories of algebras} Consider the following categories, for an arbitrary field $k$ of characteristic not 2 or 3:
\begin{itemize}
\item[--]
	$\mathsf{Prod}_{m_1,m_2}(k)$ is the groupoid of $(m_1, m_2)$-product algebras over $k$, where the morphisms are involution-preserving $k$-algebra isomorphisms;
 \item[--] $\mathsf{Comp}_m(k)$ is the groupoid of $m$-dimensional composition algebras over $k$, where the morphisms are $k$-algebra isomorphisms;
 \item[--] $\mathsf{Comp}_m\mathsf{\acute{E}t}_2(k)$ is the groupoid of $m$-dimensional composition algebras over quadratic \'etale extensions of $k$, where the morphisms are $k$-algebra isomorphisms. That is, the objects are $k$-algebras either of the form $C$ for an $m$-dimensional composition algebra $C$  over a quadratic field extension $E/k$, or of the form $C_1 \times C_2$ where $C_1, C_2$ are $m$-dimensional composition algebras over $k$ (we view $C_1 \times C_2$ as a composition algebra over $E = k \times k$);
 \item[--] $\mathsf{Malc}_7(k)$ is the groupoid of exceptional simple Malcev algebras over $k$, where the morphisms are $k$-algebra isomorphisms. 
\end{itemize}

Clearly $\mathsf{Prod}_{1,m}(k)$ is equivalent to $\mathsf{Comp}_m(k)$ and $\mathsf{Prod}_{m_1, m_2}(k)$ is equivalent to $\mathsf{Prod}_{m_2, m_1}(k)$. By Kuzmin's Theorem \ref{thm.malc7} and \cite[Proposition 12.37]{knus1998book}, there is an equivalence
	\begin{align*}\mathsf{Comp}_8(k) \to \mathsf{Malc}_7(k), &&  C \mapsto C_0^-.
	\end{align*}
 We proceed to describe some more equivalences between these categories.

\begin{theorem} \label{thm.equiv1}
If $m_1 > m_2$, the functor \begin{align*}\mathsf{Comp}_{m_1}(k) \times \mathsf{Comp}_{m_2}(k) \to \mathsf{Prod}_{m_1, m_2}(k), && (C_1, C_2)\mapsto (C_1 \otimes C_2, \gamma_1 \otimes \gamma_2),\end{align*} where $\gamma_i$ is the canonical involution on $C_i$, defines an equivalence of categories. In particular, every $(m_1, m_2)$-product algebra is decomposable. \end{theorem}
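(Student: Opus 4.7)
The plan is to prove the equivalence via nonabelian Galois cohomology. Since $(m_1,m_2)$-product algebras are by definition twisted $k$-forms of $C(m_1)\otimes C(m_2)$ with the canonical involution, they are classified by $H^1(k,\mathbf{Aut}(C(m_1)\otimes C(m_2),-))$. The main task is to show that this automorphism group scheme splits as a direct product
\begin{equation*}
\mathbf{Aut}(C(m_1)\otimes C(m_2),-) \;\cong\; \mathbf{Aut}(C(m_1)) \times \mathbf{Aut}(C(m_2)),
\end{equation*}
so that $H^1$ factors accordingly and yields a bijection between isomorphism classes of $(m_1,m_2)$-product algebras and isomorphism classes of pairs of composition $k$-algebras. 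The functor of the theorem provides the inverse, giving the claimed equivalence of groupoids.

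The trivial cases $m_2=1$ require no work since $\mathbf{Aut}(C(1))=1$. For the remaining cases $(4,2),(8,2),(8,4)$, I would first identify a canonical copy of $C(m_2)$ inside $C(m_1)\otimes C(m_2)$: when $m_2=2$, use the centre (noting $Z(A\otimes B)=Z(A)\otimes Z(B)$ and $Z(C(m_1))=k$ for $m_1\in\{4,8\}$, so $Z(C(m_1)\otimes C(2))=C(2)$); when $(m_1,m_2)=(8,4)$, use the nucleus (similarly $\Nuc(C(8)\otimes C(4))=C(4)$). Since the centre and nucleus are stable under any automorphism and anti-automorphism, and every composition algebra carries a unique standard involution, the involution restricts to the canonical involution on this copy of $C(m_2)$. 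Consequently any involution-preserving automorphism of $C(m_1)\otimes C(m_2)$ restricts to an automorphism of $C(m_2)$, giving a natural surjection $\mathbf{Aut}(C(m_1)\otimes C(m_2),-)\twoheadrightarrow\mathbf{Aut}(C(m_2))$ split by $\varphi_2\mapsto\mathrm{id}\otimes\varphi_2$.

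The heart of the argument is to identify the kernel of this surjection with $\mathbf{Aut}(C(m_1))$. An involution-preserving automorphism fixing $C(m_2)\subset A$ pointwise is $C(m_2)$-linear; viewing $A$ as a composition algebra of dimension $m_1$ over $C(m_2)$ (with its canonical involution, which is the unique standard involution there), such automorphisms are $C(m_2)$-algebra automorphisms of $A$. In the cases $(4,2)$ and $(8,2)$, this kernel equals $\mathbf{Aut}(C(m_1))$ by the classical theory of involutions of the second kind on quaternion algebras and its octonion analogue, via Galois descent along the quadratic \'etale extension $E=C(2)$. The case $(8,4)$ requires a direct argument: one must verify that any $C(4)$-algebra automorphism of $C(8)\otimes C(4)$ that is simultaneously $k$-linear and involution-preserving is of the form $\varphi_1\otimes\mathrm{id}$ for some $\varphi_1\in\Aut(C(8))$. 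Since the two factors centralise each other intrinsically through the tensor structure, the conjugation action of $\mathbf{Aut}(C(m_2))$ on the kernel is trivial, so the extension splits as an honest direct product.

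With the automorphism group computed, applying $H^1(k,-)$ and the exact sequence machinery of \ref{sec:cohomology-sequences} delivers the bijection on isomorphism classes, which simultaneously gives essential surjectivity and full faithfulness of the functor. The main obstacle will be the $(8,4)$ case: establishing the kernel identification without recourse to associative Skolem--Noether theory requires direct computation with the multiplication on the nonassociative, non-power-associative algebra $C(8)\otimes C(4)$, presumably by constructing an octonion complement of $\Nuc(A)$ in $A$ stable under the involution and then tracking how a $C(4)$-linear involution-preserving automorphism must act on this complement.
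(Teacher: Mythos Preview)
Your approach is correct in outline but takes a genuinely different route from the paper's. The paper constructs an explicit inverse functor: for $(m_1,2)$ it sends $(A,-)\mapsto (C_A, Z(A))$ where $C_A$ is the unique involution-stable $m_1$-dimensional composition subalgebra with $A=C_A\otimes Z(A)$ (citing \cite[Proposition~2.22]{knus1998book} for $m_1=4$ and \cite[Theorem~3.2]{pumplun2003involutions} for $m_1=8$, and noting $C_A$ is generated by $[\Skew(A,-),\Skew(A,-)]$); for $(8,4)$ it sends $(A,-)\mapsto (C_A(\Nuc(A)),\Nuc(A))$, using the centraliser of the nucleus. Since isomorphisms preserve centre, nucleus, and centraliser, this gives a two-sided inverse on the nose. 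The paper then derives the automorphism-group isomorphism (Theorem~\ref{thm.aut1}) as a \emph{consequence} of this equivalence, whereas you propose the reverse order.

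Your approach works but needs two clarifications. First, in the $(m_1,2)$ cases your identification of the kernel is too quick: $C(2)$-linear automorphisms of $C(m_1)_{C(2)}$ form a larger group than $\mathbf{Aut}(C(m_1))$ (over $k^s$ one sees $\Aut(C(m_1))^2$). It is precisely the involution $\gamma_1\otimes\gamma_2$, which is of the \emph{second kind} over $C(2)$, that cuts this down to the diagonal copy of $\mathbf{Aut}(C(m_1))$; you allude to this via ``Galois descent along $E=C(2)$'' but should make explicit that commuting with the $E/k$-semilinear involution forces descent. Second, the bijection on $H^1$ gives essential surjectivity but not full faithfulness by itself; you need that the group-scheme isomorphism, once established, transports under twisting to every $k$-form, giving bijections on automorphism groups of all objects. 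Both points are routine to fill in, and your route has the advantage of packaging Theorem~\ref{thm.equiv1} and Theorem~\ref{thm.aut1} into a single argument.
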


\begin{proof}
For $m_2 = 1$, the statement is trivial. Let $(m_1, m_2) = (4,2)$ or $(8,2)$, and let $(A,-)$ be in $\mathsf{Prod}_{m_1,2}(k)$. It is clear that $Z(A) = Z(A_{k^s})^{\Gamma_k}$ because $Z(A_{k^s})$ is stabilised by the action of $\Gamma_k$ on $A_{k^s}$. Since $Z(A_{k^s}) = k^s \times k^s$, Galois descent \cite[Lemma III.8.21]{berhuy} implies that $Z(A_{k^s})^{\Gamma_k}$ is a quadratic \'etale extension of $k$. Of course, $\overline{Z(A)}=Z(A)$. Furthermore, there is a unique involution-invariant $m_1$-dimensional composition $k$-subalgebra $C_{A}$ of $A$ such that $A = C_A \otimes Z(A)$  (see \cite[Proposition 2.22]{knus1998book} for $m_1 = 4$ and \cite[Theorem 3.2]{pumplun2003involutions} for $m_1 = 8$). In fact, $C_A$ is the $k$-subalgebra generated by $[\Skew(A,-),\Skew(A,-)]$. Every isomorphism $(A,-) \overset{\sim}{\to} (A',-)$ in $\mathsf{Prod}_{m_1, 2}(k)$ clearly restricts to isomorphisms $Z(A) \overset{\sim}{\to} Z(A')$ and $C_A \overset{\sim}{\to} C_{A'}$ and is uniquely determined by these restrictions, so the functor $(A,-) \mapsto (C_A, Z(A))$ is inverse to the one displayed above.

Let $(m_1, m_2) = (8,4)$, and let $(A,-)$ be in $\mathsf{Prod}_{8,4}(k)$. By definition, $A_{k^s}\simeq C(8)\otimes C(4)$ as $k^s$-algebras. It is clear that $\Nuc\big(C(8)\otimes C(4)\big) = C(4)$, and the centraliser of $C(4)$ in $C(4) \otimes C(8)$ is $C(8)$. Since both $\Nuc(A_{k^s})$ and its centraliser are stabilised by $\Gamma_k$, $\Nuc(A)$ is a quaternion algebra and its centraliser $C_A(\Nuc(A))$ is an octonion algebra  such that $A = C_A(\Nuc(A))\otimes \Nuc(A)$. The involution on $A$ stabilises both factors because this is true after extending to $k^s$. Any isomorphism $(A,-)\overset{\sim}{\to} (A',-)$ in $\mathsf{Prod}_{8,4}(k)$ is, in particular, an isomorphism of algebras $A \overset{\sim}{\to} A'$ so it restricts to unique isomorphisms on the nuclei and their centralisers. The functor $(A,-) \mapsto \big(C_A(\Nuc(A)),\Nuc(A)\big)$ is inverse to the one displayed in the statement of the theorem. \end{proof}

%An interesting implication of the  proof is that every $k$-algebra isomorphism between $(4,8)$-product algebras preserves the involution. The same is true for $(8,8)$-algebras; see Theorem~\ref{thm.aut2}. In fact, for all $(m_1,m_2)\ne (4,4)$ we could have omitted ``involution-preserving'' when we defined morphisms in $\mathsf{Prod}_{m_1,m_2}(k)$ and we would have defined the same category.

\subsection{Corestriction of composition algebras} \label{sec:corestriction}

Now let $m = 4$ or $8$. Not all $(m,m)$-product algebras are decomposable, so we need to introduce a construction called the corestriction. This construction can be found 
 in \cite[(3.12)]{knus1998book} or \cite[\S2]{allison1988tensor}.  Let $C$ be an $m$-dimensional composition algebra over a quadratic \'etale extension $E/k$, with its canonical $E$-linear involution $\tau$. Let $\iota$ be the unique nontrivial $k$-automorphism of $E$. We define a set of symbols $^\iota C = \{^\iota x \mid x \in C\}$ and give it the structure of an $E$-algebra with involution, as follows:
 \begin{align*}
 	^\iota x + {^\iota y} = {^\iota(x+y)}, && ^\iota x ^\iota y = {^\iota (xy)}, && ^\iota(ex) = \iota(e)^\iota(x), && {^\iota \tau}({^\iota x}) = {^\iota \tau(x)}
 \end{align*}
 	for $x, y \in C$ and $e \in E$.  If $n: C \to E$ is the canonical norm of $C$ then $\iota.n: {^\iota C} \to E$, defined by $\iota. n({^\iota x}) = \iota(n(x))$ for all $x \in C$, is the canonical norm of $^\iota C$.
 	% $\llangle   \iota(a),\iota(b), \iota(c)  \rrangle$ is the norm form of $^\iota M$, and similarly for the quaternion case and 2-Pfister forms.

 	Now, $(^\iota C \otimes_E C,{^\iota \tau} \otimes \tau)$ is an $E$-algebra with involution.
 	The map $s: {^\iota C \otimes_E C} \to {^\iota C \otimes_E C}$, $s(^\iota x \otimes y) = {^\iota y} \otimes x$, is an $E/k$-semilinear automorphism. The set of points in $^\iota C \otimes_E C$ fixed by $s$ is a $k$-algebra, which we denote by $\cor_{E/k}(C)$. We give $\cor_{E/k}(C)$ the canonical involution, namely the restriction of $^\iota\tau \otimes \tau$.

 	If $E = k \times k$, the nontrivial $k$-automorphism is $\iota: (x,y)\mapsto (y,x)$. In this case $C = C_1 \times C_2$ for some composition algebras $C_1, C_2$ over $k$. Building $\cor_{E/k}(C)$ from the definition leads to some heavy notation, but nevertheless there are copies of $C_1$ and $C_2$ inside $\cor_{E/k}(C)$ such that $\overline{C_i} = C_i$ and $\cor_{E/k}(C) = C_1 \otimes C_2$.

 \subsection{Malcev structure on the skew subspace}  If $(A,-)$ is any structurable algebra, the commutator equips $S = \Skew(A,-)$ with the structure of a Malcev algebra \cite[Proposition~18]{allison1978class}.	If $(A,-) = C_1 \otimes C_2$ is a decomposable $(m_1, m_2)$-product algebra then it is clear that
\begin{align} \label{eq.S}
\Skew(A,-) = (C_1)_0 \oplus (C_2)_0
\end{align}
and the subspaces $(C_i)^-_0$ are ideals in the Malcev algebra $\Skew(A,-)^-$. On the other hand, if $(A,-) = \cor_{E/k}(C)$ where $E$ is a field, then $\Skew(A,-)^- = \{{^\iota s} \otimes 1 + 1 \otimes s \mid s \in C_0\}$ is a simple (but not central simple) Malcev algebra, because \begin{align} \label{eq.S2}
 	\Skew(A,-)^- \overset{\sim}{\longrightarrow} C_0^-, && {^\iota s} \otimes 1 + 1 \otimes s \longmapsto s
 	\end{align}
 	is an isomorphism.

 \begin{theorem} \label{thm.equiv2}
	If $m = 4$ or $8$, the functor
	\begin{align*}
	F_k: \mathsf{Comp}_m\mathsf{\acute{E}t}_2(k) \to \mathsf{Prod}_{m,m}(k), && C \mapsto \cor_{Z(C)/k}(C)
	\end{align*}
	defines an equivalence of categories. In particular, if  $C$ in $\mathsf{Comp}_m\mathsf{\acute{E}t}_2(k)$ and $(A, -)$ in $\mathsf{Prod}_{m,m}(k)$ correspond to each other under this equivalence, then:
\begin{enumerate}[\rm (i)]
	\item $\Aut_k(C) \simeq \Aut_k(A,-)$.
	\item The centre of $C$ is isomorphic to the centroid of the Malcev algebra $\Skew(A,-)^-$.
	\item $(A,-)$ is decomposable as $(A,-) \simeq C_1\otimes C_2$ if and only if $C\simeq C_1 \times C_2$. 
\end{enumerate}
\end{theorem}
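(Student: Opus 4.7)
The plan is to verify that $F_k$ is a well-defined functor, construct a quasi-inverse $G_k$ using the Malcev structure on skew elements, and then deduce the three consequences. First I would check well-definedness by splitting into cases. When $Z(C) = k\times k$, so $C = C_1\times C_2$, the description of the corestriction in \S\ref{sec:corestriction} gives $F_k(C) = C_1\otimes_k C_2$, which manifestly lies in $\mathsf{Prod}_{m,m}(k)$. When $Z(C)$ is a quadratic field extension of $k$, it becomes $k^s\times k^s$ after base change, so $F_k(C)\otimes_k k^s$ falls into the previous case over $k^s$; thus $F_k(C)$ is an $(m,m)$-product algebra. Functoriality on morphisms is automatic from the universal property of the fixed-point subalgebra.

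Next I would define $G_k$ by extracting a composition algebra from the Malcev algebra $\Skew(A,-)^-$. Let $E$ be the centroid of $\Skew(A,-)^-$. Over $k^s$, equation \eqref{eq.S} shows that $\Skew(A_{k^s},-)^-$ splits as a direct sum of two central simple Malcev ideals, so its centroid is $k^s\times k^s$; by Galois descent $E$ is a quadratic étale $k$-algebra, and $\Skew(A,-)^-$ is a central simple Malcev algebra over $E$. When $m=8$, it is a $7$-dimensional exceptional simple Malcev algebra over $E$, so Kuzmin's Theorem~\ref{thm.malc7} applied over $E$ yields a unique octonion $E$-algebra $C$ with $C_0^-\simeq\Skew(A,-)^-$. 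When $m=4$, it is a $3$-dimensional central simple Lie algebra over $E$, and the classical correspondence between quaternion algebras and such Lie algebras (implemented by $Q\mapsto Q_0^-$, with $\Aut_k(Q)\simeq\Aut_k(Q_0^-)$) furnishes a unique quaternion $E$-algebra $C$ with $C_0^-\simeq\Skew(A,-)^-$. Set $G_k(A,-):=C\in\mathsf{Comp}_m\mathsf{\acute{E}t}_2(k)$. The natural isomorphism $G_k\circ F_k\simeq\id$ is immediate from \eqref{eq.S} and \eqref{eq.S2}, which identify $\Skew(F_k(C),-)^-$ with $C_0^-$ as $E$-Malcev algebras.

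The main obstacle is establishing the reverse natural isomorphism $F_k\circ G_k\simeq\id$; equivalently, showing that an $(m,m)$-product algebra is reconstructible, up to canonical isomorphism, from the Malcev algebra of its skew elements. I would attack this by first producing a candidate isomorphism $F_k(G_k(A,-))\to(A,-)$ extending the identification of skew parts coming from $G_k F_k\simeq\id$, and then verifying that it is an isomorphism by descent to $k^s$, where both sides become the split decomposable algebra $C(m)\otimes_{k^s}C(m)$ and the comparison reduces to a concrete check. An equivalent route is to observe that both categories are $k^s/k$-twisted forms of the same object with the same automorphism scheme $\mathbf{Aut}(C(m))^2\rtimes S_2$, so the map $F_k$ between the corresponding $H^1$-sets is forced to be a bijection by the twisting formalism.

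The three consequences then follow without difficulty. Property (i) is encoded in the full faithfulness of $F_k$, which on automorphism groups is exactly Kuzmin's identity $\Aut_k(C)\simeq\Aut_k(C_0^-)$ (or its quaternionic analogue). Property (ii) is built into the definition of $G_k$, since $E$ is the centroid of $\Skew(A,-)^-$ and $E = Z(C)$. Property (iii) is the split-centroid formula $F_k(C_1\times C_2) = C_1\otimes_k C_2$ together with the observation that $Z(C)\simeq k\times k$ as a $k$-algebra if and only if $C\simeq C_1\times C_2$ for $k$-composition algebras $C_1, C_2$.
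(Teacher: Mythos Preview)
Your overall strategy matches the paper's: recover a composition $E$-algebra from the Malcev (or Lie, when $m=4$) algebra $\Skew(A,-)^-$ via Kuzmin's theorem, and show this inverts $F_k$. The paper likewise reduces everything to the Malcev structure on the skew part and treats the split-centroid and field-centroid cases separately.

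The gap is precisely where you flag ``the main obstacle''. Both of your proposed resolutions quietly presuppose the paper's key concrete step. To ``produce a candidate isomorphism $F_k(G_k(A,-))\to(A,-)$ extending the identification of skew parts'' you must explain why knowing the Malcev algebra $\Skew(A,-)^-$ determines the full multiplication on $A$; a linear isomorphism on skew parts does not automatically extend to an algebra map. The paper handles this by showing, in the decomposable case, that the simple Malcev ideals $S_1,S_2$ satisfy $[S_1,S_2]=0$, whence $k1\oplus S_1$ and $k1\oplus S_2$ are centralising composition subalgebras with $A=(k1\oplus S_1)\otimes(k1\oplus S_2)$; the field-centroid case is then obtained by extending to $E$, applying this decomposition, and identifying the Galois-fixed points with $\cor_{E/k}$. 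Your alternative ``twisting formalism'' route needs the computation $\mathbf{Aut}(C(m)\otimes C(m),-)\simeq\mathbf{Aut}(C(m))^2\rtimes S_2$, but in the paper that is Theorem~\ref{thm.aut2}, whose proof \emph{uses} Theorem~\ref{thm.equiv2}; to avoid circularity you would have to establish this automorphism computation directly, and doing so amounts again to the subalgebra decomposition above. So your outline is correct, but you should replace the appeal to descent/twisting with the explicit argument that $k1\oplus S_i$ are composition subalgebras of $A$ (after base change to the centroid) whose tensor product is $A$.
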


\begin{proof}
Most of the proof concerns the main statement about the equivalence of categories. It is clear that (i) and (iii) follow from the main statement, and (ii) already follows from~\eqref{eq.S2}.

For $m = 4$, the theorem is proved completely in \cite[Theorem 15.7]{knus1998book}, so we focus on $m = 8$. Owing to the conceptual and notational differences between the decomposable and non-decomposable cases, it is convenient to follow the strategy of \cite[Proposition 12.37]{knus1998book}, instead of formally writing down an inverse to~$F_k$. This involves showing that $F_k$ induces a bijection between the isomorphism classes of objects in  $\mathsf{Comp}_m\mathsf{\acute{E}t}_2(k)$ and in $\mathsf{Prod}_{m,m}(k)$, and a bijection between the automorphism groups of $C$ and $F_k(C)$ for all $C$ in $\mathsf{Comp}_m\mathsf{\acute{E}t}_2(k)$.

Let $(A,-)$ be an $(8,8)$-product algebra, and let $S = \Skew(A,-)$. If $(A,-) = C_1 \otimes C_2$ is decomposable, then $S^-$ is a product of two exceptional simple Malcev algebras over $k$ and the centroid of $S^-$ is $k \times k$; see (\ref{eq.S}). We claim the converse: if the centroid of $S^-$ is isomorphic to $k \times k$, then $(A,-)$ is decomposable. Indeed, $S^-= S_1 \times S_2$ for a pair of simple Malcev subalgebras $S_i$. Since $[S_1, S_2] = 0$, the subspaces $k1 \oplus S_1$ and $k1 \oplus S_2$ centralise each other in $A$, and $A = (k1 \oplus S_1)\otimes(k1 \oplus S_2)$. There is an isomorphism $(A_{k^s},-) \overset{\sim}{\to} C(8)\otimes C(8)$ that can be arranged to map $k1 \oplus S_1$ into $C(8)\otimes 1$ and $k1 \oplus S_2$ into $1 \otimes C(8)$, so the subalgebras $k1\oplus S_i\subset A$ are octonion algebras over $k$. By Theorem \ref{thm.malc7}, there is only one octonion algebra structure on $k1 \oplus S_i$ compatible with $(k\oplus S_i)_0^- = S_i$. Therefore $(A,-) = F_k(C_1\times C_2)$, where $C_1 = k1 \oplus S_1$ and $C_2 = k1 \oplus S_2$ (and these factors are unique up to re-ordering!).

Any $k$-automorphism $\alpha$ of $(A,-)$ restricts to a $k$-automorphism $\alpha'$ of $S^- = S_1 \times S_2$. This $\alpha'$ must map $S_i$ isomorphically to $S_{\sigma(i)}$ for some permutation $\sigma in S_2$, because these are the unique maximal ideals of $S$.  Theorem \ref{thm.malc7} now implies that $\alpha'$ is the restriction of a unique $k$-automorphism $\alpha''$ of $C_1 \times C_2$. Then $F_k(\alpha'') = \alpha$. Moreover, if $\beta$ is any $k$-automorphism of $C_1 \times C_2$, then $F_k(\beta)'' = \beta$. Therefore $F_k$ puts the automorphism group of $C_1\times C_2$ in bijection with that of $(A,-)$.

If $(A,-)$ is a non-decomposable $(8,8)$-product algebra, the centroid of $S^-$ is a quadratic field extension $E/k$ and $S$ is an exceptional simple Malcev algebra over $E$. The extension $(S^-)_E$ has centroid $E \otimes_k E \simeq E \times E$ so it is a product $S_1 \times S_2$ of two simple Malcev $E$-algebras.  Following the reasoning from the previous paragraph, there is a unique decomposition $A_E = C_1  \otimes_E C_2$ where $C_i = E1 \oplus S_i$ is an octonion subalgebra of $A_E$.  The generator $\iota \in \mathcal{G}al(E/k)$ acts on $A_E$ by a bijection that is $k$-linear but not $E$-linear. Since the centroid of $S$ is~$E$, $\iota$ swaps the subspaces $S_1,S_2\subset A$, and it provides a bijection $C_1 \overset{\sim}{\to} C_2$ that is $k$-linear and multiplicative, but not $E$-linear. Composing this with the $k$-isomorphism $C_2 \to {^\iota C_2}$, $x \mapsto {^\iota x}$, we can naturally identify $C_1$ with $^\iota C_2$. Then $A_E = {^\iota C_2} \otimes_E C_2$, $\iota$ acts on $A_E$ by the map $^\iota x \otimes y \mapsto {^\iota y} \otimes x$, and consequently $A = \cor_{E/k}(C_2)$. The pair of $E$-algebras $C_1, C_2$ is uniquely determined by $(A,-)$, and of course $C_1 \simeq C_2$ as $k$-algebras, so the only octonion $E$-algebras $C$ such that $(A,-) \simeq \cor_{E/k}(C)$ are the ones that are $k$-isomorphic to~$C_2$.

If $\alpha$ is an automorphism of $(A,-) = F_k(C) = \cor_{E/k}(C)$, then by (\ref{eq.S2}) it induces a $k$-automorphism $\alpha'$ of $C_0^-$. If $\alpha'$ is $E$-linear then it extends to a unique $E$-automorphism $\alpha''$ of $C$ (directly applying Theorem \ref{thm.malc7}). If $\alpha'$ is not $E$-linear, then the map $C_0^- \to {(^\iota C)^-_0}$, $x \mapsto {^\iota}(\alpha'(x))$, is an $E$-isomorphism and so it extends to an $E$-isomorphism $C \to {^\iota C}$. Composing this with the $k$-isomorphism ${^\iota C \to C}$, ${^\iota x \mapsto x}$ yields a unique extension $\alpha''$ of $\alpha'$ that is actually a $k$-isomorphism of $C$. It is clear that $F_k(\alpha'') = \alpha$ and also that any $k$-automorphism $\beta$ of $C$ satisfies $F_k(\beta)'' = \beta$.
\end{proof}

\begin{proposition} \label{prop.deriv}
	 If $(A, -) = C_1 \otimes C_2$ is a decomposable $(m_1,m_2)$-product algebra, then \[\Der(A, -) = D_{A,A}  = D_{S,S} = D_{C_1,C_1}\times D_{C_2,C_2} \simeq \Der(C_1) \times \Der(C_2).\]
\end{proposition}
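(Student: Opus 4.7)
The plan is to verify the displayed chain by establishing each inclusion, with the nontrivial content in the identification $\Der(A,-) = \Der(C_1)\times \Der(C_2)$. For the easy inclusions, the defining identities of a structurable algebra give that each $D_{x,y}$ is a derivation of $(A,-)$, so $D_{S,S}\subseteq D_{A,A}\subseteq \Der(A,-)$; conversely, for $d_i\in\Der(C_i)$ the operator $d_1\otimes \id_{C_2} + \id_{C_1}\otimes d_2$ lies in $\Der(A,-)$, using the standard fact that every derivation of a composition algebra commutes with its canonical involution. This embeds $\Der(C_1)\times\Der(C_2)$ into $\Der(A,-)$.

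Next I would unpack the operators $D_{x,y}$ on $S = (C_1)_0\oplus(C_2)_0$, using that $C_1$ and $C_2$ mutually commute and associate inside $A$. A direct substitution into the defining formula shows $D_{x_1,y_2}=0$ for $x_1\in(C_1)_0$, $y_2\in(C_2)_0$, and $D_{x,y}= D^{C_i}_{x,y}\otimes\id_{C_j}$ for $x,y\in(C_i)_0$, where $D^{C_i}_{x,y}$ is the classical inner derivation inside the composition algebra $C_i$. Since operators of this form span $\Der(C_i)$ for every composition algebra, this identifies $D_{S,S}$ with $\Der(C_1)\times\Der(C_2)$ acting on $A$. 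The containments $D_{S,S}\subseteq D_{A,A}\subseteq \Der(A,-)$ will collapse to equalities once the reverse inclusion $\Der(A,-)\subseteq \Der(C_1)\times\Der(C_2)$ is proved.

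For this main step I would use that $S$ generates $A$ as a $k$-algebra whenever $m_1,m_2\geq 2$ (because $x^2\in k^\times \cdot 1$ for suitable $x\in (C_i)_0$, so $1$ lies in the subalgebra generated by $S$), which reduces the problem to showing that $d|_S$ preserves each summand $(C_i)_0$ for any $d\in\Der(A,-)$. I would then argue by case. If $m_i=1$ there is nothing to check; if $m_i=2$, then $C_i = Z(A)$ is étale, so $\Der_k(Z(A))=0$ and $d$ vanishes on this factor; in the remaining cases $(4,4)$, $(4,8)$, $(8,8)$, the summands $(C_i)_0^-$ are simple non-abelian Malcev ideals of $S^-$, hence perfect and centerless. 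A putative off-diagonal component $d_2:(C_1)_0\to(C_2)_0$ of a derivation would have to satisfy $d_2([x,y]) = [d_2(x),y]+[x,d_2(y)]=0$ for $x,y\in(C_1)_0$ (using $[(C_1)_0,(C_2)_0]=0$), hence vanish on $[(C_1)_0,(C_1)_0]=(C_1)_0$; so no such component exists, and $d|_{(C_i)_0}$ restricts to an element of $\Der((C_i)_0^-)$. By Theorem~\ref{thm.malc7} in the octonion case and its quaternionic analogue, this extends uniquely to a derivation of $C_i$.

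The trickiest point I anticipate is the $(8,8)$ case, where the two summands of $S^-$ can be $k$-isomorphic and one needs to rule out ``swapping-like'' off-diagonal derivations between two copies of the exceptional $7$-dimensional simple Malcev algebra. The argument above is purely bracket-theoretic and uses the Malcev identity only inside a single simple ideal, so it reduces to the familiar Lie-algebraic statement that $\Der(L_1\oplus L_2)=\Der(L_1)\oplus \Der(L_2)$ for centerless $L_i$, which goes through verbatim in the Malcev setting.
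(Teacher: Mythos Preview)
Your argument is correct and matches the paper's approach: both establish $\Der(A,-)\subseteq\Der(C_1)\times\Der(C_2)$ by showing that any $d\in\Der(A,-)$ preserves each $S_i$, using the bracket identity $d([S_i,S_i])\subseteq[S_i,S]+[S,S_i]\subseteq S_i$ together with perfectness of $S_i$ when $m_i\ge4$. Two minor remarks: your separate treatment of the case $m_i=2$ via $C_i=Z(A)$ and $\Der_k(Z(A))=0$ is in fact a bit more careful than the paper's displayed chain, which tacitly uses $S_i=[S_i,S_i]$; and the final appeal to Theorem~\ref{thm.malc7} is unnecessary, since once $d$ preserves $C_i=k1\oplus S_i$ the restriction $d|_{C_i}$ is automatically a derivation of~$C_i$.
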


\begin{proof}
	Allison \cite[p.\ 148]{allison1978class} shows that $\Der(A,-)\supset D_{A,A} = D_{S,S} = D_{C_1,C_1}\times D_{C_2,C_2}$. But $D_{C_i,C_i}$ embeds as an ideal of $\Der(C_i)$ via the restriction map $D_{x,y} \mapsto D_{x,y}|_{C_i}$, and $\Der(C_i)$ is either $0$ or simple, so $D_{C_i,C_i} \simeq \Der(C_i)$. 	Let $S = \Skew(A,-)$ and $S_i= S\cap C_i$.  The rest of the proof is based on \cite[Proposition 3.6]{morandi2001tensor}. Every $d \in \Der(A,-)$ kills $1 \in A$, satisfies $d([x,y]) = [x,d(y)]+[d(x),y]$, and maps $S$ to itself, so
	\[
		d(C_i) = d(S_i) = d([S_i,S_i]) \subset [S_i,d(S_i)]+[d(S_i),S_i] \subset [S_i, S]+[S,S_i] = S_i \subset C_i.
	\]
	The map $\Der(A,-) \to \Der(C_1)\times\Der(C_2)$, $d \mapsto (d|_{C_1}, d|_{C_2})$, is evidently an isomorphism because $C_1 + C_2$ generates $A$ and $C_1 \cap C_2 = k1$ is annihilated by $\Der(A,-)$. This proves $\Der(A,-) \subset D_{C_1,C_1}\times D_{C_2,C_2}$. 
\end{proof}

\begin{lemma} \label{lem:smoothness}
If $(A,-)$ is an $(m_1, m_2)$-product algebra over $k$, then $\mathbf{Aut}(A,-)$ is smooth.
\end{lemma}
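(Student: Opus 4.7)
The plan is to check smoothness after faithfully flat base change to $k^s$, and then use the structural equivalences of Theorems~\ref{thm.equiv1} and~\ref{thm.equiv2} to identify $\mathbf{Aut}(A,-)_{k^s}$ with a manifestly smooth group. Since smoothness of an affine group scheme is preserved and reflected along fpqc covers, it suffices to consider the split $(m_1, m_2)$-product algebra over $k^s$, namely $C(m_1)\otimes_{k^s} C(m_2)$ with its canonical involution.

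When $m_1 > m_2$, I would verify that the groupoid equivalence of Theorem~\ref{thm.equiv1} upgrades to an isomorphism of $k^s$-group schemes
\[
\mathbf{Aut}(A,-) \simeq \mathbf{Aut}(C(m_1))\times \mathbf{Aut}(C(m_2)),
\]
whose factors are smooth by \ref{sec:aut-der-comp}. When $m_1 = m_2 = m \in \{4,8\}$, the split algebra corresponds under Theorem~\ref{thm.equiv2} to the pair $C(m)\times C(m)$ over the split quadratic \'etale extension $k^s \times k^s$, and the analogous upgrade gives $\mathbf{Aut}(A,-)_{k^s} \simeq \mathbf{Aut}_{k^s}(C(m)\times C(m))$, which sits in the split exact sequence
\[
1 \longrightarrow \mathbf{Aut}(C(m))\times \mathbf{Aut}(C(m)) \longrightarrow \mathbf{Aut}_{k^s}(C(m)\times C(m)) \longrightarrow S_2 \longrightarrow 1.
\]
The quotient $S_2$ is the constant group scheme corresponding to the swap of factors, and is \'etale hence smooth; the kernel is smooth; and the extension splits, so the middle term is smooth.

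The main obstacle is verifying that the equivalences of Theorems~\ref{thm.equiv1} and~\ref{thm.equiv2} actually lift to isomorphisms of group schemes and not just of the $k$-rational groupoids: one must check that the inverse constructions used in their proofs -- centres, nuclei together with their centralisers, and the centroid of the Malcev algebra $\Skew(A,-)^-$ -- commute with base change to an arbitrary commutative $k^s$-algebra. These are defined by universal formulas so this should go through, but it is tedious to make fully precise. As a backup, one could instead prove smoothness by computing $\dim \Der(A,-)$ via Proposition~\ref{prop.deriv} (for decomposable cases) and an analogous centroid-twisted computation (for the non-decomposable $(m,m)$ case), and comparing with the dimension of the automorphism group of the split algebra over $k^a$.
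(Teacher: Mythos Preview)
Your backup plan is precisely the paper's proof, and it is the cleaner route. After extending scalars so that $(A,-)=C_1\otimes C_2$ is decomposable, the paper observes that the canonical map $\mathbf{Aut}(C_1)\times\mathbf{Aut}(C_2)\to\mathbf{Aut}(A,-)$ is injective on $R$-points for every $R$ (this is immediate, unlike surjectivity), hence realises the left-hand side as a closed subgroup. Then Proposition~\ref{prop.deriv} gives $\Der(A,-)\simeq\Der(C_1)\times\Der(C_2)$, and smoothness of $\mathbf{Aut}(C_i)$ yields the chain
\[
\dim\Lie(\mathbf{Aut}(A,-))=\dim\Der(C_1)+\dim\Der(C_2)=\dim\big(\mathbf{Aut}(C_1)\times\mathbf{Aut}(C_2)\big)\le\dim\mathbf{Aut}(A,-),
\]
forcing equality and hence smoothness. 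No surjectivity of the comparison map is needed.

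Your main approach, by contrast, tries to prove the full group-scheme isomorphism $\mathbf{Aut}(A,-)\simeq\mathbf{Aut}(C_1)\times\mathbf{Aut}(C_2)$ (or its $(m,m)$ analogue) directly, and you correctly flag the obstacle: Theorems~\ref{thm.equiv1} and~\ref{thm.equiv2} are equivalences of groupoids over fields, and promoting them to isomorphisms of functors on all $k^s$-algebras requires checking that constructions like ``centroid of the Malcev algebra $\Skew(A,-)^-$'' behave well over arbitrary base rings, which is not obvious. In fact the paper establishes those group-scheme isomorphisms only \emph{after} this lemma (Theorems~\ref{thm.aut1} and~\ref{thm.aut2}), and uses smoothness of $\mathbf{Aut}(A,-)$ as an input via the criterion in~\ref{sec.exact}. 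So pursuing your primary route would either be circular or would demand substantially more work than the dimension count.
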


\begin{proof}
We may extend scalars if necessary and assume for the proof that ${(A,-) = C_1 \otimes C_2}$ is decomposable. Consider the canonical homomorphism $\varphi: \mathbf{Aut}(C_1)\times \mathbf{Aut}(C_2) \to  \mathbf{Aut}(A,-)$. For all $k$-algebras~$R$, $\varphi_R: \Aut_R((C_1)_R) \times \Aut_R((C_2)_R) \to \Aut_R(A_R,-)$ is visibly injective so we may consider $\mathbf{Aut}(C_1)\times \mathbf{Aut}(C_2)$ as a subgroup of $\mathbf{Aut}(A, -)$. 
It follows that 
	\begin{align*}
	\dim(\Lie(\mathbf{Aut}(A,-)))  &= \dim(\Der(A,-))\\
	&= \dim(\Der(C_1)) + \dim(\Der(C_2)) & & \text{(from \ref{prop.deriv})}\\
	&= \dim(\mathbf{Aut}(C_1)) + \dim(\mathbf{Aut}(C_2)) & & \text{(from \ref{sec:aut-der-comp})} \\
	&= \dim(\mathbf{Aut}(C_1)\times \mathbf{Aut}(C_2)) \\
	&\le \dim(\mathbf{Aut}(A,-)) \le \dim(\Lie(\mathbf{Aut}(A,-))).  & & \hfill \qedhere
	\end{align*}
\end{proof}

\begin{theorem} \label{thm.aut1}
Let $(A,-) = C_1 \otimes C_2$ be an $(m_1,m_2)$-product algebra, with $m_1 \ne m_2$. The canonical homomorphism $\varphi: \mathbf{Aut}(C_1)\times \mathbf{Aut}(C_2) \to  \mathbf{Aut}(A,-)$ is an isomorphism.
\end{theorem}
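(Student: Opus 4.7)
The plan is to derive this theorem from Theorem \ref{thm.equiv1} together with Lemma \ref{lem:smoothness}, using the standard fact that a closed immersion between smooth algebraic groups that is surjective on $k^s$-points is an isomorphism. Without loss of generality I may assume $m_1 > m_2$, so that Theorem \ref{thm.equiv1} applies directly to $(A,-)=C_1\otimes C_2$.

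First, I would record that $\varphi_R$ is injective for every commutative $k$-algebra $R$; this was already established in the proof of Lemma \ref{lem:smoothness}. Indeed, any element $(\alpha,\beta)$ of $\ker(\varphi_R)$ acts trivially on each of the subalgebras $(C_i)_R\subset A_R$, which forces $\alpha=\id$ and $\beta=\id$. Consequently $\varphi$ is a monomorphism of affine group schemes of finite type, hence a closed immersion.

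Next, I would verify that $\varphi_{k^s}$ is surjective by base-changing Theorem \ref{thm.equiv1} to $k^s$. The constructions used in the inverse functor in that proof are intrinsic: for $(m_1,m_2)=(4,2)$ or $(8,2)$ one recovers the factors as $Z(A)$ and the $k$-subalgebra generated by $[\Skew(A,-),\Skew(A,-)]$, while for $(m_1,m_2)=(8,4)$ one uses $\Nuc(A)$ and its centralizer. Since these constructions are preserved by any automorphism of $(A,-)$ and commute with extension of scalars, Theorem \ref{thm.equiv1} applies verbatim over $k^s$. Applied to $(A,-)_{k^s}\simeq C(m_1)\otimes_{k^s} C(m_2)$, it produces a unique factorisation of every $k^s$-automorphism as $\alpha\otimes\beta$, giving surjectivity of $\varphi_{k^s}$.

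Finally, by Lemma \ref{lem:smoothness} and \ref{sec:aut-der-comp} both source and target of $\varphi$ are smooth. As a closed immersion, $\varphi$ identifies $\mathbf{Aut}(C_1)\times\mathbf{Aut}(C_2)$ with a closed smooth subgroup of $\mathbf{Aut}(A,-)$ whose $k^s$-points exhaust $\mathbf{Aut}(A,-)(k^s)$; since a smooth closed subgroup of a smooth algebraic group is determined by its $k^s$-points, the two subgroups coincide and $\varphi$ is an isomorphism. I do not anticipate a serious obstacle: all of the substantive content has been absorbed into Theorem \ref{thm.equiv1} and Lemma \ref{lem:smoothness}, and the remaining argument is purely formal. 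The only point requiring care is ensuring that the inverse functor of Theorem \ref{thm.equiv1} is defined by intrinsic operations so as to base-change to $k^s$, which is apparent from its description.
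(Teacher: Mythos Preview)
Your proposal is correct and follows essentially the same approach as the paper: injectivity of $\varphi$ (established in the proof of Lemma~\ref{lem:smoothness}), surjectivity on points over a separable/algebraic closure via Theorem~\ref{thm.equiv1}, and smoothness of $\mathbf{Aut}(A,-)$ to conclude. The only cosmetic difference is that the paper invokes $k^a$ and the criterion of \S\ref{sec.exact} rather than $k^s$, but for smooth groups this is immaterial.
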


\begin{proof}
	It is clear that $\varphi$ is injective, and Theorem \ref{thm.equiv1}  proves that $\varphi_{k^a}$ is surjective. The conclusion follows from the fact that $\mathbf{Aut}(A,-)$ is smooth  (see \ref{sec.exact}).
\end{proof}

\begin{theorem} \label{thm.aut2}
Let $m = 4$ or $8$, and let $(A,-) = \cor_{E/k}(C)$ be an $(m,m)$-product algebra, where $C$ is an $m$-dimensional composition algebra over a quadratic \'etale extension $E/k$. Let $C_{/k}$ be the $k$-algebra with the same underlying set, multiplication, and $k$-vector space structure as $C$. Then $\mathbf{Aut}(A,-) \simeq \mathbf{Aut}(C_{/k})$. Consequently,
\begin{enumerate}[\rm (i)]
	\item $\mathbf{Aut}(A,-)^\circ \simeq R_{E/k}(\mathbf{Aut}(C))$, where $R_{E/k}$ is the Weil restriction.
	\item $\mathbf{Aut}(A,-)$ has two connected components, and the non-identity component has $k$-points if and only if $C \simeq {^\iota C}$ as $E$-algebras.
	\item  If $A = C_1 \otimes C_2$ is decomposable then $\mathbf{Aut}(A,-) \simeq \mathbf{Aut}(C_1 \times C_2)$ and $\mathbf{Aut}(A,-)^\circ \simeq \mathbf{Aut}(C_1)\times \mathbf{Aut}(C_2)$. \end{enumerate}
\end{theorem}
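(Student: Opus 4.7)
The plan is to construct an explicit morphism of $k$-group schemes $\Phi \colon \mathbf{Aut}(C_{/k}) \to \mathbf{Aut}(A,-)$ and show it is an isomorphism by a smoothness argument reducing to $k^a$-points.

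First I would define $\Phi$. For a commutative $k$-algebra $R$, write $\tilde C = C \otimes_k R$ and $\tilde E = E \otimes_k R$; since $Z(C_{/k}) = E$ and $C$ is free over $k$, base change yields $Z_R(\tilde C) = \tilde E$, so every $\alpha \in \mathbf{Aut}(C_{/k})(R) = \Aut_R(\tilde C)$ restricts to an $R$-automorphism of $\tilde E$. This restriction is, locally on $\Spec R$, either the identity (so $\alpha$ is $\tilde E$-linear) or the non-trivial involution $\tilde \iota$. In the first case I set $\Phi(\alpha) = {^{\tilde\iota}\alpha} \otimes \alpha$, where ${^{\tilde\iota}\alpha}({^{\tilde\iota} x}) = {^{\tilde\iota}\alpha(x)}$; in the second case I set $\Phi(\alpha)({^{\tilde\iota} x} \otimes y) = {^{\tilde\iota}\alpha(y)} \otimes \alpha(x)$. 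Routine calculations, keeping careful track of the twisted $\tilde E$-action on ${^{\tilde \iota}\tilde C}$, show that $\Phi(\alpha)$ is a well-defined $R$-algebra automorphism of ${^{\tilde\iota}\tilde C} \otimes_{\tilde E} \tilde C$ which commutes with the semilinear involution $s$ of \ref{sec:corestriction} and therefore restricts to an element of $\Aut_R(A \otimes_k R, -)$. The multiplicativity $\Phi(\alpha\beta) = \Phi(\alpha)\Phi(\beta)$ is verified case-by-case according to the four possibilities for $(\alpha|_{\tilde E}, \beta|_{\tilde E})$, and naturality in $R$ is immediate.

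Next I would show $\Phi$ is an isomorphism of $k$-group schemes. Both source and target are smooth over $k$: this is Lemma \ref{lem:smoothness} for $\mathbf{Aut}(A,-)$, and $\mathbf{Aut}(C_{/k})$ is smooth because it is an extension of a closed subgroup of the constant group $\mathbb{Z}/2\mathbb{Z}$ (recording the action on $E$) by $R_{E/k}(\mathbf{Aut}(C))$, the latter being the Weil restriction along an étale extension of the smooth group $\mathbf{Aut}(C)$ (see \ref{sec:aut-der-comp}). For a morphism of smooth $k$-group schemes of finite type, it suffices to check bijectivity on $k^a$-points. Over $k^a$, $A_{k^a}$ is decomposable as $C(m) \otimes_{k^a} C(m)$ and $C_{k^a} \simeq C(m) \times C(m)$ as a $k^a \times k^a$-algebra, and in this split setting a direct inspection shows that $\Phi_{k^a}$ implements the bijection of automorphism groups already obtained in the proof of Theorem \ref{thm.equiv2}.

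Finally I would deduce (i)--(iii). The subgroup of $\mathbf{Aut}(C_{/k})$ fixing $Z(C) = E$ pointwise is $R_{E/k}(\mathbf{Aut}(C))$, and it is connected since $\mathbf{Aut}(C)$ is connected for $m \in \{4,8\}$ (see \ref{sec:aut-der-comp}); the quotient is a closed $k$-subgroup of the constant $\mathbb{Z}/2\mathbb{Z}$, and a swap of the factors in $C_{k^s} \simeq C(m) \times C(m)$ shows the quotient is the full $\mathbb{Z}/2\mathbb{Z}$, proving (i). A $k$-point of the non-identity component is a $k$-automorphism of $C$ acting as $\iota$ on $E$, equivalently an $E$-algebra isomorphism $C \overset{\sim}{\to} {^\iota C}$, which is (ii). For (iii), if $A = C_1 \otimes C_2$ is decomposable then Theorem \ref{thm.equiv2} identifies $C$ with $C_1 \times C_2$ over $E = k \times k$, and the formula $R_{(k\times k)/k}(X) = X_1 \times_k X_2$ for Weil restriction along a split étale extension gives $R_{E/k}(\mathbf{Aut}(C)) = \mathbf{Aut}(C_1) \times \mathbf{Aut}(C_2)$. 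The main obstacle throughout is the first step: making sense of $\Phi(\alpha)$ when $\alpha|_{\tilde E} = \tilde \iota$ requires careful bookkeeping with the twisted tensor product ${^{\tilde \iota}\tilde C} \otimes_{\tilde E} \tilde C$, and the multiplicativity check in the four mixed cases is not entirely automatic.
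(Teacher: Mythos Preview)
Your overall strategy matches the paper's exactly: construct an explicit morphism $\mathbf{Aut}(C_{/k}) \to \mathbf{Aut}(A,-)$, invoke smoothness of the target (Lemma~\ref{lem:smoothness}), and appeal to Theorem~\ref{thm.equiv2} for the bijection on $k^a$-points. The deductions of (i)--(iii) are likewise the same in substance. Two points deserve comment.

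First, a simplification. The paper avoids your case-split by the single formula
\[
\varphi_R(\alpha)\Big(\textstyle\sum({^\iota x_i}\otimes y_i)\otimes r_i\Big) = \textstyle\sum \big({^\iota \alpha(x_i)}\otimes \alpha(y_i)\big)\otimes r_i,
\]
which is well-defined, multiplicative, and $s$-equivariant regardless of whether $\alpha$ fixes $E$ or acts as $\iota$. On the $s$-fixed subspace $A$ this agrees with both of your case formulas (your second formula is the $s$-translate of the first), so the difference is cosmetic, but it eliminates the four-case multiplicativity check you flag as the main obstacle.

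Second, a genuine gap. The assertion that ``for a morphism of smooth $k$-group schemes of finite type, it suffices to check bijectivity on $k^a$-points'' is false in positive characteristic: the Frobenius $\mathbf{G}_m \to \mathbf{G}_m$, $x \mapsto x^p$, is bijective on $k^a$-points but has kernel $\bm{\mu}_p$. Since the paper allows $\Char(k)=p>3$, this step needs repair. The paper fixes it by proving injectivity of $\varphi_R$ for \emph{every} $R$: the isomorphism $\Skew(A,-)_R \overset{\sim}{\to} (C_0)_R$ of \eqref{eq.S2} sends $\varphi_R(\alpha)\big(({^\iota s}\otimes 1 + 1 \otimes s)\otimes 1\big)$ to $\alpha(s\otimes 1)$, so $\alpha|_{(C_0)_R}$ is recovered from $\varphi_R(\alpha)$, and $\alpha$ is then determined because $C_0$ generates $C$ as a $k$-algebra. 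With $\varphi_R$ injective for all $R$, $\varphi$ is a monomorphism; surjectivity on $k^a$-points together with smoothness of the target then yields the isomorphism via the criterion recorded in~\ref{sec.exact}.
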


\begin{proof}
	We define 
	\begin{align*}
		\varphi_R: \Aut_R((C_{/k})_R) &\to \Aut_R(A_R,-)\\ \textstyle
		\varphi_R(\alpha)\big(\sum({^\iota x_i}\otimes y_i)\otimes r_i\big) &= \textstyle \sum \big({^\iota \alpha(x_i)}\otimes \alpha(y_i)\big)\otimes r_i
	\end{align*}
	 for all $R$-automorphisms $\alpha$ of $(C_{/k})_R = C\otimes_k R$, and all $x_i, y_i \in C$, $r_i \in R$.	Then $\varphi_R$ is injective because, for instance, $\varphi_R(\alpha)((^\iota s \otimes 1 + 1 \otimes s)\otimes 1)$ is sent to $\alpha(s\otimes 1)$ by the isomorphism $\Skew(A,-)_R \to (C_0)_R$. Theorem \ref{thm.equiv2} proves that $\varphi_{k^a}$ is surjective, and the conclusion that $\varphi$ is an isomorphism follows from the smoothness of $\mathbf{Aut}(A,-)$.
	 
	 The kernel of the natural homomorphism $\mathbf{Aut}(C_{/k}) \to \mathbf{Aut}(E) = S_2$ is isomorphic to $R_{E/k}(\mathbf{Aut}(C))$, which is connected. Now (i) follows from the uniqueness of the connected-\'etale sequence \cite[Proposition 5.58]{milne}. For (ii), it is clear that $k$-automorphisms of $C$ acting nontrivially on $E$ are the same as $E$-algebra isomorphisms $C \simeq {^\iota C}$. Finally, (iii) is just a specialisation of the main statement and (i).
	 \end{proof}

Item (iii) of the above theorem was also proved in \cite[Theorem~3.6]{aranda2020gradings}.

\begin{theorem} \label{thm:aut-schemes-involution}
If $(A,-)$ is an $(m_1, m_2)$-product algebra with $m_1 \ge m_2$, then $\mathbf{Aut}(A,-) = \mathbf{Aut}(A)$ if and only if $(m_1, m_2) = (1,1)$, $(2,1)$, $(4,1)$, $(8,1)$, $(8,4)$ or $(8,8)$.	
\end{theorem}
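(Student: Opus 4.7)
My plan is to handle the nine pairs $(m_1,m_2)$ one at a time, separating the cases where the equality holds from those where it fails. For each positive case I will identify an intrinsic invariant of the algebra $A$ that forces any algebra automorphism to respect $-$, and for each negative case I will exhibit an algebra automorphism that does not.

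The easy positive cases are $(m,1)$ with $m \in \{1,2,4,8\}$, where $(A,-)$ is just a composition algebra with its standard involution, and the identity $\mathbf{Aut}(C) = \mathbf{Aut}(C,-)$ is classical (see \ref{sec:aut-der-comp}). For the case $(8,4)$, I plan to use the proof of Theorem \ref{thm.equiv1}: the subalgebras $\Nuc(A) = C_2$ and $C_A(\Nuc(A)) = C_1$ are intrinsic, so any algebra automorphism $\alpha$ restricts to algebra automorphisms of both $C_1$ and $C_2$, and these restrictions preserve the standard involutions of each composition factor; since $C_1$ and $C_2$ generate $A$, it follows that $\alpha$ preserves the tensor involution.

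The delicate positive case will be $(8,8)$, because here the nucleus is trivial and the two tensor factors are not canonically distinguished. My approach is a dimension and component count. Proposition \ref{prop.deriv} gives $\dim \Der(A,-) = 28$, and the crux, which I expect to be the main technical obstacle, is to upgrade this to $\Der(A) = \Der(A,-)$. I plan to do this by writing an arbitrary $d \in \Der(A)$ in terms of its restrictions $\phi := d|_{C_1 \otimes 1}$ and $\psi := d|_{1 \otimes C_2}$ and then exploiting the commutation relation $[C_1 \otimes 1, 1 \otimes C_2] = 0$ together with the simplicity of $\Der(C_i) \simeq G_2$ to force $\phi(C_1) \subseteq C_1 \otimes 1$ and $\psi(C_2) \subseteq 1 \otimes C_2$; this reduces to Proposition \ref{prop.deriv}, and the same fact can also be extracted from \cite{morandi2001tensor}. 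Once $\Der(A) = \Der(A,-)$ is known, the smoothness of $\mathbf{Aut}(A,-)$ (Lemma \ref{lem:smoothness}) combined with the inequality $\dim \mathbf{Aut}(A) \le \dim \Der(A)$ yields $\mathbf{Aut}(A,-)^{\circ} = \mathbf{Aut}(A)^{\circ}$. To compare component groups, I will observe that conjugation on the identity component (a form of $G_2 \times G_2$) gives a map $\pi_0(\mathbf{Aut}(A)) \to \operatorname{Out}(G_2 \times G_2) = S_2$; Theorem \ref{thm.aut2}(ii) already gives $\pi_0(\mathbf{Aut}(A,-)) = S_2$, and a brief Schur-type argument, using that the four isotypic components of $A$ under $G_2 \times G_2$ are pairwise non-isomorphic, shows this map is injective. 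Hence $\pi_0(\mathbf{Aut}(A,-)) = \pi_0(\mathbf{Aut}(A))$ and the two group schemes agree.

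For the negative cases $(4,2), (8,2), (4,4)$ it will suffice to exhibit obstructions. In the first two, extending scalars to $k^s$ gives $A_{k^s} \simeq C(m_1) \times C(m_1)$ with involution $(x,y) \mapsto (\bar y, \bar x)$, and the algebra automorphism $(\alpha, \id)$ for any nontrivial $\alpha \in \mathbf{Aut}(C(m_1))$ visibly fails to commute with this swap involution. In the $(4,4)$ case, $A$ is a biquaternion algebra, and over $k^s$ one has $\mathbf{Aut}(A) \simeq \mathbf{PGL}_4$ of dimension $15$ versus $\mathbf{Aut}(A,-) \simeq \mathbf{PGO}_4$ of dimension $6$; concretely, any $\Int(a)$ with $a \bar a \notin k$ provides an explicit algebra automorphism not preserving $-$.
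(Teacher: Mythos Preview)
Your proposal is correct and arrives at the same result, but by a genuinely different route from the paper in two places.

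For the negative cases, the paper invokes Bre\v{s}ar's formula $\Der(C_1\otimes C_2) = L_{Z(C_1)}\otimes\Der(C_2) + \Der(C_1)\otimes L_{Z(C_2)} + \ad(\Nuc(C_1\otimes C_2))$ to see at once that $\Der(A)\ne\Der(A,-)$ when $(m_1,m_2)\in\{(4,2),(8,2),(4,4)\}$; your explicit automorphisms are more hands-on and avoid this citation, which is a perfectly good trade. For $(8,8)$, the paper takes a shorter path: it quotes the first-order description $C_1+C_2 = \{a\in A : (a,x,y)=-(x,a,y)=(x,y,a)\ \forall x,y\}$ from \cite{morandi2001tensor} (via \cite{aranda2020gradings}), so that every algebra automorphism preserves $k\oplus S$ and hence the involution---no separate component-group argument is needed. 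Your route (derivations $+$ smoothness $+$ the $\pi_0(\mathbf{Aut}(A))\hookrightarrow\operatorname{Out}(G_2\times G_2)=S_2$ step with the Schur analysis on the four isotypic pieces) is longer but structurally interesting, and your Schur argument is sound: an algebra automorphism acting by scalars on $k\oplus(C_1)_0\oplus(C_2)_0\oplus(C_1)_0\!\otimes\!(C_2)_0$ is forced to be the identity. One caution: your sketched derivation argument ``$[C_1,C_2]=0$ plus simplicity of $\Der(C_i)$'' is not obviously enough on its own to force $d(C_i)\subseteq C_i$; the cleanest way to close this is exactly the \cite{morandi2001tensor} characterization you cite as backup (it shows any $d\in\Der(A)$ preserves $C_1+C_2$, hence $S$, hence each simple Malcev ideal $S_i$), or alternatively Bre\v{s}ar's theorem as the paper does.
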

\begin{proof}
A theorem of Bre\v{s}ar \cite[Theorem 3.1]{brevsar2017derivations} says that \[\Der(C_1 \otimes C_2) = L_{Z(C_1)}\otimes \Der(C_2) + \Der(C_1)\otimes L_{Z(C_2)} + \ad(\Nuc(C_1 \otimes C_2)).\]
By  considering each case individually and comparing with Proposition \ref{prop.deriv}, one can show that $\Der(A,-) = \Der(A)$ if and only if $(m_1, m_2) = (8,4)$, $(8,8)$, or $(m,1)$ for some $m$. Since $\Der(A) = \Lie(\mathbf{Aut}(A))$ and $\Der(A,-) = \Lie(\mathbf{Aut}(A,-))$, this shows $\mathbf{Aut}(A,-) \neq \mathbf{Aut}(A)$ if $(m_1, m_2)$ is not in this list.

Every automorphism of a composition algebra preserves the standard involution (see \ref{sec:aut-der-comp}), which settles the case of $m_2 = 1$. In the $(8,4)$ case, we have shown in Theorem \ref{thm.aut1} that $\mathbf{Aut}(C_1 \otimes C_2) \simeq \mathbf{Aut}(C_1)\times \mathbf{Aut}(C_2)$. Since $\overline C_i = C_i$ and $\mathbf{Aut}(C_i) = \mathbf{Aut}(C_i,-)$, it follows that $\mathbf{Aut}(C_1 \otimes C_2,-) = \mathbf{Aut}(C_1 \otimes C_2)$.

For decomposable bi-octonion algebras, \cite[Lemma~3.5]{aranda2020gradings} shows that $\mathbf{Aut}(C_1 \otimes C_2,-) = \mathbf{Aut}(C_1 \otimes C_2)$. The key step is from  \cite[\S3]{morandi2001tensor}, where is it proven that   $C_1 + C_2 = k \oplus \Skew(A,-)$ has the following first-order definition in the language of algebras without involution:
\[
C_1 + C_2 = \{a \in A \mid (a,x,y) = -(x,a,y) = (x,y,a) \ \forall x, y \in A \}.
\]
 If $(A,-)$ is a non-decomposable bi-octonion algebra, then one can either use the same proof or argue that $\mathbf{Aut}(A,-) = \mathbf{Aut}(A)$ because $\mathbf{Aut}(A,-)$ is smooth and $\mathbf{Aut}(A_F,-) = \mathbf{Aut}(A_F)$ for some field extension $F/k$ which decomposes $(A,-)$. \end{proof}
 
 By Galois descent, we obtain the following corollary:
 
 \begin{corollary} \label{cor:involution-dependence}
 Let $(A,-)$ and $(A',-)$ be $(m_1, m_2)$-product algebras where $(m_1, m_2) = (1,1)$, $(2,1)$, $(4,1)$, $(8,1)$, $(8,4)$ or $(8,8)$. Then $(A,-) \simeq (A',-)$ as algebras with involution if and only if $A \simeq A'$ as algebras.	
 \end{corollary}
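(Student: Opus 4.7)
The ``only if'' direction is immediate, so the plan concentrates on the converse. The key tool is Theorem \ref{thm:aut-schemes-involution}: for precisely the six pairs $(m_1, m_2)$ listed, we have an equality of $k$-group schemes $\mathbf{Aut}(A, -) = \mathbf{Aut}(A)$. Base-changing to $k^s$ preserves this equality, which tells us that every $k^s$-algebra automorphism of $A_{k^s}$ (and likewise of $A'_{k^s}$) automatically commutes with the canonical involution.

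Given a $k$-algebra isomorphism $\varphi : A \to A'$, the plan is to show that $\varphi$ itself is involution-preserving. First I would fix involution-preserving $k^s$-isomorphisms $f : (A_{k^s}, -) \overset{\sim}{\to} (A_0, -)$ and $g : (A'_{k^s}, -) \overset{\sim}{\to} (A_0, -)$, where $(A_0, -) = (C(m_1) \otimes_{k^s} C(m_2), -)$ is the split $(m_1, m_2)$-product algebra with its canonical involution; such $f, g$ exist by the very definition of $(m_1, m_2)$-product algebra. Setting $\psi = g^{-1} \circ f$ gives an involution-preserving $k^s$-isomorphism $A_{k^s} \to A'_{k^s}$. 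Then $\alpha := \psi^{-1} \circ \varphi_{k^s}$ is a $k^s$-algebra automorphism of $A_{k^s}$, which by the previous paragraph automatically commutes with the involution. It follows that $\varphi_{k^s} = \psi \circ \alpha$ is involution-preserving; since the involutions on $A$ and $A'$ are the restrictions of those on $A_{k^s}$ and $A'_{k^s}$, so is $\varphi$ itself.

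I do not anticipate a genuine obstacle, because the hard work is all packed into Theorem \ref{thm:aut-schemes-involution}. The only point to check is the straightforward descent of involution-preservation from $k^s$-points to $k$-points, which is automatic. For a more conceptual packaging, one could argue cohomologically instead: the equality of group schemes yields an equality $H^1(k, \mathbf{Aut}(A_0, -)) = H^1(k, \mathbf{Aut}(A_0))$ of pointed sets, and since these classify isomorphism classes of $(m_1, m_2)$-product algebras with involution and of $k$-forms of $A_0$ as an algebra respectively, the forgetful map between the two classifications is literally the identity, and the corollary follows immediately.
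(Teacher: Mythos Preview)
Your proposal is correct, and the cohomological packaging you give at the end is exactly the paper's one-line proof (``By Galois descent''). Your direct argument is simply an explicit unpacking of that Galois descent, and it works for the same reason: once $\mathbf{Aut}(A,-) = \mathbf{Aut}(A)$ as group schemes, any $k^s$-automorphism of the split model is automatically involution-preserving, so the forgetful map on isomorphism classes is a bijection.
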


\section{TKK Lie algebras} \label{sec:TKK Lie algebras}

In this section we determine the type of 5-graded Lie algebra that one gets by applying the Tits--Kantor--Koecher (TKK) construction to an $(m_1, m_2)$-product algebra. 

\subsection{$\ZZ$-graded algebras} \label{sec:Z-graded_algebras}

 A $k$-algebra $L$ is {$\ZZ^n$-graded} if it is a direct sum of subspaces $L = \bigoplus_{i \in \ZZ^n} L_i$ and $L_i L_j \subset L_{i+j}$ for all $i,j \in \ZZ^n$. We write $\mathbf{Aut}_{\rm gr}(L)$ for the subgroup of $\mathbf{Aut}(L)$ consisting of graded automorphisms: \[\mathbf{Aut}_{\rm gr}(L)(R) = \{\alpha \in \mathbf{Aut}(L)(R) \mid \alpha ((L_i)_R) \subset (L_i)_R \text{ for all } i \in \mathbb{Z}\}.\] If $L$ is $\ZZ$-graded and  $L_i = 0$ for all $|i| > n$, then we say that $L$ is {$(2n+1)$-graded}. We say $L$ is {strictly} $(2n+1)$-graded if it is not also $(2n-1)$-graded. The {trivial grading} on $L$ is the one where $L = L_0$.

Let $L = \bigoplus_{i \in \mathbb{Z}} L_i$ be a  $\mathbb{Z}$-graded algebra. Define the \emph{grading cocharacter} $\lambda: \mathbf{G}_m \to \mathbf{Aut}(L)$ by $\lambda_R(c)(x_i) = c^ix_i$ for all  $c \in R^\times$ and $x_i \in (L_i)_R$, $i \in \mathbb{Z}$. The one-parameter subgroup ${T}_{\rm gr}= \lambda(\mathbf{G}_m)$  of $\mathbf{Aut}(L)$ is called the {grading torus}. Any cocharacter $\lambda:\mathbf{G}_m \to \mathbf{Aut}(L)$ is the grading cocharacter of a unique $\ZZ$-grading on $L$ \cite[Proposition 1.28]{elduque2013gradings}.

\begin{lemma} \label{lem.centraliser} Suppose $L = \bigoplus_{i \in \mathbb{Z}} L_i$ is a $\mathbb{Z}$-graded algebra over an arbitrary field $K$, and let $G = \mathbf{Aut}(L)$. Then $\mathbf{Aut}_{\rm gr}(L) = C_G({T}_{\rm gr})$ and  $\mathbf{Aut}_{\mathrm {gr}} (L)^\circ = C_G({T}_{\rm gr})^\circ = C_{G^\circ}({T}_{\rm gr})$.
\end{lemma}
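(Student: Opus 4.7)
The plan is to proceed in two stages: first establish the functorial identity $\mathbf{Aut}_{\rm gr}(L) = C_G(T_{\rm gr})$, and then use this to obtain the two identity-component statements.

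For the first equality, the inclusion $\mathbf{Aut}_{\rm gr}(L) \subset C_G(T_{\rm gr})$ is essentially by definition: an $R$-point $\alpha$ that preserves each $(L_i)_R$ trivially commutes with $\lambda_{R'}(c)$, which acts as the scalar $c^i$ on $(L_i)_{R'}$ for every $R$-algebra $R'$. For the reverse inclusion, I would test the centralizer condition at the universal value $t \in \mathbf{G}_m(K[t,t^{-1}])$. Given $\alpha \in C_G(T_{\rm gr})(R)$ and $x \in (L_i)_R$, write $\alpha(x) = \sum_j y_j$ with $y_j \in (L_j)_R$. After base change to $R[t,t^{-1}]$, equating $\alpha \lambda(t)(x) = t^i \alpha(x)$ with $\lambda(t)\alpha(x) = \sum_j t^j y_j$ yields $y_j \otimes (t^i - t^j) = 0$ in $(L_j)_R[t,t^{-1}]$. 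For $j \ne i$, the coefficients of the distinct monomials $t^i$ and $t^j$ must each vanish, forcing $y_j = 0$. Hence $\alpha$ preserves each $(L_i)_R$ and is graded.

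The equality $\mathbf{Aut}_{\rm gr}(L)^\circ = C_G(T_{\rm gr})^\circ$ is then immediate by taking identity components of both sides. For the remaining identity $C_G(T_{\rm gr})^\circ = C_{G^\circ}(T_{\rm gr})$, the inclusion $\subset$ is formal: $C_G(T_{\rm gr})^\circ$ is a connected subgroup scheme containing the identity, hence lies in $G^\circ$, so lies in $G^\circ \cap C_G(T_{\rm gr}) = C_{G^\circ}(T_{\rm gr})$. The reverse inclusion is the main obstacle, because it requires showing that $C_{G^\circ}(T_{\rm gr})$ is itself connected. I would invoke the classical result that the scheme-theoretic centralizer of a subtorus in a smooth connected affine algebraic group is connected; applied over $\bar K$ and descended back to $K$, this gives the required connectedness. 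In the applications in this paper $G$ will be the automorphism scheme of a concrete algebra whose smoothness has been verified earlier (e.g.\ via Lemma~\ref{lem:smoothness}), so this step will pose no additional difficulty in practice.
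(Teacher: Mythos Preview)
Your proof is correct and follows essentially the same route as the paper: the same functorial argument for $\mathbf{Aut}_{\rm gr}(L)=C_G(T_{\rm gr})$ (testing at a generic unit; the paper writes $S=R[t]$, $c=t$, but your $R[t,t^{-1}]$ is the right ring since $c$ must be a unit), and then the same two-way containment for the identity components, appealing to connectedness of the centraliser of a torus in a connected group. The only difference is that you hedge on smoothness of $G^\circ$ and defer to the applications, whereas the paper cites \cite[Theorem~17.38, Remark~17.40(b)]{milne} directly for the connectedness of $C_{G^\circ}(T_{\rm gr})$ without adding that caveat.
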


\begin{proof}
	It is straightforward to show from the definition (see \cite[\S 1.k]{milne}) that $\mathbf{Aut}_{\rm gr}(L)(R) \subset {C}_{G}({T}_{\rm gr})(R)$.	On the other hand, if $\beta \in {C}_{G}({T}_{\rm gr})(R)$ and $ x_i \in (L_i)_R$, then for all $R$-algebras $S$ and all $c \in S^\times$,
	\begin{align} \label{eq.centraliser}\lambda_{S}(c) \circ \beta(x_i) = \beta \circ \lambda_{S}(c)(x_i) = \beta( c^i x_i) = c^i \beta(x_i).
	\end{align}
	Writing $\beta(x_i) = y$ and decomposing it as $y = \sum_{j \in \mathbb{Z}} y_j$ where $y_j \in (L_j)_R$, we also have $\lambda_{S}(c) (y) = \sum_{j \in \mathbb{Z}} \lambda_{S}(c)(y_j) = \sum_{j \in \mathbb{Z}} c^j y_j = \sum_{j \in \mathbb{Z}}c^iy_j$ (comparing with (\ref{eq.centraliser})). Since the homogeneous components of $L_R$ are linearly independent, this implies $(c^j-c^i) y_j = 0$  for all $j \in \mathbb{Z}$ and all $c \in S^\times$. Letting $S = R[t]$ and $c = t$, this implies $y_j = 0$ whenever $i \ne j$. Therefore $y = y_i \in L_i$ and $\beta\in \mathbf{Aut}_{\rm gr}(L)(R)$.
	
	For the final claim,  $\mathbf{Aut}_{\rm gr}(L)^\circ = C_{G}({T}_{\mathrm{gr}})^\circ$ is a ${T}_{\rm gr}$-centralising connected algebraic subgroup of $G$, which implies $C_{G}({T}_{\mathrm{gr}})^\circ \le C_{G^\circ}(T_{\rm gr})$. On the other hand, $C_{G^\circ}({T}_{\rm gr})$ is a connected algebraic subgroup of $C_{G}({T}_{\rm gr})$ \cite[Theorem 17.38, Remark 17.40~(b)]{milne},  and therefore $C_{G^\circ}({T}_{\rm gr}) \le C_{G}({T}_{\rm gr})^\circ$.
	\end{proof}

\subsection{From adjoint simple groups to central simple Lie algebras and back again} \label{sec.adj.simple}

	Let $G$ be an adjoint simple algebraic group over $k$ (recalling that $\Char(k) \ne 2,3$) and let $L = \Lie(G)$. As a rule,  $L$ is central simple, with the only exceptions being when $\Char(k) =p>0$ and  $G$ is of type $A_{mp-1}$ for some $m \ge 1$ \cite[Table~1]{hogeweij1982almost}. In these unusual cases, the ideal $L' = [L,L]$ is  central simple and $\dim L' = \dim L - 1$ \cite[Lemma 4.1.6~(i)]{boelaert2019moufang}.
% In the few cases where $L \ne L'$ it is nevertheless true that $L'$ is a graded ideal with respect to any grading on $L$, and that every grading of $L'$ comes from a grading of $L$. Given any grading on $L$, $L'_i = L_i$ for all $i \ne 0$ \cite[Lemma~4.3.2]{boelaert2019moufang}.

Moreover, the adjoint homomorphism $\Ad: G\to \mathbf{Aut}(L)^\circ$ and the restriction homomorphism $\cdot|_{L'}: \mathbf{Aut}(L)^\circ \to \mathbf{Aut}(L')^\circ$  are isomorphisms \cite[Lemma 4.1.6~(ii)]{boelaert2019moufang}. These facts imply that there  are one-to-one correspondences:

\medskip

\begin{center}
\fbox{\begin{minipage}[c][1cm]{3.5cm} \centering
$\ZZ$-gradings on $L$
\end{minipage}}\quad  $\longleftrightarrow$\quad\fbox{\begin{minipage}[c][1cm]{3.5cm} \centering
$\ZZ$-gradings on $L'$
\end{minipage}} \quad $\longleftrightarrow $ \quad  \fbox{\begin{minipage}[c][1cm]{3.5cm} \centering
Cocharacters $\lambda: \mathbf{G}_m \to G$
\end{minipage}}
\end{center}

\medskip

If $L = \bigoplus_{i \in \ZZ}L_i$ is a $\ZZ$-grading on $L$, then $L'$ is a graded ideal and $L_i' = L_i$ for all $i \ne 0$ \cite[Lemma 4.3.2]{boelaert2019moufang}. Therefore $(2n+1)$-gradings on $L$ are in one-to-one correspondence with $(2n+1)$-gradings on $L'$.

\begin{lemma} \cite[Proposition 10.34, Corollary 17.59]{milne} \label{lem.L_0}
	Let $G$ be an adjoint simple group with a cocharacter $\lambda: \mathbf{G}_m \to G$, and let $L = \Lie(G)$. The subgroup $H = C_G(\lambda)$ is reductive and if  $L = \bigoplus_{i \in \ZZ}L_i$ is the $\ZZ$-grading determined by $\lambda$ then $\Lie(H) = L_0$.
	%The derived subgroup $H^{\rm der} = [H,H]$ is semisimple and  if none of its almost-simple factors are of type $A_n$, then $L_0 = Z(L_0) \times [L_0, L_0]$, $Z(L_0) = \Lie(Z(H))$, and $[L_0,L_0] = \Lie(H^{\rm der})$.
\end{lemma}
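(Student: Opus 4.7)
The plan is to unpack the two Milne citations in the specific setting of this lemma, since the statement is essentially the specialisation of standard results on centralisers of cocharacters in reductive groups. I would split the argument into two parts: reductivity of $H$, and identification of its Lie algebra.

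For reductivity, I would note that an adjoint simple group is in particular reductive, and write $S = \lambda(\mathbf{G}_m)$, which is a subtorus of $G$ (possibly trivial, in which case the statement is obvious). Then $C_G(\lambda) = C_G(S)$. The classical theorem that the centraliser of any subtorus of a reductive group is reductive -- which is Milne's Corollary 17.59 -- gives at once that $H$ is reductive. This uses nothing specific about $\lambda$ beyond its factoring through a torus.

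For the Lie algebra, recall from \ref{sec:Z-graded_algebras} that the $\ZZ$-grading on $L$ determined by $\lambda$ is by definition the one whose grading cocharacter is $\lambda$ itself, i.e.\ the homomorphism $\mathbf{G}_m \to \mathbf{Aut}(L)$ given by $t \mapsto (x_i \mapsto t^i x_i)$. Under the adjoint isomorphism $\Ad: G \overset{\sim}{\to} \mathbf{Aut}(L)^\circ$ recalled in \ref{sec.adj.simple}, this identifies $L_i$ with the $t^i$-weight space of $\Ad \circ \lambda: \mathbf{G}_m \to \GL(L)$. Now invoke Milne's Proposition 10.34 to compute $\Lie(C_G(\lambda))$ as the fixed subspace of $\Ad \circ \lambda$ acting on $L$, which is by the weight decomposition exactly the $0$-weight space $L_0$.

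The only thing that needs any real care -- and what I would call the main, though minor, obstacle -- is matching conventions: one has to verify that the $\ZZ$-grading coming from $\lambda$ via the definition in \ref{sec:Z-graded_algebras} is the same as the weight decomposition for $\Ad \circ \lambda$, rather than, say, its reverse. This is immediate from the definition of the grading cocharacter together with the compatibility between the adjoint action on $L$ and the action of $\lambda$ on each $L_i$. Once that bookkeeping is settled, both parts of the lemma fall out of the cited Milne results without further work.
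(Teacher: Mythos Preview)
Your proposal is correct and is exactly the intended reading of the two Milne citations; the paper gives no separate proof and simply defers to those references. Your unpacking of Corollary 17.59 for reductivity of the torus centraliser and Proposition 10.34 for $\Lie(C_G(\lambda)) = L_0$ is precisely what the citation encodes.
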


 \subsection{Labelled Dynkin diagrams} \label{sec.labelled.dynkin}

Let $G$ and $L = \Lie(G)$ be as in \ref{sec.adj.simple}. We can attach a combinatorial invariant to a $\ZZ$-grading of $L$ or, equivalently, to a pair $(G, \lambda)$ where $\lambda$ is a $G$-valued cocharacter. This invariant is the  \emph{labelled Dynkin diagram}, and it owes its origins to Dynkin himself:  he called it the \emph{characteristic} in~\cite[Ch. III]{dynkin2000semisimple}.  More background on this can be found in \cite{kac1980some,liebeck2012unipotent,stavrova2018classification}.

To define the labelled Dynkin diagram of $(G, \lambda)$, we temporarily extend scalars until $G$ has a split maximal torus. Suppose $T$ is a maximal torus in $G$ containing the image of $\lambda$. For each root $\alpha \in \Phi(G,T)$ there is a unique integer $\ell(\alpha)$ such that $\alpha\circ \lambda(c) = c^{\ell(\alpha)}$ for all $c \in (k^a)^\times$. Equivalently, the {root space} $L_\alpha$ is contained in the homogeneous component $L_{\ell(\alpha)}$ of the $\ZZ$-grading associated to $\lambda$. We can choose a Weyl chamber for $\Phi(G,T)$ containing all the roots $\alpha \in \Phi(G,T)$ with $\ell(\alpha) > 0$, thus furnishing the root system  with a base $\Pi$ such that $\ell(\beta) \ge 0$ for every $\beta \in \Pi$. The Dynkin diagram of $\Phi(G,T)$ together with the labels $\{\ell(\beta)\}_{\beta \in \Pi}$ is called the labelled Dynkin diagram of $(G,\lambda)$.

Different choices for $T$ or $\Pi$ lead to isomorphic labelled diagrams if $G$ is an exceptional group \cite[Lemma 10.1]{liebeck2012unipotent} -- and probably this is true in general, although we lack a reference. Every possible labelling of the Dynkin diagram of $G$ with non-negative integers is the labelled Dynkin diagram of some cocharacter defined over $k^a$ \cite[Lemma 10.2]{liebeck2012unipotent}.

\subsection{How to read a labelled Dynkin diagram} \label{sec.roots.gradings}

Suppose $G$ has a split maximal torus $T$ of rank $n$.  Let $X^*(T) \simeq \ZZ^n$ be the character group of  $T$ in $G$. The {root space decomposition} 
	\begin{equation*} 
	L = \bigoplus_{\omega \in X^*(T)} L_\omega = L_{\underline{0}} \oplus \Big( \bigoplus_{\alpha \in \Phi(G,T)} L_\alpha\Big)
	\end{equation*}
	 is a fine $\ZZ^n$-grading called the \emph{Cartan grading} \cite{elduque2013gradings}. Here, $L_\omega$ is the $\omega$-weight space, which is either $n$- dimensional, $1$-dimensional, or $0$-dimensional according as $\omega$ is zero, a root, or neither. The 0-weight space $L_{\underline{0}} = \Lie(T)$ is an $n$-dimensional Cartan subalgebra.	 	 
The grading induced on $L$ by a cocharacter $\lambda: \mathbf{G}_m \to T$  is the following coarsening of the Cartan grading:
\begin{equation}\label{eq.lambda-grad}
L = \bigoplus_{i \in \ZZ} L_i
\end{equation}
where \begin{align*} L_i = \bigoplus_{\substack{\alpha \in \Phi(G,T)\\ \ell(\alpha) = i}} L_\alpha \quad \text{ for }i \ne 0, && L_{0} = L_{\underline{0}} \oplus \Big(\bigoplus_{\substack{\alpha \in \Phi(G,T)\\ \ell(\alpha) = 0}} L_\alpha\Big).\end{align*}
%\note{Note: something like (\ref{eq.component-counting}) appears in \cite[Lemma 4.2.2(iii)]{boelaert2019moufang} but the expression for $L_0$ is not accurate.}

We can use the labelled Dynkin diagram to extract some information about the grading. Calculating $\dim L_i$ boils down to counting the number of roots $\alpha = \sum_{\beta \in \Pi} m_\beta(\alpha) \beta \in \Phi(G,T)$ such that $\ell(\alpha) = \sum_{\beta \in \Pi} m_\beta(\alpha)\ell(\beta) = i$. This clearly depends only on the labelled Dynkin diagram of $(G,\lambda)$, since it determines $\ell(\alpha)$ by linearity, for all roots $\alpha \in \Phi(G,T)$.
%If all the labels on the diagram are divisible by $n$, then $L_i = 0$  unless $i \equiv 0 \pmod n$. 
One can  calculate the support of the grading very easily using the coefficients of the highest root. %The highest roots of root systems can be looked up in~\cite{springer1966some} or \cite{bourbaki}.
If $\tilde \alpha = \sum_{\beta \in \Pi} m_\beta(\tilde \alpha) \beta \in \Phi(G,T)$ is the highest root with respect to $\Pi$ then let $n = \ell(\tilde \alpha) = \sum_{\beta \in \Pi} m_\beta(\tilde \alpha) \ell(\beta)$. Since $\tilde \alpha$ is the highest root, we have $n \ge \sum_{\beta \in \Pi} m_\beta( \alpha) \ell(\beta) =  \ell(\alpha)$ for all $\alpha \in \Phi(G,T)$, so $\lambda$ induces a strict $(2n+1)$-grading.

If $G$ does not have a split maximal torus $T$, the $\ZZ$-grading (\ref{eq.lambda-grad}) is still defined over $k$ provided that $\lambda$ is defined over $k$. The dimensions of the $L_i$ are unaffected by scalar extension, so there is no harm in going to an algebraic closure and doing those calculations there.

\subsection{The TKK construction} \label{sec:TKK-construction}

The TKK construction takes a structurable algebra $(A, - )$ as input and gives a $5$-graded Lie algebra as output: \[K(A, - ) =  K_{-2} \oplus K_{-1} \oplus K_0 \oplus K_1 \oplus K_2\]
where $K_{\pm 1} = \{a_\pm \mid a \in A\}$ are copies of $A$, $K_{\pm 2} = \{s_\pm \mid s \in \Skew(A, -)\}$ are copies of $\Skew(A, -)$, and $K_0 = V_{A,A}=  T_A \oplus D_{A,A}$ \cite[p.~139]{allison1978class}.

The Lie bracket is defined on $K(A,-)$ by linearly extending certain bilinear maps $K_i \times K_j\to K_{i+j}$. For example, the bracket $K_1 \times K_1 \to K_2$ is defined as
\[
[x_+, y_+] = \psi(x,y)_+
\]
for all $x, y \in A$, where $\psi$ is defined as in \eqref{eq:psi}. We refer to 
\cite[Definition 2.4.1]{boelaert2019moufang} for the remaining details. Note that $K_0$ is a Lie subalgebra of $\mathfrak{gl}(A)$ and $K(A, -)$ is a $K_0$-module. The linear map $A \to T_A$, $a \mapsto T_a$, is bijective. The subspace $ D_{A,A}$ is an ideal of  $\operatorname{Der}(A, -)$ \cite[p.\! 139]{allison1979models}, and the subspace $L_S L_S$ is an ideal of $K_0$ \cite[Corollary 5~(vii)]{allison1978class}.

If $(A, -)$ is a central simple structurable algebra then $K(A, -)$ is a central simple Lie algebra, and conversely~\cite[\S5]{allison1979models}. Another important fact from \cite[Proposition 12.3]{allison1981isotopes} is that for any pair of structurable algebras $(A,-)$ and $(B,-)$, restriction onto the 1-component puts the set of graded isomorphisms $K(A,-) \overset{\sim}{\to} K(B,-)$ in natural bijection with the set of isotopies $(A,-) \to (B,-)$.

If $L$ is a simple Lie algebra over a field of characteristic 0 or $p > 3$, we have a classification, due to Stavrova \cite[Theorem~5.10]{stavrova2018classification}, of all possible gradings on $L$ such that $L$ is graded-isomorphic to $K(A,-)$ for some central simple structurable algebra $(A,-)$. The labelled Dynkin diagrams of these gradings are symmetric and made of 0's and either one or two 1's. In particular, it is an interesting discovery (long known in characteristic 0, but quite new in characteristic $p > 3$) that every simple Lie algebra $L$ admitting a 5-grading is isomorphic to $K(A,-)$ for some central simple structurable algebra $K(A,-)$ \cite[Theorem~1.1]{stavrova2018classification}.

By Proposition \ref{prop.deriv}, if $(A,-)$ is an $(m_1,m_2)$-product algebra then
\begin{align} \label{eq.dim.formula}
\dim K(A, -) &= 2\dim S + 3 \dim A + \dim \Der(A, -)\\ \notag & = 2(m_1 + m_2-2) + 3m_1 m_2 + \dim \Der(C_1) + \dim \Der(C_2).
\end{align}

%These data are collected in the following table. 
%\[
%\begin{array}{c|ccc|c}
% (m_1,m_2) & \dim S & \dim A & \dim \Der(A, -) & \dim K(A, -) \\ \hline
%	(1,1) & 0 & 1 & 0 & 3 \\
%	(1,2) & 1 & 2 & 0 & 8 \\
%	(1,4) & 3 & 4 & 3 & 21 \\
%	(2,4) & 4 & 8 & 3 & 35 \\
%	(4,4) & 6 & 16 & 6 & 66\\
%	(1,8) & 7 & 8 & 14 & 52 \\
%	(2,8) & 8 & 16 & 14 & 78\\
%	(4,8) & 10 & 32 & 17 & 133\\
%	(8,8) & 14 & 64 & 28 & 248 
%\end{array}
%\]

\begin{theorem} \label{thm.types}
	If $(A, -)$ is an $(m_1, m_2)$-product algebra, the  labelled Dynkin diagram of the $\ZZ$-graded central simple Lie algebra $K(A, -)$ is given by Table \ref{table.main}.
\end{theorem}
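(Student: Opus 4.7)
The plan is to reduce to the split case by base change, identify the absolute type of the central simple Lie algebra $K(A,-)$ from its dimension, and then pin down the labelled Dynkin diagram of the TKK grading using Stavrova's classification together with the dimensions $\dim K_{\pm 1} = m_1 m_2$ and $\dim K_{\pm 2} = m_1 + m_2 - 2$.

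Since the TKK construction, the grading, and the labelled Dynkin diagram are all compatible with scalar extension, we may replace $k$ by $k^a$ and assume that $(A,-) = C(m_1)\otimes_{k^a} C(m_2)$ is split. Then $(A,-)$ is central simple structurable, so $K(A,-)$ is a central simple Lie algebra by \cite[\S5]{allison1979models}. Evaluating \eqref{eq.dim.formula} in each of the nine admissible pairs $(m_1,m_2)$ yields the dimensions $3, 8, 21, 52, 35, 78, 66, 133, 248$ respectively; these uniquely determine the absolute type of $K(A,-)$ except when $\dim K = 21$, where either $B_3$ or $C_3$ could a priori occur. This ambiguity in the $(4,1)$ case can be settled by identifying $K(C(4),-)$ with $\mathfrak{sp}_6$ of type $C_3$, in agreement with the Freudenthal--Tits magic square that the TKK construction on tensor products of composition algebras realises.

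With the absolute type of $K(A,-)$ identified, the 5-grading corresponds to a cocharacter $\lambda:\mathbf{G}_m \to \mathbf{Aut}(K(A,-))^\circ$ via the dictionary recalled in \ref{sec.adj.simple}, and so determines a labelled Dynkin diagram. Stavrova's classification \cite[Theorem 5.10]{stavrova2018classification} drastically restricts the possibilities: the diagram is symmetric, all labels lie in $\{0,1\}$, and either one or two of them equal $1$. Moreover, as explained in \ref{sec.roots.gradings}, the grading has support $\{-n,\dots,n\}$ precisely when $\sum_{\beta\in\Pi}m_\beta(\tilde\alpha)\ell(\beta) = n$, which in our setting is $2$ (or $1$ in the degenerate $(1,1)$ case where $\Skew = 0$). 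Only a short list of candidate labellings survives.

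The final step is to distinguish among the remaining candidates using $\dim K_{\pm 1} = m_1 m_2$ and $\dim K_{\pm 2} = m_1 + m_2 - 2$. For any candidate labelling of a given Dynkin diagram, $\dim K_i$ is obtained by counting positive roots $\alpha$ of the associated Cartan grading with $\ell(\alpha) = i$, a routine computation from the explicit root data. Matching these counts with $m_1 m_2$ and $m_1 + m_2 - 2$ isolates the correct labelling. The main obstacle is the breadth of the case analysis rather than any single difficulty: one must work through all nine rows of Table \ref{table.main} and verify the combinatorics, but in each case the constraints above leave a unique labelling, namely the one displayed in the table.
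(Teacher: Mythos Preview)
Your approach is essentially the same as the paper's: pass to the algebraic closure, identify the absolute type from the dimension formula \eqref{eq.dim.formula}, restrict the possible labelings via Stavrova's classification, and then single out the correct one by matching the component dimensions $(\dim K_{\pm 2},\dim K_{\pm 1})=(m_1+m_2-2,\,m_1m_2)$ against the candidates. One small remark: your appeal to the magic square (or to an identification $K(C(4),-)\simeq\mathfrak{sp}_6$) for the $B_3/C_3$ ambiguity at $(4,1)$ is both vague and unnecessary, since your own dimension-matching step already rules out $B_3$---neither of its two strict $5$-gradings has $\dim K_1=4$; the paper makes exactly this combinatorial observation (though its proof contains a typo, naming the ambiguous case as $(2,4)$ rather than $(4,1)$, which you have correctly identified).
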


\begin{table}[hbt]
\begin{center}
\begin{tabular}{c|x{0.5cm}x{1.5cm}x{2cm}x{3cm}x{3cm}}
                   $(m_1, m_2)$    &     & $1$ & $2$  & $4$   & $8$  \\ \hline
  $1$ &&
% (1,1)
\begin{pspicture}(0,0)(1.5,1.25)
	\psset{arcangle=15,nodesep=2pt}
	\rput(0,-0.5){
	\scalebox{0.9}{
	\rput(0,0.75){$\bullet$}\rput(0,0.5){\tiny $1$}
	}}
\end{pspicture} & 
% (2,1)
\begin{pspicture}(0,0)(1.5,1.25)
	\psset{arcangle=15,nodesep=2pt}
	\rput(0,-0.5){
	\scalebox{0.9}{
	\rput(0,0.75){$\bullet$}\rput(0,0.5){\tiny $1$}
	\rput(0.5,0.75){$\bullet$}\rput(0.5,0.5){\tiny $1$}	
	\psline(0,0.75)(0.5,0.75)
	}
}
\end{pspicture}
  & 
% (4,1)
\begin{pspicture}(0,0)(2.75,0.75)
	\psset{arcangle=15,nodesep=2pt}
	\rput(0,-0.5){
	\scalebox{0.9}{
	\rput(0,0.75){$\bullet$}\rput(0,0.5){\tiny $0$}
	\rput(0.5,0.75){$\bullet$}\rput(0.5,0.5){\tiny $1$}	
	\rput(1,0.75){$\bullet$}\rput(1,0.5){\tiny $0$}	
	\psline(0,0.75)(0.5,0.75)
	\psline(0.5,0.80)(1,0.80)
	\psline(0.5,0.70)(1,0.70)
	\rput(0.75,0.75){\small     $<$}
	}
}
\end{pspicture} 
  &
% (8,1)
\begin{pspicture}(0,0)(2.75,0.75)
	\psset{arcangle=15,nodesep=2pt}
	\rput(0,-0.5){
	\scalebox{0.9}{
	\rput(0,0.75){$\bullet$}\rput(0,0.5){\tiny $0$}
	\rput(0.5,0.75){$\bullet$}\rput(0.5,0.5){\tiny $0$}	
	\rput(1,0.75){$\bullet$}\rput(1,0.5){\tiny $0$}	
	\rput(1.5,0.75){$\bullet$}\rput(1.5,0.5){\tiny $1$}	
	\psline(0,0.75)(0.5,0.75)
	\psline(0.5,0.80)(1,0.80)
	\psline(0.5,0.70)(1,0.70)
	\rput(0.75,0.75){\small     $>$}
	\psline(1,0.75)(1.5,0.75)
	}
}
\end{pspicture}  \\
                        $2$ & &     &      & 
% (4,2)
\begin{pspicture}(0,0)(2.75,1.25)
	\psset{arcangle=15,nodesep=2pt}
	\rput(0,-0.5){
	\scalebox{0.9}{
	\rput(0,1){$\bullet$}\rput(0,0.75){\tiny $0$}
	\rput(0.5,1){$\bullet$}\rput(0.5,0.75){\tiny $1$}	
	\rput(1,1){$\bullet$}\rput(1,0.75){\tiny $0$}	
	\rput(1.5,1){$\bullet$}\rput(1.5,0.75){\tiny $1$}	
	\rput(2,1){$\bullet$}\rput(2,0.75){\tiny $0$}
	\psline(0,1)(2,1)
	}
}
\end{pspicture}
                         &
% (8,2)
\begin{pspicture}(0,0)(2.75,1.25)
	\psset{arcangle=15,nodesep=2pt}
	\rput(0,-0.5){
	\scalebox{0.9}{
	\rput(0,0.75){$\bullet$}\rput(0,0.5){\tiny $1$}
	\rput(0.5,0.75){$\bullet$}\rput(0.5,0.5){\tiny $0$}	
	\rput(1,0.75){$\bullet$}\rput(1,0.5){\tiny $0$}	
	\rput(1,1.25){$\bullet$}\rput(1,1.5){\tiny $0$}	
	\rput(1.5,0.75){$\bullet$}\rput(1.5,0.5){\tiny $0$}	
	\rput(2,0.75){$\bullet$}\rput(2,0.5){\tiny $1$}			
	\psline(0,0.75)(2,0.75)
	\psline(1,0.75)(1,1.25)
	}
}
\end{pspicture} \\
                        $4$  & &     &      &
% (4,4) 
\begin{pspicture}(0,0)(2.75,1.25)
	\psset{arcangle=15,nodesep=2pt}
	\rput(0,-0.5){
	\scalebox{0.9}{
	\rput(0,1){$\bullet$}\rput(0,0.75){\tiny $0$}
	\rput(0.5,1){$\bullet$}\rput(0.5,0.75){\tiny $0$}	
	\rput(1,1){$\bullet$}\rput(1,0.75){\tiny $0$}	
	\rput(1.5,1){$\bullet$}\rput(1.5,0.75){\tiny $1$}	
	\rput(2,1){$\bullet$}\rput(2,0.75){\tiny $0$}
	\rput(2.433,1.25){$\bullet$}\rput(2.433,1.50){\tiny $0$}			
	\rput(2.433,0.75){$\bullet$}\rput(2.433,0.50){\tiny $0$}					
	\psline(0,1)(2,1)
	\psline(2,1)(2.433,1.25)
	\psline(2,1)(2.433,0.75)
	}
}
\end{pspicture}
  & 
%(8,4)
  \begin{pspicture}(0,0)(2.75,1.25)
	\psset{arcangle=15,nodesep=2pt}
	\rput(0,-0.5){
	\scalebox{0.9}{
	\rput(0,0.75){$\bullet$}\rput(0,0.5){\tiny $0$}
	\rput(0.5,0.75){$\bullet$}\rput(0.5,0.5){\tiny $0$}	
	\rput(1,0.75){$\bullet$}\rput(1,0.5){\tiny $0$}	
	\rput(1,1.25){$\bullet$}\rput(1,1.5){\tiny $0$}	
	\rput(1.5,0.75){$\bullet$}\rput(1.5,0.5){\tiny $0$}	
	\rput(2,0.75){$\bullet$}\rput(2,0.5){\tiny $1$}
	\rput(2.5,0.75){$\bullet$}\rput(2.5,0.5){\tiny $0$}				
	\psline(0,0.75)(2.5,0.75)
	\psline(1,0.75)(1,1.25)
	}
}
\end{pspicture}  \\
                        $8$ & &     &      &       &
%(8,8)
\begin{pspicture}(0,0)(2.75,1.25)
	\psset{arcangle=15,nodesep=2pt}
	\rput(0,-0.5){
	\scalebox{0.9}{
	\rput(0,0.75){$\bullet$}\rput(0,0.5){\tiny $1$}
	\rput(0.5,0.75){$\bullet$}\rput(0.5,0.5){\tiny $0$}	
	\rput(1,0.75){$\bullet$}\rput(1,0.5){\tiny $0$}	
	\rput(1,1.25){$\bullet$}\rput(1,1.5){\tiny $0$}	
	\rput(1.5,0.75){$\bullet$}\rput(1.5,0.5){\tiny $0$}	
	\rput(2,0.75){$\bullet$}\rput(2,0.5){\tiny $0$}
	\rput(2.5,0.75){$\bullet$}\rput(2.5,0.5){\tiny $0$}			
	\rput(3,0.75){$\bullet$}\rput(3,0.5){\tiny $0$}			
	\psline(0,0.75)(3,0.75)
	\psline(1,0.75)(1,1.25)
	}
}
\end{pspicture} \\ \
\end{tabular}
\caption{Labelled Dynkin diagrams of $K(A,-)$ where $(A,-)$ is an $(m_1, m_2)$ product algebra.} \label{table.main}
\end{center}
\end{table}

\begin{proof}
%	Using the Block-Wilson-Premet-Strade classification of simple Lie algebras over algebraically closed fields of characteristic not 2 or 3,
	Stavrova \cite[Theorem 3.3]{stavrova2018classification} has shown that the only nontrivially 5-graded simple Lie algebras over algebraically closed fields of characteristic not 2 or 3 are the ``classical" ones, i.e.\! $L= [\Lie(G),\Lie(G)]$ for some adjoint simple group $G$. We are therefore in the setting of~\ref{sec.adj.simple}.

	The dimension of $K(A, -)$ is determined by \eqref{eq.dim.formula}, and this is equal to either $|\Phi| + \operatorname{rank}(\Phi)$ or $|\Phi| + \operatorname{rank}(\Phi)-1$ for some irreducible root system $\Phi$, with the second possibility only permitted for root systems of type $A_n$, $n \ge 4$. For each $(m_1, m_2)$, there is exactly one possibility for $\Phi$, except when $(m_1, m_2) = (2,4)$ because $C_3$ has the same number of roots and the same rank as $B_3$. But $B_3$ is not the type of $K(A,-)$ when $(m_1, m_2) = (2,4)$ because the corresponding split Lie algebra has no 5-gradings with components of dimension $(3,4,7,4,3)$. This data determines the absolute type (or unlabelled Dynkin diagram) of the Lie algebra $K(A,-)$.
	
	To determine the labels on the diagrams, we checked all the possible labellings corresponding to 5-gradings on the Lie algebras.  There are usually not many strict 5-gradings, and one can single them out using the highest root method from  \ref{sec.roots.gradings}. In each case there is only one grading with components of dimension exactly $(\dim S, \dim A, \dim V_{A,A}, \dim A, \dim S)$. This purely combinatorial task was done with the assistance of the \emph{Root Systems} package in SageMath \cite{sagemath}.
\end{proof}

\subsection{Automorphism group schemes of TKK Lie algebras} \label{sec:autKA} Let $(A,-)$ be a central simple structurable algebra, and let $G = \mathbf{Aut}(K(A,-))$. Then $G^\circ$ is an adjoint absolutely simple algebraic group (see \cite[Theorem 4.1.1]{boelaert2019moufang} and \cite[Theorem 4.7]{stavrova2018classification}) and it has $k$-rank $\ge 1$ because $K(A,-)$ is a $\ZZ$-graded algebra (see \ref{sec:Z-graded_algebras}). If $G$ is split, the quotient of $G(k)$ by the connected component of its identity (in the Zariski topology) is isomorphic to the automorphism group of the Dynkin diagram	 of $K(A,-)$  \cite[4.7]{steinberg1961automorphisms}. Therefore $\pi_0(G)$ is an \'etale group scheme of order $1$, $2$, $3$, or $6$ according to the number of symmetries of the Dynkin diagram.
In particular,  if $(A,-)$ is an $(m_1, m_2)$-product algebra then $G$ is connected if and only if $(m_1, m_2) = (8,8)$, $(8,4)$, $(8,1)$, $(4,1)$, and $(1,1)$. For roots systems  $F_4$ and $E_8$, the root lattice coincides with the weight lattice \cite[24.A]{knus1998book} and this implies $G$ is simply connected when $(m_1, m_2) = (8,8)$ or~$(8,1)$.

\begin{lemma} \label{lem:connected str}
	Let $(A,-)$ be a central simple structurable algebra, let $G = \mathbf{Aut}(K(A,-))$, and let $\lambda: \mathbf{G}_m \to G$ be the grading torus. Then $\pi_0(C_G(\lambda)) \simeq \pi_0(G)$. 	
\end{lemma}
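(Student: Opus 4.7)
The plan is to show that the natural map $\pi_0(C_G(\lambda)) \to \pi_0(G)$ induced by the inclusion is a bijection. Injectivity is essentially routine: by Lemma~\ref{lem.centraliser} we have $C_G(\lambda)^\circ = C_{G^\circ}(\lambda)$, and since $\lambda$ factors through the connected component $G^\circ$ this agrees with $C_G(\lambda) \cap G^\circ$, so the kernel of $C_G(\lambda) \to \pi_0(G)$ is precisely the identity component.

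For surjectivity I would first observe that $C_G(\lambda)$ is smooth: applying Lemma~\ref{lem.L_0} to $G^\circ$ shows that $C_G(\lambda)^\circ = C_{G^\circ}(\lambda)$ is reductive, so the whole of $C_G(\lambda)$ is smooth and both $\pi_0(C_G(\lambda))$ and $\pi_0(G)$ are \'etale $k$-group schemes. Consequently surjectivity can be tested on $k^a$-points. By \S\ref{sec:autKA}, once $G^\circ$ is split over $k^a$, the group $\pi_0(G)(k^a)$ is identified with the automorphism group $\mathrm{Aut}(\Delta)$ of the Dynkin diagram $\Delta$ of $K(A,-)$. Given $\tau \in \mathrm{Aut}(\Delta)$, I would choose a pinning of $G^\circ_{k^a}$ whose maximal torus $T_0$ contains $\lambda(\mathbf{G}_m)_{k^a}$ and whose base $\Pi$ of simple roots makes all labels $\ell(\alpha)$, $\alpha \in \Pi$, non-negative, as in \S\ref{sec.labelled.dynkin}; the pinning then produces a distinguished lift $\hat\tau \in G(k^a)$ that stabilises $T_0$ and permutes the simple root spaces according to $\tau$. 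Conjugation by $\hat\tau$ carries $\lambda$ to the cocharacter of $T_0$ whose pairing with $\alpha \in \Pi$ equals $\ell(\tau^{-1}(\alpha))$.

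The crux of the proof---and the only step that requires real input---is showing that $\ell(\tau^{-1}(\alpha)) = \ell(\alpha)$ for every $\alpha \in \Pi$ and every $\tau \in \mathrm{Aut}(\Delta)$, i.e., that the labelled Dynkin diagram attached to $(G,\lambda)$ is $\mathrm{Aut}(\Delta)$-invariant. This is a general feature of TKK constructions guaranteed by Stavrova's symmetry result \cite[Theorem~5.10]{stavrova2018classification}; alternatively, since the only cases in which $\pi_0(G)$ is nontrivial are $(m_1,m_2) \in \{(2,1),(4,2),(8,2),(4,4)\}$, one may simply inspect the corresponding entries of Table~\ref{table.main} and verify in each case that the label pattern is preserved by the diagram symmetry. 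Once this symmetry is in hand, $\hat\tau \lambda \hat\tau^{-1} = \lambda$, so $\hat\tau$ lies in $C_G(\lambda)(k^a)$ and maps to $\tau$ in $\pi_0(G)(k^a)$, completing the surjectivity argument. The main subtlety I anticipate is ensuring that the pinning really does furnish a lift inside $G$ centralising $\lambda$ on the nose (rather than only up to an inner automorphism of $T_0$), but this is standard for adjoint groups in the characteristic range $\Char(k) \neq 2, 3$.
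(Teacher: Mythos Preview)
Your proposal is correct and follows essentially the same line as the paper's proof: both use Stavrova's symmetry result \cite[Theorem~5.10]{stavrova2018classification} to see that every connected component of $G$ meets $C_G(\lambda)$, and Lemma~\ref{lem.centraliser} to see that $C_G(\lambda)\cap G^\circ = C_{G^\circ}(\lambda)$ is connected. One small caveat: your fallback of inspecting Table~\ref{table.main} only covers the $(m_1,m_2)$-product case, whereas the lemma is stated for all central simple structurable algebras, so you do need Stavrova's general result (which you cite) rather than the table alone.
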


\begin{proof}
The labelled Dynkin diagram of $K(A,-)$ is preserved by all graph automorphisms \cite[Theorem 5.10]{stavrova2018classification}. This implies that each of the connected components in $G$  has nonempty intersection with $C_G(\lambda)$. By Lemma  \ref{lem.centraliser}, the group $C_G(\lambda) \cap G^\circ = C_{G^\circ}(\lambda)$ is connected, so the inclusion map induces an isomorphism $\pi_0(C_G(\lambda)) \simeq \pi_0(G)$. 	
\end{proof}

\subsection{Structure groups: a preview} \label{sec:connected str}  Let $(A,-)$ be an $(m_1, m_2)$-product algebra, and let $H = \mathbf{Str}(A,-)$. Recall that $H \simeq  C_G(\lambda)$, where $\lambda$ is the grading torus in $G$. In particular, by Lemma \ref{lem:connected str}, $H$ is connected if $(m_1, m_2) = (8,8)$, $(8,4)$, $(8,1)$, $(4,1)$, and $(1,1)$, and otherwise it has two connected components.

 The connected structure group $H^\circ$ is a Levi subgroup of a certain parabolic subgroup of $G^\circ$ \cite[Proposition 20.4]{borel}. Given the data from Table \ref{table.main}, it is already possible to determine the isogeny class of the semisimple group $(H^\circ)^{\rm der}$ in the split case -- information that appears later in Table \ref{table.structure-groups}~(E). This task is as easy as deleting the vertices labelled with 1's in Table \ref{table.main}, and writing down the names of the Dynkin diagrams that remain. The validity of this method follows from \cite[Theorem 8.1.5(i)]{springer}, which shows that $(H^\circ)^{\rm der}$ is the semisimple group generated by the root groups $\{U_\alpha \mid \alpha \in \Phi(G^\circ,T) , \ell(\alpha) = 0\}$.

When $G^\circ$ is split, it should be possible in principle to determine $H^\circ$ and $(H^\circ)^{\rm der}$ up to isomorphism (not just isogeny)  from the root datum  of  $G^\circ$. However, we have not done any such calculations with root data because we obtain more general results in the next section.
In case $(m_1, m_2) = (1,8)$ or $(8,8)$, the task is slightly easier because $G$ is simply connected, so $H^\circ = H$ and  $H^{\rm der}$ must be a simply connected semisimple group \cite[Exercise~8.4.6(6)]{springer}. This implies that   $H^{\rm der} \simeq \mathbf{Spin}_{7}$ or $\mathbf{Spin}_{14}$, these being the split simply connected groups of type $B_3$ and $D_7$ respectively.

In the next section we determine $H^\circ$ for an arbitrary $(m_1, m_2)$-product algebra over $k$.  The main technique is to study some homomorphisms between these connected structure groups and other well-known algebraic groups. These homomorphisms often fit into commutative diagrams and the problem reduces to diagram-chasing. The results obtained in \S\ref{sec:structure groups} are completely independent of those from \S\ref{sec:TKK Lie algebras}.

\section{Structure groups} \label{sec:structure groups}

The goal of this section is to  determine precisely the connected structure groups of all $(m_1, m_2)$-product algebras. We deal with the associative cases first, followed by the non-associative cases, in which a quadratic form called the Albert form plays a very important role. Some of the results in this section appear implicitly or explicitly in \cite{allison1988tensor}.  However, we make fewer restrictions on the characteristic of $k$ (only assuming $\Char(k) \ne 2,3$) and we work exclusively with algebraic groups rather than their Lie algebras. 

\subsection{Left and right multiplication operators} \label{sec:L-isotopes}

If $(A,-)$ is a structurable $R$-algebra, we define $A^* \subset A\setminus\{0\}$ to be the set of conjugate-invertible elements in $A$, and $S^* = \Skew(A,-) \cap A^*$. 
We use some important facts from \cite[\S11]{allison1981isotopes} about skew invertible elements, namely that $S^* = \{s \in S \mid L_s \in \GL(A)\}$ and $L_s$ is an isotopy for all $s \in S^*$, with $\hat {L}_{s} = L_{\hat s} = -L_s^{-1}$.
	Similar statements hold for left multiplication by nuclear elements and right multiplication by nuclear similitudes. Lacking a reference, we prove these statements below.
	  
\begin{lemma} \label{lem:nuclear-isotopes}
	If $(A,-)$ is a  structurable algebra over $k$ and $n \in \Nuc(A)$, then the following notions of invertibility are equivalent:
	\begin{enumerate}[\rm (1)]
	\item	$xn = nx = 1$ has a solution in $A$,
	\item $L_n\in \GL(A)$,
	\item $R_n \in \GL(A)$,
	\item $n$ is conjugate-invertible.
	\end{enumerate}
	Assuming these conditions are met, $\hat n = L_{\bar n}^{-1}(1)$ and $L_n$ is an isotopy with $\hat{L}_n = L_{\bar n}^{-1}$. In contrast,
		 $R_n$ is an isotopy if and only if $n\bar n \in Z(A)$, and in this case $\hat{R}_n = R_{\bar n}^{-1}$.
 \end{lemma}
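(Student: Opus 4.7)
The proof has two parts: establishing the equivalences $(1)\Leftrightarrow(2)\Leftrightarrow(3)\Leftrightarrow(4)$, and then analyzing when $L_n$ and $R_n$ are isotopies. For the equivalences, the key tool throughout is nuclear associativity: since $n\in\Nuc(A)$ (hence also $\bar n\in\Nuc(A)$), the identities $(na)b=n(ab)$, $(an)b=a(nb)$, and $(ab)n=a(bn)$ hold universally. From these, $(1)\Leftrightarrow(2)$ and $(1)\Leftrightarrow(3)$ are routine: if $xn=nx=1$ then $L_nL_x=\id$ by direct expansion, and conversely if $L_n\in\GL(A)$ then surjectivity supplies $x$ with $nx=1$, and the identity $L_n(xn)=(nx)n=n=L_n(1)$ combined with injectivity yields $xn=1$. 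For $(1)\Rightarrow(4)$, applying $(1)\Leftrightarrow(2)$ to $\bar n$ makes $\hat n:=L_{\bar n}^{-1}(1)$ well-defined; substituting $\overline{\hat n}=n^{-1}$ and $\hat n=\bar n^{-1}$ into $V_{n,\hat n}(z)=(n\overline{\hat n})z+(z\overline{\hat n})n-(z\bar n)\hat n$ and collapsing each bracket via the nucleus identities gives $V_{n,\hat n}(z)=z+z-z=z$. For $(4)\Rightarrow(2)$, the formula $U_n(y)=2(n\bar y)n-(n\bar n)y$ rewrites as $U_n(y)=n[2\bar y n-\bar n y]$ for nuclear $n$, exhibiting $U_n$ as $L_n\circ S$ for some linear endomorphism $S$; since $U_n$ is invertible as a consequence of conjugate-invertibility, $L_n$ must be surjective and hence bijective in our finite-dimensional setting.

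For the isotopy claim about $L_n$, I will verify the identity
\[
L_n\circ V_{x,y}=V_{nx,\,\bar n^{-1}y}\circ L_n
\]
by direct expansion on an arbitrary $w\in A$. Since $n,n^{-1},\bar n,\bar n^{-1}$ all lie in $\Nuc(A)$, products involving any of them can be freely re-bracketed, and each of the three summands of $V_{nx,\bar n^{-1}y}(nw)$ collapses to the corresponding summand of $L_nV_{x,y}(w)$ with no residual term.

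The main obstacle is the $R_n$-case, where right-multiplication does not commute as cleanly with the internal structure of $V_{x,y}$. Performing the analogous expansion of $R_n\circ V_{x,y}\circ R_n^{-1}$ against $V_{xn,\,R_{\bar n}^{-1}y}$, the first two summands match automatically but the third reduces to the identity
\[
n^{-1}(wn)=(\bar n w)\bar n^{-1}\qquad\text{for all }w\in A.
\]
Multiplying this relation on the left by $n$ and on the right by $\bar n$ and then simplifying via the nucleus identities collapses it to the commutation relation $w(n\bar n)=(n\bar n)w$; combined with $n\bar n\in\Nuc(A)$, this is exactly the condition $n\bar n\in Z(A)$. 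Because $L_n$ and $R_{\bar n}$ are invertible the derivation runs in both directions, so $R_n$ is an isotopy if and only if $n\bar n\in Z(A)$, and when the condition holds the same computation delivers $\hat R_n=R_{\bar n}^{-1}$.
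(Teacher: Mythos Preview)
Your argument for the equivalences $(1)\Leftrightarrow(2)\Leftrightarrow(3)\Leftrightarrow(4)$ and for $L_n$ being an isotopy is correct and matches the paper's approach closely; your factorisation $U_n = L_n\circ S$ for $(4)\Rightarrow(2)$ is a minor variation on the paper's observation $U_n(A)\subset nA$, and your direct verification of $L_n V_{x,y}=V_{nx,\bar n^{-1}y}L_n$ is exactly what the paper calls ``straightforward''.

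The gap is in the ``only if'' direction for $R_n$. Your computation shows that
\[
R_n V_{x,y} R_n^{-1} = V_{xn,\,y\bar n^{-1}}\qquad\text{for all }x,y
\]
holds if and only if $n\bar n\in Z(A)$. This gives the implication $n\bar n\in Z(A)\Rightarrow R_n$ is an isotopy with $\hat R_n=R_{\bar n}^{-1}$. But the converse you need is: if $R_n$ is an isotopy with \emph{some} $\beta$, then $n\bar n\in Z(A)$. You have not argued why the companion map $\beta$ must equal $R_{\bar n}^{-1}$; a~priori a different $\beta$ could make $R_nV_{x,y}R_n^{-1}=V_{xn,\beta(y)}$ hold even when your third-summand identity fails. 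The phrase ``the derivation runs in both directions'' only establishes the equivalence between your displayed identity and $n\bar n\in Z(A)$, not between the latter and the existential statement ``$R_n$ is an isotopy''.

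The paper closes this gap differently: it writes $R_n=\Int(n)^{-1}\circ L_n$ and uses that $L_n$ is already known to be an isotopy, so $R_n$ is an isotopy iff $\Int(n)$ is. Since $\Int(n)(1)=1$, Allison's result \cite[Corollary~8.6]{allison1981isotopes} says $\Int(n)$ is an isotopy iff it preserves the involution, which one checks is equivalent to $n\bar n\in Z(A)$. Your direct approach can likely be repaired by first pinning down $\beta$ (for instance, specialising the isotopy identity at $x=z=1$ forces $2\overline{\beta(y)}-\beta(y)$ and hence $\beta(y)$ in characteristic $\ne 3$), but as written the argument is incomplete.
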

 
 \begin{proof}
 We first show that (1)--(3) are equivalent. Assuming (1), for all $b \in A$ we have a solution to $ny = b$ because $n(xb) = (nx)b = b$, so (3) holds. Assuming (3), let $x = L_n^{-1}(1)$. Then $nx = L_n(x) = 1$ and since $n \in \Nuc(A)$ this implies $\id = L_{nx} = L_n L_x$, so $L_x = L_n^{-1}$. Consequently $xn = L_xL_n(1) = L_{n}^{-1}L_n(1) = 1$, so (1) holds. Symmetrically, we can prove that (1) is equivalent to (3).
  
 Assuming (1)--(3), it is clear that not only $n$ but also $\bar n$ is an invertible nuclear element, and we may write $\bar{n}^{-1} = \overline{n^{-1}}$ without causing confusion. It is straightforward to show that $V_{nx,\bar{n}^{-1}y}(nz) = nV_{x,y}(z)$ and therefore $L_n$ is an isotopy with $\hat{L}_n = L_{\bar{n}^{-1}} = L_{\bar n}^{-1}$. From this it follows that $V_{n,\bar{n}^{-1}} = L_n V_{1,1} L_{n}^{-1}= L_n\id L_n^{-1} = \id$, so (4) holds and $\hat n = \bar{n}^{-1} = L_{\bar n}^{-1}(1)$. Assuming only (4), $U_n \in \GL(A)$ by \cite[\S6]{allison1981isotopes}, so $A = U_n(A) \subset nA \subset A$. Then $L_n$ is surjective and (2) holds because $A$ is finite-dimensional.
 
 Lastly, $R_n$ is an isotopy if and only if $\Int(n) = L_n R_n^{-1}$ is too, and $\Int(n)$ is an isotopy if and only if it preserves the involution  \cite[Corollary 8.6]{allison1981isotopes}, or equivalently if $n\bar n \in Z(A)$. It is easy to show that $V_{xn,y\bar n^{-1}}(zn) = V_{x,y}(z)n$ using the fact that $n \bar n \in Z(A)$, so $\hat R_n = R_{\bar n^{-1}}=R_{\bar n}^{-1}$.
 \end{proof}

\subsection{Structure groups of associative central simple algebras with involution} \label{sec:associative}

Let $(A,-)$ be an associative central simple algebra with involution over $k$ and let  $F = Z(A)$, which is either $k$ or a quadratic \'etale extension of $k$. The most obvious subgroups of $\mathbf{Str}(A,-)$ are $\mathbf{Aut}(A,-)$ and the copy of $\mathbf{GL}_1(A)$ embedded in $\mathbf{Str}(A,-)$ by the invertible left-multiplication operators (see Lemma \ref{lem:nuclear-isotopes}). In \cite[\S10~(1)]{allison1981isotopes} it is shown that the abstract group $\Str(A,-)$ is indeed generated by  $L_{A^\times}$ and $\Aut(A,-)$, which gives a semidirect product decomposition:
\begin{equation} \label{eq:str-semidirect}
	\Str(A,-) = L_{A^\times} \rtimes \Aut(A,-).
\end{equation}
Using the fact that $\mathbf{Str}(A,-)$ and $\mathbf{Aut}(A,-)$ are smooth, one can make this into a statement about algebraic groups, namely that there is a split short exact sequence:
\begin{equation} \label{diag:associative ses}
	\begin{tikzcd}
		1 \ar[r] & \mathbf{GL}_1(A) \ar[r] & \mathbf{Str}(A,-) \ar[r,"e" ,shift left] & \mathbf{Aut}(A,-) \ar[l,"i", shift left] \ar[r] & 1.
	\end{tikzcd}
\end{equation}
However, we prefer a different description so that we can determine the derived subgroup of $\mathbf{Str}(A,-)^\circ$ and express it as an almost-direct product of simple groups. Consider the homomorphism \begin{align*}
	\phi: \mathbf{GL}_1(A) \times \mathbf{Sim}(A,-) \to \mathbf{Str}(A,-)^\circ, &&
	\phi_R(x,y) & =  L_x R_{y^{-1}}
\end{align*}
for all $x \in A_R^\times$ and $y \in \Sim(A_R,-)$. It is easy to show that $\ker(\phi_R) = \{ (c, c^{-1}) \mid c \in F_R^\times \} \simeq F_R^\times$. We have $\mathbf{Aut}(A,-)^\circ = \mathbf{Aut}_F(A,-) =  \Int(\mathbf{Sim}(A,-))$ \cite[(23.3)]{knus1998book}, so by \eqref{eq:str-semidirect} every isotopy in the identity component of $\Str(A,-)$ is of the form $L_w \Int(z) = L_w L_z R_{z^{-1}} = L_{wz}R_{z^{-1}} = \phi(wz,z^{-1})$ for some $w \in A^\times$ and $z \in \Sim(A,-)$. Since all the groups involved are smooth, $\phi$ is a surjection. This yields the exact sequence:
\[
\begin{tikzcd}
	1 \ar[r] & \mathbf{GL}_1(F)	 \ar[r] & \mathbf{GL}_1(A) \times \mathbf{Sim}(A,-) \ar[r, "\phi"] &  \mathbf{Str}(A,-)^\circ \ar[r]& 1.
\end{tikzcd}
\]

Using the notation established in \cite[pp.\!~159,~346--347]{knus1998book},
\[
\mathbf{Sim}(A,-) = \begin{cases}
	\mathbf{GO}(A,-) & \text{if }(m_1, m_2) = (4,4)\\
	\mathbf{GU}(A,-) & \text{if }(m_1, m_2) = (4,2)\\
	\mathbf{GSp}(A,-) & \text{if }(m_1, m_2) = (4,1). 	
 \end{cases}
\]

\subsection{Generalities on similitudes of quadratic spaces} \label{sec:generalities quadratic spaces}

Let $(V,q)$ be an even-dimensional quadratic space over $k$. Every isometry $\nu\in \operatorname{O}(V,q)$ induces an automorphism $\tilde C(\nu)$ of the full Clifford algebra $C(V,q)$, and every similitude $\beta\in \GO (V,q)$ induces an automorphism $C(\beta)$ of the even Clifford algebra $C^+(V,q)$  \cite[Proposition 13.1]{knus1998book}. Concretely,
\begin{align} \label{C-map}
\tilde{C}(\nu)(v_1\dots v_r) &= \nu(v_1) \dots \nu(v_r), \\
C(\beta)(v_1\dots v_{2r}) &= \mu(\beta)^{-r}\beta(v_1) \dots \beta(v_{2r})
\end{align}
for all $v_1, \dots, v_{2r} \in V \subset C(V,q)$, where $\mu(\beta) \in k^\times$ is the multiplier of $\beta$.   These maps are homomorphisms: \begin{align*}\tilde C: \operatorname{O}(V,q) &\to \Aut(C(V,q)) \\ C: \GO(V,q) &\to \Aut(C^+(V,q)).\end{align*}
Of course $\tilde{C}(\nu)$ preserves the $\ZZ/2\ZZ$-grading on $C(V,q)$, and the restriction of $\tilde{C}(\nu)$ to $C^+(V,q)$ is  $C(\nu)$.
 Moreover, $C(\beta)$ fixes the centre $Z = Z(C^+(V,q))$ if and only if $\beta \in \GO^+(V,q)$; i.e., $\beta$ is a proper similitude \cite[Proposition~13.2]{knus1998book}. In other words, $C$ restricts to a homomorphism $C: \GO^+(V,q) \to \Aut_Z(C^+(V,q))$ whose kernel is $k^\times \id$. 
 
 This all works on the level of algebraic groups, so there are canonical homomorphisms $\tilde{C}: \mathbf{O}(V,q) \to \mathbf{Aut}(C(V,q))$ and $C: \mathbf{GO^+}(V,q) \to \mathbf{Aut}_Z(C^+(V,q))$.  Since all $k$-automorphisms of $C(V,q)$ and all $Z$-automorphisms of $C^+(V,q)$ are inner, the homomorphisms
  \begin{align*}
 	\Int: \mathbf{GL}_1(C(V,q)) \to \mathbf{Aut}(C(V,q)), && \Int: \mathbf{GL}_1(C^+(V,q)) \to \mathbf{Aut}_Z(C^+(V,q))
 \end{align*}
 are both surjective.
 
 	Let $\tau: s_1\dots s_{2r} \mapsto s_{2r} \dots s_1$ be the main involution on $C^+(V,q)$.
The following subgroups of $\mathbf{GL}_1(C^+(V,q))$ are well-known and the equalities below can serve as their definitions (see \cite{knus1998book, scharlau1985quadratic}):
	\begin{align*}
 	\mathbf{\Omega}(V,q) &= \Int^{-1}(C(\mathbf{GO}^+(V,q)) & \text{\emph{(the extended Clifford group)}} \\
 	\mathbf{\Gamma^+}(V,q) &= \Int^{-1}(\tilde{C}(\mathbf{O}^+(V,q)))\\ &= \Int^{-1}(\tilde C(\mathbf{O}(V,q)))\cap \mathbf{GL}_1(C^+(V,q)) & \text{\emph{(the even Clifford group)}} \\
 	\mathbf{Spin}(V,q) &= \mathbf{Iso}(C^+(V,q),\tau)\cap \mathbf{\Gamma}^+(V,q) & \text{\emph{(the Spin group)}.} 
 	\end{align*}
  A consequence of the Cartan--Dieudonn\'e Theorem is that the $k$-points of $\mathbf{\Gamma^+}(V,q)$ are products $v_1 \dots v_{2r}$, where $v_1, \dots, v_{2r} \in V$ are vectors such that $q(v_i) \ne 0$ \cite[Theorem 4.15]{jacobson2012basicII}. This is in contrast with $\mathbf{\Omega}(V,Q)$ whose $k$-points may include units of $C^+(V,q)$ that are nontrivial linear combinations of such terms. All three of these groups act on  $(C^+(V,q),\tau)$ by involution-preserving inner automorphisms. 
 In summary, we have inclusions
 	\[
 	\mathbf{Spin}(V,q) \quad \subset \quad \mathbf{\Gamma^+}(V,q)\quad \subset\quad \mathbf{\Omega}(V,q)\quad \subset\quad \mathbf{Sim}(C^+(V,q),\tau)\quad \subset \quad \mathbf{GL}_1(C^+(V,q)).
 	\]
 	
	The \emph{vector representation} of $\mathbf{\Gamma}^+(V,q)$ is the homomorphism $\chi: \mathbf{\Gamma}^+(V,q) \to \mathbf{O}^+(V,q)$ where $\chi(x)(v) = xvx^{-1}$ for all $x \in \Gamma^+(V,q)$ and all $v \in V_R$, i.e.\! $\chi = \Int|_V$. And there is a unique homomorphism $\chi': \mathbf{\Omega}(V,q) \to \mathbf{PGO}^+(V,q)$ such that $\chi'(x) = \bar{\beta}$	where $\beta \in \GO^+(V,q)$ is a similitude such that $\Int(x) = C(\beta)$, and $\bar \beta$ is its image in $\PGO^+(V,q)$  (see \cite[(13.19)]{knus1998book}).
	
	 We have the following commutative diagram of algebraic groups, where the rows are exact, the first two columns are injective, and the third column is surjective \cite[(13.24), p.\!~352]{knus1998book}:
	\begin{equation} \label{diag:omega-exact}
		\begin{tikzcd}
		1 \arrow[r] &  \mathbf{G}_m \arrow[r] \ar[d] & \mathbf{\Gamma^+}(V,q) \arrow[r,"\chi"] \ar[d] & \mathbf{O}^+(V,q) \arrow[r] \ar[d] & 1\\
		1 \arrow[r] &  \mathbf{GL}_1(Z) \arrow[r] & \mathbf{\Omega}(V,q) \arrow[r,"\chi'"] & \mathbf{PGO}^+(V,q) \arrow[r] & 1.
		\end{tikzcd}
	\end{equation}
	Since the groups in the first and third columns are smooth and connected, so are the groups in the middle column \cite[Propositions 1.62~\&~5.59]{milne}.
	
	If $(V,q)$ is an odd-dimensional quadratic form, a modified version of this diagram is still valid but it is somewhat degenerate because the two rows are isomorphic: $\mathbf{GL}_1(Z) \simeq \mathbf{G}_m$, $\mathbf{PGO}(V,q) \simeq \mathbf{O}^+(V,q)$  \cite[Proposition~12.4]{knus1998book}. The groups $\mathbf{PGO}^+(V,q)$ and $\mathbf{\Omega}(V,q)$ are not usually defined if $\dim V$ is odd.%	If $\dim V$ is odd, this diagram is still valid but it is somewhat degenerate because the two rows are isomorphic.
	
	If $(V,q)$ is $2n$-dimensional with a split Clifford algebra, then $C^+(V,q) \simeq M_{2^{n-1}}(k) \times M_{2^{n-1}}(k)$. The \emph{half-spin representations} $\rho_i: \mathbf{Spin}(V,q) \to \mathbf{SL}_{2^{n-1}}(k)$ are the projections onto each component of $C^+(V,q)$; in general these are inequivalent representations.

\subsection{The Albert form $Q$ and the $\natural$-map}\cite[\S3]{allison1988tensor} \label{sec:QandNat}
Let $(A,- )$ be an $(m_1,m_2)$-product algebra where $m_1 \ge m_2$ and ${S} = \Skew(A, - )$.
To begin with, suppose that $(A,-)= C_1 \otimes C_2$ is decomposable and write $S_i = \Skew(C_i)= (C_i)_0$ for the skew subspace of $C_i$, and $n_i$ for its norm.
The \emph{Albert form} is the following nondegenerate quadratic form defined on ${S}= S_1 \oplus S_2$:
\begin{align} \label{eq.albert1}
Q(s_1 + s_2) = n_1(s_1) - n_2(s_2) && \text{for all } s_i \in S_i.
\end{align}
In other words $Q = n_1' \perp \langle -1\rangle n_2'$. The identity $Q = n_1 - n_2 = n_1'-n_2'$ holds in   $W(k)$.
Another important  map  is the isometry $\natural \in  \operatorname{O}(S,Q)$, defined as:
	\begin{align} \label{eq.nat}
	(s_1 + s_2)^\natural =  s_1 - s_2 && \text{for all } s_i \in S_i.
	\end{align}
Note that if $m_2 = 1$ then $(A,-)$ is just a composition algebra, $Q$ is the pure norm, and $\natural$ is the identity map.
 If $m_1 = m_2$, the definitions (\ref{eq.albert1}) and (\ref{eq.nat}) depend on a choice of decomposition for $(A,-)$. By Theorem~\ref{thm.equiv2}, the only possible decompositions are $C_1 \otimes C_2$ and $C_2 \otimes C_1$. Changing the order flips the sign of $Q$, but this shall have no material effect on anything that follows.
 	
 \subsection{$Q$ and $\natural$ for indecomposable algebras} \label{sec:QandNat2}

 If $(A,-)$ is indecomposable then it is of the form $\cor_{E/k}(C)$ for a quadratic field extension $E/k$ and an $m$-dimensional composition algebra $C$ over $E$, where $m = 4$ or $8$.  Write $n$ for the norm of $C$. 	Since $(A_E,-)={^\iota C}\otimes_E C$ is decomposable, we have an Albert form $\iota.n' \perp \langle -1 \rangle n'$ defined on $S_E$, in the sense of \ref{sec:QandNat}. Unfortunately, this quadratic form is never in the image of $\res_{E/k}: W(k)\to W(E)$ and so there is no quadratic form on $S$ whose extension to $E$ is isometric to $\iota.n' \perp \langle -1 \rangle n'$.
Nevertheless, it is possible to nominate a pair of maps $Q$ and $\natural$, which play more or less the same role as the ones from~\ref{sec:QandNat}.

Suppose that $E = k(\sqrt{d})$ for some non-square $d \in k$. Define the following nondegenerate quadratic form on $S$:
\begin{align*} 
	Q({^\iota} s\otimes 1 + 1 \otimes s) = \sqrt{d}\big(\iota(n(s))-n(s)\big) && \text{for all } s \in C_0. 
 \end{align*}
 This is the Scharlau transfer of $n'$ along the linear functional $E \to k$, $a + b \sqrt{d} \mapsto -2bd$ for all $a, b \in k$; in other words, $Q = T_{E/k}(\langle -\sqrt{d} \rangle n')$. Clearly $Q_E$ is similar to the Albert form of $(A_E,-)$; the two forms merely differ by a factor $\sqrt{d}$.
 Likewise, we define a map ${\natural}: S \to S$ as follows:
	\begin{align*}
 	({^\iota s} \otimes 1 + 1 \otimes s)^{\natural} = \sqrt{d}({^\iota s } \otimes 1 - 1 \otimes s) = -{^\iota( \sqrt{d} s)}\otimes 1 - 1\otimes (\sqrt{d}s) && \text{for all } s \in C_0.
	\end{align*}
Again $\natural_E$ is $\sqrt{d}$ times the $\natural$-map on $S_E$. The definitions of $Q$ and $\natural$ depend on a choice of $d$, and $\natural$ is no longer an isometry of $Q$ but rather a similitude  of $Q$ with multiplier~$d$.

\subsection{Generalities on invertible skew elements and isotopies} \label{sec:generalities skews}

Up to and including \ref{eq.theta}, we continue to assume that $(A,-)$ is any $(m_1, m_2)$-product algebra over $k$, $S = \Skew(A,-)$, and $Q$ and $\natural$ are as defined in \ref{sec:QandNat} or \ref{sec:QandNat2}.
Recall from \ref{sec:L-isotopes} that if $R$ is a $k$-algebra and $s \in {S}_R$, then $s$ is conjugate-invertible if and only if $L_s \in \GL(A_R)$. In that case  $L_s^{-1} = -L_{\hat s}$. Therefore, if  $c \in R$ and $s \in {S_R}$, then $c s \in {S_R}^*$ if and only if  $c \in R^\times$ and $s \in {S_R}^*$.
Given $s \in {S}_{R}$, one can calculate directly (as in \cite[Proposition 3.3]{allison1988tensor}) that
\begin{align} \label{Ls-identity}
 L_sL_{s^\natural} = -Q_R(s)\id.
\end{align}  
This implies that $s \in {S_R}^*$ if and only if  $Q_R(s) \in  R^\times$. We can calculate the conjugate inverse of $s$ if it has one: 
	\begin{equation} \label{inversion formula}\hat s =  -L_s^{-1}(1) = Q_R(s)^{-1}L_{s^\natural}(1) = Q_R(s)^{-1}s^\natural.
	\end{equation}

Recall the map $\psi: A_R \times A_R \to S_R$ defined in \eqref{eq:psi}. Given $\alpha \in \Str(A_R, - )$ we define $\alpha_S \in \End_R{(S_R)}$, 
\begin{align} \label{eq.alpha-s}
\alpha_S(s) = \tfrac{1}{2}\psi\big(\alpha(s), \alpha(1)\big) && \text{for all } s \in S.
\end{align}
It turns out (see \cite[Lemma 12.1]{allison1981isotopes}) that $\alpha_S \in \GL(S_R)$, and for all $s \in {S}_{R}$, \begin{equation} \label{eq:alphaL}L_{\alpha_{S}(s)} \hat\alpha = \alpha L_s.\end{equation} Exchanging the roles of $\alpha$ and $\hat \alpha$, it also holds that $L_{\hat \alpha_{{S}}(s)}\alpha = \hat \alpha L_s$.
	If $s, t \in S_R$, then \begin{align} \label{line-one} \alpha L_s L_t \alpha^{-1} &= (\alpha L_s \hat \alpha^{-1})( \hat \alpha L_t \alpha^{-1})  = L_{\alpha_S(s)} L_{\hat \alpha_S(t)}.
	\end{align}
	Furthermore,  if $s$ is conjugate-invertible then setting $t = \hat s$ yields
\begin{equation*} L_{\alpha_S(s)} L_{\hat \alpha_S(\hat s)} = \alpha L_s L_{\hat s} \alpha^{-1}  =  \alpha (-\id) \alpha^{-1} =  -\id.\end{equation*}
It follows that $\alpha_S(s)$ is conjugate-invertible if and only if $s$ is so. We can then derive:
\begin{align}\label{widehats}
\widehat{\alpha_{{S}}(s)} =L_{\widehat{\alpha_{S}(s)}}(1) =  -L_{\alpha_{S}(s)}^{-1}(1) = L_{\hat\alpha_{{S}}(\hat s)}(1) = \hat \alpha_{{S}}(\hat s).
\end{align}

	\begin{proposition} \label{gamma}
There is a homomorphism of algebraic groups $\gamma: \mathbf{Str}(A, -) \to \mathbf{GO}({S},Q)$ with $\gamma(\alpha) = \alpha_S$ for all $\alpha \in \Str(A,-)$, where $\alpha_S$ is the linear map defined in (\ref{eq.alpha-s}). \end{proposition}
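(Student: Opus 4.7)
The proof splits into three pieces: verifying that $\alpha_S$ is a similitude of $Q$ (not merely an element of $\mathbf{GL}(S)$), the multiplicativity $(\alpha\beta)_S = \alpha_S\beta_S$, and that $\gamma$ is a morphism of affine schemes. The last is immediate from formula \eqref{eq.alpha-s}: since $\alpha_S(s) = \tfrac{1}{2}\psi(\alpha(s), \alpha(1))$ is polynomial in the matrix entries of $\alpha$ and $s$, the assignment is functorial in $R$.

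The heart of the argument is the similitude step. The plan is to combine two applications of \eqref{Ls-identity}. Applying that identity directly to $\alpha_S(s) \in S_R$ gives
\[
L_{\alpha_S(s)}\,L_{\alpha_S(s)^\natural} = -Q(\alpha_S(s))\,\id,
\]
while conjugating $L_s L_{s^\natural} = -Q(s)\,\id$ by $\alpha$ and invoking \eqref{line-one} (with $t = s^\natural$) yields
\[
L_{\alpha_S(s)}\,L_{\hat\alpha_S(s^\natural)} = -Q(s)\,\id.
\]
Restricting to $s \in S_R^*$, where $L_{\alpha_S(s)}$ is invertible by \ref{sec:L-isotopes}, and canceling between these two equations produces the polynomial identity $Q(s)\,\alpha_S(s)^\natural = Q(\alpha_S(s))\,\hat\alpha_S(s^\natural)$ in $S_R$; as both sides are polynomial in $s$ and $S_R^*$ is Zariski-dense, this identity extends to all $s$. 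Reading it coordinatewise in $R[S]$ as $Q(s)\,\ell_i(s) = Q_\alpha(s)\,\ell'_i(s)$, where $\ell_i,\ell'_i$ are linear forms and $Q_\alpha := Q \circ \alpha_S$, an irreducibility/UFD argument---valid whenever $\dim S \ge 3$, the cases $\dim S \in \{0,1\}$ being trivial---forces $Q \mid Q_\alpha$, hence $Q_\alpha = \mu(\alpha)\,Q$ for a scalar $\mu(\alpha) \in R^\times$. Thus $\alpha_S \in \mathbf{GO}(S,Q)(R)$.

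The multiplicativity drops out from two applications of \eqref{eq:alphaL}, combined with $\widehat{\alpha\beta} = \hat\alpha\hat\beta$ (a quick consequence of $(\alpha\beta) V_{x,y} (\alpha\beta)^{-1} = V_{\alpha\beta(x), \hat\alpha\hat\beta(y)}$ and the uniqueness of the partner map):
\[
L_{(\alpha\beta)_S(s)}\,\hat\alpha\hat\beta = \alpha\beta L_s = \alpha\, L_{\beta_S(s)}\,\hat\beta = L_{\alpha_S\beta_S(s)}\,\hat\alpha\hat\beta.
\]
Since $L\colon A_R \to \End_R(A_R)$, $a \mapsto L_a$, is injective (as $L_a(1) = a$), this forces $(\alpha\beta)_S = \alpha_S\beta_S$.

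The main obstacle is the similitude step, specifically extracting the scalar conclusion $Q_\alpha = \mu\,Q$ from the cubic polynomial identity. Irreducibility of $Q$ over a splitting field is the key ingredient; to handle arbitrary $R$ (not only fields), the cleanest route is to use that $\mathbf{GO}(S,Q) \subset \mathbf{GL}(S)$ is a closed subscheme, so the factorisation of $\gamma$ through it can be checked after reducing mod nilpotents and passing to $k^a$-points, where the UFD argument applies.
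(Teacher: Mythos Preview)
Your proof is correct and tracks the paper's argument closely up to the key identity $Q(s)\,\alpha_S(s)^\natural = Q(\alpha_S(s))\,\hat\alpha_S(s^\natural)$; the paper derives the same relation (written as $\alpha_S(s)^\natural = \rho(s)\hat\alpha_S(s^\natural)$ with $\rho(s) = Q(\alpha_S(s))Q(s)^{-1}$) via \eqref{widehats} and the inversion formula rather than via \eqref{line-one} and \eqref{Ls-identity}, but the two derivations are equivalent.

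Where the approaches genuinely diverge is in extracting the similitude conclusion from this identity. The paper argues directly that $\rho$ is constant on $S^*$: fix $s_0 \in S^*$, take any $t \in S^*$ linearly independent from $s_0$, expand $\alpha_S(s_0+t)^\natural$ using linearity of $\natural$ and $\hat\alpha_S$, and compare coefficients to get $\rho(s_0) = \rho(s_0+t) = \rho(t)$. This is entirely elementary and works uniformly without any hypothesis on $\dim S$. Your UFD argument---using irreducibility of the nondegenerate form $Q$ in $k^a[S]$ when $\dim S \ge 3$ to force $Q \mid Q_\alpha$---is a nice alternative, and the excluded low dimensions are indeed trivial here. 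Your multiplicativity proof via two applications of \eqref{eq:alphaL} is also a valid substitute for the paper's appeal to Allison's identity $\alpha_S(\psi(x,y)) = \psi(\alpha(x),\alpha(y))$.

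One small point: your reduction to $k^a$-points (``reducing mod nilpotents'') implicitly uses that $\mathbf{Str}(A,-)$ is smooth, which the paper makes explicit by identifying it as the centraliser of a torus in a smooth group (Lemma~\ref{lem.centraliser}). It would be worth stating this, since without smoothness of the source the passage from $k^a$-points to the scheme-theoretic factorisation through the closed subscheme $\mathbf{GO}(S,Q)$ is not automatic.
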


\begin{proof}
It is shown in \cite[Lemma 12.1]{allison1981isotopes} that $\alpha_{S} (\psi(x,y)) = \psi(\alpha(x), \alpha(y))$
	for all $\alpha \in \Str(A_R, - )$ and all $s \in {S}_R$.
	It follows that \[(\alpha \beta)_{S}(s) = \frac{1}{2} \psi(\alpha\beta(s), \alpha\beta(1)) = \alpha_{S}(\frac{1}{2}  \psi(\beta(s), \beta(1))) = \alpha_{S} \beta_{S}(s)\] for all $\alpha, \beta \in \Str(A_R,- )$. Clearly $(\id_A)_{S} = \id_{S}$, so $\alpha\mapsto \alpha_S$ is a homomorphism $\Str(A_R,-)\to \GL(S_R)$.  It is also clear that this homomorphism is functorial in $R$, so it defines a morphism of algebraic groups $\mathbf{Str}(A, -) \to \mathbf{GL}({S})$.
	
	By Lemma \ref{lem.centraliser},  $\mathbf{Str}(A, -)$ is the centraliser of a torus in a smooth group, so $\mathbf{Str}(A,-)$ is smooth \cite[Theorem 13.9]{milne}. To show that the morphism of the previous paragraph factors through $\mathbf{GO}(S,Q)$, it suffices (by \ref{sec.exact}) to show that  $\gamma_{k^a}(\Str(A_{k^a},-))\subset \GO(S_{k^a},Q_{k^a})$. This is covered by the remaining part of the proof, where we assume that $k = k^a$.

	We follow the same approach as \cite[Proposition 5.2]{allison1988tensor}, but with slightly more details included, to show that for all $\alpha \in \Str(A, -)$,  $\alpha_S$ is a similitude of $Q$.
	Applying (\ref{inversion formula}) to both sides of (\ref{widehats}) yields that
	\begin{align*}  Q(\alpha_S(s))^{-1}\alpha_S(s)^\natural = \hat \alpha_S(Q(s)^{-1} s^\natural) = Q(s)^{-1} \hat \alpha_S(s^\natural) && \text{for all } s \in S^*.
	\end{align*}
	Define the rational function $\rho(s) = Q(\alpha_S(s)) Q(s)^{-1}$ on $S$, which gives \begin{align}\label{rho}\alpha_S(s)^\natural = \rho(s)\hat\alpha_S(s^\natural) && \text{for all } s \in S^*.\end{align}
	We shall show that $\rho: {S}^* \to k$ is a constant function. Fix a particular $s_0 \in {{S}}^*$, and let $t \in {{S}}^*$ be arbitrary with the intention of showing that $\rho(t) = \rho(s_0)$. Clearly $\rho(s_0) = \rho(\lambda s_0)$ for all $\lambda \in k^\times$, so we may assume that $s_0$ and $t$ are linearly independent. Working with (\ref{rho}), we obtain:
	\begin{align*}
	\rho(s_0)\hat \alpha_S(s_0^\natural) +  \rho(t)\hat \alpha_S(t^\natural) &= \alpha_S(s_0)^\natural + \alpha_S(t)^\natural= \alpha_S(s_0+t)^\natural =   \rho(s_0+t)\hat \alpha_S((s_0+t)^\natural) \\ &= \rho(s_0+t)\hat \alpha_S(s_0^\natural) + \rho(s_0+t)\hat \alpha_S(t^\natural).
	\end{align*}
	Since $s_0$ and $t$ are linearly independent, so are $\hat \alpha_S(s_0^\natural)$ and $\hat \alpha_S(t^\natural)$, and it follows that $\rho(s_0) = \rho(s_0+t) = \rho(t)$. Let $\mu(\alpha) = \rho(s_0)\in k^\times$. We have proved
	\begin{align}\label{mu}
	Q(\alpha_{S}(s)) = \mu(\alpha) Q(s) && \text{for all } s \in S^*.
	\end{align}
	Since we shown in \ref{sec:generalities skews} that $Q(s) = Q(\alpha_S(s)) = 0$ for all $s \in S\setminus S^*$, equation (\ref{mu}) holds for all $s \in S$. The conclusion is that $\alpha_S \in \GO(S, Q)$ and its multiplier is $\mu(\alpha)$.
	\end{proof}

Note that we can use (\ref{rho}) to determine the effect of composing $\wedge$ with $\gamma$:
\begin{align} \label{eq.hat-alpha}
\gamma_R(\hat \alpha)(s) = \hat \alpha_S(s) = \mu(\alpha)^{-1}\alpha_S(s^\natural)^\natural && \text{for all } \alpha \in \Str(A,-) \text{ and } s \in S.
\end{align}

\subsection{A composition property of the Albert form} \label{sec:comp-prop}
The multiplication operators $L_s$, $s \in S^*$, are important examples of isotopies (see \ref{lem:nuclear-isotopes}). Applying the map $\gamma$ to $L_s$ leads us to an interesting observation. It is easy to show from the definition that $\gamma(L_s)(t) = (L_s)_S(t) = -sts$ for all $s \in S^*$, $t \in S$. (Note that we can write $sts$ without brackets because of the skew-alternativity property of structurable algebras \cite[Proposition 1]{allison1978class}.) By (\ref{Ls-identity}), this yields $(L_s)_S(s^\natural) = -ss^\natural s = Q(s)s$ and so $\mu(L_s) = Q(s)^2$ is the relevant multiplier. Since $Q((L_s)_S(t) = Q(-sts) = Q(sts)$, we  have derived the interesting identity \begin{align} \label{eq:composition property} Q(sts) = Q(s)^2Q(t) && \text{for all } s, t \in S^*.\end{align}
Additionally, we have $L_{sts} = L_sL_t L_s$ as a consequence of skew-alternativity \cite[(2.5)]{boelaert2019moufang}, so $Q(sts) =0$ if and only if $Q(s)^2Q(t) = 0$, hence the identity \eqref{eq:composition property} holds for all $s, t \in S$.

Allison showed that the even Clifford algebra $C^+(S,Q)$ has a representation in the multiplication algebra of $A$. For the sake of completeness we give a tiny proof:
	
	\begin{proposition}\cite[Proposition 4.2]{allison1988tensor} \label{eq.theta} There exists a unique algebra homomorphism ${\theta: C^+(S, Q) \to \End_k(A)}$ such that
	\begin{align*}
	\theta(s t) = -L_{s}L_{t^\natural}
	&& \text{for all } s, t \in S.
	\end{align*}
	%The image of $\theta$ is $\End_{\Nuc(A)}(A)$, the centraliser of $R_{\Nuc(A)}$ in $\End_k(A)$.	
	\end{proposition}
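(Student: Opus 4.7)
The plan is to construct $\theta$ via the universal property of the full Clifford algebra $C(S,Q)$ combined with a standard doubling trick, and then descend to $C^+(S,Q)$ by restricting to the block-diagonal image.

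First I would upgrade the identity (\ref{Ls-identity}) to the symmetric form
\[
L_s L_{s^\natural} \;=\; L_{s^\natural} L_s \;=\; -Q(s)\,\id_A \qquad \text{for every } s \in S.
\]
The second equality is obtained by applying (\ref{Ls-identity}) with $s$ replaced by $s^\natural$: indeed $(s^\natural)^\natural = cs$ and $Q(s^\natural) = c\,Q(s)$ for the same scalar $c$, where $c = 1$ in the decomposable case and $c = d$ in the indecomposable case (as recorded in \ref{sec:QandNat} and \ref{sec:QandNat2}), so the two factors of $c$ cancel.

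Next I would define the $k$-linear map
\[
\tilde\phi : S \to \End_k(A\oplus A), \qquad \tilde\phi(s) \;=\; \begin{pmatrix} 0 & L_s \\ -L_{s^\natural} & 0 \end{pmatrix},
\]
and verify by direct matrix computation, using the identity above, that $\tilde\phi(s)^2 = Q(s)\,\id_{A\oplus A}$. The universal property of the Clifford algebra (in the sign convention $v \cdot v = q(v)$ used throughout the paper) then produces a unique $k$-algebra homomorphism $\tilde\Theta : C(S,Q) \to \End_k(A\oplus A)$ extending $\tilde\phi$.

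Finally, for $s,t \in S$ one computes
\[
\tilde\Theta(st) \;=\; \tilde\phi(s)\tilde\phi(t) \;=\; \begin{pmatrix} -L_s L_{t^\natural} & 0 \\ 0 & -L_{s^\natural} L_t \end{pmatrix},
\]
so $\tilde\Theta$ sends the subalgebra $C^+(S,Q)$ into the block-diagonal subalgebra $\End_k(A) \times \End_k(A) \subset \End_k(A\oplus A)$. Composing $\tilde\Theta|_{C^+(S,Q)}$ with projection onto the first factor defines the desired $\theta$, satisfying $\theta(st) = -L_sL_{t^\natural}$. Uniqueness is automatic, since the products $st$ with $s,t \in S$ generate $C^+(S,Q)$ as a $k$-algebra. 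The main point requiring care is the sign bookkeeping that yields $\tilde\phi(s)^2 = +Q(s)\,\id$ rather than $-Q(s)\,\id$; this is exactly what forces the antisymmetric shape of the matrix $\tilde\phi(s)$ and makes the symmetric version of the identity $L_{s^\natural}L_s = -Q(s)\,\id$ indispensable.
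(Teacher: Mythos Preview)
Your proof is correct, but the paper takes a more direct route. Rather than passing through the full Clifford algebra $C(S,Q)$ with a doubling trick, the paper invokes the universal property of the \emph{even} Clifford algebra directly (citing \cite[Lemma~8.1]{knus1998book}): a $k$-algebra homomorphism out of $C^+(S,Q)$ is determined by a bilinear assignment $s\otimes t \mapsto -L_sL_{t^\natural}$ once one checks the two identities
\[
-L_sL_{s^\natural} = Q(s)\,\id \quad\text{and}\quad (-L_rL_{s^\natural})(-L_sL_{t^\natural}) = -Q(s)\,L_rL_{t^\natural},
\]
both of which the paper says are ``evident from (\ref{Ls-identity}).'' Note that the second identity already needs $L_{s^\natural}L_s = -Q(s)\,\id$, the same symmetric version of (\ref{Ls-identity}) that you took care to justify; the paper simply leaves this implicit.

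So the underlying computation is identical. The paper's approach is shorter and avoids the auxiliary space $A\oplus A$, at the cost of assuming the reader knows the universal property of $C^+$. Your approach is more self-contained, relying only on the better-known universal property of the full Clifford algebra, and the block-matrix construction makes transparent why the even part lands in something that projects onto $\End_k(A)$. Either is a perfectly good proof.
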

	
	\begin{proof}
		By the universal property of the even Clifford algebra \cite[Lemma 8.1]{knus1998book},	it is good enough to show that $-L_s L_{s^{\natural}} = Q(s)\id$ and $(-L_r L_{s^{\natural}})(- L_{s} L_{t^\natural}) = -Q(s) L_r L_{t^\natural}$ for all $r,s,t \in S$. These identities are evident from (\ref{Ls-identity}).
	\end{proof}
		
	\subsection{Structure groups of octonion algebras} \label{sec:octonion} Let $(C,-)$ be an octonion algebra over $k$ with its standard involution and norm $n$. Since $C^+(C_0,n') \simeq M_8(k) \simeq \End_k(C)$, the representation~$\theta$ is an isomorphism. The generators $st$, $Q(s), Q(t) \ne 0$, of $\Gamma^+(C_0,n')$ are mapped to isotopies $-L_sL_{t}\in \Str(C,-)$. Since $\mathbf{\Gamma}^+(C_0,n')$ is connected, $\theta$ induces an injective homomorphism $\theta': \mathbf{\Gamma}^+(C_0,n') \to \mathbf{Str}(C,-)^\circ$. It is easy to calculate from the definition that $\gamma_R(L_s) = \gamma_R(-L_s) = -L_sR_s|_{S_R}$ for all  $s \in (C_0)_R^*$. We calculate that for all $s,t \in (C_0)_R^*$,
\[
	\gamma_R\circ \theta'_R(st) = \gamma_R(-L_s L_t) = \gamma_R(-L_s)\gamma_R(L_t) = L_sR_sL_tR_t|_{(C_0)_R}.
\]
		Let $\rho_s \in \operatorname{O}(C_0,n')$ be the reflection about an anisotropic vector $s \in C_0$. It turns out that $L_s R_s|_{C_0} = n(s)\rho_s$ \cite[p.\! 44]{springer2000exceptional}, so $\gamma_R \circ \theta'_R(st) = n(st)\rho_s\rho_t$. Meanwhile, in the vector representation it is well-known that $\chi_R(s) = \rho_s$ \cite[p.\! 239]{jacobson2012basicII}, so $\chi_R(st) =  \rho_s\rho_t$. This proves that $\gamma_R\circ \theta'_R$ agrees with $\chi_R$ {modulo scalars}. Let $\gamma': \mathbf{Str}(C,-)^\circ \to \mathbf{O}^+(C_0,n)$ be the composition of $\gamma$ and the natural surjection $\mathbf{GO}^+(C_0,n') = \mathbf{G}_m\cdot \mathbf{O}^+(C_0,n') \to \mathbf{O}^+(C_0,n')$. We have proved that the following diagram commutes:
	\begin{equation} \label{diag:octonions}
		\begin{tikzcd}
		1 \arrow[r] &  \mathbf{G}_m \arrow[r] \ar[d, "\simeq"] & \mathbf{\Gamma^+}(C_0,n') \arrow[r,"\chi"] \ar[d,"\theta'"] & \mathbf{O}^+(C_0,n') \arrow[r] \ar[d,equal] & 1\\
		1 \arrow[r] &  \mathbf{G}_m \arrow[r] & \mathbf{Str}(C,-)^\circ \arrow[r,"\gamma'"] & \mathbf{O}^+(C_0,n') \arrow[r] & 1.
		\end{tikzcd}
	\end{equation}
	Since $\chi = \gamma'\circ \theta'$ is surjective, $\gamma'$ must be surjective too. Now suppose $\alpha \in \Str(C_R,-)$ and $\gamma_R(\alpha)   = r \id $ for some $r \in R^\times$. Then $\gamma_R(\hat \alpha) = r^{-1} \id $, according to (\ref{eq.hat-alpha}). By (\ref{line-one}), we have 
	$
		\alpha L_s L_t \alpha^{-1} = L_{rs} L_{r^{-1}t} =  L_s L_t
	$
	for all $s, t \in (C_0)_R$. But $\{L_sL_t \mid s, t \in C_0\}$ generates $\End_{R}(C_R)$ as an $R$-algebra because $\theta$ is surjective. So $\alpha\in Z(\GL(C_R)) = R^\times \id \subset \Str(C_R,-)$. Therefore $\ker(\gamma')$ is the central torus $\mathbf{G}_m$. This shows that the bottom row of \eqref{diag:octonions} is exact. All the groups involved are smooth and a standard diagram chase (the five lemma) proves that $\theta': \mathbf{\Gamma}^+(C_0,n') \to \mathbf{Str}(C,-)^\circ$ is an isomorphism.
	
 	The bottom row of \eqref{diag:octonions} is exactly the same as the second exact sequence in \cite[\S4.13]{petersson2002structure}, although this is not obvious at first sight. If $C$ is an octonion division algebra, $\Str(C,-)$ is the same as the group $X_1$ studied in \cite[(37.23)]{tits2002moufang}.
 	
 	\subsection{Structure groups of $(8,m)$-product algebras} The next few subsections are all about adapting the arguments from \ref{sec:octonion} to $(8, m)$-product algebras where $m \ge 2$, i.e.\! fitting their structure groups into commutative diagrams that are easy to understand. First let us fix some notation. The right-multiplication operators give $\End_k(A)$ the structure of a right $N$-module, where $N = \Nuc(A)$. Define $\End_N(A)$ to be the ring of $N$-endomorphisms. In light of \ref{lem:nuclear-isotopes}, there is an embedding $R: \mathbf{GL}_1(N)  \to \mathbf{Str}(A,-)$. Define $\mathbf{Str}_N(A,-)$ to be the centraliser of the image of  $\mathbf{GL}_1(N)$ in $\mathbf{Str}(A,-)$. We also fix the notation $Z = Z(C^+(S,Q))$.

	\begin{lemma} \label{lem:theta-map} Let $(A,-)$ be an $(8,m)$-product algebra with $m \ge 2$. \begin{enumerate}[\rm (i)] 
	\item \cite[Theorem 4.5]{allison1988tensor} The image of $\theta: C^+(S,Q) \to \End_k(A)$ is $\End_N(A)$.
	\item The algebra homomorphism $\theta$ induces a homomorphism of algebraic groups \[\theta': \mathbf{\Omega}(S,Q) \to \mathbf{Str}_N(A, -)^\circ.\]
		\item If $m=2$ then $\theta'$ is injective. If $m = 4$ or $8$ then \[\ker(\theta'_R) = \{ce_1 + 1e_2 \mid c \in R^\times\} \simeq R^\times\] where $e_1, e_2 \in Z$ are orthogonal idempotents such that $1 = e_1 + e_2$.	
		\item If $k$ is algebraically closed, the $k$-subalgebra of $\End_k(A)$ generated by $\theta'_k(\Spin(S,Q))$ is $\End_N(A)$.
		\end{enumerate}
	\end{lemma}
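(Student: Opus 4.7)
Part (i) is cited from Allison's paper, so I take it as given. For (ii), I would first observe that because $\theta$ is a unital algebra homomorphism, it sends units to units, and hence restricts to a morphism of algebraic groups $\theta': \mathbf{GL}_1(C^+(S,Q)) \to \mathbf{GL}_1(\End_N(A))$. The task is then to restrict $\theta'$ to $\mathbf{\Omega}(S,Q)$ and show the image lies in $\mathbf{Str}_N(A,-)^\circ$. Since the image lies in $\End_N(A)$ by (i), it commutes with right multiplication by $N$, so it suffices to show the image consists of isotopies; because $\mathbf{\Omega}(S,Q)$ is connected (as noted after \eqref{diag:omega-exact}), factoring through $\mathbf{Str}_N(A,-)$ then forces the image into the connected component. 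From the exact sequence $1 \to \mathbf{GL}_1(Z) \to \mathbf{\Omega}(S,Q) \to \mathbf{PGO}^+(S,Q) \to 1$ and Cartan--Dieudonn\'e over $\bar k$, $\mathbf{\Omega}(S,Q)(\bar k)$ is generated by $Z_{\bar k}^\times$ together with products $st$ with $s,t \in S_{\bar k}^*$. Central elements $c \in Z_{\bar k}^\times$ map to central elements of $\End_N(A_{\bar k})$, which are right multiplications by central nuclear elements and hence isotopies by Lemma~\ref{lem:nuclear-isotopes}. For products, $\theta(st) = -L_s L_{t^\natural}$ is a composition of two isotopies by the same lemma, using that $\natural$ is a similitude of $Q$ so $t^\natural \in S_{\bar k}^*$ whenever $t \in S_{\bar k}^*$. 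Checking on $\bar k$-points is enough by the principle in \ref{sec.exact}.

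For (iii), the strategy is a combination of dimension counting and the structure theory of the even Clifford algebra. In the $(8,2)$ case, $\dim_k C^+(S,Q) = 2^7 = 128$ matches $\dim_k \End_N(A)$, so $\theta$ is an algebra isomorphism and $\theta'_R$ is injective for every $R$. For the $(8,4)$ and $(8,8)$ cases, a direct count gives $\dim \ker\theta = \tfrac{1}{2}\dim C^+(S,Q)$. The orthogonal idempotents $e_1, e_2 \in Z$ with $e_1 + e_2 = 1$ split $C^+(S,Q) = e_1 C^+(S,Q) \oplus e_2 C^+(S,Q)$ into two central simple ideals of equal dimension, so the two-sided ideal $\ker\theta$ equals $e_1 C^+(S,Q)$ after suitable labelling, and the restriction $\theta|_{e_2 C^+(S,Q)}$ is an algebra isomorphism onto $\End_N(A)$. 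Given $x \in \mathbf{\Omega}(S,Q)(R)$ with $\theta'_R(x) = \id$, decomposing $x = xe_1 + xe_2$ forces $\theta(xe_2) = \id$ and hence $xe_2 = e_2$. Writing $x = y + e_2$ with $y \in (e_1 C^+(S,Q))_R^\times$ and using $\Int(x) = C(\beta)$ for some $\beta \in \mathbf{GO}^+(S,Q)(R)$, the triviality of $\Int(x)$ on $e_2 C^+(S,Q)_R$ forces $\beta$ to be a scalar similitude via the faithfulness of one half-spin representation of $\mathbf{PGO}^+(S,Q)$. Thus $\Int(y) = \id$ on $e_1 C^+(S,Q)_R$, so $y$ is central in that simple factor and hence a scalar multiple of $e_1$, giving $x = ce_1 + e_2$ for some $c \in R^\times$.

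For (iv), part (i) gives $\theta_k(C^+(S,Q)) = \End_N(A)$. Over an algebraically closed field $k$, the $k$-linear span of $\Spin(S,Q)(k)$ inside $C^+(S,Q)$ equals the whole algebra, by a Burnside-type argument: $\Spin(S,Q)$ acts absolutely irreducibly on each of the half-spin representations, so the span of its image in each simple matrix factor of $C^+(S,Q)$ fills out that factor. Applying $\theta$, the $k$-subalgebra generated by $\theta'_k(\Spin(S,Q))$ inside $\End_k(A)$ is exactly $\End_N(A)$.

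The main obstacle I expect is the step in (iii) that extracts scalarity of $\beta$ from the triviality of $C(\beta)$ on $e_2 C^+(S,Q)$; this depends on the faithfulness of a half-spin representation of $\mathbf{PGO}^+(S,Q)$, which is well known but needs careful invocation in each of the type $B_n$ and $D_n$ cases, most notably in the $D_7$ case relevant to bi-octonion algebras where the two half-spin representations play symmetric but inequivalent roles.
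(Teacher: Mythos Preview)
Your argument is correct, and parts (i) and (ii) match the paper's approach closely. The substantive difference is in part (iii) for $m = 4, 8$. You deduce that $x = y + e_2 \in \ker(\theta'_R)$ has $\Int(x)$ acting trivially on $e_2 C^+(S,Q)_R$, and then invoke faithfulness of the action of $\mathbf{PGO}^+(S,Q)$ on one simple factor of the Clifford algebra to conclude $\beta$ is scalar; this works because $\mathbf{PGO}^+(S,Q)$ is adjoint simple, so any nontrivial homomorphism out of it is a closed immersion. The paper avoids this structural input entirely: it uses that the canonical involution $\tau$ on $C^+(S,Q)$ is of the second kind (hence swaps $e_1$ and $e_2$) and that $\mathbf{\Omega}(S,Q) \subset \mathbf{Sim}(C^+(S,Q), \tau)$, so any $y = xe_1 + e_2$ in $\mathbf{\Omega}(S_R,Q_R)$ satisfies $y\tau(y) = xe_1 + \tau(x)e_2 \in R^\times \cdot 1$, forcing $x \in R^\times$ in one line. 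Your approach buys a representation-theoretic explanation of why the kernel sits where it does; the paper's buys brevity and avoids any appeal to the classification of normal subgroups. In part (iv) your Burnside argument shows the span of $\Spin(S,Q)(k)$ surjects onto each simple factor of $C^+(S,Q)$, but to fill the full product (needed when $m = 2$, where $\theta$ is injective) you should also note that the two half-spin representations are inequivalent; the paper instead observes that $\Gamma^+(S,Q)$ spans $C^+(S,Q)$ and equals $Z^\times \cdot \Spin(S,Q)$ over an algebraically closed field, so that $\theta'_k(Z^\times) \cup \theta'_k(\Spin(S,Q))$ generates $\End_N(A)$.
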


	\begin{proof}
	(i) Clearly $0 \ne \theta(C^+(S,Q)) \subset \End_N(A)$ because left-multiplications commute with right-multiplications by nuclear elements. If $m = 8$ then $Q$ is a 14-dimensional form with trivial Clifford invariant, so $C^+(S,Q) \simeq M_{64}(k)\times M_{64}(k)$.
	%The two orthogonal idempotents in $Z$ are $e_1 = \frac{1+z}{2}$ and $e_2 = \frac{1-z}{2}$.
	Since $N = k$ we have $\End_N(A) = \End_k(A) \simeq  M_{64}(k)$, so $\theta$ must kill one of the full matrix subalgebras of $C^+(S,Q)$ and map the other one isomorphically onto $\End_k(A)$. If $m = 4$ then $Q$ is a 10-dimensional form whose discriminant is trivial and whose Clifford invariant is the Brauer class of $N = C_2$, so $C^+(S,Q) \simeq M_{8}(C_2)\times M_{8}(C_2)$. Then $\theta(C^+(S,Q)) \subset \End_N(A,-) =  \End_{C_2}(C_1 \otimes C_2) \simeq  M_8(C_2)$ and the conclusion follows as it did before. If $m = 2$ then $Q$ is an 8-dimensional form whose discriminant is the class of $N = Z(A) = C_2$, and $C^+(S,Q) \simeq M_8(C_2)$ \cite[Theorem~4.14]{jacobson2012basicII}. Now $\End_N(A) = \End_{C_2}(C_1 \otimes C_2) \simeq M_8(C_2)$. 	If  $C_2$ is a field, the conclusion is clear. More generally, if $s_1, \dots, s_7 \in  S_1$ and $t \in S_2$ constitute an orthogonal basis for $S$, then the centre of $C^+(S,Q)$ is $Z = k[s_1\dots s_7t] \simeq C_2$ \cite[p.\! 237]{jacobson2012basicII} and it is easy to see that \[\theta(s_1\dots s_7t) = (-L_{s_1} L_{{s_2}^\natural})(-L_{s_3} L_{{s_4}^\natural})(-L_{s_5}L_{{s_6}^\natural})(-L_{s_7}L_{t^\natural}) = -L_{s_1} \dots L_{s_7}L_t \notin k\id.\] 
	because $L_{s_1} \dots L_{s_7}L_t(C_1)\subset tC_1$ and $C_1\cap tC_1 = \{0\}$. Therefore $\theta$ is injective on~$Z$, so it is an isomorphism onto its image $\End_N(A)$.

	(ii) Since $\mathbf{\Omega}(S,Q)$ is smooth and connected, it suffices (see \ref{sec.exact}) to show that \[\theta'_{k^a}\big(\mathbf{\Omega}(S,Q)(k^a)\big) \subset \mathbf{Str}_N(A, -)(k^a).\]
	We assume $k = k^a$ for ease of notation; this implies $A = C_1 \otimes C_2$ is decomposable (and the $C_i$'s are split composition subalgebras of $A$).
	The group  $\Omega(S,Q)$ is generated by its subgroups $\Gamma^+(S,Q)$ and $Z^\times$ \cite[Lemma 13.20]{knus1998book}. Since $\theta'(s_1 s_2) = -L_{s_1}L_{{s_2}^\natural} \in \Str_N(A,-)$ for all $s_1, s_2 \in S^*$ and  $\Gamma^+(S,Q)$ is generated by elements of the form $s_1s_2$ where the $s_i$ are anisotropic, we have $\theta'(\Gamma^+(S,Q)) \subset \Str_N(A,-)$. Now $Z^\times = k^\times e_1 \times k^\times e_2$ where $e_1, e_2 \ne 1$ are a pair of orthogonal idempotents such that $e_1 + e_2 = 1$. From the proof of (i), it is clear that if $m = 4$ or $8$ then $\theta(e_i) \in \{0, \id\}$, which implies $\theta'(Z^\times) = k^\times \id \subset \Str_N(A,-)$. If $m = 2$ then $\theta$ is an isomorphism so $\theta(Z) = Z(\End_{Z(A)}(A)) = R_{Z(A)}$ and  $\theta'(Z^\times) = R_{Z(A)}^\times \subset \Str_N(A,)$ by Lemma \ref{lem:nuclear-isotopes}.

	(iii) For $m = 2$, this is clear because $\theta$ itself is injective. For $m = 4$ or $8$, it follows from the proof of (i) that $\ker(\theta'_R) = \{xe_1 + 1 e_2 \mid x \in C^+(S_R,Q_R) \} \cap \Omega(S_R,Q_R)$. Recall that $\Omega(S_R,Q_R) \subset \Sim(C^+(S_R,Q_R),\tau)$ where $\tau$ is the main involution on $C^+(S,Q)$. In this situation, $\tau$ is an involution of the second kind \cite[Proposition~8.4]{knus1998book}. So if $y = xe_1 +1e_2 \in \Omega(S_R,Q_R)$ then $y \tau(y) = x e_1 + \tau(x)e_2 \in R^\times(e_1 + e_2)$ implies $x = \tau(x) \in R^\times$. This shows that $\ker(\theta'_R) = \{ce_1 + 1e_2 \mid c \in R^\times \} \simeq R^\times$. 
	
	(iv) This is simply because $C^+(S,Q)$ is linearly spanned by $\Gamma^+(S,Q)$, and it is easy to show that $k$ being algebraically closed implies $\Gamma^+(S,Q) = Z^\times.\Spin(S,Q)$. Therefore $Z^\times \cup \Spin(S,Q)$ generates $C^+(S,Q)$ as a $k$-algebra, so the set \[\theta'_k(Z^\times \cup \Spin(S,Q)) = k^\times\id \cup \theta'_k(\Spin(S,Q))\] generates $\theta'_k(C^+(S,Q)) = \End_N(A)$ as a $k$-algebra.
	\end{proof}

Recall the definitions of the following homomorphisms from   \ref{sec:generalities quadratic spaces},   \ref{gamma}, and \ref{lem:theta-map}.
\begin{align*}\chi': \mathbf{\Omega}(V,q) \to \mathbf{PGO}^+(V,q), && \gamma: \mathbf{Str}(A,-) \to \mathbf{GO}(S,Q),&& \theta': \mathbf{\Omega}(S,Q) \to  \mathbf{Str}(A,-)^\circ. \end{align*}
Let $\gamma': \mathbf{Str}(A,-) \to \mathbf{PGO}(S,Q)$ be the composition of $\gamma$ with the natural homomorphism $\mathbf{GO}(S,Q) \to \mathbf{PGO}(S,Q)$, and let $\gamma'': \mathbf{Str}(A,-)^\circ \to \mathbf{PGO}^+(S,Q)$ be the restriction of $\gamma'$ to the identity component.  

\begin{lemma} \label{ker-gamma}
		Let $(A,-)$ be an $(8,m)$-product algebra with $m \ge 2$, and let $N = \Nuc(A)$. \begin{enumerate}[\rm (i)] \item 	
$\ker(\gamma')$ is the group of right multiplications by nuclear elements, i.e.,  the image of $\mathbf{GL}_1(N) \to \mathbf{Str}(A,-)$.
\item  $\gamma''\circ \theta' = \chi'$.
 \end{enumerate}
\end{lemma}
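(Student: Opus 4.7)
\emph{Part (i).} For $n \in N_R^\times$, both $n$ and $\bar n$ lie in $\Nuc(A_R)$ and $n\bar n$ lies in $k \subseteq Z(A_R)$, because the involution restricts to the standard involution on the composition algebra $N$. Expanding $\psi(sn, n)$ directly gives $(R_n)_S(s) = (n\bar n)\,s$, so $\gamma'(R_n) = 1$ in $\mathbf{PGO}(S, Q)$. Conversely, if $\alpha \in \ker(\gamma'_R)$ with $\alpha_S = c\,\mathrm{id}$, then \eqref{eq.hat-alpha} and \eqref{line-one} force $\alpha\,L_sL_t\,\alpha^{-1} = L_sL_t$ for all $s, t \in S_R$. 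Hence $\alpha$ centralises the $R$-subalgebra generated by $\{L_sL_t\}$, which by Lemma~\ref{lem:theta-map}(i) equals $\End_{N_R}(A_R)$. Because $N$ is Azumaya over $k$ (quadratic \'etale for $m = 2$, a quaternion algebra for $m = 4$, or $k$ itself for $m = 8$) and $A_R$ is a faithful right $N_R$-module, the double-centraliser theorem identifies this centraliser with the image of right multiplication by $N_R$; so $\alpha = R_n$ for some $n \in N_R^\times$.

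\emph{Part (ii), reduction to generators.} Both $\chi'$ and $\gamma'' \circ \theta'$ are morphisms of smooth algebraic groups $\mathbf{\Omega}(S,Q) \to \mathbf{PGO}^+(S,Q)$, so equality may be checked on $k^a$-points. I extend scalars to assume $(A,-) = C_1 \otimes C_2$ is decomposable, hence $S = S_1 \oplus S_2$ and $\natural^2 = \mathrm{id}$. By \cite[Lemma~13.20]{knus1998book}, $\Omega(S, Q)$ is generated by $\Gamma^+(S, Q)$ and $Z^\times$, so it suffices to check on each. On $Z^\times$, $\chi'|_{Z^\times} = 1$ by exactness of the bottom row of~\eqref{diag:omega-exact}; Lemma~\ref{lem:theta-map}(iii) combined with part~(i) places $\theta'(Z^\times)$ inside $\ker(\gamma'')$, either as scalars (when $m \in \{4, 8\}$) or as right multiplications by $N^\times$ (when $m = 2$).

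\emph{Part (ii), computation on $\Gamma^+$.} For a generator $st \in \Gamma^+(S, Q)$ with $s, t \in S^*$, Cartan--Dieudonn\'e gives $\chi'(st) = [\rho_s\rho_t]$. On the other side, $\theta'(st) = -L_sL_{t^\natural}$, and a direct computation from the definition of $\psi$ using skew-alternativity yields $(L_u)_S(v) = -uvu$ for $u \in S_R^*$, $v \in S_R$. The crucial identity is
\[
(L_u)_S \;=\; -Q(u)\,\rho_u \circ \natural \qquad (u \in S^*),
\]
which I verify by expanding $-(u_1 + u_2)\,v\,(u_1 + u_2)$ in $C_1 \otimes C_2$: the commutation $[C_1, C_2] = 0$ collapses mixed products, while inside each $C_i$ the octonion identity $x\,y\,x = n_i(x)\,y - b_{n_i}(x, y)\,x$ produces the reflection, and the sign conventions $Q|_{S_1} = n_1|_{S_1}$, $Q|_{S_2} = -n_2|_{S_2}$ supply the $\natural$ factor. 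Composing at $s$ and $t^\natural$ and using $\natural\rho_u\natural^{-1} = \rho_{u^\natural}$ together with $\natural^2 = \mathrm{id}$ yields $\gamma(\theta'(st)) = Q(s)Q(t)\,\rho_s\rho_t$, whose image in $\mathbf{PGO}^+(S, Q)$ is $[\rho_s\rho_t] = \chi'(st)$. The main technical obstacle is precisely the identity $(L_u)_S = -Q(u)\,\rho_u \circ \natural$ for general $u$: it is a nontrivial quadratic-in-$u$ calculation that forces a careful case analysis across $S = S_1 \oplus S_2$ using octonion multiplication and the tensor commutation; once it is secured, the remainder of the argument is routine bookkeeping with the diagram \eqref{diag:omega-exact} and part~(i).
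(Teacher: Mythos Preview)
Your proof of part~(i) is correct and essentially identical to the paper's: both use \eqref{line-one} together with \eqref{eq.hat-alpha} to show that $\alpha \in \ker(\gamma'_R)$ centralises the image of $\theta$, then invoke the double centraliser theorem via Lemma~\ref{lem:theta-map}(i).

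For part~(ii) the two arguments diverge. The paper does not reduce to generators or compute $(L_u)_S$ in terms of reflections. Instead it takes an arbitrary $x \in \Omega(S_R,Q_R)$, lets $\alpha = \theta'_R(x)$, and compares the two automorphisms $C_R(\beta)$ and $C_R(\beta')$ of the even Clifford algebra (where $\chi'(x) = \bar\beta$ and $\gamma''(\alpha) = \bar{\beta'}$). Applying $\theta'_R$ to both and using \eqref{line-one}--\eqref{eq.hat-alpha} shows $\theta_R \circ C_R(\beta) = \theta_R \circ C_R(\beta')$; then injectivity of $\theta$ (for $m=2$) or a trick with the main involution $\tau$ (for $m=4,8$) yields $C_R(\beta) = C_R(\beta')$, hence $\bar\beta = \bar{\beta'}$. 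Your approach instead establishes the pointwise identity $(L_u)_S = -Q(u)\,\rho_u \circ \natural$ and composes reflections. This is correct---the identity does hold, and the bilinear cross-term in $u = u_1 + u_2$ reduces to $b_{n_1}(u_1,v_1)u_2 + b_{n_2}(u_2,v_2)u_1$ on both sides---but it trades the paper's short injectivity argument for a longer explicit computation. The paper's route is more uniform across $m \in \{2,4,8\}$ and avoids the case analysis you flag as the main obstacle; your route, on the other hand, makes the geometric content (that $\gamma \circ \theta'$ on $\Gamma^+$ is literally built from hyperplane reflections) completely transparent, which the paper's proof does not.

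One minor remark: your reference to Lemma~\ref{lem:theta-map}(iii) for the image $\theta'(Z^\times)$ is slightly off---that item describes $\ker\theta'$, not $\theta'(Z^\times)$; the fact you need (that $\theta'(Z^\times)$ lands in scalars for $m=4,8$ and in $R_{Z(A)^\times}$ for $m=2$) is in the proof of Lemma~\ref{lem:theta-map}(ii).
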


\begin{proof}
	Towards (i), suppose $\alpha \in \ker(\gamma'_R) = \gamma_R^{-1}(R^\times \id)$. This means there exists $r \in R^\times$ with $\alpha_S = r \id$. According to (\ref{eq.hat-alpha}) and (\ref{line-one}), we have 
	$
		\alpha L_s L_t \alpha^{-1} = L_{rs} L_{r^{-1}t} =  L_s L_t
	$
	for all $s, t \in S_R$. Now $\{L_sL_t \mid s, t \in S\}$ generates $\End_{N_R}(A_R)$ as an $R$-algebra because $\theta(C^+(S,Q)) = \End_N(A)$, as we showed in Lemma \ref{lem:theta-map}~(i). So $\alpha$ centralises  $\End_{N_R}(A_R)$ in $\End_{R}(A_R)$. The double centraliser theorem \cite[Theorem 4.10]{jacobson2012basicII} implies $\alpha \in N_R$. So $\ker(\gamma'_R)$ is contained in the image of $\mathbf{GL}_1(N) \to \mathbf{Str}(A,-)$. It is easy to check that this containment is an equality.
		
	For (ii), let $x \in \Omega(S_R, Q_R)$ and let $\alpha = \theta'_R(x)$. By definition, $\chi'_R(x)  = \bar{\beta} \in \PGO^+(S_R, Q_R)$ for some $\beta \in \GO^+(S_R,Q_R)$ such that $\Int(x)(st) = C_R(\beta)(st) =  \mu(\beta)^{-1} \beta(s)\beta(t)$ for all $s, t \in S_R$. Applying $\theta'_R$ to the preceding equation yields
	\[
	\alpha (-L_sL_{t^\natural})\alpha^{-1} = \theta'_R \circ C_R(\beta)(st) = -\mu(\beta)^{-1}L_{\beta(s)}L_{\beta(t)^\natural}.
	\]
 Meanwhile, $\gamma''_R(\alpha) = \overline {\beta'}$ for some $\beta' \in \GO^+(S_R,Q_R)$ such that  $\beta' = \gamma_R(\alpha) = \alpha_S$. This implies that
 \begin{align*}\alpha (-L_s L_{t^\natural}) \alpha^{-1} = -\mu(\beta')^{-1}L_{\beta'(s)}L_{\beta'(t)^\natural} = \theta'_R\circ C_R(\beta')(st) \text{for all } s,t \in S_R,
 \end{align*}
 the first equality being a consequence of (\ref{eq.hat-alpha}) and (\ref{line-one}). We have shown that $\theta_R \circ C_R(\beta) = \theta_R\circ C_R(\beta')$. We would like to show that $C_R(\beta) = C_R(\beta')$. If $m_1 = 2$, then $\theta_R$ is injective so this goal is achieved. If $m_1 = 4$ or~$8$, the main involution $\tau$ on $C^+(S,Q)$ is of the second kind \cite[(8.4)]{knus1998book}, so it swaps the two full matrix subalgebras of $C^+(S,Q)$ and equality in $C^+(S_R,Q_R)$ can be tested by applying $\theta_R$ and $\theta_R\circ \tau$ successively. Any automorphism in the image of $C_R$ commutes with~$\tau$, so $\theta_R \circ \tau \circ C_R(\beta) = \theta_R\circ C_R(\beta)\circ \tau = \theta \circ C_R(\beta') \circ \tau = \theta_R \circ \tau \circ C_R(\beta')$. This proves $C_R(\beta) = C_R(\beta')$. Given that $\ker(C_R) = R^\times \id$, this proves that $\overline \beta = \overline{\beta'} \in \PGO^+(S_R,Q_R)$. 
\end{proof}

\begin{proposition}\label{main1} Let $(A,-)$ be an $(8,8)$-product algebra. The following diagram commutes, the rows are exact, and the vertical arrows are surjective.
	\begin{equation} \label{diag.nice-diagram} 
		\begin{tikzcd}
			1 \arrow[r] &  \mathbf{G}_m \times \mathbf{G}_m \arrow[r] \arrow[d] & \mathbf{\Omega}(S,Q) \arrow[r,"\chi'"] \arrow[d,"\theta'"] & \mathbf{PGO}^+(S,Q) \arrow[r] \arrow[d, equal] & 1\\
			1 \arrow[r] & 1\times \mathbf{G}_m \arrow[r] & \mathbf{Str}(A, -)^\circ \arrow[r, "\gamma''"] & \mathbf{PGO}^+(S,Q) \ar[r] & 1.
		\end{tikzcd}
	\end{equation}
	The middle column is part of a short exact sequence: 
	\begin{equation} \label{diag.nice-short-exact}
	\begin{tikzcd}
		1 \ar[r] & \mathbf{G}_m \times 1 \ar[r] & \mathbf{\Omega}(S,Q) \ar[r, "\theta'"] &\mathbf{Str}(A,-)^\circ \ar[r] & 1.
	\end{tikzcd}
	\end{equation}
	\end{proposition}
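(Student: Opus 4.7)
The plan is to treat Proposition \ref{main1} as a diagram-chasing exercise built on top of the infrastructure already in place: the standard Clifford-theoretic sequence \eqref{diag:omega-exact} applied to the Albert form $Q$, Lemma \ref{lem:theta-map} (which pins down $\theta$ and its kernel), and Lemma \ref{ker-gamma} (which identifies $\ker(\gamma')$ and gives the key relation $\gamma''\circ\theta' = \chi'$). First I would make the identifications needed to recognize the top row as a specialization of \eqref{diag:omega-exact}. Since $(A,-)$ is $(8,8)$, $S$ is $14$-dimensional and $Q$ is an Albert form in $I^3(k)$ (trivial discriminant and Clifford invariant), so $Z = Z(C^+(S,Q)) \simeq k\times k$ and $\mathbf{GL}_1(Z) \simeq \mathbf{G}_m\times\mathbf{G}_m$. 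This puts the top row of \eqref{diag.nice-diagram} exactly into the shape of \eqref{diag:omega-exact}, which is known to be exact.

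Next I would verify commutativity of both squares. The right square is precisely Lemma \ref{ker-gamma}(ii). For the left square, I use Lemma \ref{lem:theta-map}(iii), which says $\ker(\theta'_R)=\{ce_1+e_2\mid c\in R^\times\}$; this forces (after possibly interchanging labels) $\theta(e_1)=0$ and $\theta(e_2)=\id$. Then $\theta'(ce_1+de_2)=d\,\id$, matching the composition $\mathbf{G}_m\times\mathbf{G}_m \twoheadrightarrow 1\times\mathbf{G}_m \hookrightarrow \mathbf{Str}(A,-)^\circ$, where the last inclusion sends $d$ to the scalar $d\,\id$. This simultaneously identifies the left vertical arrow as projection to the second factor and establishes commutativity.

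For the bottom row, since $(A,-)$ is a bi-octonion algebra we have $\Nuc(A)=\Nuc(C_1)\otimes\Nuc(C_2)=k$, so by Lemma \ref{ker-gamma}(i) the kernel of $\gamma'$ is just $R_{k^\times}=k^\times\,\id$, giving $\ker(\gamma'')=1\times\mathbf{G}_m$ under the identification just made. Surjectivity of $\gamma''$ follows instantly from $\chi'=\gamma''\circ\theta'$ together with the surjectivity of $\chi'$ in the top row. The middle vertical arrow $\theta'$ is surjective by a standard diagram chase: both $\mathbf{\Omega}(S,Q)$ and $\mathbf{Str}(A,-)^\circ$ are smooth (the former as the middle term of an exact sequence of smooth groups, the latter by the smoothness argument in Proposition \ref{gamma}), so it suffices to check surjectivity on $k^a$-points; given $x\in\Str(A_{k^a},-)^\circ$, pick $y\in\Omega(S,Q)(k^a)$ with $\chi'(y)=\gamma''(x)$, note that $x\theta'(y)^{-1}\in\ker(\gamma''_{k^a})=k^{a\times}\,\id$, and write any scalar $c\,\id$ as $\theta'(e_1+ce_2)$.

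Finally the short exact sequence \eqref{diag.nice-short-exact} is immediate: surjectivity of $\theta'$ was just established, and $\ker(\theta')=\mathbf{G}_m\times 1$ is exactly Lemma \ref{lem:theta-map}(iii). The main potential pitfall is purely bookkeeping: correctly matching the two factors of $Z\simeq k\times k$ with the two factors of $\mathbf{G}_m\times\mathbf{G}_m$ so that the kernel identified in Lemma \ref{lem:theta-map}(iii) and the left vertical map of \eqref{diag.nice-diagram} are consistent. Once that labeling is fixed, every other step is either a direct appeal to a previously established lemma or a routine five-lemma / smoothness argument.
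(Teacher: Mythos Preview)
Your proposal is correct and follows essentially the same route as the paper's proof: identify the top row via \eqref{diag:omega-exact} with $Z\simeq k\times k$, use Lemma~\ref{ker-gamma} for commutativity and for $\ker(\gamma'')$, deduce surjectivity of $\gamma''$ from $\chi'=\gamma''\circ\theta'$, then establish surjectivity of $\theta'$ by a diagram chase and read off $\ker(\theta')$ from Lemma~\ref{lem:theta-map}(iii). The only cosmetic difference is that the paper phrases the surjectivity of $\theta'$ as a five-lemma argument on $R$-points, whereas you argue via smoothness plus $k^a$-points; both are valid and amount to the same thing.
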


\begin{proof}
	The top row of the diagram comes from (\ref{diag:omega-exact}), using the fact that $Z = Z(C^+(S,Q))$ is split. Lemma \ref{ker-gamma} proves that $\ker(\gamma'')$ is the group of scalar matrices
 and that the right square is commutative (so the left square is too, by design). It follows that $\gamma''$ is surjective onto $\mathbf{PGO}^+(S,Q)$, hence the bottom row is exact.
A diagram chase (the five lemma) applied to the $R$-points of (\ref{diag.nice-diagram}) proves that $\theta'_R$ is surjective onto $\Str(A_R,-)$ for all $R$. The kernel of $\theta'$ is one of the copies of $\mathbf{G}_m$ in the centre of $\mathbf{\Omega}(S,Q)$ by Lemma \ref{lem:theta-map}~(iii). Hence \eqref{diag.nice-short-exact} is exact.
\end{proof}

%We can set up diagrams like (\ref{diag.nice-diagram}) for the other $(m_1, m_2)$-product algebras too. Since the proofs are similar to Theorem \ref{main1}, we give sketches and highlight only the main differences.

\begin{proposition} \label{prop:4,8}
	Let $(A,-)$ be an $(8,4)$-product algebra and let $N = \Nuc(A)$ be its quaternion factor. The following diagram commutes and the rows are exact.
	\begin{equation} \label{diag.nice-diagram2} 
		\begin{tikzcd}
			1 \arrow[r] &  \mathbf{G}_m \times \mathbf{G}_m \arrow[r] \arrow[d,"{(c_1, c_2) \mapsto c_2}" left] & \mathbf{\Omega}(S,Q) \arrow[r,"\chi'"] \arrow[d,"\theta'"] & \mathbf{PGO}^+(S,Q) \arrow[r] \arrow[d, equal] & 1\\
			1 \arrow[r] & \mathbf{GL}_1(N) \arrow[r,"R"] & \mathbf{Str}(A,-)^\circ \arrow[r, "\gamma''"] & \mathbf{PGO}^+(S,Q) \ar[r] & 1.
		\end{tikzcd}
	\end{equation}
\end{proposition}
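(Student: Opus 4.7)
The strategy mimics the proof of Proposition \ref{main1} for the $(8,8)$ case, with the nuclear quaternion algebra $N$ now playing the role of a connected normal kernel in the bottom row (rather than just a one-dimensional torus). The top row is a direct instance of the general Clifford sequence \eqref{diag:omega-exact} applied to $(S,Q)$. The crucial point is that $Z = Z(C^+(S,Q))$ is split over $k$: by Lemma \ref{lem:theta-map}(i) we have $C^+(S,Q) \simeq M_8(N) \times M_8(N)$, so its centre is isomorphic to $k \times k$ and $\mathbf{GL}_1(Z) \simeq \mathbf{G}_m \times \mathbf{G}_m$.

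For the bottom row, the map $R$ is injective by Lemma \ref{lem:nuclear-isotopes}, and its image lies in $\mathbf{Str}(A,-)^\circ$ because $\mathbf{GL}_1(N)$ is a connected algebraic group (a $k$-form of $\mathbf{GL}_2$). Commutativity of the right-hand square is exactly Lemma \ref{ker-gamma}(ii), and since $\chi'$ is already known to be surjective from \eqref{diag:omega-exact}, this forces $\gamma''$ to be surjective onto $\mathbf{PGO}^+(S,Q)$, giving exactness at the right end of the bottom row. For exactness in the middle, Lemma \ref{ker-gamma}(i) identifies $\ker(\gamma')$ with $R(\mathbf{GL}_1(N))$; intersecting with $\mathbf{Str}(A,-)^\circ$ gives $\ker(\gamma'') = R(\mathbf{GL}_1(N))$ since that image is already connected.

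The remaining task is commutativity of the left-hand square. Let $e_1, e_2 \in Z$ be the orthogonal central idempotents with $e_1 + e_2 = 1$, fixed by the convention of Lemma \ref{lem:theta-map}(iii) so that $\theta(e_1) = 0$; then necessarily $\theta(e_2) = \theta(1) - \theta(e_1) = \id_A$. For any $(c_1, c_2) \in \mathbf{GL}_1(Z) \simeq \mathbf{G}_m \times \mathbf{G}_m$ we therefore compute
\[
\theta'_R(c_1 e_1 + c_2 e_2) = c_1 \theta(e_1) + c_2 \theta(e_2) = c_2 \id_A = R_{c_2 \cdot 1_N},
\]
which is the image under $R$ of the element $c_2 \cdot 1_N \in \mathbf{GL}_1(N)$; this agrees with the prescribed vertical map $(c_1, c_2) \mapsto c_2$. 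The main obstacle, rather mild in character, is simply tracking which of the two blocks of $C^+(S,Q)$ is killed by $\theta$ and which one is preserved, but this labelling is already fixed by Lemma \ref{lem:theta-map}.
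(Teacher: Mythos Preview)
Your proof is correct and follows the same approach the paper indicates (``very straightforward using Lemma~\ref{ker-gamma}''); you have simply made explicit the steps the paper leaves implicit, namely the splitness of $Z(C^+(S,Q))$, the left-square computation $\theta'(c_1e_1+c_2e_2)=c_2\id_A=R_{c_2\cdot 1_N}$, and the passage from $\ker(\gamma')$ to $\ker(\gamma'')$. Note that for $(8,4)$-product algebras $\mathbf{Str}(A,-)$ is already connected (see~\ref{sec:connected str}), so your intersection step is harmless but unnecessary here.
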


\begin{proof}
	The proof is again very straightforward using Lemma \ref{ker-gamma}.
\end{proof}

	The major difference between  and \eqref{diag.nice-diagram} and \eqref{diag.nice-diagram2} is that  in the latter diagram the left two vertical arrows are neither injective nor surjective.  Rather, it is clear from \ref{lem:theta-map}~(i) that $\theta'(\mathbf{\Omega}(S,Q)) \subset \mathbf{Str}_N(A,-)^\circ$ and by diagram chasing it is easy to deduce that $\theta'(\mathbf{\Omega}(S,Q)) = \mathbf{Str}_N(A,-)^\circ$, which by Lemma \ref{lem:theta-map}~(iii) is isomorphic to $\mathbf{\Omega}(S,Q)/\mathbf{G}_m$. We can apply another diagram chase to (\ref{diag.nice-diagram2}) and find that $\mathbf{Str}(A,-)^\circ$ is generated by the two commuting subgroups $\mathbf{GL}_1(N)$ and $\mathbf{Str}_N(A,-)^\circ \simeq \mathbf{\Omega}(S,Q)/\mathbf{G}_m$. These two subgroups intersect in a central torus $\mathbf{G}_m$, so \[\mathbf{Str}(A,-)^\circ \simeq \frac{ \mathbf{Str}_N(A,-)^\circ \times \mathbf{GL}_1(N)}{\mathbf{G}_m} \simeq \frac{ \mathbf{\Omega}(S,Q) \times \mathbf{GL}_1(N)}{\mathbf{G}_m \times \mathbf{G}_m}.\]

\begin{proposition} \label{prop:2,8}
	Let $(A,-)$ be an $(8,2)$-product algebra and let $Z(A) = F$. The following diagram commutes, the rows are exact, and the vertical arrows are isomorphisms:
	\begin{equation*} 
		\begin{tikzcd}
			1 \ar[r] & \mathbf{GL}_1(Z) \ar[r]\ar[d, "\simeq"] & \mathbf{\Omega}(S,Q)\ar[d, "\theta'"] \ar[r, "\chi'"] & \mathbf{PGO}^+(S,Q) \ar[r] \ar[d,equal] & 1\\
			1 \ar[r] & \mathbf{GL}_1(F) \ar[r] & \mathbf{Str}(A,-)^\circ \ar[r,"\gamma''"] & \mathbf{PGO}^+(S,Q) \ar[r] & 1
		\end{tikzcd}
	\end{equation*}
\end{proposition}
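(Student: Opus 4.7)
The plan is to obtain the diagram almost for free from the machinery already built up in Lemma~\ref{lem:theta-map}, Lemma~\ref{ker-gamma}, and the commutative square \eqref{diag:omega-exact}, and then invoke the five lemma to upgrade $\theta'$ to an isomorphism. First I would observe that the top row is simply the bottom row of the general diagram \eqref{diag:omega-exact} applied to $(S,Q)$, so its exactness is already known. The only new ingredient on the top row is the identification of the centre: since $(A,-)$ is an $(8,2)$-algebra, the proof of Lemma~\ref{lem:theta-map}(i) identifies $Z = Z(C^+(S,Q))$ with the discriminant extension of $Q$, which is $F = Z(A)$. This pins down the left vertical isomorphism $\mathbf{GL}_1(Z) \simeq \mathbf{GL}_1(F)$, namely $\theta'$ itself, since the same proof shows $\theta(Z) = Z(\End_F(A))$, the algebra of right-multiplications by elements of $F$ (which by Lemma~\ref{lem:nuclear-isotopes} are honest isotopies because $F$ is central).

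Next I would check commutativity of the two squares. The right square is exactly the content of Lemma~\ref{ker-gamma}(ii), so there is nothing to do. The left square commutes because, tracing through the identifications in the previous paragraph, both compositions $\mathbf{GL}_1(Z) \to \mathbf{Str}(A,-)^\circ$ send $f \in F^\times$ to the right-multiplication $R_f$ (for the top-then-left route, via $\theta'(f) = R_f$; for the left-then-bottom route, tautologically).

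For the bottom row I would argue as follows. The map $R: \mathbf{GL}_1(F) \to \mathbf{Str}(A,-)^\circ$ is injective (evaluate at~$1$), and its image equals the kernel of $\gamma'$ by Lemma~\ref{ker-gamma}(i) applied with $N = \Nuc(A) = F$. Since $R(\mathbf{GL}_1(F))$ is connected, this image also equals $\ker(\gamma'')$. Surjectivity of $\gamma''$ follows from the factorisation $\chi' = \gamma'' \circ \theta'$ together with surjectivity of $\chi'$ in the top row. Hence the bottom row is exact.

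The main (only) point requiring care is that $\theta'$ is an isomorphism, and this is where the $m=2$ hypothesis is essential. Injectivity is Lemma~\ref{lem:theta-map}(iii) in the $m=2$ case, which holds because $\theta$ itself is already injective (the source and target both have dimension $128$ over~$k$). Given injectivity of $\theta'$, the fact that the outer vertical maps are isomorphisms forces $\theta'$ to be an isomorphism by the five lemma applied to $R$-points for every $k$-algebra~$R$; alternatively, since all groups involved are smooth, one can conclude from surjectivity over $k^a$, which in turn follows from the diagram chase on $k^a$-points. This completes the proof.
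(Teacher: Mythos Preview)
Your proposal is correct and follows essentially the same route as the paper's proof: exactness of the bottom row from Lemma~\ref{ker-gamma}, the identification $Z\simeq F$ and injectivity of $\theta'$ from Lemma~\ref{lem:theta-map}, and then the five lemma to conclude that $\theta'$ is an isomorphism. The paper's own argument is terser (it does not spell out the commutativity of the squares or the surjectivity of $\gamma''$), but the underlying logic is identical.
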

\begin{proof}
	The exactness of the bottom row again follows from Lemma \ref{ker-gamma}. Clearly the leftmost vertical arrow is an isomorphism (using \cite[Theorem~8.2]{knus1998book} or just the fact from Lemma \ref{lem:theta-map} that $\theta'$ is injective). By the five lemma, $\theta'$ is an isomorphism.
\end{proof}

Recall that in \ref{sec:connected str}, we have determined  the group of components $\pi_0(H)$ of the structure group $H = \mathbf{Str}(A,-)$. In the following theorem (which is independent of  \S \ref{sec:TKK Lie algebras}), we summarise our results on the connected component $H^\circ$.

\begin{theorem} \label{main2}
Let $(A,-)$ be an $(m_1, m_2)$-product algebra with an Albert form $Q$ on $S = \Skew(A,-)$, and let $N = \Nuc(A)$ and $F = Z(A)$.  The connected structure group $H^\circ = \mathbf{Str}(A,-)^\circ$ is the reductive group determined up to isomorphism by the data in Table \ref{table.structure-groups}.
\end{theorem}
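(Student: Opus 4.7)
The plan is to verify the entries of Table~\ref{table.structure-groups} row-by-row, assembling results already established in the preceding propositions. Smoothness of $H^\circ$ is immediate from Lemma~\ref{lem:smoothness}, and reductivity follows from the fact recorded in \ref{sec:connected str} that $H^\circ = C_{G^\circ}(\lambda)$ is a Levi subgroup of a parabolic in the adjoint simple group $G^\circ = \mathbf{Aut}(K(A,-))^\circ$. What remains is to identify $H^\circ$ explicitly case by case.

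For the composition-algebra rows $(m,1)$ with $m \in \{1,2,4\}$, the algebra is associative and $H^\circ$ is read off directly from the sequence in \ref{sec:associative}; for $m=8$, diagram~\eqref{diag:octonions} realises $H^\circ$ as the even Clifford group $\mathbf{\Gamma}^+(C_0, n')$, whose derived subgroup is $\mathbf{Spin}(C_0, n')$. For the three associative cases $(4,4), (4,2), (4,1)$, the exact sequence
\begin{equation*}
1 \to \mathbf{GL}_1(F) \to \mathbf{GL}_1(A) \times \mathbf{Sim}(A,-) \to \mathbf{Str}(A,-)^\circ \to 1
\end{equation*}
of \ref{sec:associative} exhibits $H^\circ$ as the almost-direct product of $\mathbf{GL}_1(A)$ with $\mathbf{Sim}(A,-)$ (specialising to $\mathbf{GO}$, $\mathbf{GU}$, or $\mathbf{GSp}$ respectively), amalgamated along their common central torus $\mathbf{GL}_1(F)$. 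Finally, for the three nonassociative mixed cases $(8,2), (8,4), (8,8)$, Propositions~\ref{prop:2,8}, \ref{prop:4,8}, and \ref{main1} and the discussion following them present $H^\circ$ as $\mathbf{\Omega}(S,Q)$, as $(\mathbf{\Omega}(S,Q) \times \mathbf{GL}_1(N))/(\mathbf{G}_m \times \mathbf{G}_m)$, and as $\mathbf{\Omega}(S,Q)/\mathbf{G}_m$ respectively.

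In every case, the derived subgroup of $H^\circ$ is extracted from the structure above: it is an image of $\mathbf{Spin}(S,Q)$ under $\theta'$ in the bi-octonion and mixed cases (coming from the spin subgroup of $\mathbf{\Omega}(S,Q)$), and the derived subgroup of $\mathbf{Sim}(A,-)$ in the associative ones; the central torus is likewise identified from the quotient description. The most delicate step will be pinning down, in the $(8,8)$-case, exactly how the kernel of $\theta'$ sits inside $\mathbf{Spin}(S,Q)$: Lemma~\ref{lem:theta-map}(iii) identifies that kernel as one of the two copies of $\mathbf{G}_m$ inside $Z^\times$ for the split centre $Z = Z(C^+(S,Q))$, and one must verify that its intersection with $\mathbf{Spin}(S,Q) \subset \mathbf{\Omega}(S,Q)$ is precisely $\bm{\mu}_2$, so that $(H^\circ)^{\rm der} \simeq \mathbf{Spin}(S,Q)$ and the surviving $\mathbf{G}_m$ provides the one-dimensional central torus of $H^\circ$. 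Once this bookkeeping is straight, the multiplier data and the almost-direct-product decompositions $H^\circ = Z(H^\circ)\cdot (H^\circ)^{\rm der}$ recorded in Table~\ref{table.structure-groups} follow by routine diagram-chasing in the commutative squares of Propositions~\ref{main1}, \ref{prop:4,8}, and \ref{prop:2,8}.
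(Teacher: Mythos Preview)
Your approach is the same as the paper's: Column~(A) is assembled from \ref{sec:associative}, \ref{sec:octonion}, and Propositions~\ref{main1}--\ref{prop:2,8}, and the remaining columns are extracted by routine computation of derived subgroups and centres. Two slips, however. First, Lemma~\ref{lem:smoothness} concerns $\mathbf{Aut}(A,-)$, not $\mathbf{Str}(A,-)$; smoothness of the latter is obtained in the proof of Proposition~\ref{gamma} from its being the centraliser of a torus. Second, and more substantively, in the $(8,8)$ case the kernel $\{ce_1+e_2\}$ of $\theta'$ meets $\mathbf{Spin}(S,Q)$ \emph{trivially}, not in $\bm{\mu}_2$: since $\tau$ is of the second kind it swaps $e_1,e_2$, so $\tau(ce_1+e_2)(ce_1+e_2)=c$, which equals $1$ only for $c=1$. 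Had the intersection really been $\bm{\mu}_2$, the derived subgroup would be a half-spin quotient rather than $\mathbf{Spin}(S,Q)$ itself, contradicting the table. The paper sidesteps this computation by invoking $\mathbf{\Omega}(V,q)^{\rm der}=\mathbf{Spin}(V,q)$ together with the identity $[B/A,B/A]\simeq[B,B]/(A\cap[B,B])$.
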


\begin{table}[hbt]
\begin{center} {\scriptsize 
\begin{tabular}{x{0.45cm}x{0.45cm}|x{3.3cm}x{3.3cm}x{1.4cm}x{1.6cm} x{1.8cm}} 
&& (\textbf{A}) & (\textbf{B}) & (\textbf{C}) & (\textbf{D}) & (\textbf{E})\\ & \\
  $m_1$ & $m_2$ & $H^\circ$ & $(H^\circ)^{\rm der}$ & $Z(H^\circ)$ & $Z((H^\circ)^{\rm der})$ & Type \\ \hline &\\
 $8$ & $8$ & $\mathbf{\Omega}(S,Q)/\mathbf{G}_m$ & $\mathbf{Spin}(S,Q)$ & $\mathbf{G}_m$ & $\bm{\mu}_4$ & $D_7$ \\ & \\
 & $4$ & $\displaystyle \frac{ \mathbf{\Omega}(S,Q) \times \mathbf{GL}_1(N)}{\mathbf{G}_m\times \mathbf{G}_m}$ & $\displaystyle \frac{\mathbf{Spin}(S,Q) \times \mathbf{SL}_1(N)}{\bm{\mu}_2}$ & $\mathbf{G}_m$ & $\bm{\mu}_{4}$ & $D_5 \times A_1$ \\ & \\
 & $2$ & $\mathbf{\Omega}(S,Q)$ & $\mathbf{Spin}(S,Q)$ & $\mathbf{GL}_1(F)$ & $\bm{\mu}_2 \times \bm{\mu}_2$ & $D_4$ \\ & \\
 & $1$ & $\mathbf{\Gamma}^+(S,Q)$ & $\mathbf{Spin}(S,Q)$ & $\mathbf{G}_m$ & $\bm{\mu}_2$ & $B_3$ \\ & \\
  $4$ & $4$ & $\displaystyle \frac{\mathbf{GL}_1(A)\times \mathbf{GO}(A,-)}{\mathbf{G}_m}$ & $\displaystyle \frac{\mathbf{SL}_1(A) \times \mathbf{SO}(A,-)}{\bm{\mu}_2}$ & $\mathbf{G}_m$  & $\bm{\mu}_4$ & $A_3 \times A_1 \times A_1$ \\ & \\
  & $2$ & $\displaystyle \frac{\mathbf{GL}_1(A)\times \mathbf{GU}(A,-)}{\mathbf{GL}_1(F)}$ & $\displaystyle \frac{\mathbf{SL}_1(A) \times \mathbf{SU}(A,-)}{\bm{\mu}_{2}}$ & $\mathbf{GL}_1(F)$ & $\bm{\mu}_{2}$ & $A_1 \times A_1 \times A_1$ \\ & \\
&  $1$ & $\displaystyle \frac{\mathbf{GL}_1(A) \times \mathbf{GL}_1(A)}{\mathbf{G}_m}$ & $\displaystyle \frac{\mathbf{SL}_1(A) \times \mathbf{SL}_1(A)}{\bm{\mu}_2}$ & $\mathbf{G}_m$ &  $\bm{\mu}_2$ & $A_1 \times A_1$ \\ & \\
$2$ & $1$ & $\mathbf{GL}_1(F)$ & $1$ & $\mathbf{GL}_1(F)$ & $1$ \\ & \\
$1$ & $1$ & $\mathbf{G}_m$ & $1$ & $\mathbf{G}_m$ & $1$ 
 \\ \ \\
\end{tabular} }
\caption{If $(A,-)$ is an $(m_1,m_2)$-product algebra, the table displays:  (\textbf{A})~The group $H^\circ = \mathbf{Str}(A,-)^\circ$. (\textbf{B})--(\textbf{D})~The isomorphism classes of $(H^\circ)^{\rm der}$, $Z(H^\circ)$, and $Z((H^\circ)^{\rm der})$.
%, which determine $H^\circ$ in the ``standard form" $H^\circ \simeq \frac{(H^\circ)^{\rm der}\times Z(H^\circ)}{Z((H^\circ)^{\rm der})}$.
(\textbf{E}) The type of the root system of $(H^\circ)^{\rm der}$.} \label{table.structure-groups}
\end{center}
\end{table}

\begin{proof}
	Column (A) comes directly from \ref{sec:associative}, \ref{sec:octonion}, \ref{main1}, \ref{prop:4,8}, and \ref{prop:2,8}. So we begin with Column (B). Since $\mathbf{Spin}(V,q)^{\rm der} = \mathbf{Spin}(V,q)$, it is not difficult to show using the definitions that $\mathbf{\Omega}(V,q)^{\rm der} = \mathbf{\Gamma}^+(V,q)^{\rm der} = \mathbf{Spin}(V,q)$ for any quadratic space $(V,q)$. Similar facts like $\mathbf{GL}_1(A)^{\rm der} = \mathbf{SL}_1(A)$,  $\mathbf{GO}(A,-)^{\rm der} = \mathbf{SO}(A,-)$, and $\mathbf{GU}(A,-)^{\rm der} = \mathbf{SU}(A,-)$ are standard and easy to prove. To complete Column~(B) we also used the fact that $[B/A,B/A] \simeq [B,B]/(A \cap [B,B])$ for abstract groups $A \lhd B$, in combination with the characterisation of $G^{\rm der}$ from \cite[Proposition 6.18]{milne}. The remaining columns are based on direct and easily replicable calculations, using facts from \cite[\S\S23--26]{knus1998book} or \cite[\S24]{milne} about the root systems and centres of semisimple groups.
	\end{proof}

\section{Norm-similitude groups}

Algebraists have been studying the norm-similitude groups of algebras for a very long time. The following Theorem \ref{Frobenius-theorem} is as classical as it gets, proved in an early form by Frobenius \cite{frobenius1897uber} and generalised by Dieudonn{\'e} \cite{dieudonne1948generalisation}, Jacobson \cite{jacobson1976structure},  Waterhouse \cite{waterhouse1987automorphisms}, and many others. 

%\subsection{The norm-similitude group for involutions of the first kind}

Let $(A,\sigma)$ be a central simple associative algebra with involution. Consider the group $(\mathbf{GL}_1(A) \times \mathbf{GL}_1(A)) \rtimes \ZZ/2\ZZ$, where $\ZZ/2\ZZ$ acts by $(x,y) \mapsto (y,x)$, together with the homomorphism 
\begin{align*}
	\phi_{\sigma}: (\mathbf{GL}_1(A) \times \mathbf{GL}_1(A))\rtimes \ZZ/2\ZZ \to \mathbf{GO}(N_A), &&
	\phi_{\sigma}(x,y,\epsilon) & =  L_x R_{\sigma(y)}\sigma^\epsilon.
\end{align*}

\begin{theorem} \label{Frobenius-theorem}	If $(A,\sigma)$ is a central simple associative algebra with involution of the first kind over an arbitrary field $K$, the following sequence is exact:
	\begin{equation*}
	\begin{tikzcd}
	1 \ar[r] & \mathbf{G}_m \ar[r] & (\mathbf{GL}_1(A) \times \mathbf{GL}_1(A))\rtimes \ZZ/2\ZZ \ar[r,"\phi_\sigma"] & \mathbf{GO}(N_A) \ar[r] & 1.
%	&& \mathbf{G}_m \ar[r,equal] & \mathbf{G}_m 
	\end{tikzcd}
	\end{equation*}
\end{theorem}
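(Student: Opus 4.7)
My plan has four steps, corresponding to the four things needed for an exact sequence of algebraic groups in the sense of \ref{sec.exact}: well-definedness of $\phi_\sigma$, the kernel calculation, smoothness of the target, and surjectivity on $K^a$-points.

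First, I would check that $\phi_\sigma$ is a morphism of algebraic groups landing in $\mathbf{GO}(N_A)$. The left- and right-multiplication operators $L_x$ and $R_y$ are norm similitudes with multipliers $N_A(x)$ and $N_A(y)$, and because $\sigma$ is of the first kind it restricts to the identity on $Z(A) = K$, hence $N_A \circ \sigma = N_A$ (i.e.\ $\sigma$ is an isometry of $N_A$). Consequently $\phi_\sigma(x,y,\epsilon)$ is a similitude with multiplier $N_A(x) N_A(y)$. The homomorphism property $\phi_\sigma(g_1 g_2) = \phi_\sigma(g_1) \phi_\sigma(g_2)$ is a direct unpacking of the semidirect product rule $(x_1, y_1, 1)\cdot(x_2, y_2, 0) = (x_1 y_2, y_1 x_2, 1)$; the crucial check is the case $\phi_\sigma(x_1, y_1, 1) \phi_\sigma(x_2, y_2, 0)(z) = x_1 \sigma\big(x_2 z \sigma(y_2)\big) \sigma(y_1) = x_1 y_2 \sigma(z) \sigma(x_2) \sigma(y_1)$, which indeed matches $\phi_\sigma(x_1 y_2, y_1 x_2, 1)(z)$.

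Next, the kernel computation. If $\phi_\sigma(x, y, 0) = \id$, then evaluating at $z = 1$ gives $x \sigma(y) = 1$; substituting back gives $x z x^{-1} = z$ for all $z$, which forces $x \in Z(A) = K$, and then $y = \sigma(x)^{-1} = x^{-1}$. If $\phi_\sigma(x, y, 1) = \id$, then $x \sigma(z) \sigma(y) = z$ for all $z$; this would make $\sigma$ equal to the inner automorphism $\mathrm{Int}(x^{-1})$ (after absorbing $\sigma(y)$), which is impossible since an antiautomorphism cannot equal an automorphism unless $A$ is commutative. (The degenerate case $A = K$, $\deg(A) = 1$, is harmless since then both components of the source map to $K^\times$ in the same way and the statement should be interpreted appropriately, but our applications will have $\deg(A) \geq 2$.) Thus $\ker \phi_\sigma = \{(c, c^{-1}, 0) \mid c \in K^\times\} \simeq \mathbf{G}_m$, embedded centrally.

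The target $\mathbf{GO}(N_A)$ is smooth: it is the preimage of $\mathbf{G}_m \subset \mathbf{A}^1$ under the polynomial map $\mathbf{GL}(A) \to \mathbf{A}^1$ sending $\beta$ to the (unique) constant such that $N_A \circ \beta = \mu(\beta) N_A$, and smoothness in characteristic not 2 or 3 follows from standard arguments (or from the fact that $\mathbf{GO}(N_A) \simeq \mathbf{Iso}(N_A) \rtimes \mathbf{G}_m$ via any chosen square root of a scalar multiplier, reducing to the smoothness of $\mathbf{Iso}(N_A)$, which is a well-known classical group).

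The main obstacle is surjectivity on $K^a$-points, and this is where the classical input is essential. Over $K^a$ the algebra splits as $A_{K^a} \simeq M_n(K^a)$, and $N_A$ becomes the determinant. The Frobenius--Dieudonn\'e--Waterhouse theorem (in the form proved in the cited references \cite{frobenius1897uber, dieudonne1948generalisation, jacobson1976structure, waterhouse1987automorphisms}) asserts that every linear bijection $\beta: M_n(K^a) \to M_n(K^a)$ satisfying $\det(\beta(X)) = \mu \det(X)$ is of the form $X \mapsto P X Q$ or $X \mapsto P X^{\rm T} Q$ for invertible $P, Q \in M_n(K^a)$. Choosing transposition as the split form of $\sigma$, these two cases are precisely $\phi_\sigma(P, \sigma(Q), 0)$ and $\phi_\sigma(P, \sigma(Q), 1)$, establishing surjectivity on $K^a$-points. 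Combined with Steps 1--3 and the exactness of the $R$-point sequence at the first two terms (which follows from the kernel computation being valid verbatim over any commutative $K$-algebra $R$), this completes the proof of exactness in the sense of \ref{sec.exact}.
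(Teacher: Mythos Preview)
Your overall strategy matches the paper's: compute the kernel directly, then invoke the Frobenius--Dieudonn\'e--Waterhouse description of determinant-preserving linear maps on $M_n$ to get surjectivity in the split case, and descend. The paper also reduces from a general first-kind involution $\sigma$ to the transpose $t$ by writing $\sigma = \Int(u)\circ t$ and noting that $\phi_\sigma$ and $\phi_t$ have the same image, which you effectively do by passing to $K^a$.

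The gap is in your smoothness step. The theorem is stated over an \emph{arbitrary} field, so appealing to ``standard arguments in characteristic not $2$ or $3$'' does not suffice. More seriously, your two concrete suggestions do not work. There is no semidirect-product splitting $\mathbf{GO}(N_A) \simeq \mathbf{Iso}(N_A)\rtimes \mathbf{G}_m$ in general: the scalar $c\cdot\id$ has multiplier $c^{\deg A}$, not $c^2$, so ``any chosen square root'' is not the relevant operation, and in any case this would require extracting $(\deg A)$-th roots. And calling $\mathbf{Iso}(N_A)$ ``a well-known classical group'' is circular: the whole content of the Frobenius--Waterhouse theorem is to identify $\mathbf{Iso}(\det)$ and in particular to show it is smooth as a group scheme. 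That smoothness is not something you can take as input. The paper handles this by citing Waterhouse's result \cite[Theorem~4.1]{waterhouse1987automorphisms} precisely for the smoothness of $\mathbf{Iso}(N_A)$ in the split case (from which smoothness of $\mathbf{GO}(N_A)$ follows), and then using that smoothness to pass from $K^a$-point surjectivity to surjectivity of algebraic groups. You should do the same: invoke Waterhouse not only for the description of the $K^a$-points but also for smoothness.
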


\begin{proof}
	It is clear that the kernel of $\phi_\sigma$ is the one-parameter subgroup $T \simeq \mathbf{G}_m$ where $T(R) = \{(c\id,c^{-1}\id,0) \mid c \in R^\times \}$. By definition,  $\mathbf{Iso}(N_A)$ is the kernel of the multiplier homomorphism $\mathbf{GO}(N_A) \to \mathbf{G}_m$. Assume first that $A \simeq M_n(K)$ and $\sigma = t$ is the transpose involution. Then $N_A$ is the determinant and \cite[Theorem 4.1]{waterhouse1987automorphisms} shows that $\mathbf{Iso}(N_A)$ is smooth (so $\mathbf{GO}(N_A)$ is smooth too). The same theorem implies quite directly that $\mathbf{GO}(N_A)$ is generated by the image of $\phi_t$. If $A= M_n(K)$ and $\sigma$ is another orthogonal involution or a  symplectic involution, then $\sigma = \Int(u)t$ for some invertible $u \in A$ \cite[Proposition~2.19]{knus1998book}, and it is clear that $\phi_\sigma$ and $\phi_t$ have the same image, namely $\mathbf{GO}(N_A)$. This still holds if $A$ is not split, because one can extend to the algebraic closure (or just until $A$ is split) and use the smoothness of $\mathbf{GO}(N_A)$.
\end{proof}

\subsection{The norm-similitude group for involutions of the second kind} \label{sec:ns-2ndkind}

Suppose $K$ is an arbitrary field and $(A,\sigma)$ is a central simple associative algebra over $K$ with involution of the second kind, and let $F = Z(A)$. It is clear that we have an exact sequence
\begin{equation*}
\begin{tikzcd} 1 \ar[r] & \mathbf{GL}_1(F) \ar[r] & (\mathbf{GL}_1(A) \times \mathbf{GL}_1(A)) \rtimes \ZZ/2\ZZ \ar[r,"\phi_\sigma"] & \mathbf{GO}(N_A)	.
\end{tikzcd}
\end{equation*}
In the rather trivial case where $A = F$, it is easy to work out that $\phi_\sigma$ is surjective. However, if $A \ne F$ then $\phi_\sigma$ is not surjective. Indeed, if $A = M_n(F)$, $n > 1$, then there is a conjugate-transpose involution $* = t\circ \iota= \iota \circ t$ on $A$, where $t \in \End(M_n(F))$ is the transpose map and $\iota$ is the nontrivial automorphism  of $F/K$ applied entrywise. There is a self-adjoint unit $u = u^* \in \mathbf{GL}_n(F)$ such that $\sigma = \Int(u)\circ *$ \cite[Proposition~2.20~(2)]{knus1998book}. By \cite[Th{\'e}or{\`e}me~3]{dieudonne1948generalisation}, \[\mathbf{GO}(N_A)(K) = \frac{(\GL_1(A) \times \GL_1(A))\rtimes \{1, t, \iota, *\}}{F^\times}\]  and this group has no fewer than four Zariski-connected components, unlike the image of $\phi_\sigma$ which has only two connected components.

\subsection{The norm-similitude group for octonion algebras} \label{sec:ns-octonion}
	If $C$ is an octonion algebra over $k$ with norm $n$, the similitude group $\mathbf{GO}(C,n) = \mathbf{G}_m.\mathbf{O}(C,n)$ is the reductive group generated by the group of scalar transformations and the orthogonal group of $(C,n)$. The $k$-points of $\mathbf{GO}(C,n)$ are all of the form $L_z t$ where $z$ is an invertible element of $C$ and $t \in O(C,n)$ is a product of some reflections \cite[p.\! 38]{springer2000exceptional}. If $y \in C$ is invertible then the reflection about $y$ is $\rho_y = -N(y)^{-1}L_y R_y \tau = -N(y)^{-1}L_y \tau L_{\bar y}$ where $\tau$ is the standard involution \cite[p.\! 44]{springer2000exceptional}, so the group of norm-similitudes is generated as an abstract group by left-multiplications and the standard involution.

\begin{theorem} \label{thm:Str-GO}
 \label{thm:ns-8,8 and 8,2}
	If $(A,-)$ is an $(8,8)$- or $(8,2)$-product algebra, then $\mathbf{Str}(A,-) = \mathbf{GO}(N_A)$.
\end{theorem}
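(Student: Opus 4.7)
The forward inclusion $\mathbf{Str}(A,-) \subset \mathbf{GO}(N_A)$ is \cite[Proposition~4.7]{allison1992norms}. The plan for the reverse inclusion is to pass to $k^a$ (using smoothness of both sides plus Galois descent: for $\mathbf{Str}(A,-)$ smoothness was established in Proposition~\ref{gamma}, and for $\mathbf{GO}(N_A)$ it is extracted by the same kind of argument as in the proof of Theorem~\ref{Frobenius-theorem}), and then exploit the explicit description of $\mathbf{Str}(A,-)^\circ$ from Theorem~\ref{main2}.

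For the $(8,8)$-case, $\mathbf{Str}(A,-)$ is already connected (Lemma~\ref{lem:connected str} and \S\ref{sec:connected str}), and over $k^a$ its derived subgroup $\mathbf{Spin}_{14}$ acts irreducibly on the $64$-dimensional space $A$ via the half-spin representation $\Delta_+$, while its central torus is just $\mathbf{G}_m$ acting by scalars. The key claim is that $\mathbf{Spin}_{14}$ is a maximal proper semisimple subgroup of $\mathbf{SL}(\Delta_+) = \mathbf{SL}_{64}$: because $n = 7$ is odd in $D_n$, the two half-spin representations $\Delta_+$ and $\Delta_-$ are dual to one another but inequivalent, so $\mathbf{Spin}_{14}$ preserves no non-degenerate bilinear form on $\Delta_+$ and is not contained in any smaller $\mathbf{SO}$ or $\mathbf{Sp}$; maximality then follows from Dynkin's classification of maximal semisimple subalgebras of $\mathfrak{sl}_n$. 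Consequently $\mathbf{GO}(N_A)^{\circ,\mathrm{der}}$, which contains $\mathbf{Spin}_{14}$, is either $\mathbf{Spin}_{14}$ itself or all of $\mathbf{SL}_{64}$. The latter is impossible, since $\mathbf{SL}_{64}$ has no non-constant polynomial invariants on its standard representation. Adjoining the one-dimensional central torus of scalars yields $\mathbf{GO}(N_A)^\circ = \mathbf{Spin}_{14}\cdot \mathbf{G}_m = \mathbf{Str}(A,-)$. To rule out extra components of $\mathbf{GO}(N_A)$, any element in a nonidentity component would induce, modulo scalars, the nontrivial outer automorphism of the $D_7$ Dynkin diagram on $\mathbf{Spin}_{14}$; but that outer automorphism exchanges $\Delta_+$ with $\Delta_-$, so it cannot act on $A=\Delta_+$ as a $k^a$-linear map.

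The $(8,2)$-case proceeds along the same lines with some modifications: over $k^a$, $(A,-)\simeq C(8)\times C(8)$ with the swap involution, and by Proposition~\ref{prop:2,8} together with \S\ref{sec:connected str}, $\mathbf{Str}(A,-)^\circ \simeq \mathbf{\Omega}(S,Q)$ acts on each octonion factor via triality, while $\mathbf{Str}(A,-)$ has a second component corresponding to the $\mathcal{G}al(F/k)$-action on the centre $F = Z(A)$. The representation is no longer irreducible, so the Dynkin-maximality argument must be supplemented by a direct dimension count using the explicit form of $N_A$ on $C(8)\times C(8)$ (essentially a product of the two octonion norms), comparing $\dim \mathbf{GO}(N_A)^\circ$ with $\dim \mathbf{Str}(A,-)^\circ$ via a tangent-space calculation. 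The nontrivial component of $\mathbf{Str}(A,-)$, manifesting as the swap of the two octonion factors after base change, visibly preserves $N_A$, and the matching of components then follows as in the $(8,8)$-case.

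The main obstacle is the representation-theoretic input in the $(8,8)$-case --- the combination of Dynkin maximality of $\mathbf{Spin}_{14}$ in $\mathbf{SL}_{64}$ with the behaviour of the $D_7$ outer automorphism on the pair of half-spin representations. Once this is in hand, the rest of the proof is essentially organizational: matching the known reductive structure of $\mathbf{Str}(A,-)$ against whatever reductive envelope $\mathbf{GO}(N_A)$ turns out to have. The $(8,2)$-case substitutes a hands-on dimension count in place of the pure maximality argument, but the underlying philosophy is the same.
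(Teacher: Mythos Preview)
Your strategy for the $(8,8)$-case has the right shape --- identify $\mathbf{Str}(A,-)^{\mathrm{der}} \simeq \mathbf{Spin}_{14}$ inside $\mathbf{GO}(N_A)$ and argue that nothing larger can occur --- but there are two genuine gaps. First, you implicitly assume that $\mathbf{GO}(N_A)^\circ$ is smooth and reductive so that its derived subgroup is a connected semisimple group to which Dynkin applies; the reference to Theorem~\ref{Frobenius-theorem} does not help, since Waterhouse's result there is specific to the determinant. Second, Dynkin's maximality classification is a characteristic-zero statement about Lie subalgebras, and the paper works over any $k$ with $\operatorname{char}(k) \ne 2,3$; extending such maximality results to positive characteristic is nontrivial (Seitz-type work) and you give no argument.

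The paper avoids both issues by a different and sharper route. Rather than bounding $\mathbf{GO}(N_A)^{\circ,\mathrm{der}}$ from above, it invokes \cite[Lemma~5.1]{garibaldi2015simple} --- a characteristic-free result on stabilisers of semi-invariants for irreducible restricted tensor-indecomposable representations --- to conclude directly that $\mathbf{Iso}(N_A)^\circ$ is smooth and equals the image $X$ of $\mathbf{Spin}(S,Q)$. Since $\mathbf{Iso}(N_A)^\circ$ is normal in $\mathbf{GO}(N_A)$, this gives $\mathbf{GO}(N_A) \subset N_{\mathbf{GL}(A)}(X)$, and the normaliser is then computed by observing that the outer automorphism of $X$ acts nontrivially on $Z(X) \subset Z(\mathbf{GL}(A))$, hence cannot be induced by conjugation in $\mathbf{GL}(A)$; combined with $C_{\mathbf{GL}(A)}(X) = Z(\mathbf{GL}(A))$ (Lemma~\ref{lem:theta-map}~(iv)), this yields $N_{\mathbf{GL}(A)}(X) = X \cdot Z(\mathbf{GL}(A)) = \mathbf{Str}(A,-)$. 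Your outer-automorphism argument for components is essentially the same observation in a different guise. For the $(8,2)$-case the paper uses exactly the same lemma and normaliser computation, with the only wrinkle being a component count via $\pi_0$; your proposed ``dimension count using the explicit form of $N_A$'' is left entirely unspecified and would be considerably more work than the uniform argument the paper gives.
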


\begin{proof}
We have $\mathbf{Str}(A,-) \subset \mathbf{GO}(N_A) \subset \mathbf{GL}(A)$. Assume first that $(m_1, m_2) = (8,8)$.  In light of Lemma \ref{lem:connected str} and Proposition~\ref{main1},  $\mathbf{Str}(A,-)^\circ = \mathbf{Str}(A,-) = X.Z(\mathbf{GL}(A))$ where $X = \mathbf{Str}(A,-)^{\rm der}$ is the image of the (faithful) half-spin representation $\theta'|_{\mathbf{Spin}(S,Q)}: \mathbf{Spin}(S,Q) \to \mathbf{GL}(A)$. Clearly $X \subset \mathbf{Iso}(N_A) \subset \mathbf{SL}(A)$. This half-spin representation is irreducible, restricted (in the sense of \cite[p.\! 3]{garibaldi2015simple}), and tensor indecomposable. Since $\Char(k) \ne 2$ and $N_A$ is an irreducible polynomial of degree 8 \cite[Theorem~9.6]{allison1992norms}, we can apply \cite[Lemma 5.1]{garibaldi2015simple} and conclude that $(\mathbf{Iso}(N_A)^\circ)_{k^a}$ is smooth and equals $X_{k^a}$. Therefore $\mathbf{Iso}(N_A)^\circ$ is smooth, and $\mathbf{Iso}(N_A)^\circ = X$. 

We claim that $\mathbf{Str}(A,-)$ contains the normaliser of $X$ in $\mathbf{GL}(A)$; since $\mathbf{Iso}(N_A)^\circ$ is normal in $\mathbf{GO}(N_A)$, this clearly implies that $\mathbf{GO}(N_A) \subset \mathbf{Str}(A,-)$. If $g \in N_{\mathbf{GL}(A)}(X)(k^a)$, then $\Int(g)|_X$ is an automorphism of $X$. There is only one nontrivial outer automorphism class of $X$ and it acts nontrivially on $Z(X) \subset Z(\mathbf{GL}(A))$, so there is no element of $\mathbf{GL}(A)(k^a)$ whose conjugation action is an outer automorphism of~$X$. Therefore, $\Int(g)|_X$ is an inner automorphism of $X$ and this implies $g \in X.C_{\mathbf{GL}(A)}(X)(k^a)$. But $C_{\mathbf{GL}(A)}(X) = Z(\mathbf{GL}(A))$ because by Lemma \ref{lem:theta-map}~(iv), anything in $\mathbf{GL}(A)(k^a)$ that commutes with $X(k^a)$ must commute with all of $\mathbf{GL}(A)(k^a)$. Therefore $g \in X.Z(\mathbf{GL}(A))(k^a) = \mathbf{Str}(A,-)(k^a)$.

	If $(m_1, m_2) = (8,2)$, then by Proposition \ref{prop:2,8}, $\mathbf{Str}(A,-)^\circ = X.Z$ where the subgroup $X = \mathbf{Str}(A,-)^{
	\rm der} \simeq \mathbf{Spin}(S,Q)$ is the image of the full spin representation and $Z = Z(\mathbf{Str}(A,-)) \simeq \mathbf{GL}_1(F)$. We can use \cite[Lemma 5.1]{garibaldi2015simple} again to conclude that $(\mathbf{Iso}(N_A)^\circ)_{k^a}$ is smooth and equal to $X_{k^a}$, so $\mathbf{Iso}(N_A)^\circ = X$. The centraliser of $X$ in $\mathbf{GL}(A)$ is $Z$ by Lemma \ref{lem:theta-map}~(iv), so the connected normaliser  $N_{\mathbf{GL}(A)}(X)^\circ$ is equal to $X.Z = \mathbf{Str}(A,-)^\circ$. By Lemma~\ref{lem:connected str}, $\pi_0(\mathbf{Str}(A,-))= \ZZ/2\ZZ = \pi_0(\mathbf{Aut}(X)) = \pi_0({N_{\mathbf{GL}(A)}}(X))$, and this implies $\mathbf{GO}(N_A) \subset N_{\mathbf{GL}(A)}(X) = \mathbf{Str}(A,-)$.
\end{proof}

%\begin{lemma} \label{lem:generated by centralisers}
%If a connected reductive group $G$ contains two commuting normal subgroups $X, Y$  that are connected, simple, and of positive dimension, then $G$ is generated by the (scheme-theoretic) centralisers of $X$ and $Y$.\end{lemma}
%
%
%\begin{proof}
%This is a straightforward consequence of the structure theory of reductive groups and \cite[Proposition 14.10~(2)]{borel}.	
%\end{proof}

By Lemma \ref{lem:connected str}, Table \ref{table.structure-groups}, and the results of \ref{Frobenius-theorem}--\ref{thm:ns-8,8 and 8,2} we arrive at the following conclusion.

\begin{theorem}
Let $(A,-)$ be an $(m_1, m_2)$-product algebra. If $(m_1, m_2) = (8,8)$, $(8,2)$, $(4,1)$, $(2,1)$, or $(1,1)$ then $\mathbf{Str}(A,-) = \mathbf{GO}(N_A)$. If $(m_1, m_2) = (8,1)$, $(4,4)$, or $(4,2)$ then $\mathbf{Str}(A,-)$ is a proper subgroup of positive codimension in $\mathbf{GO}(N_A)$.
\end{theorem}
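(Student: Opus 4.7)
The plan is to treat the eight cases separately, each one being a short consequence of results already established in the paper. Concretely, I would split the argument into equality cases and proper-inclusion cases, and for each I would cite the relevant earlier tool.

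The equalities for $(m_1,m_2) = (8,8)$ and $(8,2)$ are nothing new: they are precisely Theorem \ref{thm:Str-GO}, so they require no further work. The three remaining equality cases, $(1,1)$, $(2,1)$, and $(4,1)$, all have $(A,-)$ associative, so the plan is to combine the semidirect-product description $\mathbf{Str}(A,-) = L_{A^\times} \rtimes \mathbf{Aut}(A,-)$ recalled in \S\ref{sec:associative} with the Frobenius-type description of $\mathbf{GO}(N_A)$ furnished by Theorem \ref{Frobenius-theorem}. The case $(1,1)$ is trivial: $A = k$, $N_A$ is the identity, and both groups are $\mathbf{G}_m$. For $(2,1)$, one has $A = F$ quadratic \'etale, $\mathbf{Aut}(F,-) = \ZZ/2\ZZ$, and a direct inspection shows that both $\mathbf{Str}(F,-)$ and $\mathbf{GO}(N_{F/k})$ equal $\mathbf{GL}_1(F) \rtimes \ZZ/2\ZZ$.

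For $(4,1)$ one exploits the map $\mathbf{GL}_1(A) \times \mathbf{GL}_1(A) \to \mathbf{Str}(A,-)$ sending $(x,y) \mapsto L_x R_y$ (valid because every invertible element of a quaternion algebra is a similitude for the standard involution, cf.\ Lemma \ref{lem:nuclear-isotopes}), and matches it against the homomorphism $\phi_\sigma$ from Theorem \ref{Frobenius-theorem}. Equality of the identity components follows by comparing kernels and dimensions; equality of the full groups then reduces to comparing component groups, where one invokes Lemma \ref{lem:connected str} applied to the labelled Dynkin diagram of $K(A,-)$ listed in Table \ref{table.main}.

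For the proper-inclusion cases $(8,1)$, $(4,4)$, and $(4,2)$ the plan is a dimension count. From Table \ref{table.structure-groups} one reads off $\dim\mathbf{Str}(A,-) = 22$, $22$, and $11$ respectively. From \S\ref{sec:ns-octonion} the similitude group of the octonion norm is $\mathbf{GO}_8$, of dimension $29$, so $(8,1)$ gives codimension $7$. Theorem \ref{Frobenius-theorem} applied to the biquaternion algebra $(A,-)$ yields $\dim \mathbf{GO}(N_A) = 2\dim\mathbf{GL}_1(A) - 1 = 31$, so $(4,4)$ gives codimension $9$. Finally, \S\ref{sec:ns-2ndkind} yields $\dim \mathbf{GO}(N_A) \ge 2\dim\mathbf{GL}_1(A) - \dim\mathbf{GL}_1(F) = 30$ for $(4,2)$, giving codimension at least $19$. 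The main obstacle is the subtlety of the $(4,1)$ case: $\mathbf{Str}(A,-)$ and $\mathbf{GO}(N_A)$ share the same dimension $7$, so one cannot dispose of it by a dimension count and must genuinely match component groups via Lemma \ref{lem:connected str} and identify, in the two descriptions coming from Theorem \ref{Frobenius-theorem} and \S\ref{sec:associative}, the \emph{same} generators of the second component.
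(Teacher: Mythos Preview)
Your overall strategy mirrors the paper's one-line proof: cite Theorem~\ref{thm:Str-GO} for $(8,8)$ and $(8,2)$, use the associative machinery of \S\ref{sec:associative} and Theorem~\ref{Frobenius-theorem} for the small cases, and do a dimension count using Table~\ref{table.structure-groups} for the proper inclusions. That is exactly what the paper intends.

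Two issues. First, a minor arithmetic slip: in the $(4,2)$ case $\dim_k A = 4\cdot 2 = 8$, not $16$, so $2\dim\mathbf{GL}_1(A) - \dim\mathbf{GL}_1(F) = 14$, not $30$. Your conclusion survives since $14 > 11$, but the codimension is $3$, not $19$.

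Second, and more seriously, your plan for $(4,1)$ does not work as described. You propose to ``match component groups via Lemma~\ref{lem:connected str}'' and identify ``the same generators of the second component'' on both sides. But Lemma~\ref{lem:connected str} together with Table~\ref{table.main} tells you that $K(A,-)$ has type $C_3$, whose Dynkin diagram has no nontrivial automorphisms, so $\pi_0(\mathbf{Str}(A,-))$ is \emph{trivial}---there is no second component to match. On the other side, Theorem~\ref{Frobenius-theorem} shows $\mathbf{GO}(N_A)$ contains the standard involution $\sigma$, and for a quaternion algebra $\sigma$ acts as $+1$ on $k1$ and $-1$ on the $3$-dimensional pure part, hence $\det\sigma = -1$ and $\sigma\notin\mathbf{GO}^+(N_A)$. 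Moreover $\sigma$ fixes $1$, so by \cite[Corollary~8.6]{allison1981isotopes} it would be an isotopy only if it were an automorphism, which it is not. Thus the tools you cite yield $\mathbf{Str}(A,-) = \mathbf{GO}(N_A)^\circ$ as an index-$2$ (codimension-$0$) subgroup, not equality. Your plan would uncover this discrepancy rather than resolve it; you should flag the $(4,1)$ case as needing a separate argument or a correction to the statement.
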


Our attempts to answer this question for $(8,4)$-product algebras are still inconclusive.

\section{Albert forms and criteria for isotopy and division algebras} \label{ch:Gal-com-KP-and-SA}

The goal of this section is to establish some theorems about isotopy of $(m_1, m_2)$-product algebras, and some criteria for being a division algebra. For instance, we prove that isotopic algebras have similar Albert forms, and that the Albert forms classify $(8,m)$-product algebras up to isotopy -- this was previously known only in characteristic~0. Galois cohomology affords us some elegant proofs of these results, which are also new proofs of the known results in characteristic~0.

\subsection{Nonassociative pairs}\label{sec:pairs1}
To interpret the Galois cohomology set $H^1(k, \mathbf{Str}(A, -))$, we follow standard practice and look for an algebraic object whose automorphism group is $\mathbf{Str}(A, -)$.

\begin{definition*}A \emph{(nonassociative) pair} over a $k$-algebra $R$ is a pair of $R$-modules $P = (P_+, P_-)$ equipped with a pair of $R$-bilinear maps $\mathcal{V}: P_\sigma \times P_{-\sigma} \to \End(P_{\sigma})$ for $\sigma = \pm 1$. We deliberately use the same notation $\mathcal{V}: (x,y)\mapsto \mathcal{V}_{x,y}$ for both these maps. 
\end{definition*}

 An isomorphism of pairs $P\to Q$ is a pair of $R$-module isomorphisms $f_\sigma: P_\sigma \to Q_\sigma$ such that $f_\sigma \mathcal{V}_{x,y} = \mathcal{V}_{f_\sigma(x), f_{-\sigma}(y)}f_\sigma$ for all $x, y \in P_\sigma \times P_{-\sigma}$. For an $R$-module $S$, we define the scalar extension of $P$ in the obvious way and denote it by $P_S = (P_{+,S},P_{-,S})$.

\subsection{The automorphism group scheme of a pair}

Let $P$ be a nonassociative pair over $k$, and let 
$W_\sigma = \End(P_\sigma \otimes P_{-\sigma} \otimes P_\sigma, P_{\sigma})$.
Consider the following pair of representations:
\begin{align*}
 \rho_\sigma: \mathbf{GL}(P_+)\times\mathbf{GL}(P_{-}) &\to \mathbf{GL}(W_\sigma)
  \\ \rho_{\sigma,R}(f_+,f_{-})(E)(x \otimes y \otimes z) &= f_{\sigma}(E(f_\sigma^{-1}(x)\otimes f_{-\sigma}^{-1}(y) \otimes f_{\sigma}^{-1}(z)))
 \end{align*}
 for all $f_\sigma \in \GL(P_{\sigma,R})$, $E \in W_{\sigma,R}$, and $(x,y,z) \in P_{\sigma,R}\times P_{-\sigma,R}\times P_{\sigma,R}$.
  The direct sum $\rho = \rho_+ \oplus \rho_-$ is a representation of $\mathbf{GL}(P_+)\times\mathbf{GL}(P_-)$ in the vector space $W = W_+\oplus W_-$.
  
  Note that $W$ can be identified with the set of all pairs with underlying vector spaces $P_+$ and $P_-$. Let $w \in W$ be the representative of $\mathcal{V}$; that is, $w = (w_+,w_-)$ where $w_\sigma(x,y,z) = \mathcal{V}_{x,y}(z)$. Define the group functor  $\mathbf{S}_w\subset \mathbf{GL}(W)$ as the stabiliser of $w$ in $\mathbf{GL}(W)$; that is,
  \[
  	\mathbf{S}_w(R) = \{\alpha \in \GL(W_R) \mid \alpha(w) = w \text{ for all } w \in W_R\}.
  \]
  Finally, define the group functor $\mathbf{Aut}(P) = \rho^{-1}(\mathbf{S}_w)\subset \mathbf{GL}(P_+)\times \mathbf{GL}(P_-)$, meaning:
	\[
	\mathbf{Aut}(P)(R) = \{(f_+, f_-) \in \GL(P_{+,R})\times \GL(P_{-,R}) \mid \rho_R(f_+,f_-) \in \mathbf{S}_w(R)\}.
	\]
  The functor $\mathbf{Aut}(P)$ is representable \cite[Examples~20.3~(2) \& 20.4~(2)]{knus1998book}, and we call it the \emph{automorphism group scheme of $P$}.  The group of $R$-points $\mathbf{Aut}(P)(R)$ is precisely the automorphism group of the pair $P_R$.
  
 \subsection{Galois cohomology of Kantor pairs} \label{sec:cohomology of pairs} Since $H^1(k, \mathbf{GL}(W)) = 1$ by Hilbert 90, \cite[Proposition~29.1]{knus1998book} implies that there is a one-to-one correspondence
  \begin{center}
\fbox{\begin{minipage}[c][1cm]{4cm} \centering
$H^1\big(k,\mathbf{Aut}(P)\big)$
\end{minipage}}\quad  $\longleftrightarrow$\quad\fbox{\begin{minipage}[c][1cm]{9cm} \centering
$k$-isomorphism classes of pairs $Q$ such that the $k^s$-pairs $Q_{k^s}$ and $P_{k^s}$ are isomorphic.
\end{minipage}}
\end{center}
 
\begin{definition*} \label{def:kantor-pairs}  \cite[\S3]{allison1999elementary} 
A pair $P = (P_+,P_-)$ is called a \emph{Kantor pair} if it satisfies the identities:
\begin{enumerate}[(KP1)]
	\item $[\mathcal{V}_{x,y},\mathcal{V}_{z,w}] = \mathcal{V}_{\mathcal{V}_{x,y}(z),w} - \mathcal{V}_{z, \mathcal{V}_{y,x}(w)}$
	\item $\mathcal{K}_{a,b}\mathcal{V}_{x,y} + \mathcal{V}_{y,x}\mathcal{K}_{a,b} = \mathcal{K}_{\mathcal{K}_{a,b}(x),y}$
\end{enumerate}
for all $(x,y), (z,w) \in P_\sigma \times P_{-\sigma}$ and $(a,b) \in P_\sigma \times P_{\sigma}$, where $\mathcal{K}_{z,w}(x) = \mathcal{V}_{z,x}(w) - \mathcal{V}_{w,x}(z)$.
\end{definition*}

\subsection{Kantor pairs from structurable algebras} \label{sec:kantor-pairs}

If $(A,-)$ is a structurable algebra, then the pair $\KP(A,-) = (A,A)$ equipped with the maps $\mathcal{V}_{x,y} = 2V_{x,y}$ is a Kantor pair \cite[p.\! 533]{allison1999elementary}.
Let  $(A,-)$ and $(B,-)$ be structurable $R$-algebras. An $R$-module isomorphism $f:A \to B$ is an isotopy if and only if there exists a linear map $\hat f: A \to B$ such that $(f,\hat f)$ is an isomorphism of the Kantor pairs $\KP(A,-) \to \KP(B,-)$. The map $\hat f$, if it exists, is uniquely determined by $f$.  Consequently, $(A,-)$ is isotopic to $(B,-)$ if and only if $\KP(A,-)$ is isomorphic to $\KP(B,-)$, and there is an isomorphism of algebraic groups $\mathbf{Str}(A,-) \overset{\sim}{\to} \mathbf{Aut}(\KP(A,-))$ defined by $\alpha \mapsto (\alpha, \hat \alpha)$	for all $\alpha \in \Str(A_R,-)$.

Of course, not every Kantor pair is of the form $\KP(A,-)$ for a structurable algebra $(A,-)$, so the following lemma is very important; it implies among other things that the class of Kantor pairs associated to structurable algebras is closed under Galois descent. 
\begin{lemma} \label{lem:dense-orbit}
	Let $(A,-)$ be a central simple structurable algebra over $k$.
	\begin{enumerate}[\rm (i)]
	\item 	If $k$ is algebraically closed, the action of $\mathbf{Str}(A,-)^\circ$ on $A$ has a dense open orbit.
	\item The map $i_*: H^1(k,\mathbf{Aut}(A,-)) \to H^1(k,\mathbf{Str}(A,-))$ induced by the inclusion $i: \mathbf{Aut}(A,-) \to \mathbf{Str}(A,-)$ is surjective.
	\end{enumerate}
\end{lemma}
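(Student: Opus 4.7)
The plan for part (i) is to pass to the TKK Lie algebra and carry out a dimension count. By Lemma~\ref{lem.centraliser}, $\mathbf{Str}(A,-)^\circ \simeq C_{G^\circ}(T_{\rm gr})$ where $G = \mathbf{Aut}(K(A,-))$ and $G^\circ$ is adjoint simple because $(A,-)$ is central simple. Lemma~\ref{lem.L_0} then identifies $\Lie(\mathbf{Str}(A,-)^\circ) = K_0 = T_A \oplus D_{A,A}$, acting on $K_1 \cong A$ by the restriction of the adjoint action, which under $T_a = V_{a,1}$ corresponds to the tautological inclusion $K_0 \hookrightarrow \End(A)$. Evaluated at $1 \in A$, an element $T_a + d$ therefore produces $V_{a,1}(1) + d(1) = 2a - \bar a$, since any derivation of $(A,-)$ kills the unit. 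Requiring $2a - \bar a = 0$ and applying the involution forces $3a = 0$, so $a = 0$ by the standing hypothesis $\Char(k) \ne 3$. The infinitesimal stabilizer of $1$ is thus precisely $D_{A,A}$, and orbit--stabilizer gives
\[
\dim\big(\mathbf{Str}(A,-)^\circ \cdot 1\big) \ge \dim K_0 - \dim D_{A,A} = \dim A,
\]
so the orbit is open and dense in the irreducible variety $A$.

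For part (ii) the plan is to parlay (i) into a Galois-descent argument. From (i), over $k^s$ the morphism $\alpha \mapsto \alpha(1)$ exhibits $\mathbf{Str}(A,-)$ as a principal $\mathbf{Aut}(A,-)$-bundle over the open orbit $U = A^*$ of conjugate-invertible elements, identifying the homogeneous space $\mathbf{Str}(A,-)/\mathbf{Aut}(A,-) \cong U$. The standard non-abelian cohomology sequence attached to such a homogeneous space (cf.\ \cite[I.\S5.4]{serre1997galois}) then tells us that a class $\xi \in H^1(k,\mathbf{Str}(A,-))$ lies in $\mathrm{im}(i_*)$ precisely when the twisted form $U^\xi$ has a $k$-rational point. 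Here $U^\xi$ is a nonempty open subvariety of the $k$-vector space $P_+$ (the $+1$-component of the Kantor pair corresponding to $\xi$, which is a $k$-form of $A$), cut out by the non-vanishing of the twisted norm $N_{P_+}$.

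The main obstacle will be securing $U^\xi(k) \ne \emptyset$ uniformly in $k$. Over infinite $k$ this is immediate: $\{N_{P_+} = 0\}$ is a proper Zariski-closed subset of the affine space $P_+$, so its complement has $k$-points in abundance. Over a finite field $k$, I fall back on Lang's theorem, which trivialises $H^1(k,\mathbf{Str}(A,-)^\circ)$ and reduces $H^1(k,\mathbf{Str}(A,-))$ to cohomology of the finite $\Gamma_k$-group $\pi_0(\mathbf{Str}(A,-))$; the surjectivity of $i_*$ then boils down to surjectivity of $\pi_0(\mathbf{Aut}(A,-)) \to \pi_0(\mathbf{Str}(A,-))$ as $\Gamma_k$-groups, which in the setting of this paper is verified case by case from Table~\ref{table.structure-groups} together with Theorems~\ref{thm.aut1} and~\ref{thm.aut2} (morally, $\mathbf{Aut}(A,-)$ meets every connected component of $\mathbf{Str}(A,-)$).
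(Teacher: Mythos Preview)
Your proof is correct, and for part~(i) it takes a genuinely different and more elementary route than the paper. The paper appeals to Stavrova's results on gradings (\cite[Theorems~5.5~and~5.7]{stavrova2018classification}) to obtain the dense orbit, whereas your direct dimension count through $K_0 = T_A \oplus D_{A,A}$ and the computation $T_a(1) = 2a - \bar a$ is self-contained. The paper does sketch an alternative argument after the main proof (using the classification of structurable algebras to show $A^*$ is a single orbit), but your approach avoids even that. One small imprecision: in the type-$A_{mp-1}$ case in characteristic $p$, one only has $\Lie(\mathbf{Str}(A,-)^\circ) \supseteq \ad K_0$ rather than equality (cf.\ \ref{sec.adj.simple}), but since you only need a lower bound on the orbit dimension this is harmless.

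For part~(ii), your argument is essentially the same as the paper's: the paper cites Rost's theorem on prehomogeneous vector spaces \cite[Theorem~9.3]{garibaldi2009cohomological} as a black box, while you unpack that argument in terms of twisting and rational points of $U^\xi$ in affine space. Note that you do not actually need the identification $U = A^*$ or the description via the norm --- openness of the orbit in $A$ (which is what you proved in (i)) is all that is required for the infinite-field case. For the finite-field case, rather than verifying $\pi_0(i)$ surjective case-by-case from the tables (which limits scope to $(m_1,m_2)$-product algebras), observe that it follows directly from (i): since $\mathbf{Str}(A,-)^\circ$ already acts transitively on the open orbit, any $g \in \mathbf{Str}(A,-)(k^s)$ satisfies $g\cdot 1 = h\cdot 1$ for some $h \in \mathbf{Str}(A,-)^\circ(k^s)$, whence $h^{-1}g \in \mathbf{Aut}(A,-)(k^s)$ lies in the same component as $g$. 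This is essentially what the paper's invocation of Lemma~\ref{lem:connected str} is getting at.
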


\begin{proof}
	Part (i) is well-known fact of unclear provenance: see \cite{azad1990structure,garibaldi2009cohomological,stavrova2018classification} and the references therein. We shall prove it first by appealing to the most convenient reference, and then later sketch an alternative proof using the classification of structurable algebras. Let $L = K(A,-) = \bigoplus_{i = -2}^2 K_i$, let  $G = \mathbf{Aut}(L)^\circ$, let $\nu: \mathbf{G}_m \to G$ be the grading cocharacter, and let $\lambda = 2\nu$. Then by \cite[Theorem~5.7]{stavrova2018classification}, the pair $(G,  \lambda)$ satisfies the conditions of \cite[Theorem~5.5~(1)]{stavrova2018classification} and it follows that $C_G(\lambda) = C_G(\nu)$ has a unique dense orbit in $K_1$ (since the 2-weight space of $\lambda$ is the same as the 1-weight space of~$\nu$). The diagram $C_G(\nu) \to \mathbf{GL}(K_1)$ is isomorphic to the diagram $\mathbf{Str}(A,-)^\circ \to \mathbf{GL}(A)$, so $\mathbf{Str}(A,-)^\circ$ has a dense orbit in $A$.

	To prove (ii), firstly assume  $k$ is infinite. Then Rost's Theorem on prehomogeneous vector spaces \cite[Theorem 9.3 \& Context, p.\ 29]{garibaldi2009cohomological} implies that $i_*$ is surjective. Secondly, if $k$ is a finite field, then $i_*$ is surjective provided that $\pi_0(i): \pi_0(\mathbf{Aut}(A,-)) \to \pi_0(\mathbf{Str}(A,-))$ is surjective \cite[III.~\S2.4 Corollaries 2 \& 3]{serre1997galois}, and $\pi_0(i)$ is indeed surjective by Lemma~\ref{lem:connected str}.	\end{proof}

We quickly sketch an alternative proof of Lemma \ref{lem:dense-orbit}~(i). One can use the classification of central simple structurable algebras to show that two central simple structurable algebras over an algebraically (even separably) closed field are isotopic if and only if they are isomorphic. Then for all $u \in A^*$, $(A,-) \simeq (A^{\langle u \rangle}, -^{\langle u \rangle})$, which means that $\mathbf{Str}(A,-)$ acts transitively on the invertible elements of $A$. It is known that $A^*$ is dense and open in $A$ \cite[Theorem~10.5]{allison1992norms}, so this gives an explicit description of the dense orbit.

\subsection{Galois cohomology of structurable algebras} \label{sec:galois cohomology of structurable algebras}
If $(A,-)$ is any algebra with involution, there is a one-to-one correspondence:
 \begin{center}
\fbox{\begin{minipage}[c][1.6cm]{4cm} \centering
$H^1\big(k,\mathbf{Aut}(A,-)\big)$
\end{minipage}}\quad  $\longleftrightarrow$\quad\fbox{\begin{minipage}[c][1.6cm]{9cm} \centering
$k$-isomorphism classes of algebras with involution $(A',-)$ such that  $(A'_{k^s},-)$ and $(A_{k^s},-)$ are isomorphic.
\end{minipage}}
\end{center}
Assume $(A,-)$ is a structurable algebra. By \ref{sec:cohomology of pairs} and \ref{sec:kantor-pairs}, we can identify $H^1(k, \mathbf{Str}(A,-))$ with the set of isomorphism classes of Kantor pairs that become isomorphic to $\KP(A,-)$ after extending scalars to $k^s$. The map $i_*: H^1\big(k,\mathbf{Aut}(A,-)\big) \to H^1\big(k,\mathbf{Str}(A,-)\big)$ sends the isomorphism class of $(A',-)$ to the isomorphism class of $\KP(A',-)$. Two $k$-forms of $(A,-)$ have the same image under $i_*$ if and only if they are isotopic over $k$. We can therefore identify the image of $i_*$ with the set of $k$-isotopy classes of structurable algebras that become isotopic to $(A,-)$ over $k^s$. If $(A,-)$ is central simple, then $i_*$ is surjective by Lemma~\ref{lem:dense-orbit}~(ii), hence there is a one-to-one correspondence:\begin{center} 
\fbox{\begin{minipage}[c][1cm]{4cm} \centering
$H^1\big(k,\mathbf{Str}(A,-)\big)$
\end{minipage}}\quad  $\longleftrightarrow$\quad\fbox{\begin{minipage}[c][1cm]{9cm} \centering
$k$-isotopy classes of structurable algebras $(A',-)$ such that $(A'_{k^s},-)$ and $(A_{k^s},-)$ are isotopic.
\end{minipage}}
\end{center}

\subsection{Galois cohomology of orthogonal similitude groups} \label{sec:gal-com-GO}

Let $(A,-) = (\End(V),\tau_q)$ be a matrix algebra with orthogonal involution adjoint to a quadratic form $q$ on a vector space~$V$ of even dimension $n$. Then $\mathbf{Aut}(A,-)=\mathbf{PGO}(V,q)$ and so $H^1(k, \mathbf{PGO}(V,q))$ is in one-to-one correspondence with the isomorphism classes of associative central simple algebras with orthogonal involution of degree $n$ \cite[29.F]{knus1998book}.  By Hilbert 90 and twisting, the natural map $H^1(k, \mathbf{O}(V,q)) \to H^1(k, \mathbf{GO}(V,q))$ is surjective and the natural map $H^1(k, \mathbf{GO}(V,q)) \to H^1(k, \mathbf{PGO}(V,q))$ is injective. As such,  $H^1(k, \mathbf{GO}(V,q))$ is in natural one-to-one correspondence with the similitude classes of $n$-dimensional quadratic spaces (see also \cite[III, Exercise~2 \& Lemma~VIII.21.21]{berhuy} for another perspective).

The identity component of $\mathbf{PGO}(V,q)$ is $\mathbf{PGO}^+(V,q)$, and $H^1(k, \mathbf{PGO}^+(V,q))$ is in one-to-one correspondence with isomorphism classes of triples $[(A,\sigma),\varphi]$ where $(A,\sigma)$ is an orthogonal involution of degree $n$ and $\varphi:Z(C(A,\sigma))\to Z(C^+(V,q))$ is an isomorphism \cite[29.F]{knus1998book}, $C(A,\sigma)$ being the Clifford algebra of $(A,\sigma)$ as defined in \cite[8.B]{knus1998book}. In the event that $Z(C^+(V,q)) = k \times k$, making a choice of an isomorphism $\varphi:Z(C(A,\sigma))\to k\times k$ is the same as ordering the two simple subalgebras of $C(A,\sigma)$ and labelling them as $C_+$ and $C_-$ \cite[Remark~29.31]{knus1998book}.

While the map $H^1(k, \mathbf{PGO}^+(V,q))\to H^1(k,\mathbf{PGO}(V,q))$ is not generally injective, it does have trivial fibres over the subset corresponding to quadratic forms; that is, the composition $H^1(k, \mathbf{GO}^+(V,q)) \to H^1(k, \mathbf{PGO}^+(V,q)) \to H^1(k, \mathbf{PGO}(V,q))$ is injective. One can show this by an argument similar to the one on \cite[p.\!~407]{knus1991quadratic}.
 
 \begin{proposition} \label{prop:isotopic-similar}
	If $(A,-)$ and $(A',-)$ are isotopic $(m_1, m_2)$-product algebras, then their Albert forms are similar.	
\end{proposition}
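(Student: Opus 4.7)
The direct approach is to upgrade Proposition \ref{gamma} so that it applies to an isotopy $\alpha\colon (A,-)_R \to (A',-)_R$ between two (possibly distinct) $(m_1,m_2)$-product algebras, rather than only to self-isotopies. For such an $\alpha$, I would define
\[
\alpha_S\colon S_R \to S'_R, \qquad \alpha_S(s) \;=\; \tfrac12\,\psi\bigl(\alpha(s),\alpha(1)\bigr),
\]
with $\psi$ interpreted in $A'$, and verify that $\alpha_S$ is a similitude $(S,Q)_R \to (S',Q')_R$. Applied to the given $k$-isotopy between $(A,-)$ and $(A',-)$, this yields the proposition at once.

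For the verification, essentially every step of the proof of Proposition \ref{gamma} carries over. The crucial input is the identity $\alpha_S\bigl(\psi(x,y)\bigr) = \psi\bigl(\alpha(x),\alpha(y)\bigr)$ from \cite[Lemma~12.1]{allison1981isotopes}, which is valid for arbitrary isotopies. From it one derives in turn $L_{\alpha_S(s)}\hat\alpha = \alpha L_s$, then $\widehat{\alpha_S(s)} = \hat\alpha_S(\hat s)$ for conjugate-invertible $s \in S_R$, and finally
\[
\alpha_S(s)^{\natural'} \;=\; \rho(s)\,\hat\alpha_S\bigl(s^{\natural}\bigr), \qquad \rho(s) \;=\; Q'(\alpha_S(s))\,Q(s)^{-1},
\]
where $\natural$ and $\natural'$ denote the maps on $S$ and $S'$ respectively. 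After reducing to $k = k^a$ so that enough conjugate-invertible skew elements are available on both sides, the same ``constant-$\rho$'' bilinearity argument --- tested on a generic pair $(s_0,t)$ and exploiting the linear independence of $\hat\alpha_S(s_0^{\natural})$ and $\hat\alpha_S(t^{\natural})$ --- forces $\rho$ to be a nonzero scalar $\mu \in k^\times$. Hence $\alpha_S$ is a similitude with multiplier $\mu$, and $Q'$ is similar to $Q$.

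No essential obstacle arises, since the proof of Proposition \ref{gamma} uses only the structure of $\alpha$ as an isotopy and not the coincidence of its domain and codomain; the work is purely bookkeeping. In the spirit of the present section, the proposition can alternatively be obtained via Galois cohomology: the induced map $\gamma_*\colon H^1(k,\mathbf{Str}(A,-)) \to H^1(k,\mathbf{GO}(S,Q))$ is, by the identifications of \ref{sec:galois cohomology of structurable algebras} and \ref{sec:gal-com-GO}, a map of pointed sets sending the isotopy class of $(A',-)$ to the similitude class of its Albert form $Q'$, so isotopic product algebras automatically have similar Albert forms.
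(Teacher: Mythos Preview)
Your cohomological alternative at the end is exactly the paper's proof. The paper packages it as a commutative triangle
\[
\begin{tikzcd}
H^1(k,\mathbf{Aut}(A,-)) \ar[rd,"(\gamma|_{\mathbf{Aut}(A,-)})_*"] \ar[d,"i_*"] & \\
H^1(k,\mathbf{Str}(A,-)) \ar[r,"\gamma_*"] & H^1(k,\mathbf{GO}(S,Q)),
\end{tikzcd}
\]
noting that the diagonal map visibly sends the class of $(A',-)$ to the similitude class of its Albert form (since an automorphism of $(A,-)$ restricts to a similitude of $(S,Q)$), and that $i_*$ is surjective by Lemma~\ref{lem:dense-orbit}. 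This triangle is what justifies your assertion that $\gamma_*$ ``sends the isotopy class of $(A',-)$ to the similitude class of $Q'$''; without it that interpretation of $\gamma_*$ is precisely what is being claimed.

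Your primary direct approach --- extending Proposition~\ref{gamma} to an isotopy $\alpha\colon (A,-)\to (A',-)$ between distinct product algebras --- is also correct and is a genuinely different, more elementary route. All the ingredients (Allison's \cite[Lemma~12.1]{allison1981isotopes}, the identity $L_{\alpha_S(s)}\hat\alpha=\alpha L_s$, the inversion formula \eqref{inversion formula}, and the constant-$\rho$ argument) carry over verbatim once one keeps track of the two $\natural$-maps and the two Albert forms. What this buys you is a proof that works over any base ring and does not rely on the interpretation of $H^1(k,\mathbf{Str}(A,-))$ from \S\ref{sec:galois cohomology of structurable algebras}; the paper's cohomological route, by contrast, is shorter once that machinery is in place and dovetails with the later use of $\gamma_*$ in Lemma~\ref{lem:horizontal-summary}.
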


\begin{proof}
	The  map $\gamma: \mathbf{Str}(A,-) \to \mathbf{GO}(S,Q)$ from Lemma \ref{gamma}, when restricted to $\mathbf{Aut}(A,-)$, becomes  $\gamma(f) = f|_{S_R}$ for all $f \in \mathbf{Aut}(A,-)(R)$. This induces a commutative triangle:
	\begin{equation*} \label{diag:aut-str}
	\begin{tikzcd}
		H^1(k, \mathbf{Aut}(A,-)) \ar[rd, "(\gamma|_{\mathbf{Aut(A,-)}})_*"] \ar[d,"i_*"]\\
		H^1(k,\mathbf{Str}(A,-)) \ar[r,"\gamma_*"] & H^1(k,\mathbf{GO}(S,Q))
	\end{tikzcd}
	\end{equation*}
It is clear that the arrow  $(\gamma|_{\mathbf{Aut}(A,-)})_*$ sends the isomorphism class of an $(m_1, m_2)$-product algebra $(A',-)$ to the similitude class of its Albert form $Q'$. The fact that this factors through $H^1(k,\mathbf{Str}(A,-))$ gives us the lemma. 
\end{proof}

\subsection{A word on octonion algebras with nonstandard involutions}
There exist alternative structurable algebras that are neither associative nor $(8,1)$- or $(8,2)$-product algebras. In fact, there is just one such class of examples: these are called octonion algebras with nonstandard involutions (see \cite[p.\ 376]{allison1986conjugate} or \cite[Proposition~2.5]{pumplun2003involutions}). They are characterised among central simple structurable algebras by $(\dim A, \dim \Skew(A,-)) = (8,3)$. In the Allison--Smirnov classification scheme of central simple structurable algebras, these algebras fall into the class of structurable algebras constructed from Hermitian forms over quaternion algebras. The (abstract) automorphism group of such an algebra is of the form $\Aut(A,-) = \PGL(Q) \ltimes \SL(Q)$ for some quaternion algebra $Q \subset A$  \cite[\S2.1]{springer2000exceptional}. It is not clear without further work what the structure group of such an algebra is, although heuristically (using the ideas from \S\ref{sec:TKK Lie algebras}) it ought to be a connected reductive group with root datum of type $A_1 \times C_2$.

The following theorem was proved by Allison in \cite[Corollary~7.6]{allison1986conjugate}, who showed the statement is \emph{also true} for octonion algebras with nonstandard involutions. We give an alternative proof of the theorem but exclude the nonstandard octonionic involutions for expedience and in order to stay within our scope.

\begin{theorem} \cite[Corollary~7.6]{allison1986conjugate} \label{thm:assoc-isotopy}
	Let $(A,-)$ and $(A',-)$ be central simple algebras with involution over $k$ such that $A$ is alternative and $(\dim A, \dim \Skew(A,-)) \ne (8,3)$. Then $(A,-)$ and $(A',-)$ are isotopic if and only if they are isomorphic.
\end{theorem}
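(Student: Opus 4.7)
The plan is to prove that the canonical map
\[
i_* \colon H^1(k, \mathbf{Aut}(A,-)) \to H^1(k, \mathbf{Str}(A,-))
\]
is a bijection. Via the correspondences recalled in Section~\ref{sec:galois cohomology of structurable algebras}, this map sends the isomorphism class of a $k$-form $(A',-)$ to its $k$-isotopy class, so bijectivity of $i_*$ is precisely the content of the theorem. Surjectivity is already given by Lemma~\ref{lem:dense-orbit}~(ii). The hypothesis $(\dim A, \dim \Skew(A,-)) \neq (8,3)$ then leaves two cases for the injectivity: either $A$ is associative, or $A$ is an octonion algebra equipped with its standard involution (i.e., an $(8,1)$-product algebra).

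For the associative case, I would invoke the split short exact sequence recorded in~\ref{sec:associative},
\[
1 \to \mathbf{GL}_1(A) \to \mathbf{Str}(A,-) \xrightarrow{\,e\,} \mathbf{Aut}(A,-) \to 1,
\]
whose splitting is the canonical inclusion $i\colon \mathbf{Aut}(A,-) \hookrightarrow \mathbf{Str}(A,-)$. Since $e \circ i = \id$, functoriality of non-abelian $H^1$ gives $e_* \circ i_* = \id$ on pointed sets; hence $i_*$ admits a retraction and is in particular injective. Combined with surjectivity, this yields that $i_*$ is bijective.

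For the octonion case, I would argue directly that any two isotopic $(8,1)$-product algebras $(A,-)$ and $(A',-)$ are isomorphic. Here the structurable norm $N_A$ coincides with the octonion norm $n_A$, which is a $3$-Pfister form. By the inclusion $\mathbf{Str}(A,-) \subset \mathbf{GO}(N_A)$ recalled in~\ref{sec:traces-and-norms}, every isotopy between the two algebras realises a similitude of norms, so $n_A \sim n_{A'}$ as quadratic forms. Both norms represent $1$, and since the values represented by a Pfister form constitute a subgroup of $k^\times$, the roundness of Pfister forms upgrades this similarity to an isometry $n_A \simeq n_{A'}$. An octonion algebra is determined up to algebra isomorphism by its norm form, and every algebra isomorphism of octonion algebras preserves the standard involution (see~\ref{sec:aut-der-comp}), so $(A,-) \simeq (A',-)$ as algebras with involution.

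The main obstacle is the octonion case: the argument rests on three standard facts (isotopies are norm similitudes, Pfister forms are round, octonion norms classify octonion algebras) whose combination calls for careful scalar bookkeeping, in particular verifying that the similitude factor of any isotopy lies in the subgroup of values represented by $n_A$. The associative case, by contrast, reduces to a purely formal retraction argument in non-abelian cohomology.
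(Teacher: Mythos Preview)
Your case division is incomplete. An alternative central simple structurable algebra $(A,-)$ with $(\dim A,\dim\Skew(A,-))\ne(8,3)$ need not be either associative or an $(8,1)$-product algebra: it can also be an $(8,2)$-product algebra, i.e.\ an octonion algebra $C$ tensored with a quadratic \'etale extension $E$, equipped with the tensor product of the standard involutions. Such an $(A,-)$ is alternative (only the $(8,4)$ and $(8,8)$ cases fail alternativity), it is not associative, and it has $(\dim A,\dim\Skew(A,-))=(16,8)$, so it is not excluded by the hypothesis. Your proof says nothing about this case, and neither of your two arguments covers it: the split exact sequence from~\ref{sec:associative} requires associativity, and your octonion argument (via roundness of the $3$-Pfister norm) does not directly apply because the relevant norm of an $(8,2)$-product algebra is a $4$-Pfister form over~$k$ while the classifying data is the pair $(C,E)$, not a single Pfister form.

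The paper dispatches the $(8,2)$ case separately: by Proposition~\ref{prop:isotopic-similar} the Albert forms $Q=n'\perp\langle a\rangle$ and $Q'=m'\perp\langle b\rangle$ are similar, and then the Arason--Pfister Hauptsatz forces both the $3$-Pfister parts and the $1$-Pfister parts to be isometric, recovering $C\simeq C'$ and $E\simeq E'$. Your treatment of the associative case via the retraction $e_*$ matches the paper's. For the $(8,1)$ case you take a slightly different route (norm similitude plus roundness of Pfister forms) from the paper (which instead uses Hilbert~90 and the diagram~\eqref{diag:octonions} to descend to the pure norm in $H^1(k,\mathbf{O}^+(n'))$); both are fine, but note that your sentence ``every isotopy between the two algebras realises a similitude of norms'' strictly needs the statement for isotopies between \emph{different} algebras, not just the inclusion $\mathbf{Str}(A,-)\subset\mathbf{GO}(N_A)$ for self-isotopies.
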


\begin{proof}
	Under the assumptions, $(A,-)$ is either associative or it is an $(8, m)$-product algebra where $m \le 2$ \cite[Theorem 5.1]{allison1986conjugate}. The claim to be proven is clearly equivalent to the statement that $i_*: H^1(k,\mathbf{Aut}(A,-)) \to H^1(k,\mathbf{Str}(A,-))$ is injective (and therefore an isomorphism, according to Lemma \ref{lem:dense-orbit}~(ii)).

	If $(A,-)$ is associative, applying $H^1(k,*)$ to the split short exact sequence \eqref{diag:associative ses} makes it clear that $i_*$ is  injective.
	If $(A,-)$ is not associative, then it is either an octonion algebra over $k$ or an $(8,2)$-product algebra over $k$. If $(A,-)$ is an octonion algebra then so are its isotopes, and one can deduce from \eqref{diag:octonions} using Hilbert 90 and twisting that $(A,-)$ and $(A',-)$ are isotopic if and only if they have the same image in $H^1(k,\mathbf{O}^+(n'))$; that is, their pure norms are isometric. This of course implies $(A,-) \simeq (A',-)$.
	
	If $(A,-)$ is an $(8,2)$-product algebra then so are its isotopes. By Proposition \ref{prop:isotopic-similar}, the isotopes $(A,-)$ and $(A',-)$ have similar Albert forms, say $Q \simeq \langle c \rangle Q'$. In the Witt ring we have $Q = n - \llangle d \rrangle$ and $Q' = m - \llangle e \rrangle$ for some $d, e \in k^\times$ and $3$-Pfister forms $n, m \in W(k)$. Then $n-\langle c \rangle m = \llangle d \rrangle - \langle c \rangle \llangle e \rrangle = 0$ by the Arason--Pfister Hauptsatz, so $n = \langle c \rangle m$ and $\llangle d \rrangle = \langle c \rangle \llangle e \rrangle$. It follows that $n = m$ and $\llangle d \rrangle = \llangle e \rrangle$ in $W(k)$ \cite[Ch.\! X. Corollary~5.4]{lam}, so $(A,-)$ and $(A',-)$ have isomorphic octonion and quadratic \'etale factors, and consequently $(A,-) \simeq (A',-)$.
	 \end{proof}
	 
	To proceed further towards an isotopy criterion for $(8,m)$-product algebras, we summarise some information from \S\ref{sec:structure groups}, now taking into account the whole structure group and not just its connected component.

\begin{lemma} \label{lem:horizontal-summary}
	If $(A,-)$ is an $(8,m)$-product algebra, $m \ge 2$, and $N = \Nuc(A)$, then the following sequence is exact:
	\begin{equation*}  \begin{tikzcd}1 \arrow[r] & \mathbf{GL}_1(N) \arrow[r,"R"] & \mathbf{Str}(A,-) \arrow[r, "\gamma'"] & \mathbf{PGO}(S,Q).
	\end{tikzcd}
	\end{equation*}
	If $m = 2$, then $\gamma'$ is surjective; otherwise its image is $\mathbf{PGO}^+(S,Q)$.
	The map \[\gamma_*':H^1(k,\mathbf{Str}(A,-))\to H^1(k,\mathbf{PGO}(S,Q))\] is injective, and sends the isotopy class of $(A',-)$ to the isomorphism class of $(\End S',\tau_{Q'})$ where $S' = \Skew(A',-)$ and $\tau_{Q'}$ is the adjoint involution to an Albert form $Q'$ of $(A',-)$. \end{lemma}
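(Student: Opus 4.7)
The plan is to assemble the short exact sequence from the results of \S\ref{sec:structure groups} and then obtain the $H^1$-injectivity by a standard Hilbert~90 twisting argument. First I would establish exactness. The map $R:\mathbf{GL}_1(N)\to\mathbf{Str}(A,-)$ is injective because $R_n(1)=n$, and $\ker(\gamma')=R(\mathbf{GL}_1(N))$ is the content of Lemma~\ref{ker-gamma}~(i). For the image of $\gamma'$, when $m=4$ or $8$, the group $\mathbf{Str}(A,-)$ is connected by~\ref{sec:connected str}, so the image is connected and lies in $\mathbf{PGO}^+(S,Q)$; Propositions~\ref{main1} and~\ref{prop:4,8} already provide that the restriction $\gamma''$ to the identity component surjects onto $\mathbf{PGO}^+(S,Q)$, which gives equality. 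When $m=2$, Proposition~\ref{prop:2,8} gives surjection of the identity component onto $\mathbf{PGO}^+(S,Q)$, so by smoothness it suffices to exhibit (over $k^s$) an element of the non-identity component of $\mathbf{Str}(A,-)$ mapping outside $\mathbf{PGO}^+(S,Q)$. Over $k^s$ the algebra decomposes as $C\otimes(k^s\times k^s)$ by Theorem~\ref{thm.equiv1}, and the involution-preserving automorphism $\sigma$ that exchanges the two idempotents of $F=k^s\times k^s$ lies in $\mathbf{Aut}(A,-)(k^s)\subset\mathbf{Str}(A,-)(k^s)$, is outside the identity component because it acts nontrivially on the centre, and a direct computation shows that its restriction to $S$ acts as $+1$ on the $7$-dimensional subspace $C_0\otimes k^s(e_1+e_2)$ and as $-1$ on the line $k^s(e_1-e_2)$, so it maps to an improper class in $\mathbf{PGO}(S,Q)$.

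Next I would prove the injectivity of $\gamma_*'$ using the twisting dichotomy of~\ref{sec:cohomology-sequences}. For any cocycle $\xi\in Z^1(k,\mathbf{Str}(A,-))$, twisting the short exact sequence yields
\[1\to\mathbf{GL}_1(N_\xi)\to\mathbf{Str}(A_\xi,-)\to\mathbf{Im}(\gamma')_\xi\to 1,\]
and $N_\xi$ is again either a quadratic \'etale extension (when $m=2$) or an associative central simple algebra of the same degree (when $m=4,8$). In either case $H^1(k,\mathbf{GL}_1(N_\xi))=1$ by Hilbert~90 (via Shapiro's Lemma in the \'etale case), so the fibre of $H^1(k,\mathbf{Str}(A,-))\to H^1(k,\mathbf{Im}(\gamma'))$ over $\gamma'_*(\xi)$ is a singleton, and this map is injective. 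For $m=2$ this is the whole statement. For $m=4$ or $8$, the connectedness of $\mathbf{Str}(A,-)$ forces $\gamma$ to take values in $\mathbf{GO}^+(S,Q)$, so $\gamma'$ factors as
\[\mathbf{Str}(A,-)\xrightarrow{\gamma}\mathbf{GO}^+(S,Q)\twoheadrightarrow\mathbf{PGO}^+(S,Q)\hookrightarrow\mathbf{PGO}(S,Q),\]
and the image of $H^1(k,\mathbf{Str}(A,-))$ in $H^1(k,\mathbf{PGO}^+(S,Q))$ is contained in the image of $H^1(k,\mathbf{GO}^+(S,Q))$. By~\ref{sec:gal-com-GO}, the composition $H^1(k,\mathbf{GO}^+(S,Q))\to H^1(k,\mathbf{PGO}^+(S,Q))\to H^1(k,\mathbf{PGO}(S,Q))$ is injective, so two classes in $H^1(k,\mathbf{Str}(A,-))$ with equal image in $H^1(k,\mathbf{PGO}(S,Q))$ already agree in $H^1(k,\mathbf{PGO}^+(S,Q))=H^1(k,\mathbf{Im}(\gamma'))$, and hence agree by the Hilbert~90 step.

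Finally, the description of $\gamma_*'$ is a direct reading of the Galois-cohomological dictionary: by~\ref{sec:galois cohomology of structurable algebras} and Lemma~\ref{lem:dense-orbit}~(ii), the class of an $(8,m)$-product algebra $(A',-)$ in $H^1(k,\mathbf{Aut}(A,-))$ maps under $i_*$ to the isotopy class of $(A',-)$ in $H^1(k,\mathbf{Str}(A,-))$; since $\gamma'|_{\mathbf{Aut}(A,-)}$ is the restriction $\alpha\mapsto\alpha|_S$ and $H^1(k,\mathbf{PGO}(S,Q))$ classifies central simple $k$-algebras of degree $\dim S$ with orthogonal involution, the corresponding twist is $(\End S',\tau_{Q'})$ where $Q'$ is the Albert form of $(A',-)$. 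The main technical obstacle is the second paragraph: one needs the connectedness of $\mathbf{Str}(A,-)$ for $m=4,8$ to produce the factorisation through $\mathbf{GO}^+(S,Q)$, which is what allows the Hilbert~90 injectivity into $H^1(k,\mathbf{PGO}^+(S,Q))$ to be promoted to injectivity into $H^1(k,\mathbf{PGO}(S,Q))$.
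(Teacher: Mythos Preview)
Your proposal is correct and follows essentially the same approach as the paper: Lemma~\ref{ker-gamma}~(i) for the kernel, the connectedness discussion in~\ref{sec:connected str} together with Propositions~\ref{main1}, \ref{prop:4,8}, \ref{prop:2,8} for the image, Hilbert~90 plus twisting for injectivity into $H^1(k,\mathbf{PGO}^+(S,Q))$, and the observation from~\ref{sec:gal-com-GO} to pass to $H^1(k,\mathbf{PGO}(S,Q))$. The only cosmetic difference is your treatment of surjectivity for $m=2$: the paper argues by component count (the kernel $\mathbf{GL}_1(F)$ is connected and $\mathbf{Str}(A,-)$ has two components, so the image has two components and hence equals $\mathbf{PGO}(S,Q)$), whereas you exhibit an explicit improper element, which is equally valid and slightly more concrete.
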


\begin{proof}
	The exactness of the sequence is proved in Lemma \ref{ker-gamma}~(i). As per the discussion in~\ref{sec:connected str}, $\mathbf{Str}(A,-)$ is connected if $m = 8$ or $4$, and has two connected components if $m = 2$. Therefore, by Propositions \ref{main1}, \ref{prop:4,8}, and \ref{prop:2,8}, the image of $\gamma'$ is $\mathbf{PGO}^+(S,Q)$ if $m = 8$ or~$4$ and all of $\mathbf{PGO}(S,Q)$ if $m = 2$. If $m = 2$ then Hilbert 90 and a twisting argument can be used to show that $\gamma_*': H^1(k,\mathbf{Str}(A,-)) \to H^1(k,\mathbf{PGO}(S,Q))$ is injective. Similarly, if $m= 8$ or $4$ then $\gamma_*'': H^1(k,\mathbf{Str}(A,-)) \to H^1(k,\mathbf{PGO}^+(S,Q))$ is injective. As mentioned in~\ref{sec:gal-com-GO}, the image of $\gamma_*''$ maps injectively into $H^1(k,\mathbf{PGO}(S,Q))$, hence $\gamma_*'$ is injective too. The interpretation of $\gamma_*'$ is clear from the definition of $\gamma'$.
\end{proof}

\begin{corollary} \label{cor:isotopic-similar}
	Let $(A,-)$ and $(A',-)$ be $(8,m)$-product algebras over $k$, where $m = 1,2,4$, or $8$. Then $(A,-)$ and $(A',-)$ are isotopic if and only if they have similar Albert forms.
\end{corollary}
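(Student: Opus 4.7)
The forward direction is already established by Proposition \ref{prop:isotopic-similar}, so the entire task is the converse. My plan is to exploit the injective map
\[
\gamma_*': H^1(k, \mathbf{Str}(A,-)) \longrightarrow H^1(k, \mathbf{PGO}(S,Q))
\]
provided by Lemma \ref{lem:horizontal-summary} for $m \ge 2$, and then handle $m = 1$ separately since that lemma does not apply. The Galois-cohomological interpretation of $H^1(k,\mathbf{Str}(A,-))$ is the set of isotopy classes of $(8,m)$-product $k$-algebras (see \ref{sec:galois cohomology of structurable algebras}), so injectivity of $\gamma_*'$ reduces the problem to a question about the target cohomology set.

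For $m \in \{2,4,8\}$, suppose $(A,-)$ and $(A',-)$ have Albert forms $Q$ and $Q'$ respectively, with $Q \simeq cQ'$ for some $c \in k^\times$. It is a standard fact that the adjoint involution to a nondegenerate quadratic form depends only on its similitude class, hence $(\End S, \tau_Q) \simeq (\End S', \tau_{Q'})$ as central simple algebras with orthogonal involution. By the description in \ref{sec:gal-com-GO}, these two algebras with involution represent the same class in $H^1(k, \mathbf{PGO}(S,Q))$, and the explicit formula for $\gamma_*'$ in Lemma \ref{lem:horizontal-summary} shows that $\gamma_*'[(A,-)] = \gamma_*'[(A',-)]$. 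Injectivity of $\gamma_*'$ then forces $[(A,-)] = [(A',-)]$ in $H^1(k, \mathbf{Str}(A,-))$, so the algebras are isotopic.

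For $m = 1$, an $(8,1)$-product algebra is an octonion algebra $(C,-)$ with its standard involution, and its Albert form is the pure norm $n'$. If $n_1' \simeq c n_2'$ for octonion algebras $C_1, C_2$ and some $c \in k^\times$, then in $W(k)$ we have
\[
n_1 - \langle c \rangle n_2 = (\langle 1\rangle + n_1') - (\langle c \rangle + \langle c \rangle n_2') = \llangle c \rrangle.
\]
Since $n_1, \langle c \rangle n_2 \in I^3(k)$, we obtain $\llangle c \rrangle \in I^3(k)$, and the Arason--Pfister Hauptsatz (applied to a form of dimension $2 < 2^3$) forces $\llangle c \rrangle = 0$, i.e.\ $c \in k^{\times 2}$. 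Hence $n_1' \simeq n_2'$, so $n_1 \simeq n_2$, and two octonion algebras with isometric norms are isomorphic, in particular isotopic.

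The main conceptual step is the injectivity of $\gamma_*'$, which is already in Lemma \ref{lem:horizontal-summary}; the remaining point that similar quadratic forms produce isomorphic adjoint algebras with involution is classical and uniform across $m = 2, 4, 8$. I expect no genuine obstacle beyond unpacking the cohomological identifications of \ref{sec:gal-com-GO} and \ref{sec:galois cohomology of structurable algebras}; the only case that requires a bespoke argument is $m = 1$, where the odd-dimensional target $\mathbf{PGO}$ is not available and one substitutes an elementary Arason--Pfister computation instead.
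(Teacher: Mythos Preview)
Your proof is correct and follows the paper's approach: for $m \ge 2$ both arguments invoke the injectivity of $\gamma_*'$ from Lemma~\ref{lem:horizontal-summary} together with the interpretation of the target as similitude classes, and for $m = 1$ the paper simply cites Theorem~\ref{thm:assoc-isotopy} (isotopic $\Leftrightarrow$ isomorphic for octonion algebras with standard involution) while you supply a direct Arason--Pfister argument that similar pure norms are already isometric. Your treatment of $m = 1$ is more explicit but amounts to the same reduction.
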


\begin{proof}
	This is implied by Theorem \ref{thm:assoc-isotopy} for $m = 1$ and by Lemma~\ref{lem:horizontal-summary} for  $m \ge 2$. \end{proof}

The following is an elementary property of Pfister forms. Its proof is an easy exercise (following, say,  \cite[Theorem~VII.3.1 \& Theorem~X.1.8]{lam}).

\begin{lemma} \label{lem:pfister-splitting}
	Let $q = \langle 1 \rangle \perp q'$ be an $m$-Pfister form ($m \ge 1$) over $k$, and let $F= k(\sqrt{a})$ be a quadratic field extension. Then $q_F$ is isotropic if and only if $q'$ represents $-a$.
\end{lemma}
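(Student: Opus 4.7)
The plan is to prove the two implications separately, the backward one being immediate and the forward one depending crucially on the roundness of Pfister forms together with Witt cancellation.

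For $(\Leftarrow)$, if $q'$ represents $-a$, pick $v' \in V'$ with $q'(v') = -a$. Together with the distinguished vector $e \in V$ satisfying $q(e) = 1$ and $e \perp V'$, the plane $\Span(e, v')$ carries the Pfister subform $\llangle a \rrangle = \langle 1, -a\rangle$. Since $a$ becomes a square in $F$, the base change $\llangle a \rrangle_F$ is hyperbolic, so $q_F$ contains a hyperbolic plane and is therefore isotropic.

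For $(\Rightarrow)$, assume first that $q$ is anisotropic. Fix a nonzero isotropic vector $v \in V_F$ and write $v = v_1 + \sqrt{a}\,v_2$ with $v_1, v_2 \in V$. Separating the rational and irrational parts of $q_F(v) = 0$ yields $q(v_1) + a\,q(v_2) = 0$ and $b(v_1, v_2) = 0$, where $b$ is the polar form of $q$. Since $q$ is anisotropic one has $v_2 \ne 0$ and $c := q(v_2) \ne 0$, whence $q(v_1) = -ac$. Thus $q$ contains the orthogonal subform $\langle c, -ac\rangle = \langle c\rangle \cdot \llangle a\rrangle$. Invoking the roundness of the Pfister form $q$ (cf.\ Lam, Theorems~VII.3.1 and~X.1.8), one has $\langle c\rangle q \simeq q$; rescaling the embedding above by $c^{-1}$ then shows that $q$ contains $\llangle a\rrangle = \langle 1, -a\rangle$ itself as a subform, say $q \simeq \langle 1, -a\rangle \perp q''$.

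Comparing this with the given decomposition $q \simeq \langle 1\rangle \perp q'$ and applying Witt cancellation to the common $\langle 1\rangle$ summand gives $q' \simeq \langle -a\rangle \perp q''$, so $q'$ represents $-a$. The remaining case where $q$ is isotropic forces $q$ to be hyperbolic, in which case for $m \ge 2$ the pure part $q'$ contains a hyperbolic plane and is universal, so represents $-a$ trivially. The main obstacle is the step of promoting the subform $\langle c\rangle\llangle a\rrangle \subset q$ to $\llangle a\rrangle \subset q$: this is exactly where the roundness of Pfister forms is indispensable, since it eliminates the spurious scalar $c$ and makes the subsequent Witt cancellation available.
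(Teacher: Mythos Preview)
Your proof is correct and follows precisely the approach the paper hints at via its references to Lam (roundness of Pfister forms plus Witt cancellation); the paper itself gives no proof beyond calling it an easy exercise. One small remark: your case analysis leaves the situation $m=1$ with $q$ already isotropic over $k$ unaddressed, but in that degenerate case the lemma as stated actually fails (since then $q' \simeq \langle -1\rangle$ cannot represent $-a$ when $a$ is a nonsquare), so this is a defect in the statement rather than in your argument, and the paper only applies the lemma with $m \ge 2$ anyway.
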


The final theorem of this section could have been proved much earlier, but it fits in well at this point, now that the isotopy criteria and the role of the Albert form are established.

\begin{theorem} \label{thm:albert-form-division-algebra}
Let $(A,-)$ be an $(m_1, m_2)$-product algebra. Then $(A,-)$ is a structurable division algebra if and only if   	its Albert form $Q$ is anisotropic and $Z(A)$ is a field.
\end{theorem}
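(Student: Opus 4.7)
My plan is to prove the forward direction conceptually using the identity $L_s L_{s^\natural} = -Q(s)\,\id$ and the structure of $Z(A)$, then to establish the converse by case analysis on $(m_1,m_2)$. The easy cases are those in which $A$ is alternative, where an old result of Allison reduces conjugate-invertibility to ordinary invertibility; the non-alternative cases $(8,4)$ and $(8,8)$ will need a more delicate argument.

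For the forward direction, assume $(A,-)$ is a structurable division algebra. For a nonzero $s \in S$, relation \eqref{Ls-identity} and the injectivity of $\natural$ (a similitude of $Q$, hence injective) force that $Q(s) = 0$ would make $L_s$ annihilate the nonzero vector $s^\natural$; then $L_s \notin \GL(A)$, and by the description $S^* = \{s \in S \mid L_s \in \GL(A)\}$ recalled in \ref{sec:L-isotopes}, the element $s$ would fail to be conjugate-invertible, a contradiction. Hence $Q$ is anisotropic. Further, $Z(A) \subseteq \Nuc(A)$, and a nontrivial idempotent $e \in Z(A)$ would satisfy $e(1-e) = 0$, so $L_e \notin \GL(A)$ and by Lemma~\ref{lem:nuclear-isotopes} $e$ would not be conjugate-invertible. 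So $Z(A)$ has no nontrivial idempotents, i.e.\ it is a field.

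For the converse in the alternative cases (all $(m_1,m_2)$ except $(8,4)$ and $(8,8)$), I would cite the standard fact from \cite{allison1986conjugate} that in alternative algebras with involution conjugate-invertibility coincides with ordinary invertibility; this reduces the problem to showing $A$ is a division algebra. For $m_2 = 1$ this is immediate, since $A$ is a composition algebra with norm $n \simeq \langle 1 \rangle \perp Q$. For $m_2 = 2$ with $F = Z(A) = k(\sqrt d)$ a field, $A = C\otimes_k F$ is an $F$-composition algebra, and Lemma~\ref{lem:pfister-splitting} identifies anisotropy of $Q = n_C' \perp \langle d \rangle$ (using $n_F' = \langle -d \rangle$) with anisotropy of $(n_C)_F$, which is equivalent to $A$ being a division $F$-algebra (hence a division $k$-algebra since $F \subseteq Z(A)$). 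The biquaternion case $(4,4)$ reduces to Albert's classical theorem.

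The non-alternative cases $(8,4)$ and $(8,8)$ are the hardest; here $Z(A) = k$ automatically, so the task is to show that anisotropy of $Q$ alone implies $(A,-)$ is a structurable division algebra. Since the norm $N_A$ (see \ref{sec:traces-and-norms}) detects conjugate-invertibility, it suffices to show $N_A$ is an anisotropic polynomial on $A$ whenever $Q$ is anisotropic on $S$. My approach would be to exploit the Clifford representation $\theta : C^+(S,Q) \to \End_N(A)$ from Proposition~\ref{eq.theta}, which by Lemma~\ref{lem:theta-map} surjects onto $\End_N(A)$ and whose restriction to $\mathbf{\Gamma}^+(S,Q)$ produces the isotopies $\theta(st) = -L_sL_{t^\natural}$: combining this with Lemma~\ref{lem:dense-orbit} (existence of a dense $\mathbf{Str}(A,-)^\circ$-orbit on $A$ over $\overline k$) should propagate conjugate-invertibility from $S^*$ to all of $A \setminus \{0\}$. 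The hard part will be descending the orbit analysis from $\overline k$ to $k$ in the $(8,8)$-case: one has to exploit the rigidity (visible in Table~\ref{table.structure-groups}) that $\mathbf{Spin}(S,Q)$ becomes anisotropic precisely when $Q$ does, so as to conclude that the null-cone $\{a \in A : N_A(a) = 0\}$ has no nontrivial $k$-points.
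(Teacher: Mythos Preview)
Your forward direction and your treatment of the alternative cases and the biquaternion case are essentially the paper's own argument, and they are fine.

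The gap is in the $(8,4)$ and $(8,8)$ cases. Your plan is to show that $N_A$ is anisotropic by ``propagating conjugate-invertibility from $S^*$ to all of $A\setminus\{0\}$'' via the isotopies $\theta(st)=-L_sL_{t^\natural}$ together with the dense $\mathbf{Str}(A,-)^\circ$-orbit of Lemma~\ref{lem:dense-orbit}. But that dense orbit exists only over the algebraic closure, where $Q$ is hyperbolic and the conclusion is false; over $k$ there is no reason for the isotopies generated by $L_{s_1}L_{s_2^\natural}$ (with $s_i\in S^*$) to sweep out all of $A\setminus\{0\}$. You yourself flag the descent as ``the hard part'', but the hint you offer---that $\mathbf{Spin}(S,Q)$ is anisotropic exactly when $Q$ is---does not by itself give any control over the zero locus of $N_A$: anisotropy of a subgroup of $\mathbf{GO}(N_A)$ says nothing direct about $k$-points of the null-cone of $N_A$. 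As written, this step is a hope rather than an argument.

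The paper avoids this difficulty by shifting to the TKK Lie algebra. From Table~\ref{table.structure-groups}, the derived subgroup of $\mathbf{Str}(A,-)^\circ$ contains $\mathbf{Spin}(S,Q)$ (and, in the $(8,4)$ case, also $\mathbf{SL}_1(N)$ for the quaternion nucleus $N$, which is anisotropic because $n'$ is a subform of $Q$). Thus when $Q$ is anisotropic this semisimple subgroup is $k$-anisotropic of rank one less than the absolute rank of $G=\mathbf{Aut}(K(A,-))^\circ$, forcing $G$ to have $k$-rank exactly~$1$. One then invokes \cite[Theorem~4.3.1]{boelaert2019moufang}, which says that the TKK group has $k$-rank~$1$ precisely when $(A,-)$ is a structurable division algebra. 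This is the missing idea: translate anisotropy of $Q$ into a rank computation for the TKK group, rather than attempting to analyse the null-cone of $N_A$ directly.
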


\begin{proof}
	$(\Rightarrow)$ If the Albert form $Q$ is isotropic,  there exists a nonzero element $s \in S = \Skew(A,-)$ such that $Q(s) \ne 0$, and by \eqref{Ls-identity} this implies that $s$ is not invertible. If $Z(A)$ is not a field, then clearly $(A,-)$ fails to be a structurable division algebra.
	
	$(\Leftarrow)$ If $A = Z(A)$ is a field, then $A$ is obviously a structurable division algebra; see Lemma~\ref{lem:nuclear-isotopes}. If $m_2 = 1$ and $m_1 \ge 4$, then $A$ is a composition algebra and $Q$ is the pure norm of $A$. If $Q$ is anisotropic then the standard norm $\langle 1 \rangle \perp Q$ is anisotropic too because $\dim Q > \frac{1}{2}(\dim Q + 1)$, so $A$ is an alternative division algebra, and therefore also a structurable division algebra \cite[Corollary~3.6]{allison1986conjugate}. Suppose $m_2 = 2$, $m_1 \ge 4$, $Z(A) = k(\sqrt{a})$ is a field, and $Q$ is anisotropic. Let $C$ be the $m_1$-dimensional composition algebra in the (unique) decomposition $A = C\otimes Z(A)$, and let $n$ be the norm of $C$. Then $Q \simeq n' \perp \langle a \rangle$, so $n'$ does not represent $-a$, and $n_{Z(A)}$ is anisotropic by Lemma \ref{lem:pfister-splitting}. This implies that $A$ is an alternative division algebra, because $n_{Z(A)}$ is its generic norm, hence $(A,-)$ is a structurable division algebra \cite[Corollary~3.6]{allison1986conjugate}. If $(m_1, m_2) = (4,4)$, then the statement is essentially Albert's Theorem -- see \cite[Theorem~16.5 \& Corollary~16.28]{knus1998book}.
		
	This leaves two remaining cases. Suppose $(m_1, m_2) = (8,4)$, let $N = \Nuc(A)$ be the quaternion  factor of $A$, and assume $Q$ is anisotropic. Say the norm of $N$ is $n = \langle 1 \rangle \perp n'$; then $n'$ is anisotropic because it is a subform of $Q$, and therefore $n$ is anisotropic too. This implies $N$ is an associative division algebra, so $\mathbf{SL}_1(N)$ is anisotropic.  By Theorem~\ref{main2}, $\mathbf{Aut}(K(A,-))^\circ$ contains a semisimple subgroup isomorphic to $\mathbf{Spin}(S,Q)\times \mathbf{SL}_1(N)$, which is anisotropic of absolute rank 6. Therefore $\mathbf{Aut}(K(A,-))^\circ$, being of absolute rank 7, has $k$-rank equal to 1. Now the proof of \cite[Theorem~4.3.1]{boelaert2019moufang} implies $(A,-)$ is a structurable division algebra. If $(m_1,m_2) = (8,8)$ and $Q$ is anisotropic, then $\mathbf{Aut}(K(A,-))^\circ$ has a subgroup isomorphic to $\mathbf{Spin}(S,Q)$, which is anisotropic of absolute rank 7. Therefore $\mathbf{Aut}(K(A,-))^\circ$, being of absolute rank $8$, has $k$-rank equal to 1, which implies $(A,-)$ is a division algebra.
\end{proof}

The statement of Theorem \ref{thm:albert-form-division-algebra} for biquaternion algebras (Albert's Theorem) has been proved many times, and a discussion of its history can be found in the notes in \cite[p.\! 275]{knus1998book} and \cite[p.\! 71]{lam}. The statement for $(8,2)$- and $(8,4)$-product algebras are also provable in elementary ways; see \cite[Lemma~3.16]{boelaertthesis}. However, an elementary proof of Theorem \ref{thm:albert-form-division-algebra} for $(8,8)$-product algebras seems far out of reach because of the difficulty of working with the elements of such algebras.

The following is an alternative and somewhat useful reformulation of the above theorem for $(8,8)$-product algebras (for a similar theorem on biquaternion algebras in all characteristics, see \cite[Theorem~1.1]{becher2018transfer}).

\begin{corollary} \label{cor:cor}
	Let $E/k$ be a quadratic \'etale extension and let $C$ be an octonion  algebra over $E$. The following are equivalent:
	\begin{enumerate}[\rm (1)]
	\item $\cor_{E/k}(C)$ is not a structurable division algebra.
	\item The pure norm $n'$ of $C$ represents an element of $k \subset E$.
	\item $C$ contains a quadratic \'etale extension $K/k$ which is linearly disjoint from $E/k$ (in the sense that $EK$ is a $4$-dimensional $k$-vector space).
	\end{enumerate}
\end{corollary}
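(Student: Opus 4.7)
The plan is to reduce (1) to a statement about the Albert form via Theorem~\ref{thm:albert-form-division-algebra}, and then to translate everything into a condition on the norm $n$ of $C$. The algebra $(A,-) = \cor_{E/k}(C)$ is an $(8,8)$-product algebra, hence central simple with $Z(A) = k$ (as one verifies after base change to $k^s$), so Theorem~\ref{thm:albert-form-division-algebra} says (1) holds iff the Albert form $Q$ on $S = \Skew(A,-)$ is isotropic. Under the isomorphism $S \overset{\sim}{\to} C_0$ of \eqref{eq.S2}, isotropy of $Q$ will unpack directly to condition~(2).

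For (1) $\Leftrightarrow$ (2): in the split case $E = k\times k$, we have $C = C_1 \times C_2$ and $Q \simeq n_1' \perp \langle -1 \rangle n_2'$, so $Q$ is isotropic iff $n_1'(x_1) = n_2'(x_2)$ for some nonzero $(x_1,x_2)$, which is the same as saying $n'((x_1,x_2)) \in k \subset k\times k = E$. In the field case $E = k(\sqrt{d})$, the explicit formula from~\ref{sec:QandNat2} gives $Q(s) = -2bd$ whenever $n(s_0) = a + b\sqrt{d}$, so $Q(s) = 0$ iff $n(s_0) \in k$. In both cases, $Q$ is isotropic iff $n'$ nontrivially represents an element of $k$. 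If moreover $n'$ represents $0$ nontrivially, then $n$ is an isotropic $3$-Pfister form over $E$, hence hyperbolic, and so represents every nonzero element of $E$; this shows the witness in~(2) may always be chosen in $k^\times$.

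For (2) $\Rightarrow$ (3), choose $x \in C_0$ with $n(x) = c \in k^\times$. Then $x^2 = -x\bar x = -c$, so $K := k[x] \cong k[T]/(T^2 + c)$ is a $2$-dimensional \'etale $k$-subalgebra of $C$. Since the trace on $E \subset C$ equals $2\cdot\mathrm{id}_E$, we have $E \cap C_0 = 0$, so $x \notin E$ and therefore $K \cap E = k$, from which $\dim_k KE = 4$ is immediate.

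For (3) $\Rightarrow$ (2), pick $y \in K \setminus k$ and let $T^2 - \tau T + \nu$ be its $k$-minimal polynomial, with $\tau, \nu \in k$ and $\tau^2 - 4\nu \neq 0$ (as $K$ is \'etale). Linear disjointness of $K$ and $E$ forces $y \notin E$, so the $E$-minimal polynomial of $y$ inside $C$ is still of degree $2$; being a divisor of $T^2 - \tau T + \nu$, it must equal it. This identifies $t_C(y) = \tau$ and $n_C(y) = \nu$ as elements of $k$. Setting $x := y - \tau/2 \in K \setminus \{0\}$ gives $t_C(x) = 0$, hence $x \in C_0$, and $n(x) = \nu - \tau^2/4 \in k^\times$ witnesses~(2). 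The main obstacle in the whole argument is precisely this last step: one must use the linear disjointness of $K$ and $E$ to prevent the $E$-minimal polynomial of $y$ from collapsing to degree~$1$, and thereby ensure that both the trace and the norm of $y$ in $C$ are rational over $k$.
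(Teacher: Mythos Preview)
Your proof is correct and follows essentially the same strategy as the paper's: reduce (1) to isotropy of the Albert form via Theorem~\ref{thm:albert-form-division-algebra}, then unpack isotropy as the condition $n'(z) \in k$ for some nonzero $z \in C_0$. Your treatment of (1)$\Leftrightarrow$(2) is in fact more thorough than the paper's, since you explicitly handle the split case $E = k\times k$ and the degenerate case $n'(z)=0$ (forcing $n$ hyperbolic so a witness in $k^\times$ exists), whereas the paper only writes out the field case and tacitly assumes a nonzero value can be found. For (3)$\Rightarrow$(2), the paper takes the small shortcut of choosing the generator $y$ of $K$ with $y^2 \in k$ from the start and then invoking the standard fact that in a composition algebra an element whose square is central lies in the centre or has trace zero; your route via the minimal polynomial over~$E$ is equivalent and equally clean.
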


\begin{proof}
	An Albert form of $\cor_{E/k}(C)$ is $Q = T_{E/k}(\langle \delta \rangle n')$ where $n'$ is the pure norm of $C$ and $\delta \in E^\times$ is an element of trace zero. By definition $Q$ is isotropic if and only if there is a nonzero element $z \in C_0$ such that $\tr_{E/k}(\delta n(z)) = 0$, which is equivalent to $n(z) \in k$. Together with Theorem~\ref{thm:albert-form-division-algebra}, this observation yields (1) $\Leftrightarrow$ (2). If (2) holds and $z \in C_0$ has $n(z) \in k$, let $K = k(z)$. Then $\{z, \delta z, 1, \delta\}$ is a $k$-basis for $EK$ because $Ez \cap E = \{0\}$, implying (3). Conversely, suppose (3) holds. There is a generator $y$ of $K/k$ such that $y^2 \in k \subset E$. It is a basic property of composition algebras over fields of characteristic not 2 that an element whose square is a central scalar is either in the centre or has trace zero: so either $y \in E$ or $y\in C_0$. The former is impossible since $y$ generates $K$ and $EK \ne E$. Hence $y^2 = -n(y) \in k$, which gives us (2). \end{proof}

\begin{theorem} \label{thm:divisionness}
Let $(A,-)$ be an $(8,8)$- or $(8,4)$-product algebra over $k$. The following are equivalent:
\begin{enumerate}[\rm (1)]
\item \label{tfae-1}	$(A,-)$ is not a structurable division algebra.
\item \label{tfae-albert_form} $(A,-)$ has an isotropic Albert form.
\item \label{tfae-non-invertible_skew} $(A,-)$ has a non-invertible skew element.
\item \label{tfae-biquaternion_subalgebra_with_involution} $(A,-)$ has a non-division biquaternion subalgebra stabilised by the involution.
\item \label{tfae-subalgebra with involution} $(A,-)$ has a non-division associative subalgebra stabilised by the involution.
\end{enumerate}
\end{theorem}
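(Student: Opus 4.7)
The easy equivalences and implications use tools already in place. Since $Z(A) = k$ is a field for both $(8,8)$- and $(8,4)$-product algebras, Theorem \ref{thm:albert-form-division-algebra} immediately gives $(1) \Leftrightarrow (2)$, and the identity $L_s L_{s^\natural} = -Q(s)\id$ from \eqref{Ls-identity} gives $(2) \Leftrightarrow (3)$. Clearly $(4) \Rightarrow (5)$. For $(5) \Rightarrow (1)$, if $B \subset A$ is an involution-stable associative subalgebra with a nonzero non-invertible element $b$, then $b \in \Nuc(B)$ so $b$ is not conjugate-invertible in $(B,-)$ by Lemma \ref{lem:nuclear-isotopes}, hence not conjugate-invertible in $(A,-)$ by the subalgebra criterion recalled in \ref{sec.conj.inverse}.

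The real content of the theorem is $(1) \Rightarrow (4)$, which splits into a decomposable and a non-decomposable case. In the decomposable case write $(A,-) = C_1 \otimes C_2$ with $C_1$ octonion and $C_2$ quaternion or octonion, with norms $n_i$. By $(2)$, the Albert form $n_1' \perp \langle -1 \rangle n_2'$ is isotropic, so there exist $s_i \in (C_i)_0$, not both zero, with $n_1(s_1) = n_2(s_2)$. Any nonzero element of a composition algebra lies in a quaternion subalgebra (by Cayley--Dickson doubling of the $2$-dimensional subalgebra it generates), and such a subalgebra is automatically stable under the standard involution. For each $i$ choose such a quaternion subalgebra $Q_i \subset C_i$ containing $s_i$ whenever $s_i \ne 0$ (otherwise $Q_i$ is chosen arbitrarily, taking $Q_2 = C_2$ in the $(8,4)$ case). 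If some $s_j = 0$, then $n_i(s_i) = 0$ for the other index, forcing $C_i$ and thus $Q_i$ to be split, so $Q_1 \otimes Q_2$ is a split biquaternion subalgebra. Otherwise, $s_1 + s_2$ is a nonzero isotropic vector of the Albert form $n_{Q_1}' \perp \langle -1 \rangle n_{Q_2}'$ of the biquaternion subalgebra $Q_1 \otimes Q_2 \subset A$, so by the $(4,4)$-case of Theorem \ref{thm:albert-form-division-algebra} this is not a division algebra.

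The remaining case is a non-decomposable $(8,8)$-product algebra $(A,-) = \cor_{E/k}(C)$ for a quadratic field extension $E/k$ and an octonion $E$-algebra $C$. Corollary \ref{cor:cor} provides a quadratic \'etale $k$-subalgebra $K \subset C$ linearly disjoint from $E$ over $k$. Its $E$-span $EK$ is a quadratic \'etale $E$-subalgebra of $C$, and by a further Cayley--Dickson doubling inside the octonion $E$-algebra $C$ we enlarge it to a quaternion $E$-subalgebra $Q \subset C$, stable under the canonical involution of $C$ and containing $K$. Then $\cor_{E/k}(Q)$ is a biquaternion $k$-subalgebra of $\cor_{E/k}(C) = A$ stable under the involution; and exactly as in Corollary \ref{cor:cor} (applied to $\cor_{E/k}(Q)$ rather than to $A$), the presence of $K \subset Q$ linearly disjoint from $E$ forces the Albert form of $\cor_{E/k}(Q)$ to be isotropic, so by the $(4,4)$-case of Theorem \ref{thm:albert-form-division-algebra} this biquaternion subalgebra is not a division algebra. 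The main obstacle in the plan is producing the involution-stable biquaternion subalgebra in the non-decomposable case, but this reduces cleanly to Cayley--Dickson doubling inside $C$ together with the output of Corollary \ref{cor:cor}.
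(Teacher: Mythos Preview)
Your proof is correct and follows essentially the same strategy as the paper's: the easy implications are identical, and the substantive step $(2)\Rightarrow(4)$ is in both cases ``find a quaternion subalgebra containing the relevant skew element(s) and pass to the resulting biquaternion subalgebra.'' The only organisational differences are that the paper handles all $(8,8)$-product algebras uniformly via $\cor_{E/k}$ (using part (2) of Corollary~\ref{cor:cor} rather than part (3)), whereas you split into decomposable and non-decomposable cases; and for $(8,4)$ the paper invokes the Pure Subform Theorem to arrange a common slot, while your direct argument (pick $s_i$ with $n_1(s_1)=n_2(s_2)$ and embed each in a quaternion subalgebra) is slightly more elementary and avoids that citation.
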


\begin{proof}
$(\ref{tfae-1}) \Leftrightarrow (\ref{tfae-albert_form})$ is given by Theorem \ref{thm:albert-form-division-algebra}, and $(\ref{tfae-albert_form}) \Leftrightarrow (\ref{tfae-non-invertible_skew})$ is a direct consequence of \eqref{Ls-identity}.

To show that $(\ref{tfae-albert_form}) \Rightarrow (\ref{tfae-biquaternion_subalgebra_with_involution})$, assume first that  $(A,-)$ is an $(8,8)$-product algebra with an isotropic Albert form. Then $(A,-) = \cor_{E/k}(C)$ for some quadratic \'etale extension $E/k$ and an octonion algebra  $C$ over $E$. By Corollary \ref{cor:cor} there is a $z \in C_0$ such that  $n(z) \in k$. There is a quaternion subalgebra $Q \subset C$ containing $z \subset Q_0$  \cite[Proposition~1.6.4]{springer2000exceptional}. Then $\cor_{E/k}(Q)$ is a biquaternion subalgebra of $(A,-)$ stabilised by the involution whose Albert form is isotropic, and which is therefore not a division algebra.

Secondly, if  $(A,-)$ is an $(8,4)$-product algebra with an isotropic Albert form, then $(A,-) = C \otimes Q$ where $C$ is an octonion algebra and $Q$ is a quaternion algebra, such that either $C$ or $Q$ is split, or the pure norms $n_C'$ and $n_Q'$ represent a common element $a \in k^\times$. If $C$ or $Q$ is split, then it is easy to find a non-division biquaternion subalgebra stabilised by the involution. Otherwise, $n_C = \llangle -a, b, c \rrangle$ and $n_Q = \llangle -a, d \rrangle$ for some $b,c,d \in k^\times$ \cite[Pure Subform Theorem~1.5]{lam}. Let $Q'\subset C$ be a subspace on which $n_C$ restricts to $\llangle -a, b \rrangle$; then $Q'$ is a quaternion algebra  \cite[Propositions~1.2.3~\&~1.5.1]{springer2000exceptional} and $Q'\otimes Q$ is a biquaternion subalgebra stabilised by the involution. Since $Q' \otimes Q$ has an isotropic Albert form $\llangle -a,b \rrangle' - \llangle -a, d \rrangle'$, it is not a division algebra. Therefore we have shown that $(\ref{tfae-albert_form})$ implies~$(\ref{tfae-biquaternion_subalgebra_with_involution})$.

Clearly $(\ref{tfae-biquaternion_subalgebra_with_involution})\Rightarrow (\ref{tfae-subalgebra with involution})$. If $(A,-)$ has an associative subalgebra $(B,-)$ with involution that is not a division algebra, then $(B,-)$ is not a structurable division algebra by Lemma~\ref{lem:nuclear-isotopes}, so neither is $(A,-)$. This settles $(\ref{tfae-subalgebra with involution}) \Rightarrow (\ref{tfae-1})$.
\end{proof}

It would be interesting to know if there exists a characterisation of structurable division algebras of this type, where the characterisation does not make reference to the involution. For example, if $A$ has a nontrivial idempotent $e = e^2$, does this imply $(A,-)$ is not a structurable division algebra?

\section{14-dimensional quadratic forms in $I^3$ and octic polynomials}

Recall from \ref{sec:gal-com-GO} that if $(V,q)$ is a quadratic space with trivial discriminant, the cohomology set $H^1(k,\mathbf{PGO}^+(V,q))$ can be seen as the set of isomorphism classes of quadruples $(B,\sigma, C_+, C_-)$, where $(B,\sigma)$ is an orthogonal involution of degree~$n = \dim V$ and $C_+, C_-$ are $k$-algebras such that the Clifford algebra $C(B,\sigma)$ is $k$-isomorphic to $C_+ \times C_-$. Note that the fundamental relation \cite[(9.16)]{knus1998book} says $[C_+] [C_-] = 1$ in $\Br(k)$; that is, $C_+ \simeq (C_-)^{\rm op}$.

\begin{proposition} \label{prop:connecting-map}
	Let $(A,-)$ be a bi-octonion algebra. The connecting map \[\Delta: H^1(k, \mathbf{PGO}^+(S,Q)) \to H^2(k,\mathbf{G}_m) = \Br(k)\] induced by the second row of \eqref{diag.nice-diagram} is $[(B,\sigma, C_+, C_-)] \mapsto [C_-] \in \Br(k)$.
\end{proposition}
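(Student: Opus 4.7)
The strategy is to identify $\Delta$ as the composition $\pi_* \circ \delta_\Omega$ of a classical connecting map with a natural projection of Brauer factors, by exploiting the morphism of central extensions visible in \eqref{diag.nice-diagram}.

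First I would observe that the two rows of \eqref{diag.nice-diagram} fit into a commutative ladder of central extensions whose rightmost column is the identity on $\mathbf{PGO}^+(S,Q)$, whose middle column is $\theta'$, and whose leftmost column is the projection $\pi:\mathbf{G}_m\times\mathbf{G}_m\twoheadrightarrow 1\times\mathbf{G}_m$ onto the second factor. That $\pi$ is projection onto the \emph{second} factor (and not the first) is forced by Lemma \ref{lem:theta-map}(iii), which identifies $\ker(\theta')$ with the copy of $\mathbf{G}_m$ indexed by the idempotent $e_1\in Z=Z(C^+(S,Q))$. By naturality of the second connecting map of a central extension (see \ref{sec:cohomology-sequences}), this ladder will yield a commutative square
\[
\begin{tikzcd}
H^1(k,\mathbf{PGO}^+(S,Q)) \arrow[r,"\delta_\Omega"] \arrow[d,equal] & H^2(k,\mathbf{G}_m\times\mathbf{G}_m)=\Br(k)\times\Br(k) \arrow[d,"\pi_*"] \\
H^1(k,\mathbf{PGO}^+(S,Q)) \arrow[r,"\Delta"] & H^2(k,\mathbf{G}_m)=\Br(k),
\end{tikzcd}
\]
reducing the task to identifying $\delta_\Omega$ explicitly.

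Next I would invoke the classical description of the connecting map for the extended Clifford group. Under the parametrization of $H^1(k,\mathbf{PGO}^+(S,Q))$ by quadruples $(B,\sigma,C_+,C_-)$ recalled in \ref{sec:gal-com-GO}, $\delta_\Omega$ sends the class of such a quadruple to the pair of Brauer classes $([C_+],[C_-])$, coming from the Brauer class of $C(B,\sigma)$ viewed in $\Br(Z)=\Br(k)\times\Br(k)$. This is essentially \cite[(13.35) and~29.F]{knus1998book} transported into our notation, with the identification $Z=ke_1\oplus ke_2\simeq k\times k$ set up so that the idempotent $e_i$ selects the factor labeled $C_\pm$. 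Projection onto the second factor then yields $[C_-]$, as claimed.

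The principal difficulty will be bookkeeping with the two idempotents: one must verify that the convention identifying $e_1$ with $C_+$ (equivalently $e_2$ with $C_-$)---which is implicit in \ref{sec:gal-com-GO} and Lemma \ref{lem:theta-map}(iii)---is consistent throughout, since the order of the labels $C_+, C_-$ is a choice and the opposite convention would flip the conclusion to $[C_+]$. A sanity check on the split case (where the quadruple is trivial and both Brauer classes vanish) confirms the identification there, and a standard twisting argument propagates it in general.
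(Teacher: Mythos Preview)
Your proposal is correct and follows essentially the same approach as the paper: both pass to cohomology on the ladder \eqref{diag.nice-diagram}, identify the connecting map $\delta_\Omega$ for the top row via the classical result in \cite{knus1998book} (the paper cites Exercise~VII.15, you cite (13.35) and 29.F), and then compose with the projection $\Br(k\times k)\to\Br(k)$ onto the second factor. Your remark about the bookkeeping of idempotents is apt and anticipates exactly the caveat the paper spells out in the paragraph following the proposition.
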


\begin{proof}
	The diagram \eqref{diag.nice-diagram} induces a pair of exact sequences with a vertical arrow between them:	\[
	\begin{tikzcd}[column sep = small]
	\dots \ar[r] & H^1(k, \mathbf{\Omega}(S,Q)) \ar[r,"\chi'_*"] \ar[d]	& H^1(k, \mathbf{PGO}^+(S,Q) )	\ar[r,"\Delta'"] \ar[d,equal] & H^2(k, \mathbf{G}_m\times \mathbf{G}_m) = \Br(k\times k) \ar[d] \\
	\dots \ar[r] & H^1(k, \mathbf{Str}(A,-)) \ar[r, "\gamma''_*"] 	& H^1(k, \mathbf{PGO}^+(S,Q))	\ar[r,"\Delta"] & H^2(k, \mathbf{G}_m) = \Br(k).
	\end{tikzcd}
	\]
	%Note that the fundamental relation \cite[(9.16)]{knus1998book} says $[C_+] + [C_-] = 0$ in $\Br(k)$; that is, $C_+ \simeq (C_-)^{\rm op}$.
	
	The connecting map $\Delta'$ is described in \cite[VII. Exercise~15]{knus1998book}: since $C^+(S,Q) \simeq M_{64}(k) \times M_{64}(k)$, the class $[(B,\sigma,C_+, C_-)]$ is mapped to the class $[C_+ \times C_-]$ in $\Br(k\times k)$.  The vertical arrow $\Br(k\times k) \to \Br(k)$ is induced by the projection $\mathbf{G}_m \times \mathbf{G}_m \to 1 \times \mathbf{G}_m \simeq \mathbf{G}_m$, so it sends $[C_+\times C_-]\mapsto [C_-]$. Therefore the connecting map $\Delta$ sends $[(B,\sigma, C_+, C_-)] \mapsto [C_-]$.
\end{proof}

Note that there is no canonical way of doing this, and had we made different conventions back in \S\ref{sec:structure groups} or elsewhere, then Proposition~\ref{prop:connecting-map} might have said that $\Delta$ maps $[(B,\sigma, C_+, C_-)]\mapsto C_+$. In more ambidextrous terms, the proposition would say: ``the connecting map $\Delta$ associated to $\gamma''$ sends an element of $H^1(k, \mathbf{PGO}^+(S,Q))$ to one of the components of its Clifford algebra, and if $\nabla$ is the connecting map associated to $\gamma''\circ f$, where $f$ is an outer automorphism of $\mathbf{Str}(A,-)$, then $\nabla$ sends an element of $H^1(k,\mathbf{PGO}^+(S,Q))$ to the other component of its Clifford algebra."

\begin{corollary} \label{cor:Str-PI}
	Let $(A,-) = C(8) \otimes C(8)$ be the split bi-octonion algebra. The map 	$H^1(k,\mathbf{Str}(A,-)) \to PI_{14}^3(k)$, sending the isotopy class of a bi-octonion algebra to the similitude class of its Albert form, is an isomorphism of pointed sets.
\end{corollary}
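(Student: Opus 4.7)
The plan is to combine the existing isotopy criterion for $(8,m)$-product algebras with the long exact cohomology sequence coming from the bottom row of \eqref{diag.nice-diagram}, using Proposition \ref{prop:connecting-map} to identify the connecting map explicitly. Well-definedness of the map on isotopy classes is exactly Proposition \ref{prop:isotopic-similar}. The Albert form of any bi-octonion algebra lies in $I^3_{14}(k)$: by \ref{sec:QandNat}--\ref{sec:QandNat2} it is either a difference $n_1' \perp \langle -1\rangle n_2' \equiv n_1 - n_2 \pmod{W(k)}$ of pure parts of two $3$-Pfister forms, or the Scharlau transfer of the pure part of a $3$-Pfister form over a quadratic \'etale extension; in both cases the Witt class lies in $I^3(k)$. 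Injectivity onto similitude classes is then immediate from Corollary \ref{cor:isotopic-similar}.

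For surjectivity, I would fix $[q] \in PI_{14}^3(k)$ and choose a $14$-dimensional representative $q$. Since $q$ has trivial discriminant, $Z(C^+(V,q)) \simeq k \times k$; since $q$ has trivial Clifford invariant, $C^+(V,q)$ is a product of two split simple algebras $C_+ \times C_-$. Ordering these factors yields a class
\[
x = [(\operatorname{End} V,\, \tau_q,\, C_+,\, C_-)] \in H^1(k, \mathbf{PGO}^+(S,Q))
\]
whose image in $H^1(k, \mathbf{PGO}(S,Q))$ is the adjoint-involution class of $q$. Because $\mathbf{Str}(A,-)$ is connected in the $(8,8)$ case (see \ref{sec:connected str}), the bottom row of diagram \eqref{diag.nice-diagram} induces an exact sequence of pointed sets
\[
H^1(k, \mathbf{Str}(A,-)) \xrightarrow{\gamma''_*} H^1(k, \mathbf{PGO}^+(S,Q)) \xrightarrow{\Delta} H^2(k, \mathbf{G}_m) = \Br(k),
\]
and Proposition \ref{prop:connecting-map} computes $\Delta(x) = [C_-] = 0$. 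Hence $x$ lies in the image of $\gamma''_*$, so $x = \gamma''_*([(A',-)])$ for some bi-octonion algebra $(A',-)$. Composing $\gamma''_*$ with the canonical map $H^1(k, \mathbf{PGO}^+(S,Q)) \to H^1(k, \mathbf{PGO}(S,Q))$ recovers the injection $\gamma'_*$ from Lemma \ref{lem:horizontal-summary}, which sends $[(A',-)]$ to the adjoint-involution class of its Albert form; by the injectivity of the quadratic-form inclusion into $H^1(k, \mathbf{PGO}(S,Q))$ recalled in \ref{sec:gal-com-GO}, the Albert form of $(A',-)$ is similar to $q$.

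The principal obstacle would be the precise identification of $\Delta$, but this is already settled by Proposition \ref{prop:connecting-map}; after that, the remainder is routine diagram chasing together with the dictionary between \'etale cohomology sets and the various structures (quadratic forms up to similitude, algebras with involution, orientations of the Clifford algebra) summarised in \ref{sec:gal-com-GO} and Lemma \ref{lem:horizontal-summary}.
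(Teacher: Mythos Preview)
Your proposal is correct and follows essentially the same route as the paper's own proof: injectivity via Corollary~\ref{cor:isotopic-similar}, and surjectivity by observing that for $q\in I^3_{14}(k)$ the class $[(\End V,\tau_q,C_+,C_-)]$ lies in $\ker(\Delta)=\im(\gamma''_*)$ thanks to Proposition~\ref{prop:connecting-map}. Your write-up is a bit more explicit about well-definedness and the passage from $\mathbf{PGO}^+$ back to $\mathbf{PGO}$, but the argument is the same.
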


\begin{proof}
	Suppose $(V,q)$ is a 14-dimensional quadratic space whose Clifford algebra is isomorphic to $M_{128}(k)$; for short, $q \in I^3_{14}(k)$. Let $(B,\sigma) = (M_{14}(k), \tau_q)$ be the orthogonal involution adjoint to~$q$. Then $C(B,\sigma) = C_+\times C_-$ where $C_+ \simeq C_- \simeq M_{64}(k)$ \cite[Theorem V.2.5~(3)]{lam}, and hence $[(B,\sigma, C_+,C_-)] \in \ker(\Delta) = \im(\gamma_*'')$. Therefore $(B,\sigma) \simeq (M_{14}(k),\tau_Q)$ for some $Q$ which is an Albert form of a bi-octonion algebra $(A,-)$, and hence $q$ and $Q$ are similar.  This shows $H^1(k,\mathbf{Str}(A,-)) \to PI_{14}^3(k)$ is surjective. The injectivity was established in \ref{cor:isotopic-similar}.
%	
%
%	The image of $\gamma''_*: H^1(k,\mathbf{Str}(A,-)) \to H^1(k, \mathbf{PGO}^+_{14})$ is the kernel of $\Delta'$, namely the set of all $[(B,\sigma, C_+,C_-)]$ such that $C_-$ is trivial in $\Br(k)$. If $C_-$ is trivial then $C_- \simeq (C_-)^{\rm op}\simeq C_+$, so the fundamental relation \cite[(9.15)]{knus1998book} implies $[C_+ \times C_-]^2 = [B\times B] = 0$ in $\Br(k \times k)$, and therefore $B$ is split. Therefore the image of $\gamma''_*$ is the set of isomorphism classes of orthogonal involutions on $M_{14}(k)$ with trivial Clifford algebras, and this is in one-to-one correspondence with the set of similitude classes of 14-dimensional quadratic spaces $(V,q)$ whose even Clifford algebra is isomorphic to $M_{64}(k)\times M_{64}(k)$ \cite[Proposition~8.11]{knus1998book}. But $C^+(V,q) \simeq M_{64}(k) \times M_{64}(k)$ if and only if $C(V,q) \simeq M_{128}(k)$ \cite[V.2.5~(3)]{lam}, if and only if $q \in I^3(k)$. Therefore the map $H^1(k,\mathbf{Str}(A,-)) \to {PI}_{14}^3(k)$ is surjective. 
\end{proof}

Note that since $\im(\gamma_*'') = \ker(\Delta)$, Proposition~\ref{prop:connecting-map} also implies that for an orthogonal involution $(A,\sigma)$ of degree $14$, if one of the components of $C(A,\sigma)$ is split, then $A$ is split and $\sigma$ is adjoint to a quadratic form in $I^3$. But this fact is actually easy to prove  using the fundamental relations \cite[Theorem~19.12]{knus1998book} for any degree $n \equiv 2 \mod 4$ \cite[Lemma~1.5]{garibaldi2007orthogonal}.

\begin{corollary}[Rost] \label{cor:Rost}
	Let $Q \in I^3_{14}(k)$. Then either:
	\begin{enumerate}[\rm (1)]
	\item There exist $3$-Pfister forms $\phi_1$ and $\phi_2$ over $k$ and a scalar $c \in k^\times$ such that \[Q \simeq \langle c \rangle (\phi_1' \perp \langle -1 \rangle \phi_2').\]
	\item There exists a quadratic field extension $E/k$, a $3$-Pfister form $\phi$ over $E$, and an element $\delta \in E^\times$ of trace zero such that \[ Q \simeq  T_{E/k}(\langle \delta \rangle \phi').
	\]
	\end{enumerate}
\end{corollary}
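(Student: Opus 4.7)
The plan is to reduce the statement directly to the classification of bi-octonion algebras up to isotopy, which has been established in Corollary \ref{cor:Str-PI}. More specifically, given $Q \in I^3_{14}(k)$, I would first invoke that corollary: the map $H^1(k,\mathbf{Str}(A,-))\to PI_{14}^3(k)$ is a bijection when $(A,-)$ is the split bi-octonion algebra, so the similitude class of $Q$ arises as the similitude class of the Albert form of some bi-octonion algebra $(A',-)$ over $k$. Choose a scalar $c \in k^\times$ such that $Q \simeq \langle c\rangle Q'$, where $Q'$ is an Albert form of $(A',-)$.

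Next I would apply Theorem \ref{thm.equiv2}, which gives a description $(A',-) \simeq \cor_{E/k}(C)$ for a uniquely determined quadratic \'etale extension $E/k$ and octonion algebra $C$ over $E$. The proof then splits into the two cases corresponding to whether $E$ is split or a field, producing exactly the two alternatives in the statement.

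In the split case $E = k \times k$, we have $C = C_1 \times C_2$ for octonion $k$-algebras $C_i$ with norms $n_i$, and $(A',-)$ is the decomposable algebra $C_1 \otimes C_2$. The definition in \ref{sec:QandNat} gives $Q' = n_1' \perp \langle -1\rangle n_2'$, so taking $\phi_i = n_i$ (which are $3$-Pfister forms) yields the first alternative
\[
Q \simeq \langle c \rangle \big(\phi_1' \perp \langle -1\rangle \phi_2'\big).
\]
In the non-split case, write $E = k(\sqrt{d})$. The formulas in \ref{sec:QandNat2} give $Q' = T_{E/k}(\langle -\sqrt{d}\rangle n')$ where $n$ is the norm of $C$, a $3$-Pfister form over $E$. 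Since $\delta_0 := -\sqrt{d}$ has trace zero, so does $\delta := c\delta_0$, and the usual Frobenius/projection-formula identity $\langle c \rangle T_{E/k}(\langle \delta_0\rangle n') = T_{E/k}(\langle c\delta_0\rangle n')$ gives
\[
Q \simeq T_{E/k}(\langle \delta \rangle \phi'),
\]
with $\phi = n$, which is the second alternative.

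The only step requiring any care is the initial appeal to Corollary \ref{cor:Str-PI}; everything else is a matter of unpacking the explicit definitions of the Albert form in the decomposable and indecomposable cases and absorbing the overall scalar $c$ either directly or through the transfer. There is no serious obstacle: the work has essentially been done upstream, in the identification $H^1(k,\mathbf{Str}(A,-)) \simeq PI^3_{14}(k)$ and in the corestriction-based classification of bi-octonion algebras. The substance of Rost's theorem is absorbed into the surjectivity of $\gamma''_*$ in Lemma \ref{lem:horizontal-summary}, which in turn rested on the fundamental relation forcing both components of the Clifford algebra of a form in $I^3_{14}$ to be split.
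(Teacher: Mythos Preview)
Your proof is correct and follows essentially the same route as the paper's: invoke Corollary~\ref{cor:Str-PI} to realise the similitude class of $Q$ as that of an Albert form, then use Theorem~\ref{thm.equiv2} to split into the decomposable and indecomposable cases, which yield exactly alternatives (1) and (2). Your handling of the scalar in the transfer case via the projection formula is a detail the paper leaves implicit. One small misattribution: the surjectivity you need is not in Lemma~\ref{lem:horizontal-summary} (which gives injectivity of $\gamma'_*$) but in Corollary~\ref{cor:Str-PI} itself, via the connecting map computed in Proposition~\ref{prop:connecting-map}; your description of the underlying mechanism (the fundamental relation forcing the Clifford components to split) is nonetheless accurate.
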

\begin{proof}
	By Corollary \ref{cor:Str-PI}, $Q$ is similar to the Albert form of a bi-octonion algebra, and these are precisely the two possibilities for a form that is similar to the Albert form of a bi-octonion algebra.
\end{proof}

The above proof of Rost's Theorem is different from those that have previously been sketched in \cite{rost14dim} and \cite[Theorem~21.3]{garibaldi2009cohomological}. 
Corollary~\ref{cor:Rost} is an example of a ``rational parameterisation". Similar parameterisation theorems exist for quadratic forms in $I_n^3(k)$ for all even $n \le 14$  \cite[Theorem~2.1]{hoffmann1998}. The case of $n = 12$ will be discussed further in \ref{sec:I12}. Notably, there is no known rational parameterisation of quadratic forms in $I_n^3(k)$ for $n > 14$, and it is conjectured by Merkurjev that no rational parameterisation exists (see \cite[Conjecture~4.5]{merkurjev2017invariants} and \cite[Corollary~11]{merkurjev2020versal}).

\subsection{Some existence results on quadratic forms in $I_{14}^3$.} The two possibilities in Rost's Theorem  are not mutually exclusive.
%For example, if $k$ has a quadratic field extension $E = k(\sqrt{d})$, then the hyperbolic form $7\mathbb{H}$ is similar to both $\llangle 1,1,1 \rrangle' \perp \langle -1 \rangle \llangle 1,1,1 \rrangle'$ and $T_{E/k}(\langle \sqrt{d} \rangle \llangle 1,1,1 \rrangle')$ and so it satisfies both (1) and (2).
In fact,  any isotropic form  $q$ in $I_{14}^3(k)$ satisfies (1) -- more on this in \ref{sec:I12}. Quadratic forms satisfying (2) but not (1) must therefore be anisotropic. They exist, and examples have been found by Izhboldin and Karpenko \cite[Corollary~17.4]{izhboldin2000} and Hoffmann and Tignol \cite[p.~\&~211]{hoffmann1998}; probably the most accessible example is over the field $\mathbb{Q}(t_1, t_2, t_3, t_4)$. 
Quadratic forms satisfying (1) but not (2) also exist: for example over the field $k_n = \mathbb{R}((t_1, \dots, t_n))$ where $n \ge 3$,  every anisotropic form in $I_{14}^3(k_n)$ satisfies (1) but not (2) \cite[Theorems~7.3 \&~7.13]{allison1988tensor}.

\subsection{Pfister's parameterisation of  quadratic forms in $I_{12}^3$} \label{sec:I12}

By a theorem of Pfister (proved in \cite[pp.~123--124]{pfister1966quadratische} and differently in \cite[Theorem~17.13]{garibaldi2009cohomological}), for every quadratic form $q \in I^{3}_{12}(k)$ there exists a $6$-dimensional quadratic form $r \in I_6^2(k)$ and a scalar $c \in k^\times$ such that $q \simeq \llangle c \rrangle  r$. Moreover, for any form $r \in I_{6}^2(k)$, there exist scalars $d, x_1, x_2, y_1, y_2 \in k^\times$ such that $r = \langle d \rangle (\psi_1' \perp \langle -1 \rangle \psi_2')$ where $\psi_i = \llangle x_i, y_i \rrangle$. That is to say, 
\begin{equation} \label{eq:q in I12}
q \simeq  \langle d \rangle \llangle c \rrangle (\psi_1'  \perp \langle -1 \rangle \psi_2').
\end{equation}
This also means that every isotropic quadratic form $Q \in I_{14}^3(k)$ is of the form
\[Q = q + \mathbb{H} = \langle d \rangle (\llangle c, x_1, y_1  \rrangle' \perp \langle -1 \rangle \llangle c, x_2, y_2 \rrangle').\]
Combining this result with Corollary \ref{cor:isotopic-similar} and Theorem \ref{thm:albert-form-division-algebra} yields the following theorem:

\begin{theorem} \label{thm:divisionness0}
	Let $(A,-)$ be a bi-octonion algebra. The following are equivalent:
	\begin{enumerate}[\rm (1)] \item $(A,-)$ is not a structurable division algebra.
	\item $(A,-)$ is isotopic to a decomposable bi-octonion algebra $C_1 \otimes C_2$ where $C_1$ and $C_2$ are octonion algebras whose norms have a common slot.	
	\item The Albert form of $(A,-)$ is similar to $\phi_1' \perp \langle -1 \rangle \phi_2'$ where $\phi_i$ are $3$-Pfister forms with a common slot.
	\end{enumerate}
\end{theorem}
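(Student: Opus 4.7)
The plan is to establish the cycle $(1) \Leftrightarrow (3) \Leftrightarrow (2)$, using the Albert form $Q$ of $(A,-)$ as the pivot between the algebraic and quadratic-form pictures. Bi-octonion algebras are central (since $Z(C(8) \otimes C(8)) = k^s$), so $Z(A) = k$ is automatically a field, and Theorem~\ref{thm:albert-form-division-algebra} reduces (1) to the isotropy of $Q$. Corollary~\ref{cor:isotopic-similar} reduces the isotopy of bi-octonion algebras to similitude of their Albert forms.

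For $(3) \Rightarrow (1)$, suppose $Q$ is similar to $\phi_1' \perp \langle -1 \rangle \phi_2'$ with $\phi_1 = \llangle a, b_1, c_1 \rrangle$ and $\phi_2 = \llangle a, b_2, c_2 \rrangle$ sharing the slot~$a$. Writing $\phi_i = \llangle b_i, c_i \rrangle \perp \langle -a \rangle \llangle b_i, c_i \rrangle$ and extracting the $\langle 1 \rangle$ summand from the first factor, one sees each $\phi_i'$ represents $-a$, hence $\phi_1' \perp \langle -1 \rangle \phi_2'$ represents both $-a$ and $a$ and is isotropic. So $Q$ is isotropic, giving (1).

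For $(1) \Rightarrow (3)$, if $Q$ is isotropic then $Q \simeq q \perp \mathbb{H}$ for some $q \in I^3_{12}(k)$, and Pfister's parameterisation recalled in~\ref{sec:I12} yields $q \simeq \langle d \rangle \llangle c \rrangle (\psi_1' \perp \langle -1 \rangle \psi_2')$ with $\psi_i = \llangle x_i, y_i \rrangle$. A short calculation in $W(k)$ using $\llangle c \rrangle \psi_i' = \llangle c, x_i, y_i \rrangle - \llangle c \rrangle$ shows
\[
Q \;\equiv\; \langle d \rangle \big( \llangle c, x_1, y_1 \rrangle' \perp \langle -1 \rangle \llangle c, x_2, y_2 \rrangle' \big) \pmod{\mathbb{H}},
\]
and since both sides are 14-dimensional with the same Witt class they are isometric. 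Setting $\phi_i = \llangle c, x_i, y_i \rrangle$ gives (3) with common slot~$c$.

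For $(3) \Leftrightarrow (2)$, octonion algebras over $k$ are classified up to isomorphism by their 3-Pfister norm forms, so any pair of 3-Pfister forms with a common slot arises as the norms of a pair of octonion algebras $C_1, C_2$ that share a quadratic \'etale subalgebra. The decomposable bi-octonion algebra $C_1 \otimes C_2$ has Albert form $\phi_1' \perp \langle -1 \rangle \phi_2'$, so by Corollary~\ref{cor:isotopic-similar}, $(A,-)$ is isotopic to $C_1 \otimes C_2$ if and only if $Q$ is similar to this form. The main delicate point will be the Witt-ring bookkeeping in $(1) \Rightarrow (3)$: one must check that the 12-dimensional Pfister decomposition from~\ref{sec:I12} lifts to a genuine isometry of 14-dimensional forms (not merely a Witt equivalence), so that the two 3-Pfister forms extracted from $Q$ genuinely share a slot at the level of isometry.
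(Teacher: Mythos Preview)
Your proof is correct and follows essentially the same route as the paper: the paper's one-line argument simply cites the Pfister parameterisation already displayed in~\S\ref{sec:I12} (which is exactly your $(1)\Rightarrow(3)$ calculation) together with Corollary~\ref{cor:isotopic-similar} and Theorem~\ref{thm:albert-form-division-algebra}, and you have unpacked these citations in full, including the easy $(3)\Rightarrow(1)$ direction and the Witt-cancellation step that the paper leaves implicit.
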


\subsection{The octic norm of a bi-octonion algebra}
	The norm $N_A$ of a structurable algebra $(A,-)$ is preserved up to similitude under the action of $\mathbf{Str}(A,-)$,  hence $N_A$ is preserved up to isomorphism under the action of $\mathbf{Str}(A,-)^{\rm der}$. If $(A,-)$ is a bi-octonion algebra, this means that its norm  is an invariant polynomial for $\mathbf{Spin}(S,Q)$, where $Q$ is an Albert form for $(A,-)$. When $(A,-)$ is split, the polynomial $N_A$ is traditionally denoted by $J$. It follows from Lemma \ref{lem:dense-orbit}~(i) and then \cite[Proposition~6.1]{bermudez2014linear} applied to $\mathbf{Spin}_{14}$ that $J$ in fact generates the ring of $\mathbf{Spin}_{14}$-invariant polynomials; that is:
	\[k[A]^{\mathbf{Spin}_{14}} = k[J].\]
	It is well-known in invariant theory that $\deg J = 8$: in characteristic zero this was proved in \cite[Proposition~40]{sato1977classification} and \cite[Proposition~13]{popov1980classification}.  Also including characteristic $p > 3$,  Allison and Faulkner  proved that $\deg J = 8$ and  even gave a formula for $N_A$ for arbitrary bi-octonion algebras \cite[Theorem~9.6]{allison1992norms}. Their formula is reproduced in \eqref{bi-oct norm}. Another concrete but unwieldy expression for $J$ (over the complex numbers) was calculated by Gyoja in \cite[Theorem~5]{gyoja1990construction}.
	
	In this subsection and the next one, we assemble some curious and remarkable facts about this octic form $J$ and build a picture of how bi-octonion algebras, quadratic forms, octic forms, and algebraic groups fit in with one another.  	  From Theorem~\ref{thm:ns-8,8 and 8,2}  and  Corollary~\ref{cor:Str-PI}, we have  $PI_{14}^3(k)  \simeq H^1(k,\mathbf{Str}(C(8)\otimes C(8))) = H^1(k, \mathbf{GO}(J)) $. Since $H^1(k, \mathbf{Str}(C(8) \otimes C(8))$ classifies bi-octonion algebras up to isotopy (see \ref{sec:galois cohomology of structurable algebras}), and $H^1(k,\mathbf{GO}(J))$ classifies $k$-forms of $J$ up to similitude \cite[III, Exercise~2]{berhuy}, there are one-to-one correspondences:
	\begin{center}
\fbox{\begin{minipage}[c][2.5cm]{3.9cm} \centering
Similitude classes of 14-dimensional quadratic forms in $I^3(k)$
\end{minipage}}\quad  $\longleftrightarrow$\quad\fbox{\begin{minipage}[c][2.5cm]{3.9cm} \centering
Isotopy classes of bi-octonion $k$-algebras
\end{minipage}} \quad  $\longleftrightarrow$\quad\fbox{\begin{minipage}[c][2.5cm]{3.9cm} \centering
Similitude classes of 64-dimensional octic forms over $k$ that become isomorphic to $J$ over $k^{\rm sep}$.
\end{minipage}}
\end{center}
	The leftward arrow sends a bi-octonion algebra to the class of its Albert quadratic form, and the rightward arrow sends the algebra to the class of its norm. Moreover, Theorem~\ref{thm:albert-form-division-algebra} implies that a quadratic form in $I_{14}^3(k)$ is anisotropic if and only if its corresponding octic form is anisotropic.
	
	 By Theorem \ref{thm.types}, Theorem \ref{main2}, and classification results on simple algebraic groups \cite{tits1966classification}, the three-way correspondence above can also be extended to a five-way correspondence including:
	\begin{center}
	\fbox{\begin{minipage}[c][1.5cm]{6.5cm} \centering
Isomorphism classes of strongly inner simply connected simple  algebraic $k$-groups of type $D_7$
\end{minipage}} \quad  $\longleftrightarrow$ \quad\fbox{\begin{minipage}[c][1.5cm]{6.5cm} \centering
Isomorphism classes of simple algebraic $k$-groups with Tits index

$E_{8,1}^{91}$, $E_{8,2}^{66}$, $E_{8,4}^{28}$, or $E_{8,8}^0$. 
\end{minipage}}
\end{center}
Concretely, these last two correspondences are defined by sending a bi-octonion algebra to the derived subgroup of its structure group (a simply connected group of type $D_7$) or to the  automorphism group of its TKK Lie algebra (an isotropic group of type $E_8$). Equivalently, the simply connected group of type $D_7$ contains the semisimple anisotropic kernel of the corresponding group of type $E_8$.

	\subsection{Matrix factorisations}

Let $P \in k[x_1, \dots, x_n]$ be a nonzero polynomial in $n$ variables. A \emph{matrix factorisation} of $P$ is a pair $(N, M)$ where $N, M$ are square $t\times t$ matrices  ($t \ge 1$) with entries in $k[x_1, \dots, x_n]$, such that $NM = P.\id_{t\times t}$. What is interesting about this concept is that many irreducible polynomials (i.e., having no factorisation by a pair of nonconstant polynomials) do have matrix factorisations. If $P$ is homogeneous, say of degree~$d$, and the entries of $N$ and $M$ are all of degree $j \ge 1$ and $d-j$ respectively, then the number of coefficients appearing in $N$ and $M$ is potentially much smaller than the number of coefficients appearing in $P$. This is especially true if $j$ is close to $\frac{1}{2}d$, and even more so if $N = M$. In other words, a matrix factorisation is often an incredibly efficient way to represent $P$.

Recently it was discovered that over the complex numbers the $\mathbf{Spin}_{14}$-invariant octic polynomial $J$ admits a factorisation as the square of a certain $14 \times 14$ matrix \cite[Theorem~2.3.2]{abuaf2021gradings}. We are able to describe this matrix factorisation explicitly in terms of bi-octonion algebras. Surprisingly, non-split forms of $J$ are also matrix-factorisable over the base field. The matrices in the factorisation can be calculated in reasonable time using computer algebra software.

\subsection{$P$-operators}
In any structurable algebra $(A,-)$, there is a one-parameter family of operators $P_x \in \End A$, $x \in A^*$, defined as
\[
P_x(a) = \tfrac{1}{3}U_x(5a-2V_{a,x}\hat x)
\]
for all $a \in A$. Despite the exotic definition, these operators have a number of nice properties: for example $P_x \in \Str(A,-)$ and \begin{equation} \label{eq:P-properties}\hat{P}_x = P_{\hat x} = P_x^{-1}\end{equation} for all $x \in A^*$ \cite[Theorem~8.3]{allison1981isotopes}. Since $P_x \in \Str(A,-)$, we can define the map $(P_x)_S \in \End S$ as in \eqref{eq.alpha-s}.

\begin{lemma} \label{lem:P} Let $(A,-)$ be a structurable algebra. For all $x \in A^*$ and $s \in   \Skew(A,-)$, \begin{equation} \label{eq:mat-factorisation} (P_x)_S(s) = \tfrac{1}{6}\psi(x, U_x(sx)).\end{equation} \end{lemma}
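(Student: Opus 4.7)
The plan is to reduce both sides of \eqref{eq:mat-factorisation} to a manageable expression in the multiplication algebra of $A$ and then match them. First, I would unwind the left-hand side using \eqref{eq:alphaL} applied to $\alpha = P_x$: combined with \eqref{eq:P-properties}, which gives $\hat{P}_x = P_x^{-1}$, this yields $L_{(P_x)_S(s)} = P_x L_s P_x$. Evaluating at $1 \in A$ produces the closed form
\[
(P_x)_S(s) = P_x\bigl(s\cdot P_x(1)\bigr).
\]
So the calculation of $(P_x)_S(s)$ is reduced to knowing $P_x(1)$, which comes directly from the definition $P_x(a) = \tfrac{1}{3}U_x(5a - 2V_{a,x}\hat{x})$ at $a=1$, after simplifying $V_{1,x}\hat{x} = \bar{x}\hat{x} + \hat{x}\bar{x} - \hat{x}x$ via the conjugate-invertibility identity $V_{x,\hat{x}} = \id$.

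In parallel I would try a more conceptual route that avoids computing $P_x(1)$ as an intermediate quantity. Starting from $(P_x)_S(s) = \tfrac{1}{2}\psi(P_x(s), P_x(1))$, apply the naturality relation $\alpha_S(\psi(a,b)) = \psi(\alpha(a),\alpha(b))$ established in the proof of Proposition \ref{gamma}. Since every skew element is of the form $\psi(a,b)$, the claim becomes the polynomial identity
\[
\psi\bigl(P_x(a), P_x(b)\bigr) = \tfrac{1}{6}\psi\bigl(x, U_x(\psi(a,b)\, x)\bigr)
\]
in the variables $a,b,x$, which is then verified from the standard identities for $U_x$, $V_{x,y}$, and $\psi$ in structurable algebras, most notably the identity $P_x(\hat{x}) = x$ (\cite[Theorem~8.3]{allison1981isotopes}) and the skew-alternative identity $sxs = x\,Q\text{-coefficient}$ that underlies \eqref{Ls-identity}.

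The main obstacle will be the sheer size of the computation: $P_x$ is cubic in $x$, $U_x$ is quadratic, and applying $P_x$ to a product multiplies the number of terms considerably. The combinatorics of the nonassociative identities—the structurable axiom $[V_{x,y}, V_{z,w}] = V_{\{x,y,z\},w} - V_{z,\{y,x,w\}}$, the skew-alternative law, and the various operator identities from \cite[\S\S2,\,5-8]{allison1981isotopes}—must be applied carefully, and the factor $\tfrac{1}{6}$ on the right-hand side is the signal that the factor $\tfrac{1}{3}$ in the definition of $P_x$ interacts with the factor $\tfrac{1}{2}$ in the definition of $\psi(\cdot,\cdot)/$ of $\alpha_S$ in exactly the expected way. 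I expect the clean formula \eqref{eq:mat-factorisation} to emerge after substantial cancellation; if no major shortcut presents itself, the safest route is to extend scalars to a splitting field so that $A = C_1 \otimes C_2$ is decomposable, take $s \in (C_1)_0$ and $x = c_1 \otimes c_2$ with $c_i \in C_i^\times$, and verify the identity by direct manipulation inside the well-understood octonion algebras.
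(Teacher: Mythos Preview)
Your operator identity $L_{(P_x)_S(s)} = P_x L_s P_x$ is correct, but evaluating it at $1$ is the wrong move: you are then left with $P_x\bigl(s\,P_x(1)\bigr)$, and neither $P_x(1)$ nor $P_x$ applied to the resulting expression has a clean closed form. Your alternative routes are plans rather than proofs---the polynomial identity in $a,b,x$ is never verified, and the splitting-field argument would at best cover the product algebras of this paper, not arbitrary structurable algebras as the lemma asserts.

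The paper's trick is to evaluate the same operator identity at $\hat x$ instead of at $1$. Since $P_x(\hat x) = x$, this gives immediately
\[
(P_x)_S(s)\,\hat x \;=\; P_x(sx),
\]
and two identities already available in the literature finish the job: $U_x(sx) = -3\,P_x(sx)$ and Allison's expression for $\hat x$, rearranged as $U_x(sx) = -\tfrac{1}{2}\psi\bigl(x, U_x(sx)\bigr)\hat x$. Chaining these,
\[
(P_x)_S(s)\,\hat x \;=\; P_x(sx) \;=\; -\tfrac{1}{3}U_x(sx) \;=\; \tfrac{1}{6}\psi\bigl(x, U_x(sx)\bigr)\hat x.
\]
Finally $R_{\hat x}|_S : S \to S\hat x$ is a bijection, because $S\hat x = \Skew(A^{\langle \hat x\rangle}, -^{\langle \hat x\rangle})$ has the same dimension as $S$, so the factor $\hat x$ cancels. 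The whole argument is a few lines and involves no expansion of $P_x$ at all; what you are missing is the choice of evaluation point, which converts an intractable computation into a string of citations.
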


\begin{proof}
	The proof is loosely based on \cite[Lemma~3.3.4]{boelaert2019moufang}. Using \eqref{eq:alphaL}, we have
	\begin{equation} \label{eq:f0}
	L_{(P_x)_S(s)}\hat P_x = P_x L_s.
	\end{equation}
	An expression for $\hat x$ derived by Allison (see \cite[Proposition~2.6]{allison1986conjugate}, or the more accessible sources \cite[eq.\ (1.7)]{allison1992norms} and \cite[eq.\ (2.14)]{boelaert2019moufang}),  combined with the fact that $L_{s}^{-1} = -L_{\hat s}$, yields: 
	\begin{equation} \label{eq:f1}
		U_x(sx) = - \tfrac{1}{2}\psi(x, U_x(sx))\hat x.
	\end{equation}
	Two more necessary identities appear on \cite[p.\ 33]{boelaert2019moufang}; these are (reasonably direct) consequences of the definition of $P_x$: \begin{align} \label{eq:f2}
 U_x(sx) &= -3P_x(sx),\\ \label{eq:f3} P_x(\hat x) &= x.
 \end{align}
 	Applying \eqref{eq:f1}, \eqref{eq:f2}, \eqref{eq:f0}, and \eqref{eq:f3} in that order, it follows that
  	\begin{align*}
	\tfrac{1}{6}\psi(x, U_x(sx))\hat x &= -\tfrac{1}{3}U_x(sx) =   P_x(sx)  =   P_x L_s (x)  \\&= L_{(P_x)_S(s)} \hat P_x (x) = L_{(P_x)_S(s)} \hat x = (P_x)_S(s)\hat x.
	\end{align*}
	The linear map $R_{\hat x}|_S: S \to S\hat x$, $s \mapsto s\hat x$, is bijective because $S \hat x= \Skew(A^{\langle \hat x \rangle}, -^{\langle \hat x\rangle })$ and $\dim(\Skew(A,-)) = \dim\Skew(A^{\langle \hat x \rangle}, -^{\langle \hat x\rangle })$ \cite[Corollary~12.2]{allison1981isotopes}, hence \eqref{eq:mat-factorisation}.
	\end{proof}

An important role of Lemma \ref{lem:P} is to show that $x \mapsto (P_x)_S$ extends to a globally defined rational mapping $A \to \End(S)$ such that $A^*$ is mapped into $\GL(S)$. That is, even though $P_x$ might not be defined for all $x \in A$, it is clear that $\tfrac{1}{6}\psi(x, U_x(sx))$ is defined for all $x \in A$ and $s \in \Skew(A,-)$.

\subsection{A matrix factorisation of the octic norm}

Now assume that $(A,-)$ is a bi-octonion algebra. The composition
\[
\begin{tikzcd} A^* \ar[r,"P"] & \mathbf{Str}(A,-) \ar[r, "\gamma"] & \mathbf{GO}(S,Q) \ar[r, "\mu"] & \mathbf{G}_m	
\end{tikzcd}
\]
is a well-defined map of varieties. By Lemma \ref{lem:P}, this composition is the restriction of a genuine polynomial function $A \to k$, which we can define by picking an arbitrary conjugate-invertible basepoint $s_0 \in \Skew(A,-)$ and sending
\begin{equation} \label{bi-oct norm}
x \mapsto   \frac{1}{36Q(s_0)}Q(\psi(x, U_x(s_0 x))).
\end{equation}
This polynomial function is none other than the norm of $(A,-)$  \cite[Theorem~9.6]{allison1992norms}, a claim which can be justified by the uniqueness property of the norm (namely that it is the unique normalised invertibility-detecting polynomial function of minimal degree on $A$). We repeat for emphasis that $N_A(x) = \mu((P_x)_S)$ if $x \in A^*$. 

\begin{theorem}
Let $(A,-)$ be a bi-octonion algebra with $\Skew(A,-) = S$. Define for all $x \in A$ the linear map $M_x \in \End S$,
\begin{align*}
M_x(s) = \frac{1}{6}\psi(x,U_x(s^\sharp x)).
\end{align*}
Then
\[M_x^2 = N_A(x).\id	_S.\]
\end{theorem}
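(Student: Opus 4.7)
The plan is to identify $M_x$ with the composition $(P_x)_S\circ\natural$ on the open subset $A^*\subset A$ (writing $\natural$ also for the linear map $s\mapsto s^\natural$), and then reduce $M_x^2$ to a scalar by invoking the similitude identity already encoded in \eqref{eq.hat-alpha}. Indeed, Lemma~\ref{lem:P} gives $(P_x)_S(s^\natural)=\tfrac{1}{6}\psi(x,U_x(s^\natural x))=M_x(s)$ for every $x\in A^*$ and $s\in S$, so $M_x=(P_x)_S\circ\natural$ in $\End(S)$.

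The heart of the argument is an operator identity coming from \eqref{eq.hat-alpha}. That equation reads $\widehat{\alpha}_S(s)=\mu(\alpha)^{-1}\alpha_S(s^\natural)^\natural$ for any $\alpha\in\Str(A,-)$, and rearranges to the equality $\natural\circ\alpha_S\circ\natural=\mu(\alpha)\,\widehat{\alpha}_S$ of endomorphisms of $S$. Specialising to $\alpha=P_x$ and using $\widehat{P_x}=P_x^{-1}$ from \eqref{eq:P-properties}, together with the multiplicativity of $\gamma:\Str(A,-)\to\GO(S,Q)$ established in Proposition~\ref{gamma} (so that $(\widehat{P_x})_S=((P_x)_S)^{-1}$), I get
\[
\natural\circ (P_x)_S\circ \natural \;=\; \mu(P_x)\,((P_x)_S)^{-1}.
\]
Rewriting this as $\natural\circ (P_x)_S = \mu(P_x)\,((P_x)_S)^{-1}\circ \natural^{-1}$ and substituting into the middle of $M_x^2=(P_x)_S\circ\natural\circ(P_x)_S\circ\natural$ gives
\[
M_x^2 \;=\; (P_x)_S\circ \mu(P_x)((P_x)_S)^{-1}\circ \natural^{-1}\circ \natural \;=\; \mu(P_x)\,\id_S,
\]
the two $(P_x)_S$ factors cancelling against each other.

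It remains to recall that $\mu((P_x)_S)=N_A(x)$ for $x\in A^*$: this is precisely the content of the paragraph immediately preceding the theorem, where $x\mapsto \mu((P_x)_S)$ is identified with the octic norm via the uniqueness characterisation of $N_A$ (degree $8$, normalised at $1$, invertibility-detecting). Thus $M_x^2=N_A(x)\,\id_S$ for every $x\in A^*$. Both sides of this identity are polynomial functions of $x\in A$ (the left-hand side by inspection of the defining formula for $M_x$, the right-hand side because $N_A$ is polynomial), and $A^*$ is a nonempty Zariski open subset of the affine space $A$, so the identity extends to all of $A$ by polynomial density. There is no serious obstacle in this argument: once $M_x$ is rewritten as $(P_x)_S\circ\natural$, the theorem reduces to a short manipulation of identities established earlier in the section, and the argument works uniformly in the decomposable and non-decomposable cases since it never appeals to the value of $\natural\circ\natural$, only to invertibility of $\natural$.
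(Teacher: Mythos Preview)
Your proof is correct and follows essentially the same route as the paper: identify $M_x=(P_x)_S\circ\natural$ via Lemma~\ref{lem:P}, then combine \eqref{eq.hat-alpha} with $\widehat{P_x}=P_x^{-1}$ and the multiplicativity of $\gamma$ to collapse $M_x^2$ to $\mu((P_x)_S)\,\id_S=N_A(x)\,\id_S$, finishing by Zariski density. The only cosmetic difference is that the paper organises the same identities as $\id_S=\gamma(P_x)\gamma(\widehat{P_x})=\mu((P_x)_S)^{-1}M_x^2$, and it is slightly more explicit about passing to an infinite field for the density step.
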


\begin{proof}
	Suppose $x \in A^*$. Note that $M_x(s) = (P_x)_S(s^\sharp) = \gamma(P_x)(s^\sharp)$ by Lemma \ref{lem:P}. We have by \eqref{eq.hat-alpha} and \eqref{eq:P-properties} that \begin{align*}\id_S &= \gamma(P_x P_x^{-1}) = \gamma( P_x \hat P_x) = \gamma( P_x) \gamma(\hat P_x) = (P_x)_S (\hat P_x)_S = \frac{1}{\mu((P_x)_S)}(P_x)_S \circ \sharp  \circ (P_x)_S \circ \sharp  \end{align*}
	and hence
	\[
	N_A(x).\id_S = \mu((P_x)_S).\id_S =  M_x^2.
	\]
	Extending to an infinite field if necessary, $M_x$ and $N_A(x)$ remain polynomial and $A^*$ is Zariski-dense in $A$. Therefore $N_A(x).\id_S = M_x^2$ for all $x \in A$. 
\end{proof}

\section{Cohomological invariants of bi-octonion algebras} \label{sec:cohom_inv_of_bi-oct_alg}

\subsection{Degree $n$ invariants of $I^n$} \label{sec:en}
Since the Milnor Conjectures are true, there exists an infinite sequence of mod 2 cohomological invariants
\begin{align*}
e_n: I^n \to H^n(*,\ZZ/2\ZZ) && (n \ge 0)
\end{align*}
such that for all fields $L/k$, $e_n: I^n(L) \to H^n(L,\ZZ/2\ZZ)$ is the unique additive group homomorphism with
$	e_n(\llangle a_1, \dots,  a_n \rrangle) = (a_1){\cdot} \cdots {\cdot} (a_n)$  for all $ a_1, \dots, a_n \in L^\times$.
For all $n \ge 0$, the kernel of $e_n$ is $I^{n+1}$, and this implies 
\begin{align*}
e_n(\langle c \rangle q) = e_n(q) && \text{for all } c \in L^\times, q \in I^n(L).
\end{align*}
For a concise discussion on the invariants $e_n$, an excellent reference is~\cite[\S1.1]{berhuy2007cohomological}. The existence of these invariants is a very deep result. However, the invariants $e_0$ (dimension modulo~2), $e_1$ (discriminant), and $e_2$ (Clifford invariant), are more classical; see \cite[\S1]{tignol2010cohomological} for a nice exposition. The existence of $e_3$ (Arason invariant \cite{arason1975cohomologische}) and $e_4$ \cite{jacob1989degree} was also established prior the resolution of the Milnor Conjectures, and then the rest of the $e_n$'s were dealt with in one fell swoop as a consequence of Voevodsky's work.

\subsection{Galois cohomology of $(8,8)$-product algebras} \label{sec:gal-com-of-(8,8)}

Let $G$ be the automorphism group of the split octonion algebra over $k$; that is, $G$ is the split simple algebraic group of type $G_2$. Let $S_2$ be the constant finite algebraic group of order 2.
By Theorem~\ref{thm.aut2}, the automorphism group of the split (8,8)-product algebra is $G^2 \rtimes S_2 = (G_2 \times G_2)\rtimes S_2$. The split short exact sequence of algebraic groups
$G^2 \to G^2\rtimes S_2 \to  S_2$
induces an exact sequence of pointed sets, in which we denote the third arrow by $b_1$:
\begin{equation} \label{eq:split-exact}
\begin{tikzcd}
	S_2 \ar[r] & H^1(k,G^2) \ar[r]& H^1(k,G^2\rtimes S_2) \ar[r,"b_1"] & k^\times/k^{\times 2} \ar[r] & 1.
\end{tikzcd}
\end{equation}

\begin{lemma} \label{lem.centroid}
	If $\beta \in H^1(k,G^2\rtimes S_2)$ and $(A,-)$ is an $(8,8)$-product algebra corresponding to $\beta$, then $b_1(\beta)$ is the isomorphism class of the centroid of $\Skew(A,-)^-$.
\end{lemma}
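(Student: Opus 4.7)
The plan is to realise both $b_1$ and the centroid-of-Malcev-algebra assignment as maps on $H^1$ induced by homomorphisms of algebraic groups into $S_2$, and then to identify the two inducing homomorphisms. By construction, $b_1$ is the map on $H^1$ induced by the projection $p: G^2 \rtimes S_2 \to S_2$ appearing in \eqref{eq:split-exact}. On the other hand, for every $(8,8)$-product algebra $(A,-)$, the centroid of the Malcev algebra $\Skew(A,-)^-$ is a quadratic \'etale $k$-algebra, and this construction is manifestly functorial in $(A,-)$ and compatible with scalar extension. It therefore defines a natural transformation of $H^1$-functors, which by the general formalism in \ref{sec.exact} and \ref{sec:cohomology-sequences} is induced by a uniquely determined morphism of algebraic groups $\mathbf{Aut}(A_0,-) \to \mathbf{Aut}(k \times k) = S_2$, where $(A_0,-) = C(8) \otimes C(8)$ denotes the split $(8,8)$-product algebra.

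I would then identify this induced morphism by a direct calculation. Using \eqref{eq.S}, the Malcev algebra $\Skew(A_0, -)^-$ decomposes as a direct product $C(8)_0^- \times C(8)_0^-$ of two copies of the split seven-dimensional exceptional simple Malcev algebra, and its centroid is accordingly $k \times k$, with each factor corresponding to one of the two simple ideals. Theorem~\ref{thm.aut2}~(iii) identifies $\mathbf{Aut}(A_0,-) \simeq G^2 \rtimes S_2$ in such a way that the $G^2$-part preserves each octonion factor (and hence each Malcev ideal) while the $S_2$-part interchanges them. Consequently $G^2$ acts trivially on the centroid $k \times k$ and $S_2$ acts by the nontrivial swap, which means that the induced homomorphism $\mathbf{Aut}(A_0, -) \to \mathbf{Aut}(k\times k) = S_2$ is exactly the projection $p$. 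Combining with the first paragraph, the class $b_1(\beta) \in H^1(k,S_2)$ coincides with the class of the centroid of $\Skew(A,-)^-$ viewed as a quadratic \'etale $k$-algebra.

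The main technical point requiring care is the assertion that the Malcev-centroid construction descends to an honest morphism of automorphism \emph{group schemes} (and not merely of $k$-points), so that the formalism of \ref{sec.exact} is applicable. However, this is essentially automatic, since both $\Skew(\cdot,-)$ and the formation of the Malcev centroid are defined internally over any commutative base ring and commute with arbitrary scalar extension, so the assignment $(A,-) \mapsto \operatorname{Cent}(\Skew(A,-)^-)$ is visibly a morphism of functors on the level of $R$-points for every $k$-algebra $R$. With that formality checked, nothing further stands in the way.
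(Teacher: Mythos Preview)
Your proof is correct but takes a different route from the paper's. You explicitly construct the morphism of group schemes $\mathbf{Aut}(A_0,-)\to S_2$ given by the action on the centroid of $\Skew(A_0,-)^-$, identify it with the projection $p:G^2\rtimes S_2\to S_2$ by inspection of the split case, and conclude that the induced maps on $H^1$ coincide. The paper instead observes that $b_1$ and the centroid assignment are both \emph{nonzero normalised} cohomological invariants $H^1(*,G^2\rtimes S_2)\to H^1(*,S_2)$, and then invokes \cite[Proposition~31.15]{knus1998book}, which says there is exactly one such invariant. Your approach is more self-contained and hands-on, avoiding the external classification result; the paper's argument is much shorter but relies on that uniqueness statement. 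One minor comment: your appeal to ``the general formalism in \ref{sec.exact} and \ref{sec:cohomology-sequences}'' for why the natural transformation is induced by a group-scheme morphism is a bit loose (those sections do not assert that every natural transformation arises this way), but since you go on to construct the morphism explicitly in your final paragraph, the argument stands.
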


\begin{proof}
	 Clearly $b_1$ and the map $[(A,-)] \mapsto [\operatorname{Centr}(\Skew(A,-)^-)]$ are both nonzero normalised cohomological invariants $H^1(*,G^2\rtimes S_2) \to H^1(*,S_2)$. By \cite[Proposition 31.15]{knus1998book}, there is exactly one such invariant.
\end{proof}

Now suppose $E/k$ is a quadratic \'etale extension corresponding to some  $\varepsilon \in H^1(k,S_2) = k^\times / k^{\times 2}$. The group $S_2$ acts on $E/k$ by $k$-automorphisms, and  by functoriality it acts on $H^1(k, R_{E/k}(G_E))$.

\begin{lemma} \label{lem.fibres} The fibre $b_1^{-1}(\varepsilon)$ is in natural bijective correspondence with the orbit space \[\frac{H^1(k,R_{E/k}(G_{E}))}{S_2}.\]	
\end{lemma}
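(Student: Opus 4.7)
The plan is to compute the fibre by the standard twisting technique in nonabelian Galois cohomology applied to a split short exact sequence. Since $\varepsilon \in H^1(k, S_2)$ and the section $s \colon S_2 \to G^2 \rtimes S_2$ of $b_1$ exists, a canonical lift is provided by $\tilde\varepsilon = s_*(\varepsilon) \in H^1(k, G^2 \rtimes S_2)$. The twisting bijection associated to $\tilde\varepsilon$ gives a pointed-set isomorphism
\[
H^1\bigl(k, G^2 \rtimes S_2\bigr) \overset{\sim}{\longrightarrow} H^1\bigl(k, {_{\tilde\varepsilon}(G^2 \rtimes S_2)}\bigr)
\]
sending $\tilde\varepsilon$ to the basepoint, and is compatible with the projection down to $H^1(k, S_2)$ (which is left unchanged, since $S_2$ is a constant $k$-group and so coincides with its twist). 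Hence $b_1^{-1}(\varepsilon)$ is identified with the fibre over the basepoint of the analogous boundary map for the twisted sequence.

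Next I would identify the twisted groups. The $S_2$-action on $G^2$ permutes the two factors; therefore, twisting $G^2$ by a cocycle representing $\varepsilon$ yields the Weil restriction $R_{E/k}(G_E)$. This is a routine descent computation: the Galois-fixed points of $G^2(k^s)$ under the twisted action consist of pairs $(x,y)$ satisfying $\sigma(x) = x, \sigma(y) = y$ when $\varepsilon_\sigma$ is trivial and $\sigma(x) = y, \sigma(y) = x$ otherwise, and these are precisely the $E$-points of $G$. The splitting $s$ survives twisting, giving a split exact sequence
\[
1 \longrightarrow R_{E/k}(G_E) \longrightarrow R_{E/k}(G_E) \rtimes S_2 \longrightarrow S_2 \longrightarrow 1,
\]
in which $S_2$ acts on $R_{E/k}(G_E)$ through its Galois action on $E/k$.

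Finally, I would apply the standard orbit description for split extensions (see \cite[I.\S5.5, Proposition~39]{serre1997galois}): the fibre over the basepoint of the rightmost map in the long exact sequence of pointed sets coming from a split exact sequence is the orbit space of $H^1(k, \text{kernel})$ under the natural action of the $k$-points of the quotient, acting on the kernel by the given semidirect-product action. In our case this action is that of $S_2(k) = S_2$ on $R_{E/k}(G_E)$ induced by the Galois swap, so the fibre becomes $H^1(k, R_{E/k}(G_E))/S_2$, as claimed. Naturality in $\varepsilon$ follows from the canonical choice of lift through $s$ and naturality of the twisting bijection.

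The main technical obstacle will be checking that the $S_2$-action on $H^1(k, R_{E/k}(G_E))$ that arises from conjugation inside the semidirect product genuinely agrees with the functorial action induced by the Galois action of $S_2 = \Gal(E/k)$ on $R_{E/k}(G_E)$; once this identification is in hand, the rest of the argument is formal.
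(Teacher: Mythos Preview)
Your proof is correct and follows essentially the same approach as the paper: twist the split exact sequence by a cocycle lying over $\varepsilon$, identify the twisted kernel as $R_{E/k}(G_E)$, and apply the orbit description from Serre I.\S5.5 (the paper cites Corollary~2 rather than Proposition~39, but that is the same content). The only difference is in identifying the twist: you sketch a direct Galois-descent argument on $G^2$, whereas the paper observes that the twisted group is $\mathbf{Aut}(\cor_{E/k}(C))$ for $C$ the split octonion algebra over $E$ and then invokes its earlier Theorem~\ref{thm.aut2} to read off $R_{E/k}(G_E)\rtimes S_2$.
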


\begin{proof}
If $C$ is the split octonion algebra over~$E$, then the automorphism group of $\cor_{E/k}(C)$ is $R_{E/k}(G_E)\rtimes S_2$ by Theorem \ref{thm.aut2}, and we have an exact sequence:
\begin{equation} \label{eq.twisted-sequence}
	\begin{tikzcd}[sep=small] 
	S_2 \ar[r] & H^1(k,R_{E/k}(G_{E})) \ar[r]& H^1(k,R_{E/k}(G_{E})\rtimes S_2) \ar[r] & k^\times/k^{\times 2} \ar[r] & 1.
\end{tikzcd}
\end{equation}
This is the sequence  \eqref{eq:split-exact} twisted by a cocycle $b \in Z^1(k,G^2\rtimes S_2)$ representing the isomorphism class of $\cor_{E/k}(C)$. The lemma follows from  \cite[I.\S5.5 Corollary 2]{serre1997galois}.
\end{proof}

\subsection{Example} \label{example:split-case}
	Consider \eqref{eq.twisted-sequence} in the split case when $E = k\times k$. Then $R_{E/k}(G_E) = G^2$. We can identify $H^1(k,G^2)$ with $H^1(k,G)\times H^1(k,G)$ as $S_2$-sets, where $S_2$ acts on $H^1(k,G)\times H^1(k,G)$ by swapping. From this point of view, $H^1(k,G^2)$ is the set of ordered pairs of octonion $k$-algebras and  the map $H^1(k,G^2)\to H^1(k,G^2\rtimes S_2)$ sends the class of $(C_1, C_2)$ to the class of $C_1 \otimes C_2$.   The image of this map is $\ker(b_1)$, and this is of course the set of isomorphism classes of decomposable bi-octonion algebras. The characterisation in Lemma~\ref{lem.fibres} of $\ker(b_1)$ as the orbit space $H^1(k, G^2)/S_2$ is compatible with our understanding that $C_1 \otimes C_2 \simeq C_1' \otimes C_2'$  if and only if   $C_{1} \simeq C_{\sigma(1)}'$ and $C_2 \simeq C_{\sigma(2)}'$ for some $\sigma \in S_2$.
	
\subsection{Partitioning the cohomology set} \label{sec:partitioning} By Lemma \ref{lem.fibres}, we have a canonical isomorphism:
\begin{equation} \label{eq.coprod}
H^1(k,G^2\rtimes S_2) \simeq \coprod_{a \in k^\times/k^{\times 2}} \frac{H^1(k, R_{k(\sqrt{a})/k}(G_{k(\sqrt{a})}))}{S_2}
\end{equation}
By Shapiro's Lemma \cite[Lemma 29.6]{knus1998book}, for all quadratic \'etale extensions $E/k$ there is a canonical isomorphism $H^1(k, R_{E/k}(G_E)) \simeq H^1(E,G)$. The pointed set $H^1(E,G)$ is identified with the set of $E$-isomorphism classes of octonion algebras over $E$, and the quotient of $H^1(E,G)$ by $S_2$ is identified with the set of $k$-isomorphism classes of octonion algebras over~$E$. In other words, \eqref{eq.coprod} is  a cohomological version of Theorem~\ref{thm.equiv2} for (8,8)-product algebras.

\subsection{Classification of bi-octonion algebras by successive invariants}

We can define two successive invariants of bi-octonion algebras which classify them up to isomorphism. The first invariant is $b_1$ from Lemma~\ref{lem.centroid}. If $b_1(A,-)  = [E] \in H^1(k, \ZZ/2\ZZ)$ then $(A,-) \simeq \cor_{E/k}(C)$ for a certain octonion algebra over $E$, unique up to $k$-isomorphism. The group $S_2\simeq \Aut_k(E) = \{1, \iota\}$ acts on $H^3(E,\ZZ/2\ZZ)$ by functoriality; on symbols this action is $(a){\cdot}(b){\cdot}(c) \mapsto ({^\iota a}){\cdot}({^\iota b}){\cdot}({^\iota c})$.
 Let $n_C$ be the norm of $C$ and define  $b_{[E]}(A,-) = \{e_3(n_C), e_3({^\iota n_C})\} \in H^3(E, \ZZ/2\ZZ)/S_2$.  Recall that octonion algebras over $E$ are classified up to $E$-isomorphism by the invariant $C \mapsto e_3(n_C)$ \cite[Theorem~5.4]{petersson1996elementary}, and therefore they are classified up to $k$-isomorphism by the invariant $b_{[E]}$. In other words, the map
 \[
 b_{[E]}: \frac{H^1(k, R_{E/k}(G_E))}{S_2} \longrightarrow \frac{H^3(E, \ZZ/2\ZZ)}{S_2}
 \] is both well-defined and injective. In summary, we have two invariants $b_1$ and $b_{[*]}$ that, when applied successively, classify bi-octonion algebras up to $k$-isomorphism.
 
 Note however, that $b_{[*]}$ is not a mod 2 cohomological invariant in the sense of \ref{sec:cohomological-invariants}, i.e., it is {not} an element of $\Inv(G^2 \rtimes S_2)$. We discover later in Corollary~\ref{cor:non-classification_bi-octonion} that bi-octonion algebras are \emph{not} classified by  invariants in $\Inv(G^2 \rtimes S_2)$. This situation is comparable to the classification of cubic \'etale algebras by cohomological invariants \cite[Proposition~30.18]{knus1998book}. In both situations, the invariants need to be applied successively in order to obtain classifying data, and the second invariant takes values in an orbit space of a cohomology group. 
 
\subsection{Cohomological invariants of $G^2\rtimes S_2$ in degree $\le 3$} \label{sec:b1b3}
We have a degree~1 invariant $b_1: H^1(*,G^2\rtimes S_2) \to H^1(*,\ZZ/2\ZZ)$ from Lemma \ref{lem.centroid}, namely
$
b_1(A,-) = [E]
$
where $E$ is the centroid of $\Skew(A,-)^-$.

We can define a degree 3 invariant as follows. Let $e_3: I^3(*) \to H^3(*,\ZZ/2\ZZ)$ be the Arason invariant, and define $b_3: H^1(*,G^2\rtimes S_2) \to H^3(*,\ZZ/2\ZZ)$:
\[b_3(A,-) = e_3(Q)\]
where $Q$ is an Albert form for $(A,-)$. Since $e_3$ is constant on similitude classes, no ambiguity arises if we choose a different Albert form $\langle c \rangle Q$ instead of $Q$.

\subsection{Multiplicative transfer} \label{sec:multiplicative-transfer}
The notion of multiplicative transfer first appeared in work by Rost \cite{rost2003multiplicative} and Tignol \cite{tignol1994norme}. Let $\iota$ be the nonidentity automorphism of $E/k$. Define ${^\iota V}$ to be the $E$-module $^\iota V = V$ with $e\cdot v = \iota(e)v$ for all $e \in E$, $v \in V$, and define $^\iota q(v) = \iota(q(v))$ for all $v \in {^\iota V}$. This defines a quadratic space $({^\iota V}, {^\iota q})$ over $E$. The \emph{multiplicative transfer} of $(V,q)$ is the $n^2$-dimensional $k$-quadratic space $N_{E/k}(V,q) = (N_{E/k}(V), N_{E/k}(q))$ defined as follows: $N_{E/k}(V)$ is the subspace of ${^\iota V}\otimes_E V$ fixed by the switch map $x \otimes y \mapsto y \otimes x$, and $N_{E/k}(q)$ is the restriction of ${^\iota q} \otimes_E q$ to $N_{E/k}(V)$. For one-dimensional quadratic forms, the multiplicative transfer behaves straightforwardly \cite[Lemma 2.6~(i)]{wittkop}:
\begin{align} \label{eq:N-one-d}
	N_{E/k}(\langle e \rangle) = \langle N_{E/k}(e) \rangle && \text{for all } e \in E.
\end{align}
	For quadratic forms in dimension $> 1$, it is less straightforward, for example if $E = k(\sqrt{d})$ is a field then  by \cite[Lemma~2.13]{wittkop},
\begin{equation*} 
	N_{E/k}(\mathbb{H}) = \langle 2 \rangle \llangle d \rrangle + \mathbb{H} = \langle 2, -2d, 1, -1 \rangle.
\end{equation*}

Unlike the additive transfer $T_{E/k}$, the multiplicative transfer  $N_{E/k}$ is not generally compatible with Witt equivalence, so it does not define an operation on $W(k)$. However, $N_{E/k}$ does extend to the Grothendieck--Witt ring; specifically, \cite[Lemma~2.6~(iii) \& Satz~2.9]{wittkop} show that there is a unique multiplicative map $N_{E/k}: \widehat{W}(E) \to \widehat{W}(k)$ such that $N_{E/k}([q]) = [N_{E/k}(q)]$ for all quadratic forms $q$ over~$E$, and $N_{E/k}(-1) = 3 - \langle 2 \rangle\llangle d \rrangle$.
  In the split case where $E = k\times k$, and $q = (q_1, q_2)$ for quadratic forms $q_i$ over $k$, we have $N_{E/k}(q) \simeq q_1 \otimes q_2$.

The following theorem says what $N_{E/k}$ does to Pfister forms; it is a direct consequence of \cite[Satz 2.16~(ii)]{wittkop}, whose statement also appears without proof in \cite{rost2006galois}.

\begin{theorem}[Rost, Wittkop] \label{thm:norm}
	Let $E=k(\sqrt{d})$ be a quadratic field extension, and let $q = \llangle c_1, \dots, c_n \rrangle$ where $c_i \in E$. If $\tr_{E/k}(c_i) = 0$ for some $i$, then \[N_{E/k}(q) \simeq 2^{n-1}(2^n-1)\mathbb{H} \perp 2^{n-1}\llangle d \rrangle.\] Otherwise, \[N_{E/k}(q) \perp 2^{n-1}\mathbb{H} \simeq 2^{n-1}\llangle d  \rrangle \perp \bigotimes_{i = 1}^n \llangle \tr_{E/k}(c_i),-dN_{E/k}(c_i) \rrangle.\]
	Consequently, in $W(k)$ we have $N_{E/k}(q) - 2^{n-1}\llangle  d \rrangle \in I^{2n}(k)$.
\end{theorem}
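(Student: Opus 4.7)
The plan is to proceed by induction on $n$, using the multiplicativity of $N_{E/k}: \widehat{W}(E) \to \widehat{W}(k)$. Since $\llangle c_1, \dots, c_n \rrangle = \llangle c_1, \dots, c_{n-1}\rrangle \otimes \llangle c_n \rrangle$ as quadratic forms, multiplicativity gives $N_{E/k}([\llangle c_1, \dots, c_n \rrangle]) = \prod_{i=1}^n N_{E/k}([\llangle c_i \rrangle])$ in $\widehat{W}(k)$, so the whole theorem will reduce to computing $N_{E/k}([\llangle c \rrangle])$ for a single $c \in E^\times$ and then assembling the factors.

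For the base case $n = 1$, I would write $[\llangle c \rrangle] = [\langle 1\rangle] + [\langle -c\rangle]$ in $\widehat{W}(E)$ and apply the polarization-type identity
\[
N_{E/k}(a + b) = N_{E/k}(a) + N_{E/k}(b) + T_{E/k}(a \cdot \iota(b)),
\]
which is the quadratic-form analog of $N(x+y) = N(x) + N(y) + \tr(x\iota(y))$ and follows from the definition of $N_{E/k}$ as the $s$-invariant subspace of $^\iota V \otimes_E V$ (the ``off-diagonal'' part providing the transfer term). Using \eqref{eq:N-one-d} and the standard diagonalization of $T_{E/k}(\langle -\iota(c)\rangle)$ from Scharlau's formula, this additive-transfer term is Witt-equivalent to $\langle -\tr(c)\rangle \llangle -d\,N_{E/k}(c)\rrangle$ when $\tr(c) \ne 0$, and is hyperbolic when $\tr(c) = 0$. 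A case split then gives the statement for $n=1$: namely $N_{E/k}(\llangle c \rrangle) \simeq \mathbb{H} \perp \llangle d \rrangle$ in the trace-zero case, and $N_{E/k}(\llangle c \rrangle) \perp \mathbb{H} \simeq \llangle d \rrangle \perp \llangle \tr(c), -dN_{E/k}(c)\rrangle$ otherwise.

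For the induction step, there are two possibilities. If some $\tr(c_i) = 0$, then the factor $N_{E/k}(\llangle c_i \rrangle)$ already contributes a class Witt-equivalent to $\llangle d \rrangle$, and the absorption identity $\llangle d \rrangle \otimes \llangle d \rrangle = 2\llangle d \rrangle$ in $W(k)$ applied to all subsequent factors, combined with a dimension count, yields $2^{n-1}(2^n - 1)\mathbb{H} \perp 2^{n-1}\llangle d \rrangle$. If all $\tr(c_i) \ne 0$, multiplying the base-case formulas together and collecting the hyperbolic corrections yields exactly $2^{n-1}\llangle d \rrangle \perp \bigotimes_i \llangle \tr(c_i), -d N_{E/k}(c_i)\rrangle$ modulo $2^{n-1}\mathbb{H}$.

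The main obstacle is the bookkeeping of hyperbolic summands when passing between $\widehat{W}(k)$ and $W(k)$: at each induction step the Grothendieck--Witt identities introduce new hyperbolic terms, and one must verify that their total multiplicity matches the $2^{n-1}(2^n - 1)\mathbb{H}$ or $2^{n-1}\mathbb{H}$ appearing in the statement. The final consequence, $N_{E/k}(q) - 2^{n-1}\llangle d \rrangle \in I^{2n}(k)$, is then immediate: in the trace-zero case both sides of the difference reduce to $0$ in $W(k)$ modulo $I^{2n}$, and in the other case $\bigotimes_i \llangle \tr(c_i), -dN_{E/k}(c_i)\rrangle$ is visibly a $2n$-Pfister form.
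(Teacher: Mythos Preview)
The paper does not actually prove this theorem: it is quoted as a direct consequence of \cite[Satz~2.16~(ii)]{wittkop} (Wittkop's Diplomarbeit), with the remark that the statement also appears without proof in Rost's notes. So there is no ``paper's own proof'' to compare against.

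That said, your outline is essentially the standard argument and is correct. The polarization identity $N_{E/k}(q_1 \perp q_2) \simeq N_{E/k}(q_1) \perp N_{E/k}(q_2) \perp T_{E/k}({}^\iota q_1 \otimes q_2)$ is exactly what drops out of the definition of $N_{E/k}$ on the switch-fixed subspace, and applying it to $\langle 1 \rangle \perp \langle -c \rangle$ gives the base case as you describe. The only nontrivial check in assembling the induction is that the cross term $\llangle d \rrangle \otimes \llangle \tr(c_i), -dN_{E/k}(c_i) \rrangle$ is hyperbolic; this holds because $-dN_{E/k}(c_i) = N_{E/k}(\sqrt{d}\,c_i)$ is a norm from $E$, so $\llangle d, -dN_{E/k}(c_i) \rrangle$ is hyperbolic. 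Once that is in hand, multiplying the $n$ base-case identities in $\widehat{W}(k)$ and tracking the hyperbolic bookkeeping (which you rightly flag as the only delicate point) gives the stated formulas, and the dimension count pins down the number of hyperbolic planes exactly. Your sketch is a faithful reconstruction of how Wittkop's argument goes.
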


\subsection{The quadratic trace form} If $(A,-)$ is any structurable algebra, the bilinear trace $T_A: A \times A \to k$ is defined as
\[
	T_A(x,y) = \tr(L_{x \bar y + y \bar x}).
\]
We call $T_A(x) = T_A(x,x)$ the quadratic trace form. The bilinear trace is an invariant form in the sense that Allison defined it in \cite[\S6]{allison1978class}:
\begin{align*}
	T_A(\bar{x}, \bar{y}) = T_A(x,y) && \text{and} && T_A(zx,y) = T_A(x,\bar{z}y) && \text{for all } x,y,z \in A.
\end{align*}
If $(A,-)$ is simple then $T_A$ is either nondegenerate or zero, because its radical is an ideal stabilised by the involution.
 According to the main theorem and the endnote of \cite{schafer1989invariant}, $T_A$ is a scalar multiple of every nonzero invariant symmetric bilinear form on $A$.

\begin{lemma} \label{lem.Trace}
		Suppose  $C$ is an octonion algebra over a quadratic \'etale extension $E/k$ with norm $n$, and let $(A,-) = \cor_{E/k}(C)$. Then $T_A = \langle 128 \rangle N_{E/k}(n)$.
\end{lemma}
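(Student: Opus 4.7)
The plan is to base-change to $E$ and reduce to the decomposable case, where the identity can be checked by a direct trace computation. Over $E$ we have $A_E \cong {^\iota C}\otimes_E C$ with canonical involution ${^\iota \tau}\otimes \tau$, and $T_A\otimes_k E = T_{A_E}$ since left multiplication and $k$-trace both commute with base change. At the same time, $N_{E/k}(n)\otimes_k E$ gets identified with ${^\iota n}\otimes_E n$ on $A_E$ under the natural $E$-linear isomorphism $N_{E/k}(C)\otimes_k E \xrightarrow{\sim} {^\iota C}\otimes_E C$, $x\otimes e\mapsto ex$. So once I prove
\[
T_{A_E} = \langle 128\rangle ({^\iota n}\otimes_E n)
\]
as $E$-quadratic forms on $A_E$, restricting to $A\subset A_E$ (where both sides are already $k$-valued) yields the desired equality of $k$-quadratic forms.

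The decomposable case is the heart of the argument. I would show that for \emph{any} two octonion algebras $D_1, D_2$ over an arbitrary field $F$ (of characteristic not $2$) with norms $n_1, n_2$, the bi-octonion algebra $B = D_1\otimes_F D_2$ equipped with the canonical involution satisfies $T_B = \langle 128\rangle (n_1\otimes_F n_2)$. This I would prove by comparing polar bilinear forms on pure tensors. Since $L_{a\otimes b} = L_a \otimes L_b$ on the $64$-dimensional space $B$, we have $\tr(L_{a\otimes b}) = \tr(L_a)\tr(L_b)$; combined with the standard octonion identity $\tr(L_u) = 4\,t(u)$ (coming from the rank-$2$ minimal polynomial $X^2 - t(u)X + n(u)$ together with $\dim_F D_i = 8$) and with $t(x\bar{x'}) = b_{n_1}(x,x')$, a short computation gives
\[
T_B(x\otimes y,\, x'\otimes y') = 32\, b_{n_1}(x,x')\, b_{n_2}(y,y')
\]
on pure tensors, hence everywhere by bilinear extension. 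Under the convention $b_{q_1\otimes q_2} = \tfrac{1}{2}(b_{q_1}\otimes b_{q_2})$ for tensor products of quadratic forms, this matches the polar form of $\langle 128\rangle (n_1\otimes n_2)$, and since a quadratic form in characteristic $\ne 2$ is recovered from its polar, the decomposable case follows.

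For the descent, the identification $N_{E/k}(n)\otimes_k E = {^\iota n}\otimes_E n$ follows immediately from the definition of the multiplicative transfer: both $E$-quadratic forms on ${^\iota C}\otimes_E C$ agree on the subspace $N_{E/k}(C)$ (where $N_{E/k}(n)$ is defined as the restriction of ${^\iota n}\otimes_E n$), both scale as $q(ev) = e^2 q(v)$ for $e\in E$, and their $E$-bilinear polars agree on $N_{E/k}(C)\times N_{E/k}(C)$, hence everywhere by $E$-bilinearity. Assembling this with the decomposable case applied over $E$ gives $T_A\otimes_k E = \langle 128\rangle N_{E/k}(n)\otimes_k E$, and evaluating at elements of $A \subset A_E$ completes the proof.

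The main obstacle is really bookkeeping: the factor-of-$2$ conventions relating a quadratic form to its polar, and the convention $b_{q_1\otimes q_2} = \tfrac{1}{2}(b_{q_1}\otimes b_{q_2})$ for tensor products, are precisely where a careless slip would produce the wrong normalization constant. Once those are handled consistently, the normalization $128 = 2\cdot 64$ appears naturally from $T_A(1) = 2\tr(L_1) = 2\cdot 64$ and $N_{E/k}(n)(1) = 1$, and the rest is a routine combination of standard octonion identities and the functorial behavior of base change.
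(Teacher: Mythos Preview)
Your proof is correct, but it takes a markedly different route from the paper's. The paper exploits the remark immediately preceding the lemma (citing Schafer) that on a central simple structurable algebra the invariant symmetric bilinear form is unique up to scalar: since the polar of $N_{E/k}(n)$ is nondegenerate and invariant (citing \cite[Proposition~2.2~(i)]{allison1988tensor}), it must be a scalar multiple of $T_A$, and evaluating at $1$ pins the scalar to $128$. That is the entire argument---three lines, no computation beyond $T_A(1)=128$.

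Your approach instead bypasses the uniqueness result entirely: you base-change to $E$, reduce to the decomposable case $D_1\otimes_F D_2$, and verify the identity there by a direct trace calculation on pure tensors (using $\tr(L_u)=4t(u)$ and $t(x\bar{x'})=b_n(x,x')$), then descend. This is longer and requires care with the factor-of-$2$ conventions for polars of tensor products, but it is more self-contained: it does not rely on Schafer's theorem or Allison's invariance lemma, both of which are external references in the paper's proof. Your computation $T_B(x\otimes y,x'\otimes y')=32\,b_{n_1}(x,x')b_{n_2}(y,y')$ and the identification $b_{\langle 128\rangle(n_1\otimes n_2)}=64\,b_{n_1}\otimes b_{n_2}$ check out, and the descent step (identifying $N_{E/k}(n)\otimes_k E$ with ${^\iota n}\otimes_E n$) is standard for multiplicative transfers.
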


\begin{proof}
	The symmetric bilinear form associated to $N_{E/k}(n)$ is nondegenerate and invariant (cf.\! \cite[Proposition 2.2~(i)]{allison1988tensor}), so it is a scalar multiple of $T_A$. The scaling factor is $128$ because  $T(1) = \tr(L_{2}) = 128$ and $N_{E/k}(n)(1) = n(1){^\iota n}(1) =  1$.
\end{proof}

\subsection{A cohomological invariant of $G^2\rtimes S_2$ in degree $6$} \label{sec:b6}

Suppose $L/k$ is a field extension and $(A,-) = \cor_{E/L}(C)$ for some quadratic \'etale extension $E/L$ and an octonion algebra $C$ over $E$. Define the invariant $b_6: H^1(*,G^2\rtimes S_2) \to H^6(*,\ZZ/2\ZZ)$ by
\[
	b_6(A,-) = e_6(N_{E/L}(n_C) - 4n_E)
\]
where $n_C \in W(E)$ and $n_E \in W(L)$ are the standard norms of $C$ and $E$ respectively. (Note that $n_E = \llangle d \rrangle$ if $E = L(\sqrt{d})$ is a field and $n_E = \mathbb{H}$ if $E = L\times L$ is split.) This invariant is well-defined because by Theorem \ref{thm:norm}, $N_{E/L}(n_C) -4n_E  \in I^6(L)$ if $E$ is a field, and $N_{E/L}(n_C)$ is a $6$-Pfister form if $E$ is not a field.

This invariant has an even more direct description, in light of Lemmas \ref{lem.centroid} and \ref{lem.Trace}, namely:
\[
	b_6(A,-) = e_6(\langle 128\rangle T - 4S)
\]
where $T\in W(L)$ is the quadratic trace form of $(A,-)$ and $S\in W(L)$ is the norm form of the centroid of $\Skew(A,-)^-$.

\subsection{Example: real bi-octonion algebras} \label{sec:reals}

There are exactly two isomorphism classes of real octonion algebras (the split octonion algebra $\mathbb{O}_{\rm split}$ and the division octonion algebra $\mathbb{O}_{\rm div}$). Let $\mathbb{O}_\mathbb{C}$ denote the unique (split) complex octonion algebra. By Theorem~\ref{thm.equiv1}, there are four isomorphism classes of real bi-octonion algebras, namely $\mathbb{O}_{\rm split} \otimes \mathbb{O}_{\rm div}$, $\mathbb{O}_{\rm div} \otimes \mathbb{O}_{\rm div}$, $\mathbb{O}_{\rm split} \otimes \mathbb{O}_{\rm split}$, and $\cor_{\mathbb{C}/\mathbb{R}}(\mathbb{O}_{\mathbb{C}})$. The first one, $\mathbb{O}_{\rm split} \otimes \mathbb{O}_{\rm div}$ has an Albert form of signature $(11,3)$, and the other three algebras have hyperbolic Albert forms. Therefore there are exactly two isotopy classes of real bi-octonion algebras.
The real bi-octonion algebras all have distinct invariants:

\begin{center}
\begin{tabular}{c|x{2cm}x{2cm}x{2cm}}
$(A,-)$ & $b_1(A,-)$ & $b_3(A,-)$ & $b_6(A,-)$ \\ \hline
$\mathbb{O}_{\rm split} \otimes \mathbb{O}_{\rm split}$ & $0$ & $0$ & $0$  \\
$\mathbb{O}_{\rm split} \otimes \mathbb{O}_{\rm div}$ & $0$ & $(-1)^3$ & $0$ \\
$\mathbb{O}_{\rm div} \otimes \mathbb{O}_{\rm div}$ & $0$ & $0$ & $(-1)^6$  \\
$\cor_{\mathbb{C}/\mathbb{R}}(\mathbb{O}_\mathbb{C})$ & $(-1)$ & $0$ & $0$ \\
\end{tabular}
\end{center}

We return to assuming that $k$ is an arbitrary field of characteristic not $2$ or $3$. The following theorem provides some applications of the cohomological invariants of $G^2\rtimes S_2$.

\begin{theorem} \label{thm:symbols}
	Let $(A,-)$ be a bi-octonion algebra.
	\begin{enumerate}[\rm (i)] \item  $b_3(A,-)$ has symbol length $\le 3$. If the symbol length is equal to $3$ then $(A,-)$ is a division algebra, and if $b_3(A,-) = 0$ then $(A,-)$ is isotopic to the split bi-octonion algebra.
		\item $b_6(A,-)$ is a symbol.
		\item If $(A,-)$ is not a division algebra then $b_6(A,-) \in (-1) {\cdot} H(k)$.	In particular, if $\sqrt{-1} \in k$ then $b_6(A,-) \ne 0$ implies $(A,-)$ is a division algebra.
	\end{enumerate}
\end{theorem}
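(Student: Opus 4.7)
For part~(i), the symbol length bound will come for free from the Pfister number $\Pf(3,14)=3$ recalled in the introduction: since the Albert form $Q$ lies in $I_{14}^3(k)$, it is a sum of at most three $3$-Pfister forms in $W(k)$, and applying the additive Arason invariant $e_3$ (which sends each $3$-Pfister to a symbol) shows that $b_3(A,-)$ has symbol length $\le 3$. The implication ``symbol length $3$ forces division'' I would prove by contrapositive: if $(A,-)$ is not a division algebra, Theorem~\ref{thm:divisionness0} gives $Q \sim \phi_1' \perp \langle -1\rangle \phi_2'$ for two $3$-Pfister forms $\phi_i$, so $b_3(A,-) = e_3(\phi_1) + e_3(\phi_2)$ has symbol length $\le 2$. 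Finally, $b_3(A,-)=0$ puts $Q$ in $\ker e_3 = I^4(k)$; since $\dim Q = 14 < 16$, the Arason--Pfister Hauptsatz forces $Q$ to be hyperbolic, and Corollary~\ref{cor:isotopic-similar} then identifies $(A,-)$ with the split bi-octonion algebra up to isotopy.

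For part~(ii), I would split on whether $b_1(A,-)$ is trivial. If $b_1(A,-)=0$, then by Theorem~\ref{thm.equiv2} the algebra is decomposable with $E = k\times k$, and $N_{E/k}(n_C) - 4n_E = n_{C_1}\otimes n_{C_2}$ in $W(k)$ is already a $6$-Pfister form, so $b_6(A,-)$ is a single $6$-symbol. If $b_1(A,-)\ne 0$, then $E = k(\sqrt{d})$ is a quadratic field extension; writing $n_C = \llangle c_1, c_2, c_3 \rrangle$ over $E$ and applying the Rost--Wittkop formula (Theorem~\ref{thm:norm}) with $n=3$, either some slot satisfies $\tr_{E/k}(c_i)=0$, in which case $N_{E/k}(n_C) \equiv 4\llangle d \rrangle \pmod{W(k)}$ and $b_6(A,-)=0$, or
\begin{equation*}
N_{E/k}(n_C) - 4\llangle d\rrangle \;\equiv\; \bigotimes_{i=1}^{3}\llangle \tr_{E/k}(c_i),\, -d\,N_{E/k}(c_i)\rrangle \pmod{W(k)},
\end{equation*}
again a $6$-Pfister form, so $b_6(A,-)$ is a symbol either way.

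For part~(iii), I would first reduce to a nice decomposable model. By Theorem~\ref{thm:divisionness0}, if $(A,-)$ is not a division algebra it is isotopic to some $C_1\otimes C_2$ with $n_{C_i} = \llangle c, x_i, y_i\rrangle$ sharing the common slot $c\in k^\times$. A direct computation gives
\begin{equation*}
b_6(C_1\otimes C_2) \;=\; e_6\big(\llangle c, c, x_1, y_1, x_2, y_2\rrangle\big) \;=\; (c){\cdot}(c){\cdot}(x_1){\cdot}(y_1){\cdot}(x_2){\cdot}(y_2),
\end{equation*}
and the standard relation $(c){\cdot}(c) = (-1){\cdot}(c)$ in mod-$2$ Milnor K-theory rewrites this as an element of $(-1){\cdot}H^5(k)$. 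I then invoke the isotopy invariance of $b_6$ modulo $(-1){\cdot}(-1){\cdot}H^4(k)$ (Proposition~\ref{prop:isotopy-invariants}(6), announced as Theorem~D(6) in the introduction): since $(-1){\cdot}(-1){\cdot}H^4(k)\subset (-1){\cdot}H^5(k)$, the congruence $b_6(A,-)\equiv b_6(C_1\otimes C_2)$ drops $b_6(A,-)$ into $(-1){\cdot}H^5(k)$ as well. The consequence for $\sqrt{-1}\in k$ is immediate: then $(-1)=0$ in $H^1$, so $(-1){\cdot}H^5(k)=0$ and any nonzero $b_6(A,-)$ rules out the non-division case.

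The plan rests on three substantial external inputs -- the Pfister number $\Pf(3,14)=3$ (used in part~(i)), the Rost--Wittkop evaluation of $N_{E/k}$ on Pfister forms (used in part~(ii)), and the modular isotopy invariance of $b_6$ (used in part~(iii)) -- all of which are already stated or proved elsewhere in the paper. The main subtlety is correctly handling the decomposable/indecomposable dichotomy in part~(ii) via $b_1$, and marshalling the isotopic reduction so that the error term introduced by replacing $(A,-)$ by its decomposable isotope remains absorbed into $(-1){\cdot}H^5(k)$; everything else is short cohomological bookkeeping with Pfister symbols.
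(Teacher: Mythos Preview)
Your arguments for parts~(i) and~(ii) are essentially identical to the paper's: the same symbol-length bound via the Izhboldin--Hoffmann/Pfister-number result, the same contrapositive via Theorem~\ref{thm:divisionness0}, the Arason--Pfister Hauptsatz for $b_3=0$, and the same decomposable/indecomposable split using Theorem~\ref{thm:norm} for~(ii).

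Part~(iii) is where you diverge. The paper does \emph{not} pass to an isotope. Instead it writes $(A,-)=\cor_{E/k}(C)$ and uses Corollary~\ref{cor:cor} directly: non-division means the pure norm $n'$ of $C$ represents some $c\in k$, so $n=\llangle c,c_1,c_2\rrangle$ with $c\in k^\times$, and then the Rost--Wittkop formula (already used in~(ii)) is applied with this $k$-rational first slot. The factor $(-dc^2)=(-1)+(d)$ together with $(d){\cdot}(-dN_{E/k}(c_1))=0$ extracts the $(-1)$ explicitly, giving $b_6(A,-)\in(-1){\cdot}H^5(k)$ by a short symbol manipulation. Your route---replace $(A,-)$ by a decomposable isotope with a common slot, compute $b_6$ there, then transfer back using the isotopy-invariance of $b_6$ modulo $(-1){\cdot}(-1){\cdot}H^4(k)$---is correct and the error term is indeed absorbed, but it leans on Proposition~\ref{prop:isotopy-invariants}, which in the paper is proved \emph{after} Theorem~\ref{thm:symbols} and rests on the heavier comparison Theorem~\ref{thm:big-calc} between $b_6$ and $a_6$. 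There is no logical circularity (Theorem~\ref{thm:big-calc} does not invoke Theorem~\ref{thm:symbols}), but the paper's direct computation is self-contained at this point and avoids the detour through~$a_6$ and the $P_3$ machinery.
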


Recall from~\ref{sec:reals} that there is a non-division real bi-octonion algebra with nonzero $b_6$, so the assumption $\sqrt{-1} \in k$ is necessary in the last sentence of the theorem.

\begin{proof}
	(i) By definition $b_3(A,-) = e_3(Q)$ where $Q \in I^3_{14}(k)$ is an Albert form for $(A,-)$. A result proved independently by Izhboldin and Hoffmann (see \cite[Remark~17.7]{izhboldin2000} and \cite[Proposition~2.3]{hoffmann1998}) says that any $Q \in I_{14}^3(k)$ is a sum of at most three forms that are similar to $3$-Pfister forms, hence the symbol length of $e_3(Q)$ is at most~3. If $(A,-)$ is not a division algebra, then $Q$ is isotropic and Pfister's theorem says it is similar to a difference of two 3-Pfister forms (see \ref{sec:I12}), hence the symbol length of $e_3(Q)$ is $\le 2$. If $e_3(Q) = 0$ then $Q \in I_{14}^4(k)$ and is therefore hyperbolic by the Arason--Pfister Hauptsatz, so  by Corollary~\ref{cor:isotopic-similar} $(A,-)$ is isotopic to the split bi-octonion algebra.
	
	(ii) If $(A,-) = C_1\otimes C_2$ is decomposable, then $b_6(A,-)$ is the symbol: \[b_6(A,-) = e_6(N_{k\times k / k}((n_1, n_2))) = e_6(n_1\cdot n_2) = e_3(n_1)e_3(n_2)\]
	where $n_i$ is the norm of $C_i$. If $(A,-)= \cor_{E/k}(C)$ where $E = k(\sqrt{d})$ is a field    then $b_6(A,-) = e_6(N_{E/k}(n_C) - 4\llangle d \rrangle)$. Say  $n_C = \llangle c_1, c_2, c_3 \rrangle$. By Theorem~\ref{thm:norm}, $b_6(A,-)$ is either zero or equal to the symbol \[e_6\big(\bigotimes_{i = 1}^3\llangle \tr_{E/k}(c_i), -dN_{E/k}(c_i) \rrangle\big) = \prod_{i=1}^3(\tr_{E/k}(c_i)) {\cdot} (-dN_{E/k}(c_i)). 
	\]
	(iii) 	Suppose $(A,-) = \cor_{E/k}(C)$ where $E$ is a quadratic \'etale $k$-algebra and $n:C \to E$ is the norm of~$C$. If $(A,-)$ is not a division algebra then  $n'$ represents an element $c\in k$ (Corollary~\ref{cor:cor}). If $n'$ represents 0 then $n$ is hyperbolic and $b_6(A,-) = 0$. Otherwise, $n'$ represents some $c \in k^\times$. In this case, $n = \llangle c, c_1, c_2 \rrangle$ for some $c_1, c_2 \in E^\times$ and \begin{align*}b_6(A,-) &= (2c){\cdot} (-dc^2) {\cdot} (\tr_{E/k}(c_1)) {\cdot} (-dN_{E/k}(c_1)){\cdot}(\tr_{E/k}(c_2)){\cdot}(-dN_{E/k}(c_2)) \\
	&= (-1) {\cdot} (2c) {\cdot} (\tr_{E/k}(c_1)) {\cdot} (-dN_{E/k}(c_1)){\cdot}(\tr_{E/k}(c_2)){\cdot}(-dN_{E/k}(c_2))  \end{align*} 
	because $(-dc^2) = (-1) + (d)$ and  $(d) {\cdot}(-dN_{E/k}(c_1)) = (d) {\cdot}(N_{E/k}(c_1\sqrt{d})) = 0$ by \cite[Proposition~III.9.15~(2)]{berhuy}. \end{proof}

\subsection{Central simple associative algebras with orthogonal involution of degree $4$} \label{sec:csa4}

Most of the results of this section apply, with minor modifications, also to $(4,4)$-product algebras. This class of objects is better known as central simple algebras with orthogonal involution of degree 4. It should be no trouble to reproduce the proofs, so we shall only summarise. Let $t$ be the transpose involution on a matrix algebra. The automorphism group of $(M_4(k),t)$ is $\mathbf{PGO}_4 \simeq (\mathbf{PGL}_2 \times \mathbf{PGL}_2) \rtimes S_2$. There are natural-in-$k$ bijections \[H^1(k, \mathbf{PGO}_4) \simeq \mathsf{Prod}_{4,4}(k) \simeq \mathsf{Comp}_4\mathsf{\acute{E}t}_2(k).\] 

One can define three nontrivial mod 2 cohomological invariants of $\mathbf{PGO}_4$. There are various ways to think about the unique nontrivial degree 1 invariant $y_1$: it is the connecting map $\delta: H^1(*, \mathbf{PGO}_4) \to H^1(*,\ZZ/2\ZZ)$, it is the discriminant in the sense of \cite[Definition~7.2]{knus1998book}, it is the map sending $(A,-)$ to the centroid of the 6-dimensional semisimple Lie algebra $\Skew(A,-)$ of type $A_1 \times A_1$, and it is the map sending $(A,-)$ to the centre $Z(Q)$ of the unique (up to $k$-isomorphism) quaternion algebra $Q$ such that $(A,-) = \cor_{Z(Q)}(Q)$.

The next nontrivial invariant $y_2$ is of degree 2, and again there are several ways to think about it: it is the map $(A,\sigma) \mapsto [A] \in {}_2\operatorname{Br}(*) = H^2(*,\ZZ/2\ZZ)$, and it is the map sending $(A,\sigma)$ to $e_2(Q)$ where $Q$ is an Albert form of $(A,\sigma)$. The next nontrivial invariant is of degree~4, and it is entirely analogous to the invariant described in \ref{sec:b6}. We shall describe it here for the record.
Suppose $(A,\sigma)$ is a central simple associative algebra with orthogonal involution of degree~4, and let $\operatorname{Trd}$ be its reduced trace. The bilinear form $b(x,y) = \operatorname{Trd}(x \sigma(y) + y\sigma(x))$ is clearly an invariant symmetric bilinear form on $A$. So is the form $N_{E/k}(n)$ where $n$ is the norm on the unique quaternion algebra $Q$ over a quadratic \'etale extension $E$ such that $(A,\sigma) = \cor_{E/k}(Q)$. Since invariant symmetric forms are unique up to scaling, we have $b = \langle 8 \rangle N_{E/k}(n)$, and by Theorem~\ref{thm:norm} $N_{E/k}(n) -2 n_E$ equals a $4$-Pfister form in $W(k)$. We can define a degree 4 invariant $y_4$ by taking $y_4(A,\sigma) = e_4(q)$ where $q$ is this 4-Pfister form.

\begin{proposition}
	If $(A,\sigma)$ is not a division algebra, then $y_4(A,\sigma) \in (-1){\cdot}H^3(k,\ZZ/2\ZZ)$.
 	\end{proposition}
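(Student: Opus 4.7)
The proof should closely mirror that of Theorem~\ref{thm:symbols}(iii). Write $(A,\sigma) = \cor_{E/k}(Q)$, where $E/k$ is the quadratic \'etale extension classified by $y_1(A,\sigma)$ and $Q$ is a quaternion algebra over $E$ with norm $n$. By definition, $y_4(A,\sigma) = e_4(q)$ where $q$ is the $4$-Pfister form obtained from $N_{E/k}(n) - 2n_E$ via Theorem~\ref{thm:norm}.

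The first step is to establish the quaternion analog of Corollary~\ref{cor:cor}: if $\cor_{E/k}(Q)$ is not a division algebra, then the pure norm $n'$ represents an element $c \in k \subset E$. The proof of Corollary~\ref{cor:cor} carries over without change, since it only uses that the Albert form is the additive transfer $T_{E/k}(\langle \delta \rangle n')$ and the division criterion of Theorem~\ref{thm:albert-form-division-algebra}, both of which apply equally well in the biquaternion setting. If $c = 0$ then $n$ is hyperbolic, hence $N_{E/k}(n)$ is hyperbolic by multiplicativity of the transfer, and $y_4(A,\sigma) = 0$ is trivially in $(-1)\cdot H^3(k,\ZZ/2\ZZ)$. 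Otherwise $c \in k^\times$, so we may write $n = \llangle c, c_1\rrangle$ for some $c_1 \in E^\times$.

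Assume $E = k(\sqrt{d})$ is a field; since $\tr_{E/k}(c) = 2c$ and $N_{E/k}(c) = c^2$, Theorem~\ref{thm:norm} gives, modulo hyperbolic summands,
\[
N_{E/k}(n) - 2\llangle d \rrangle \;\simeq\; \llangle 2c,\, -dc^2 \rrangle \otimes \llangle \tr_{E/k}(c_1),\, -d N_{E/k}(c_1)\rrangle,
\]
so $y_4(A,\sigma) = (2c)\cdot(-dc^2)\cdot(\tr_{E/k}(c_1))\cdot(-d N_{E/k}(c_1))$. Two simplifications finish the computation: $(-dc^2) = (-1) + (d)$ in $H^1(k,\ZZ/2\ZZ)$ (because $(c^2) = 0$), and $(d)\cdot(-d N_{E/k}(c_1)) = (d)\cdot(N_{E/k}(\sqrt{d}\, c_1)) = 0$ by \cite[Proposition~III.9.15~(2)]{berhuy}. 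The second term of the expansion vanishes, leaving $y_4(A,\sigma) = (-1)\cdot(2c)\cdot(\tr_{E/k}(c_1))\cdot(-dN_{E/k}(c_1)) \in (-1)\cdot H^3(k,\ZZ/2\ZZ)$.

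In the split case $E = k\times k$, the algebra decomposes as $(A,\sigma) = Q_1\otimes_k Q_2$ with quaternion norms $n_1, n_2$, and $y_4(A,\sigma) = e_4(n_1 \otimes n_2) = e_2(n_1)\cdot e_2(n_2)$. The classical theorem of Albert says $Q_1\otimes Q_2$ is not a division algebra exactly when $n_1$ and $n_2$ share a common slot: $n_1 = \llangle c, a\rrangle$ and $n_2 = \llangle c,b\rrangle$ for some $c,a,b\in k^\times$. Then $y_4 = (c)(a)(c)(b) = (c)^2(a)(b) = (-1)\cdot(c)(a)(b)$ via the standard identity $(c)^2 = (-1)\cdot(c)$, again yielding an element of $(-1)\cdot H^3(k,\ZZ/2\ZZ)$. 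The only non-routine step is the quaternion version of Corollary~\ref{cor:cor}; once that is recorded, the rest is just the cohomological arithmetic already performed in Theorem~\ref{thm:symbols}(iii).
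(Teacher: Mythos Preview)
Your proof is correct and follows essentially the same route as the paper's: both reduce to showing that non-division forces the norm of $Q$ to have a slot in $k^\times$, and then run the identical cohomological arithmetic from Theorem~\ref{thm:symbols}(iii). The only difference is cosmetic---the paper cites \cite[Corollary~16.28]{knus1998book} directly for the common-slot fact, whereas you re-derive it as the quaternion analogue of Corollary~\ref{cor:cor}; both arguments ultimately rest on Albert's theorem (Theorem~\ref{thm:albert-form-division-algebra} for $(4,4)$-product algebras).
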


\begin{proof}
The proof is entirely similar to \ref{thm:symbols}~(iii), using the fact that if $Q$ is a quaternion algebra over a quadratic \'etale extension $E/k$, and $\cor_{E/k}(Q)$ is not a division algebra, then the pure norm of $Q$ is writable as $\llangle a, b \rrangle$ for some $a \in k^\times$ \cite[Corollary~16.28]{knus1998book}. 
\end{proof}

Under the assumption that $\sqrt{-1} \in k$, Rost, Serre, and Tignol showed that any central simple associative algebra $A$ of degree 4 has the property that the quadratic form $x \mapsto \operatorname{Trd}(x^2)$ on $A$ is Witt equivalent to the sum $q_2 \perp q_4$ of a $2$-Pfister and a $4$-Pfister form \cite{rost2006forme}. These summands are unique, so they become cohomological invariants $H^1(*,\mathbf{PGL}_4) \to H^4(*,\ZZ/2\ZZ)$. The form $q_4$ is hyperbolic if and only if $A$ is cyclic, and $q_2$ is hyperbolic if and only if $[A]$ has order 2 in the Brauer group, which is the case if $A$ supports an orthogonal involution. Not surprisingly, the composition \[\begin{tikzcd} H^1(*, \mathbf{PGO}_4) \ar[r] &  H^1(*,\mathbf{PGL}_4) \ar[r,"q_4"] & H^4(*, \ZZ/2\ZZ)\end{tikzcd}\] is the same as the invariant $y_4$ \cite[Exemple]{rost2006forme}.

\part{Classifying cohomological invariants} \label{pt:II}

 Our approach to classifying the invariants of $\mathbf{Spin}_{14}$ goes as follows. We approach $\mathbf{Spin}_{14}$ from the following system of subgroups of the extended Clifford group $\mathbf{\Omega}_{14}$ of the hyperbolic $14$-dimensional quadratic form:
\begin{equation*}
	\begin{tikzcd}
	G_2\times G_2 \ar[r,hook] & (G_2\times G_2)\rtimes S_2 \ar[r,hook] & \mathbf{\Omega}_{14} \ar[r,hookleftarrow] & \mathbf{Spin}_{14}	\end{tikzcd}
\end{equation*}
Since $H^1(k,\mathbf{\Omega}_n) \simeq PI_{n}^3(k)$, this leads to the following diagram of rings: 
\begin{equation*}
\begin{tikzcd}
\Inv(G_2\times G_2)^{S_2}  & \ar[l] \Inv((G_2\times G_2)\rtimes S_2) \ar[r,hookleftarrow]  &  \Inv(PI_{14}^3) \ar[r,hookrightarrow]   & \Inv(\mathbf{Spin}_{14})
\end{tikzcd}
\end{equation*}
The two monomorphisms in this diagram are important: they are explained by the maps  $H^1(*,(G_2\times G_2)\rtimes S_2))\to PI_{14}^3(*)$ and $H^1(*,\mathbf{Spin}_{14}) \to PI_{14}^3(*)$ being surjections. Of all these sets of invariants, $\Inv(G_2 \times G_2)^{S_2}$ is the easiest to determine and it becomes the cornerstone of our analysis. All the $S_2$-invariant invariants of $G_2 \times G_2$ extend to $(G_2 \times G_2)\rtimes S_2$, so the arrow on the left is split surjective. The unique degree 1 invariant of $(G_2 \times G_2) \rtimes S_2$ is clearly in the kernel of this arrow, and we show that it does in fact generate the kernel. This step makes use of a lemma (\ref{lem:gille}) by Philippe Gille on unramified elements in cycle modules. In this way we classify  the contents of  $\Inv((G_2\times G_2)\rtimes S_2)$, and from there it is straightforward to determine which invariants of $(G_2\times G_2)\rtimes S_2$ lie in the image of $\Inv(PI_{14}^3)$. Thereby we have determined $\Inv(PI_{14}^3)$, a crucial subset of $\Inv(\mathbf{Spin}_{14})$. Interestingly, $\Inv(PI_{14}^3)$ is not generally a free module if $-1$ is not a square in $k$.  For the next step, we use the system of subgroups
\begin{equation*}
		\begin{tikzcd} 
		 \mathbf{Spin}_{14}\ar[rr,hook]  && \mathbf{\Omega}_{14} \\
		\mathbf{Spin}_{12} \ar[r,hook] \ar[u,hook] & \mathbf{\Gamma}^+_{12}   \ar[r,hook] & \mathbf{\Omega}_{12}\ar[u,hook]
		\end{tikzcd}
\end{equation*}
from which the solid arrows in the following diagram are obtained:
\begin{equation*}
		\begin{tikzcd} 
		\Inv(PI_{14}^3) \ar[d] \ar[rr,hook]&& \Inv(\mathbf{Spin}_{14}) \ar[d] \ar[r,dashed] & \Inv(PI_{14}^3)  \ar[d] \\
		\Inv(PI_{12}^3) \ar[r,hook] &  \Inv(I_{12}^3) \ar[r,hook] \ar[rr, dashed, bend right] & \Inv(\mathbf{Spin}_{12})  \ar[r, dotted] &    \Inv(PI_{12}^3)	\end{tikzcd}
\end{equation*}
The dashed arrows are ``residue maps" obtained using the concept of a fibration of functors (Definition \ref{sec:fibrations}). It was previously known that $\Inv(I_{12}^3) \simeq \Inv(\mathbf{Spin}_{12})$ if  $\sqrt{-1} \in k$, and actually this is true even without that assumption -- we prove it in \ref{thm:I12-Spin12}. This allows us to complete the rightmost square with the dotted arrow, and it turns out that this square is commutative. A special feature of our fibrations is that the sequences \[\begin{tikzcd} \Inv(PI_{n}^3)\ar[r] & \Inv(\mathbf{Spin}_n) \ar[r] & \Inv(PI_{n}^3) \end{tikzcd}\] derived from them are exact in the middle for $n = 12, 14$. The contents of $\Inv(PI_{14}^{3})$, $\Inv(PI_{12}^3)$, and $\Inv(\mathbf{Spin}_{12})$ are known at this late stage of the process, and it turns out that all of this information is sufficient to determine $\Inv(\mathbf{Spin}_{14})$ using an elementary argument, but only if $-1$ is a square in $k$. Attempts to prove this without assuming $\sqrt{-1} \in k$ failed at the very last step, because in this configuration of residues and restriction maps, there appear many terms involving $(-1)$ and the presence of these terms gives us insufficient control over the right hand side of the diagram above.

\section{Invariants of $\mathbf{PGO}_4$ and  $(G_2 \times G_2) \rtimes S_2$} \label{sec:classification1}

This section aims at classifying the cohomological invariants of bi-octonion algebras. The same method works to classify the invariants of $\mathbf{PGO}_4$, something which does not seem to have been done in the literature, so we state and prove both results in Theorem \ref{thm:inv-bioctonions}.
%Recall the notation $G = G_2$ for the automorphism group of the split octonion algebra.

\subsection{Cohomological invariants of $G_2 \times G_2$} \label{sec:invariantsG2}
	Let $e: H^1(*,G_2) \to H^3(*,\ZZ/2\ZZ)$ be the unique nontrivial normalised cohomological invariant of $G_2$ \cite[Pt.\ 1 \S18.4]{garibaldi2003cohomological}. Define  invariants 
	$	e', e'', r_3, r_6 \in \Inv(G_2 \times G_2)$
	\begin{align*}
	e'(\alpha, \beta) = e(\alpha), &&
	 e''(\alpha, \beta) = e(\beta), 
	&& r_3(\alpha, \beta) = e(\alpha)+e(\beta), &&
	r_6(\alpha,\beta) = e(\alpha){\cdot} e(\beta).
	\end{align*}

Recall from Example~\ref{example:split-case} how $S_2$ acts on $H^1(*,G_2 \times G_2)$. In the obvious way, $S_2$ acts on $\Inv(G_2 \times G_2)$ too.

\begin{lemma} \label{lem:InvG2}
	$\Inv(G_2 \times G_2)$ is the free $H(k)$-module with basis $\{1, e', e'', r_6 \}$, and  $\Inv(G_2\times G_2)^{S_2}$ is the free $H(k)$-module with basis $\{1, r_3, r_6\}$.
\end{lemma}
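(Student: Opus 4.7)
The plan is to reduce the computation of $\Inv(G_2 \times G_2)$ to the known structure of $\Inv(G_2)$, which by Serre's theorem is a free $H(k)$-module with basis $\{1, e\}$. The main step is to show that the natural multiplication map
\[\Phi: \Inv(G_2) \otimes_{H(k)} \Inv(G_2) \longrightarrow \Inv(G_2 \times G_2), \quad \Phi(x \otimes y)(\alpha,\beta) = x(\alpha) {\cdot} y(\beta),\]
is an isomorphism of $H(k)$-modules. Under $\Phi$, the tensor-product basis $1\otimes 1,\ e\otimes 1,\ 1\otimes e,\ e\otimes e$ maps respectively to $1, e', e'', r_6$, so bijectivity of $\Phi$ directly yields the stated free basis for the first part of the lemma.

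For surjectivity, I would take $x \in \Inv(G_2 \times G_2)$ and view it as a family of invariants of the second factor: for every extension $L/k$ and octonion $L$-algebra $C$, the partial evaluation $\beta \mapsto x_L(C,\beta)$ is an element of $\Inv(G_{2/L})$ and hence decomposes uniquely as $a_x(C) + b_x(C){\cdot} e$ with $a_x(C), b_x(C) \in H(L)$. The critical sub-step is to verify that $C \mapsto a_x(C)$ and $C \mapsto b_x(C)$ are themselves cohomological invariants of $G_2$, i.e.\ natural under field extensions; this follows from naturality of $x$ combined with the uniqueness of the decomposition in the free module $\Inv(G_{2/L})$. Expanding $a_x$ and $b_x$ in the basis $\{1, e\}$ of $\Inv(G_2)$ then exhibits $x$ as an $H(k)$-linear combination of $1, e', e'', r_6$, proving surjectivity.

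For injectivity, I would evaluate a hypothetical relation $c_0 + c_1 e' + c_2 e'' + c_3 r_6 = 0$ with $c_i \in H(k)$ on a pair of versal octonion algebras. Let $C_1^{\rm gen}$ and $C_2^{\rm gen}$ be the generic octonion algebras with norms $\llangle t_1,t_2,t_3 \rrangle$ and $\llangle s_1,s_2,s_3 \rrangle$ over $K = k(t_1,t_2,t_3,s_1,s_2,s_3)$. The relation specialises to
\[c_0 + c_1 {\cdot}(t_1)(t_2)(t_3) + c_2 {\cdot}(s_1)(s_2)(s_3) + c_3 {\cdot}(t_1)(t_2)(t_3)(s_1)(s_2)(s_3) = 0 \text{ in } H(K),\]
and a standard iterated-residue argument based on the short exact sequence for the mod $2$ cohomology of a rational function field shows that these four monomial expressions are $H(k)$-linearly independent in $H(K)$, forcing $c_0 = c_1 = c_2 = c_3 = 0$.

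The second assertion is immediate from the first. The $S_2$-action on $\Inv(G_2 \times G_2)$ induced by swapping the two factors fixes $1$ and $r_6 = e' {\cdot} e''$ and interchanges $e'$ and $e''$; hence the $S_2$-fixed submodule of this free rank-$4$ module is itself free, with basis $\{1, e'+e'', r_6\} = \{1, r_3, r_6\}$. The main technical obstacle in the whole argument is verifying the functoriality of the specialisation coefficients $a_x, b_x$ in the surjectivity step, but this is a routine consequence of uniqueness of the decomposition in the free $H(L)$-module $\Inv(G_{2/L})$.
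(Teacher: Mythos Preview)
Your proof is correct and follows essentially the same approach as the paper. The paper simply cites \cite[Pt.\ 1 Exercise~16.5]{garibaldi2003cohomological} for the tensor-product decomposition $\Inv(G_2 \times G_2) \simeq \Inv(G_2) \otimes_{H(k)} \Inv(G_2)$, whereas you spell out that exercise via the partial-evaluation and versal-torsor arguments; the treatment of the $S_2$-fixed part is identical.
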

\begin{proof}
	We use the fact that $\Inv(G_2)$ is a free $H(k)$-module with basis $\{1,e\}$ \cite[Pt.\ 1 Theorem 18.1]{garibaldi2003cohomological}. Applying \cite[Pt.\ 1 Exercise 16.5]{garibaldi2003cohomological},  $\Inv(G_2 \times G_2)$ is a free $H(k)$-module with basis $\{1,e',e'',r_6\}$. Since $S_2$ fixes $1$ and $r_6$ and it swaps $e'$ with $e''$, it follows that $\{1,r_3,r_6\}$ is an $H(k)$-basis for $\Inv(G_2 \times G_2)^{S_2}$.
\end{proof}

The fibres of the map $H^1(*,G_2 \times G_2) \to H^1(*, (G_2 \times G_2) \rtimes S_2)$ are orbits of $H^1(k,G_2 \times G_2)$ by $S_2$; see \ref{lem.fibres}--\ref{example:split-case}. This means there is an $H(k)$-linear map \begin{equation} \label{eq:G2G2sequence}\Inv((G_2 \times G_2)\rtimes S_2) \to \Inv(G_2 \times G_2)^{S_2}\end{equation} defined by restricting invariants to the image of $H^1(*,G_2 \times G_2)$ in $H^1(*,(G_2 \times G_2)\rtimes S_2)$.

\subsection{Cohomological invariants of $\mathbf{PGL}_2 \times \mathbf{PGL}_2$}
	Let $f: H^1(*, \mathbf{PGL}_2) \to H^2(*, \ZZ/2\ZZ)$ be the unique nontrivial normalised cohomological invariant of $\mathbf{PGL}_2$, sending a quaternion algebra to its Brauer class. We can define invariants $r_2, r_4: H^1(*,\mathbf{PGL}_2 \times \mathbf{PGL}_2)$,
	\begin{align*}
	r_2(\alpha, \beta) = f(\alpha)+f(\beta), &&
	r_4(\alpha,\beta) = f(\alpha){\cdot} f(\beta).
	\end{align*}
	As in the case with $G_2$, it is clear that $\Inv(\mathbf{PGL}_2 \times \mathbf{PGL}_2)$ is the free $H(k)$-module with basis $\{1, r_2, r_4 \}$. There is an exceptional isomorphism $\mathbf{PGO}_4 \simeq (\mathbf{PGL}_2 \times \mathbf{PGL}_2) \rtimes S_2$, and a map
	\begin{equation} \label{eq:PGL2PGL2}
	\Inv(\mathbf{PGO}_4) \to \Inv(\mathbf{PGL}_2 \times \mathbf{PGL}_2)^{S_2}
	\end{equation}
	which is entirely analogous to \eqref{eq:G2G2sequence}.

\begin{lemma} \label{lem:surjective}
	The maps \eqref{eq:G2G2sequence} and \eqref{eq:PGL2PGL2} are split surjective.	
\end{lemma}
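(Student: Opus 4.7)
The plan is to exhibit explicit $H(k)$-linear sections of both maps, using cohomological invariants of bi-octonion algebras and of degree-$4$ algebras with orthogonal involution that have already been constructed in the preceding sections.

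For \eqref{eq:G2G2sequence}, I would define the section $s: \Inv(G_2 \times G_2)^{S_2} \to \Inv((G_2 \times G_2)\rtimes S_2)$ on the $H(k)$-basis $\{1, r_3, r_6\}$ supplied by Lemma~\ref{lem:InvG2} via $s(1) = 1$, $s(r_3) = b_3$, and $s(r_6) = b_6$, then extend $H(k)$-linearly. To confirm that this is a section, it suffices to compute the restrictions of $b_3$ and $b_6$ along the natural map $H^1(*,G_2 \times G_2) \to H^1(*,(G_2 \times G_2)\rtimes S_2)$ which sends a pair of octonion algebras $(C_1, C_2)$ to the decomposable bi-octonion algebra $(A,-) = C_1 \otimes C_2$. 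Its Albert form is $Q = n_1' \perp \langle -1 \rangle n_2' = n_1 - n_2$ in $W(k)$, so
\[
b_3(A,-) = e_3(Q) = e_3(n_1) + e_3(n_2) = e(\alpha) + e(\beta) = r_3(\alpha, \beta).
\]
For $b_6$, the centroid of $\Skew(A,-)^-$ is $E = k \times k$ (Example~\ref{example:split-case}), so $n_E = \mathbb{H}$, and by the split-case formula in \ref{sec:multiplicative-transfer} we have $N_{E/k}(n_C) = n_1 \otimes n_2$. Therefore
\[
b_6(A,-) = e_6(n_1 \otimes n_2 - 4\mathbb{H}) = e_6(n_1 \otimes n_2) = e_3(n_1) \cdot e_3(n_2) = r_6(\alpha, \beta),
\]
which is exactly $r_6$.

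For \eqref{eq:PGL2PGL2}, the argument is entirely analogous using the invariants $y_2$ and $y_4$ from \ref{sec:csa4}. I would define a section by $1 \mapsto 1$, $r_2 \mapsto y_2$, $r_4 \mapsto y_4$, and check that on a decomposable $(4,4)$-product algebra $(A, \sigma) = Q_1 \otimes Q_2$ corresponding to $(\alpha, \beta) \in H^1(k, \mathbf{PGL}_2 \times \mathbf{PGL}_2)$, the restriction $y_2(A,\sigma) = [A] = [Q_1] + [Q_2] = f(\alpha) + f(\beta) = r_2(\alpha, \beta)$. For $y_4$, the split-case transfer formula and the definition in \ref{sec:csa4} give $y_4(A,\sigma) = e_4(n_1 \otimes n_2) = e_2(n_1) \cdot e_2(n_2) = f(\alpha){\cdot}f(\beta) = r_4(\alpha, \beta)$.

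Since the sections are visibly $H(k)$-linear by construction and send generators to elements whose images under \eqref{eq:G2G2sequence} and \eqref{eq:PGL2PGL2} are the prescribed generators, both maps are split surjective. The only work is the two verifications above, which are straightforward calculations with the definitions of $b_3$, $b_6$, $y_2$, and $y_4$, combined with the formula \eqref{eq:N-one-d} for the multiplicative transfer in the split case; there is no real obstacle.
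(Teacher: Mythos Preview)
Your proof is correct and follows essentially the same approach as the paper: exhibit the explicit section sending $1, r_3, r_6$ to $1, b_3, b_6$ (respectively $1, r_2, r_4$ to $1, y_2, y_4$) and check that these invariants restrict correctly on decomposable algebras. The paper's proof simply asserts the restriction values $b_3 \mapsto r_3$, $b_6 \mapsto r_6$ (and similarly in the $\mathbf{PGL}_2$ case) without spelling out the computation, whereas you carry out the verifications explicitly; one minor point is that your closing reference to \eqref{eq:N-one-d} is not quite the right label (that equation concerns one-dimensional forms), but you already cited the correct split-case formula from \S\ref{sec:multiplicative-transfer} earlier in the argument.
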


\begin{proof}
	Recall the definitions of $b_1, b_3, b_6$ from \ref{sec:b1b3} and \ref{sec:b6}, and $s_1, s_2, s_4$ from \ref{sec:csa4}. The first map \eqref{eq:G2G2sequence} sends: \begin{align*} 1 \mapsto 1, &&  b_1 \mapsto 0, && b_3 \mapsto r_3, && b_6 \mapsto r_6.\end{align*}
It is surjective by Proposition \ref{lem:InvG2}, and clearly splits. The second map \eqref{eq:PGL2PGL2} sends
\begin{align*} 1 \mapsto 1, &&  s_1 \mapsto 0, && s_2 \mapsto r_4, && s_4 \mapsto r_4\end{align*}
and is just as obviously split surjective.
\end{proof}

 The next step is to show that in fact the kernels of \eqref{eq:G2G2sequence} and \eqref{eq:PGL2PGL2} are the ideals $H(k){\cdot} b_1$ and $H(k){\cdot} s_1$ respectively.

\subsection{Cycle modules} Cycle modules were introduced by Rost in \cite{rost1996chow}, as a generalisation of, say, the Galois cohomology functor $H(*) = \bigoplus_{i \ge 0}H^i(*,\ZZ/2\ZZ)$ if $\Char(F) \ne 2$, as well as the Milnor $K$-theory $K_*(*)$. A cycle module over a field $F$ is a functor $M = \bigoplus_{i \in \mathbb{Z}} M_i$ from the category of finitely generated field extensions of $F$ to the category of $\mathbb{Z}$-graded abelian groups, equipped with: \begin{enumerate}[(i)] \item  norm homomorphisms $N_{L_1/L_2}: M(L_2) \to M(L_1)$, graded of degree 0,  for every finite extension $L_2/L_1$ of finitely generated fields over $F$,
 	\item residue homomorphisms $\partial_\upsilon: M(L) \to M(E)$, graded of degree $-1$, for every extension $L/F$ having a discrete valuation $\upsilon$ with residue field $E$, such that $\upsilon$ corresponds to the valuation at a codimension 1 point in a normal proper $F$-variety $X$ with $L/F \simeq F(X)/F$.
 	\item a graded left $K_*(L)$-module structure on $M(L)$, for every finitely generated $L/F$.
  	\end{enumerate}
 	In addition, there are a number of axioms which should be satisfied; see \cite[\S1--2]{rost1996chow} or \cite[\S2]{merkurjev2008unramified}.
 	
 	For a cycle module $M$ over $F$, an equidimensional $F$-variety $X$, and an integer $d \ge 0$, we define the group
 	\[
 	A^0(X,M_d) = \ker\Big( \bigoplus_{x \in X^{(0)}}M_d(F(x)) \overset{\partial}{\longrightarrow} \bigoplus_{y \in X^{(1)}} M_{d-1}(F(y)) \Big).
 	\]
 	If $X$ is of dimension $d_X$, this group is $A_{d_X}(X,M,d-d_X)$ with the homological notation of \cite[p.\ 356]{rost1996chow}. Here, $X^{(i)}$ stands for the set of codimension $i$ points of $X$, and $F(x)$ is the residue field associated to $x$. The map $\partial$ is defined as in \cite[p.\ 337]{rost1996chow} or \cite[pp.\ 54--55]{merkurjev2008unramified}: for $x \in X^{(0)}$, the $x,y$-component $\partial_x^y$ is trivial if $y\notin \overline{\{x \}}$ and otherwise it equals $\sum N_{F(\upsilon)/F(x)} \circ \partial_\upsilon$ where $\upsilon$ ranges over the discrete valuations on $F(x)$ corresponding to points lying over $y$ in the normal closure of $\overline{\{x \}}$. If $X$ is normal and irreducible then $X^{(0)}$ has only the generic point $\xi$ and $A^0(X,M_d) = \bigcap_{x \in X^{(1)}} \ker \partial^{\xi}_{x}$	is the subset of $M(F(X))$ which is ``unramified at all irreducible divisors of $X$".

\begin{lemma}[Gille] \label{lem:gille}
		Let $E/F$ be a Galois field extension and consider the torus $T=R_{E/F}( \mathbf{G}_m)^r$ for $r \ge 1$. For each $d \ge 0$, we have
an isomorphism $$\ker( M_d(F) \to M_d(E) ) \overset{\sim}{\longrightarrow} \ker\big( A^0(T,M_d) \to A^0(T_E,M_d) \big).$$   
\end{lemma}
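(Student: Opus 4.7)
The plan is to reduce the isomorphism to a simpler injectivity statement using the identity section of $T$, and then settle that statement with Rost's computation of cycle cohomology for split tori. First, the $F$-rational identity point $e \in T(F)$ provides a retraction of the structure morphism, so the specialization map $\mathrm{sp}_e \colon A^0(T,M_d) \to M_d(F)$ splits the pullback $\pi_T^* \colon M_d(F) \to A^0(T,M_d)$ (since $\mathrm{sp}_e \circ \pi_T^* = \mathrm{id}_{M_d(F)}$, a standard fact for cycle modules on smooth varieties). Writing $A^0(T,M_d) = M_d(F) \oplus N$ with $N = \ker(\mathrm{sp}_e)$, and analogously $A^0(T_E,M_d) = M_d(E) \oplus N'$, the base-change map respects both direct summands because it commutes with both pullback along $\pi_T$ and with specialization at $e$. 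Therefore
\[
\ker\bigl(A^0(T,M_d) \to A^0(T_E,M_d)\bigr) \;=\; \ker\bigl(M_d(F) \to M_d(E)\bigr) \;\oplus\; \ker(N \to N'),
\]
and the lemma is equivalent to showing $\ker(N \to N') = 0$.

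I would then exploit that $T_E \simeq \mathbf{G}_{m,E}^{rn}$ is a split torus, where $n = [E:F]$. Rost's computation for split tori gives a canonical decomposition
\[
A^0(T_E,M_d) \;=\; \bigoplus_{I \subseteq \{1,\ldots,rn\}} \{x_I\}\cdot M_{d-|I|}(E),
\]
where the $x_i$ are the natural coordinate functions and $\{x_I\}$ is the corresponding Milnor symbol; the summand with $I = \emptyset$ is $M_d(E)$, and $N'$ is the sum over $I \neq \emptyset$. On the other hand, a choice of $F$-basis of $E$ realizes $R_{E/F}(\mathbf{G}_m)$ as the open complement of the norm hypersurface $\{N_{E/F} = 0\}$ in $\mathbf{A}^n_F$, and after base change to $E$ the norm form factors as a product of $n$ coordinate linear forms (one for each embedding $E \hookrightarrow \bar F$). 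Iterating $r$ times, $T$ sits as the open complement of a divisor $\mathcal{D} \subset \mathbf{A}^{rn}_F$ whose base change $\mathcal{D}_E$ is a strict normal crossings divisor cut out by the coordinate hyperplanes.

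The core of the argument is then a residue analysis. Homotopy invariance of cycle modules gives $A^0(\mathbf{A}^{rn}_F, M_d) = M_d(F)$, so an element $\alpha \in N$ is fully controlled by its iterated residues along the components and higher-codimension intersections of $\mathcal{D}$; under base change these residues correspond to the residues of $\alpha|_{T_E}$ along the coordinate hyperplanes of $\mathbf{A}^{rn}_E$, which match up via Rost's formula with the components of $\alpha|_{T_E}$ in the summands $\{x_I\}\cdot M_{d-|I|}(E)$ for $I \neq \emptyset$. Hence $\alpha|_{T_E} = 0$ forces every iterated residue of $\alpha$ to vanish, so $\alpha$ extends to an element of $A^0(\mathbf{A}^{rn}_F,M_d) = M_d(F)$, and combined with $\mathrm{sp}_e(\alpha) = 0$ this yields $\alpha = 0$. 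The hardest part of the proof is the bookkeeping needed to match residues on the $F$-model of $T$ to components of Rost's decomposition on $T_E$: this is governed by the permutation action of $\mathrm{Gal}(E/F)$ on the coordinates of $T_E$, and care is required to track how the non-trivial factorizations of the norm form contribute to the summands indexed by subsets $I$ of size greater than one. Once this compatibility is verified, the injectivity of $N \to N'$ is immediate, finishing the proof.
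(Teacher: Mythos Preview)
Your reduction via the identity section to the injectivity of $N \to N'$ is correct and clean. The gap is in the step where you conclude that the residues of $\alpha$ along the irreducible components of $\mathcal{D}$ over $F$ vanish, given only that $\alpha|_{T_E} = 0$. Compatibility of residues with base change (Rost's axiom R3a) tells you that $\partial_{w}(\alpha|_{T_E}) = \mathrm{res}_{\kappa_w/\kappa_v}\bigl(\partial_v(\alpha)\bigr)$ for each extension $w$ of the valuation $v$ corresponding to an $F$-component of $\mathcal{D}$. So vanishing over $E$ gives $\mathrm{res}_{\kappa_w/\kappa_v}\bigl(\partial_v(\alpha)\bigr)=0$, not $\partial_v(\alpha)=0$; you still need $\mathrm{res}_{\kappa_w/\kappa_v}$ to be injective. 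This is in fact true---the norm hypersurface has exactly $n$ geometric components permuted simply transitively by $\mathrm{Gal}(E/F)$, which forces each $\kappa_w/\kappa_v$ to have degree $1$, i.e.\ $F(D)$ already contains $E$---but you do not establish it, and your language about ``iterated residues'' and summands with $|I|>1$ suggests you are looking in the wrong place: only the first residue along codimension~$1$ boundary points is needed, and the higher Rost summands play no role.

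The paper's proof is structurally the same (partial compactification inside affine space, localization sequence, diagram chase) but sidesteps the issue by choosing the compactification more carefully. It works with the split model $T_0=\mathbf{G}_m^{nr}$ inside an open $U_0 \subset \mathbf{A}^{nr}$ chosen so that the boundary $Z_0 = U_0\setminus T_0$ is smooth with $nr$ components, then twists the whole picture by the $\mathrm{Gal}(E/F)$-torsor $\mathrm{Isom}(F^n,E)$. The twist turns $Z_0$ into a disjoint union of $r$ varieties each carrying an $E$-structure by construction, so $Z_E \to Z$ visibly has a section and $A^0(Z,M_{d-1}) \to A^0(Z_E,M_{d-1})$ is split injective. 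The diagram chase then finishes immediately. Your compactification (full affine space, boundary the norm hypersurfaces) also has an $E$-boundary, but you have to prove it rather than read it off, and that proof is exactly the ``bookkeeping'' you defer.
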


 \begin{proof}
 		The idea is to construct  a partial compactification $T \hookrightarrow U$ where $U$
is open in some affine space $X$ such that $U^{(1)}=X^{(1)}$.
We put $n=[E:F]$ and consider the embedding of $T_0=(\mathbf{G}_m)^{nr}$ in 
$U_0=\mathbf{A}^{nr} \setminus \bigcup\limits_{i <j} \{ x_i=x_j\}$. 
We observe that the closed $F$-subvariety \[Z_0= \bigcup\limits_{i} \{x_i= 0\} \setminus \bigcup_{i <j} \{x_i=x_j\}\]
of $U_0$ 
is isomorphic to $\bigsqcup\limits_{i =1,\dots,  nr} \mathbf{G}_{m}$.
In particular, $Z_0$ is smooth and we have  $U_0 \setminus Z_0=T_0$.

This embedding is $S_{nr}$-equivariant
and $S_n$-equivariant for the diagonal embedding $S_n \to S_{nr}$.
Twisting by the $S_n$-torsor $P=\operatorname{Isom}(F^n,E)$ provides 
open embeddings $T \hookrightarrow U \hookrightarrow \mathbf{A}^{r}(E)$. 
Furthermore $T$ is the complement of $Z={^PZ}_0$. 
We have  $Z_0 = \bigsqcup\limits_{i =1,\dots,  nr} \mathbf{G}_{m}$.  so that 
$Z= \bigsqcup\limits_{l =1,\dots,  r} \mathbf{G}_{m,E}$. In particular, $Z$ is an $E$-variety.
We consider the commutative diagram of long exact sequences (defined in \cite[\S5]{rost1996chow}):
 \[
\xymatrix{
0 \ar[r] & A^0(U, M_d) \ar[r] \ar[d] & A^0(T, M_d)  \ar[r]\ar[d] & A^0(Z, M_{d-1}) \ar[d]\\
0 \ar[r] & A^0(U_E, M_d) \ar[r] & A^0(T_E, M_d)  \ar[r] & A^0(Z_E, M_{d-1}) .
}
\]
 where the vertical maps are pull-backs for $U_E \to U$, $T_E \to T$, $Z_E \to Z$ respectively.
 The point is that the map $Z_E \to Z$ admits a splitting so that 
 the right vertical map is injective. By diagram chase, we get an isomorphism
 $$
 \ker\Big(A^0(U, M_d) \to A^0(U_E, M_d) \Big) \overset{\sim}{\longrightarrow}  
 \ker\Big(A^0(T, M_d) \to A^0(T_E, M_d) \Big) .
 $$
 Since $U$ is an open subset of $\mathbf{A}^{r}(E)$ containing all its points of codimension $1$, we have 
 $A^0(\mathbf{A}^{r}(E), M_d) =   A^0(U, M_d)$ and $A^0(\mathbf{A}^{r}(E)_E, M_d) =   A^0(U_E, M_d)$.
 But $\mathbf{A}^{r}(E)$ is an affine space so that $M_d(F)= A^0(\mathbf{A}^{r}(F), M_d)$
 and $M_d(E)= A^0(\mathbf{A}^{r}(E)_E, M_d)$. By combining those identities we get the desired 
 isomorphism 
$\ker( M_d(F) \to M_d(E) ) \overset{\sim}{\longrightarrow} \ker\big( A^0(T,M_d) \to A^0(T_E,M_d) \big)$.   
 \end{proof}

\begin{lemma} \label{lem:kernels}
	The kernels of the maps  \eqref{eq:G2G2sequence} and \eqref{eq:PGL2PGL2} are the ideals $H(k){\cdot} b_1$ and $H(k){\cdot} s_1$ respectively.
\end{lemma}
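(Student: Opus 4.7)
The inclusion $H(k) \cdot b_1 \subseteq \ker$ of \eqref{eq:G2G2sequence} is immediate from Lemma \ref{lem.centroid}: every decomposable bi-octonion algebra has centroid $k \times k$ in its Malcev skew subspace (cf.\ \eqref{eq.S}), so $b_1$ vanishes on the image of $H^1(*, G_2 \times G_2)$ and consequently so does every element of $H(k) \cdot b_1$. The same argument, applied to the description of $s_1$ from \S\ref{sec:csa4}, gives $H(k) \cdot s_1 \subseteq \ker$ of \eqref{eq:PGL2PGL2}.

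For the reverse inclusion I focus on the bi-octonion case; the $\mathbf{PGO}_4$ case is entirely parallel (with generic quaternion algebras of norm $\llangle a_1, a_2 \rrangle$ and the torus $R_{E/F}(\mathbf{G}_m)^2$). Let $x$ be in the kernel of \eqref{eq:G2G2sequence}, of degree $n$. The plan is to evaluate $x$ on a versal torsor and invoke Gille's Lemma~\ref{lem:gille}. Take $F = k(d)$ with $d$ transcendental, set $E = F(\sqrt{d})$, and choose further algebraically independent transcendentals $a_1, a_2, a_3$ over $E$. Let $C$ be the octonion algebra over $E(a_1, a_2, a_3)$ with norm $\llangle a_1, a_2, a_3 \rrangle$, and set $V = \cor_{E(a_1, a_2, a_3)/F(a_1, a_2, a_3)}(C)$, a bi-octonion algebra over $K = F(a_1, a_2, a_3)$ with $b_1(V) = (d)$.

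By standard versal-torsor theory, $x$ is determined by the value $x(V) \in H^n(K, \ZZ/2\ZZ)$. Viewing $(a_1, a_2, a_3)$ as the generic point of the $F$-torus $T = R_{E/F}(\mathbf{G}_m)^3$, the class $x(V)$ extends to an unramified class in $A^0(T, H_n)$ that dies upon restriction to $T_E$: indeed, after base change to $E$, the corestriction splits (since $E \otimes_F E \simeq E \times E$) and $V \otimes_F E \simeq C \otimes_{E(a_1, a_2, a_3)} {^\iota C}$ is decomposable, hence $x(V \otimes_F E) = 0$ by hypothesis. Gille's Lemma~\ref{lem:gille}, with the cycle module $M = H$, then identifies $x(V)$ with a unique element of $\ker(H^n(F) \to H^n(E)) = (d) \cdot H^{n-1}(F)$. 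Thus $x(V) = (d) \cdot z$ for some $z \in H^{n-1}(F)$.

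The remaining step -- and what I expect to be the main obstacle -- is to show that $z$ in fact lies in $H^{n-1}(k)$ rather than merely in $H^{n-1}(k(d))$. The idea is a residue analysis on $\mathbf{A}^1_k$ with coordinate $d$: for any specialization $d \mapsto d_0 \in k^\times$, functoriality of $x$ and of $b_1$ forces $z$ to specialize consistently, which should force $z$ to have trivial residues at every closed point of $\mathbf{A}^1_k$, hence to be unramified and therefore in $H^{n-1}(k)$. The cleanest implementation may be via a second application of Gille's lemma to an enlarged torus that treats $d$ as an additional coordinate, thereby bundling the descent argument into the same framework. Once $z \in H^{n-1}(k)$ is in hand, the equality $x = z \cdot b_1$ holds on the versal torsor, hence identically by versality, completing the proof.
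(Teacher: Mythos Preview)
Your overall strategy (reduce to a torus and invoke Gille's Lemma~\ref{lem:gille}) is the right one and matches the paper's, but you have introduced an unnecessary complication that leaves a genuine gap, and there are some imprecisions in your versal construction.

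\textbf{The gap.} You make the discriminant $d$ generic by working over $F = k(d)$, and then Gille's lemma delivers $z \in H^{n-1}(F) = H^{n-1}(k(d))$. You acknowledge that you still need $z \in H^{n-1}(k)$ and propose a residue argument along $\mathbf{A}^1_k$, but this is not carried out. The paper avoids this descent problem entirely: it fixes an arbitrary quadratic field extension $E/k$ over the original base field, applies Gille's lemma with $F = k$ to the torus $T = R_{E/k}(\mathbf{G}_m)^r$, and concludes that $b(\xi)$ is a constant in $H^d(k)$. This shows that $b$ is constant on each fibre $H^1(L, R_{E/k}(G))/S_2$ of $b_1$. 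Since $E/k$ was arbitrary, $b$ factors through $b_1 : H^1(*, G^2 \rtimes S_2) \to H^1(*, S_2)$, and then the well-known fact that $\Inv(S_2)$ is free on $\{1, \mathrm{id}\}$ forces $b = h \cdot b_1$ for a unique $h \in H^{d-1}(k)$. The point is that varying $E$ is handled by the classification of invariants of $S_2$, not by a second descent.

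\textbf{Imprecisions.} Your $(a_1, a_2, a_3)$, described as ``algebraically independent transcendentals over $E$'', cannot be the generic point of the six-dimensional $F$-torus $R_{E/F}(\mathbf{G}_m)^3$: three transcendentals give a three-dimensional extension. For the Gille argument to apply, each $a_i$ must be a generic element of $E$ (so $a_i = x_i + y_i\sqrt{d}$ with the $x_i, y_i$ independent). The paper handles this by passing through the surjection $H^1(L, R_{E/k}(\bm{\mu}_2)^r) \to H^1(L, R_{E/k}(G))$ and using that $T = R_{E/k}(\mathbf{G}_m)^r$ is a classifying variety for $R_{E/k}(\bm{\mu}_2)^r$. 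Finally, your appeal to ``standard versal-torsor theory'' to say that $x$ is determined by $x(V)$ presumes that your $V$ is versal for $(G_2 \times G_2) \rtimes S_2$, which you have not justified; the paper instead uses the explicit injection $\Inv^d(R_{E/k}(G)) \hookrightarrow A^0(T, H^d)$ and works fibre by fibre, so no global versality statement is needed.
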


\begin{proof}
 	Let $(G,r) = (\mathbf{PGL}_2, 2)$ or $(G_2, 3)$. Fix a quadratic field extension $E/k$. For any field $L/k$ there is a surjection
 	\[
 	   H^1(L\otimes_k E, (\bm{\mu}_2)^r) \simeq H^1(L, R_{E/k}(\bm{\mu}_2)^r) \longrightarrow  H^1(L, R_{E/k}(G)) \simeq H^1(L \otimes_k E, G)
 	\]
 	taking parameters $(c_1,\dots,c_r)$ to the composition algebra over $L\otimes_k E$ with norm $\llangle c_1, \dots, c_r \rrangle$.
 	Precomposition with $H^1(L, R_{E/k}(\bm{\mu}_2)^r) \to  H^1(L, R_{E/k}(G))$ gives an injective map \[\Inv^d(R_{E/k}(G)) \hookrightarrow \Inv^d(R_{E/k}(\bm{\mu}_2)^r)\] for all $d \ge 0$. The torus $T = R_{E/k}(\mathbf{G}_m)^r$ is a classifying variety for $R_{E/k}(\bm{\mu}_2)^r$ in the sense of \cite[Pt.\ 2 \S3]{garibaldi2003cohomological}, so there is an injective map \[\Inv^d(R_{E/k}(\bm{\mu}_2)^r) \hookrightarrow H^d(k(T), \ZZ/2\ZZ),\] namely evaluation at the generic $R_{E/k}(\bm{\mu}_2)^3$-torsor $\xi$ over $k(T)$. Invariants of $R_{E/k}(\bm{\mu}_2)^r$ are unramified at every codimension 1 point of $T$ \cite[Pt.\ 1 Theorem~11.7]{garibaldi2003cohomological}, which means their values at the generic torsor are contained in $A^0(T, H^d(*,\ZZ/2\ZZ))$. 	In summary, we have an injective map
 	\begin{align*}
 	 \Inv^d(R_{E/k}(G)) &\hookrightarrow  A^0(T,H^d(*,\ZZ/2\ZZ)) \subset H^d(k(T), \ZZ/2\ZZ). \\
 	 b &\mapsto b(\xi)
 	\end{align*}
% 	 Now suppose $b \in \Inv^d((G_2 \times G_2) \rtimes S_2)$ is in the kernel of  the map \eqref{eq:G2G2sequence}, meaning that for all field extensions $L/k$ and all $x$ in the image of $H^1(L,G_2 \times G_2) \to H^1(L, (G_2 \times G_2) \rtimes S_2)$, we have $b(x) = 0$. Chasing $b$ around the diagram, 
	Note that $R_{E/k}(G)_E = G^2_E$ and consider the commutative diagram where the first two vertical arrows are just precomposition with the forgetful functor $\mathsf{Fields}_{/E} \to \mathsf{Fields}_{/k}$, and the horizontal arrows on the left come from restricting invariants to the subfunctor $H^1(*,R_{E/k}(G))/S_2$ of $H^1(*,G^2\rtimes S_2)$, as in \ref{sec:partitioning}:
 	 \[
 	 \begin{tikzcd}
 	 \Inv^d\big(G^2\rtimes S_2\big) \ar[r] \ar[d] & \Inv^d\big(R_{E/k}(G)\big)^{S_2} \ar[r, hook] \ar[d] & A^0(T, H^d(*,\ZZ/2\ZZ)) \ar[d] \\
 	 \Inv^d\big((G^2\rtimes S_2)_E\big) \ar[r] & \Inv^d\big(G^2_E\big)^{S_2} \ar[r, hook] & A^0(T_E, H^d(*,\ZZ/2\ZZ)).
 	 \end{tikzcd}
 	 \]
 	 If $b \in \ker(\Inv^d(G^2 \rtimes S_2) \to \Inv^d(G^2)^{S_2})$, this means that for all field extensions $L/k$ and all $x$ in the image of $H^1(L,G^2) \to H^1(L, G^2 \rtimes S_2)$, we have $b(x) = 0$. In particular, $b$ is mapped to 0 in $\Inv^d(G^2_E)^{S_2}$ and hence $b(\xi) \in A^0(T, H^d(*,\ZZ/2\ZZ))$ is in the kernel of $A^0(T, H^d(*,\ZZ/2\ZZ)) \to A^0(T_E, H^d(*,\ZZ/2\ZZ))$. By Lemma \ref{lem:gille}, $b(\xi)$ is a constant from $H^d(k, \ZZ/2\ZZ)$. This in turn implies that $b$ is constant on the subset $H^1(L, R_{E/k}(G))/{S_2} \subset  H^1(L, G^2 \rtimes S_2)$, for all fields $L/k$ (and it is constantly zero on $H^1(L, G^2)/S_2$ by assumption). Equivalently, $b$ factors through the unique nontrivial degree 1 invariant $H^1(*, G^2 \rtimes S_2) \to H^1(*, \ZZ/2\ZZ)$ and so there is a unique $h \in H^{d-1}(k,\ZZ/2\ZZ)$ such that $b = h{\cdot} b_1 $ if $G = G_2$, or $b  = h{\cdot} s_1 $ if $G = \mathbf{PGL}_2$.
\end{proof}

\begin{theorem} \label{thm:inv-bioctonions} \begin{enumerate}[\rm (i)] \item[]
\item $\Inv(\mathbf{PGO}_4)$ is a free $H(k)$-module with basis $\{1, s_1, s_2, s_4\}$.
\item
$\Inv((G_2 \times G_2) \rtimes S_2)$ is a free $H(k)$-module with basis $\{1, b_1, b_3, b_6\}$. 
\end{enumerate}
\end{theorem}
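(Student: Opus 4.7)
The plan is to assemble the theorem as a direct consequence of Lemmas \ref{lem:InvG2}, \ref{lem:surjective}, and \ref{lem:kernels}. For part (ii), I would work with the short exact sequence of $H(k)$-modules
\[
0 \longrightarrow H(k) \cdot b_1 \longrightarrow \Inv((G_2 \times G_2) \rtimes S_2) \xrightarrow{\ \varphi\ } \Inv(G_2 \times G_2)^{S_2} \longrightarrow 0,
\]
where $\varphi$ is the restriction map \eqref{eq:G2G2sequence}. Lemma \ref{lem:kernels} identifies the kernel as $H(k) \cdot b_1$, while Lemma \ref{lem:surjective} shows $\varphi$ is surjective and furnishes an explicit splitting sending $r_3 \mapsto b_3$ and $r_6 \mapsto b_6$. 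Since $\Inv(G_2 \times G_2)^{S_2}$ is free with basis $\{1, r_3, r_6\}$ by Lemma \ref{lem:InvG2}, the split exact sequence yields a direct sum decomposition
\[
\Inv((G_2 \times G_2) \rtimes S_2) \;=\; H(k) \cdot b_1 \;\oplus\; H(k) \cdot 1 \;\oplus\; H(k) \cdot b_3 \;\oplus\; H(k) \cdot b_6,
\]
and the claimed basis $\{1, b_1, b_3, b_6\}$ follows provided each of these summands is $H(k)$-free of rank one. For the three summands coming from the splitting this is immediate from Lemma \ref{lem:InvG2}, so the only remaining point is the freeness of $H(k) \cdot b_1$.

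For this last point, I would show the annihilator of $b_1$ in $H(k)$ is zero by specialisation. Given $h \in H^d(k, \ZZ/2\ZZ)$ with $h \cdot b_1 = 0$, set $L = k((t))$ and let $A_t = \cor_{L(\sqrt{t})/L}(C)$ with $C$ any (say split) octonion algebra over $L(\sqrt{t})$. By Lemma \ref{lem.centroid}, $b_1(A_t) = (t) \in H^1(L, \ZZ/2\ZZ)$, so the vanishing of $h \cdot b_1$ at $A_t$ reads $\res_{L/k}(h) \cdot (t) = 0$ in $H(L)$. The residue decomposition $H(k((t))) = H(k) \oplus (t){\cdot} H(k)$ at the $t$-adic valuation shows that multiplication by $(t)$ is injective on the unramified part $H(k)$, hence $h = 0$. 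Part (i) follows by the same pattern applied to the sequence built from \eqref{eq:PGL2PGL2}: the $S_2$-action on $\{1, r_2, r_4\}$ is trivial so $\Inv(\mathbf{PGL}_2 \times \mathbf{PGL}_2)^{S_2}$ is free with this basis, the splitting from Lemma \ref{lem:surjective} lifts $r_2$ and $r_4$ to $s_2$ and $s_4$, the kernel is $H(k) \cdot s_1$ by Lemma \ref{lem:kernels}, and the same residue argument (using a biquaternion algebra with discriminant $(t)$) confirms freeness of $H(k) \cdot s_1$.

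In summary, the heavy lifting has already been done in the preceding lemmas. The deep ingredient was Lemma \ref{lem:kernels}, which hinged on Gille's unramified-elements lemma \ref{lem:gille} applied to the Weil-restricted torus $R_{E/k}(\mathbf{G}_m)^r$; at this stage the theorem reduces to a module-theoretic assembly together with the mild freeness check above. The only place where I anticipate any subtlety is in this last step, and it is dispatched by a single specialisation to a complete discretely valued field.
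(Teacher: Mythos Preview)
Your proposal is correct and follows essentially the same route as the paper: assemble the theorem from the split short exact sequence coming from Lemmas \ref{lem:surjective} and \ref{lem:kernels}, together with the freeness of $\Inv(G_2\times G_2)^{S_2}$ from Lemma \ref{lem:InvG2}. The only difference is in how the freeness of the kernel $H(k)\cdot b_1$ is justified: the paper identifies it with (the normalised part of) $\Inv(S_2)$, whose freeness is a standard fact, whereas you verify it directly by specialising to $k((t))$ and taking a residue. Both arguments are valid; yours is slightly more self-contained, the paper's is slightly slicker.
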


\begin{proof}
	We have shown in Lemmas \ref{lem:surjective} and  \ref{lem:kernels} that the sequence
	\[
\begin{tikzcd} 0 \ar[r] &  \Inv(S_2) \ar[r] &  \Inv((G_2 \times G_2) \rtimes S_2) \ar[r] &  \Inv(G_2 \times G_2)^{S_2} \ar[r] & 0  \end{tikzcd}
	\]
	is split exact, so $\Inv((G_2 \times G_2)\rtimes S_2) \simeq \Inv(S_2) \oplus \Inv(G_2 \times G_2)^{S_2}$, and keeping track of the isomorphism shows that $\{1, b_1, b_3, b_6\}$ is the basis for $\Inv((G_2 \times G_2) \rtimes S_2)$ that we get from this splitting. The proof of (i) is identical with $G_2$ replaced by $\mathbf{PGL}_2$, and it yields the basis $\{1, s_1, s_2, s_4\}$.
\end{proof}

\section{Invariants of $I_{12}^3$ and $\mathbf{Spin}_{12}$} \label{sec:Spin12}

Garibaldi in \cite[\S20]{garibaldi2009cohomological}  showed that $\Inv(I_{12}^3) \simeq \mathbf{Spin}_{12}$ is a free $H(k)$-module with generators in degrees 3, 5, 6, under the rather strong assumption that $\sqrt{-1} \in k$. We shall revisit the classification to remove that assumption, in the process setting up a number of techniques and concepts that prove to be very useful in the next section.

\subsection{The functors $I_{n}^2$, $I_{n}^{3}$, and $PI_{n}^3$ as Galois cohomology sets.} \label{sec:cohomological-In}
Let $(V,q)$ be a quadratic space.
The Galois cohomology of the exact sequence \begin{equation} \label{diag:spin} \begin{tikzcd} 1 \ar[r] & \bm{\mu}_2 \ar[r] & \mathbf{Spin}(V,q)  \ar[r] &  \mathbf{O}^+(V,q) \ar[r] & 1 \end{tikzcd}\end{equation} is well-understood: see \cite[IV.11.2]{berhuy}, \cite[p.~437]{knus1998book}, or \cite[16.2]{garibaldi2009cohomological}. The set $H^1(k, \mathbf{O}^+(V,q))$ injects into $H^1(k,\mathbf{O}(V,q))$ and so it corresponds bijectively with the set of isometry classes of quadratic forms with the same dimension and discriminant as $(V,q)$.
The image of $H^1(k, \mathbf{Spin}(V,q)) \to H^1(k, \mathbf{O}^+(V,q))$ is the set of isometry classes of quadratic forms with the same dimension, discriminant, and Clifford algebra as $(V,q)$. By appending \eqref{diag:spin} above the top row of \eqref{diag:omega-exact}, one demonstrates as in \cite[pp.~461--462]{chernousov2014essential} that there is  a one-to-one correspondence:

 \begin{center}
\fbox{\begin{minipage}[c][1cm]{3.5cm} \centering
$H^1\big(k,\mathbf{\Gamma}^+(V,q)\big)$
\end{minipage}}\quad  $\longleftrightarrow$\quad\fbox{\begin{minipage}[c][1cm]{10cm} \centering
Isometry classes of quadratic forms with the same dimension, discriminant, and Clifford algebra as $(V,q)$.
\end{minipage}}
\end{center}

Assume now that $V$ is even-dimensional. The extended Clifford group $\mathbf{\Omega}(V,q)$ is connected and reductive. Let $Z = Z(C^+(V,q))$. We can show by chasing the diagram \eqref{diag:omega-exact}, or referring to \cite[Lemma 13.20]{knus1998book} and using the smoothness of $\mathbf{\Omega}^+(V,q)$, that $\mathbf{\Omega}^+(V,q)$ is generated by its centre $\mathbf{GL}_1(Z)$ and the subgroup $\mathbf{\Gamma}^+(V,q)$.  Hence $\mathbf{\Gamma}^+(V,q)$ is a normal subgroup of $\mathbf{\Omega}(V,q)$. We extend \eqref{diag:omega-exact} to make the first two columns into exact sequences.
	\begin{equation*} 		\begin{tikzcd}
		& 1 \ar[d] & 1 \ar[d] \\
		1 \arrow[r] &  \mathbf{G}_m \arrow[r] \ar[d] & \mathbf{\Gamma^+}(V,q) \arrow[r,"\chi"] \ar[d] & \mathbf{O}^+(V,q) \arrow[r] \ar[d] & 1\\
		1 \arrow[r] &  \mathbf{GL}_1(Z) \arrow[r] \ar[d, "z \mapsto z^{-2}N_{Z/k}(z)" left] & \mathbf{\Omega}(V,q) \ar[d] \arrow[r,"\chi'"] & \mathbf{PGO}^+(V,q) \arrow[r] & 1
		\\
		 & \mathbf{G}_{m,Z}^1 \ar[d] \ar[r] & T \ar[d] \\
		 & 1 & 1
		\end{tikzcd}
	\end{equation*}
	Here,  $\mathbf{G}_{m,Z}^1= \ker(N_{Z/k}: \mathbf{GL}_1(Z) \to \mathbf{G}_m) \simeq \mathbf{GL}_1(Z)/\mathbf{G}_m$, and $T = \mathbf{\Omega}(V,q)/\mathbf{\Gamma}^+(V,q)$. We have from \cite[Corollary~13.16]{knus1998book} the fact that $Z^\times \cap \Gamma^+(V,q) = \{z \in Z^\times \mid z^2 \in k^\times\}$. Hence $\mathbf{GL}_1(Z) \cap \mathbf{\Gamma}^+(V,q)$ is the kernel of the map $\mathbf{GL}_1(Z) \to \mathbf{G}_{m,Z}^1$, $z \mapsto z^{-2}N_{Z/k}(z)$. By the isomorphism theorem \cite[Theorem~5.52]{milne},
	\[T  = \frac{\mathbf{\Gamma}^+(V,q).\mathbf{GL}_1(Z)}{ \mathbf{\Gamma}^+(V,q)} \simeq \frac{\mathbf{GL}_1(Z)}{\mathbf{GL}_1(Z) \cap \mathbf{\Gamma}^+(V,q)} \simeq \mathbf{G}_{m,Z}^1.\]
	Standard arguments, as in \cite[p.~416]{knus1998book}, show that $H^1(k, \mathbf{G}_{m,Z}^1) \simeq k^\times/N_{Z/k}(Z^\times)$. If  $N_{Z/k}$ is surjective then $H^1(k, \mathbf{G}_{m,Z}^1) = 1$ and $H^1(k, \mathbf{\Gamma}^+(V,q)) \to H^1(k,\mathbf{\Omega}(V,q))$ is surjective. The fibre in $H^1(k, \mathbf{\Gamma}^+(V,q))$ over some $\omega \in H^1(k,\mathbf{\Omega}(V,q))$ is   the set of all $\gamma \in H^1(k,\mathbf{\Gamma}^+(V,q))$ such that $\chi_*(\gamma)$ maps to $\chi'_*(\omega)$ in $H^1(k, \mathbf{PGO}^+(V,q))$. This means that the fibres of $H^1(k,\mathbf{\Gamma}^+(V,q))\to H^1(k,\mathbf{\Omega}(V,q))$ are   similitude classes.
		In summary, for any even-dimensional quadratic space $(V,q)$ such that $N_{Z/k}$ is surjective, we have a one-to-one correspondence
		 \begin{center}
\fbox{\begin{minipage}[c][1cm]{3.5cm} \centering
$H^1\big(k,\mathbf{\Omega}(V,q)\big)$
\end{minipage}}\quad  $\longleftrightarrow$\quad\fbox{\begin{minipage}[c][1cm]{10cm} \centering
Similitude classes of quadratic forms with the same dimension, discriminant, and Clifford algebra as $(V,q)$.
\end{minipage}}
\end{center}

If $q$ is the hyperbolic $n$-dimensional form, we adopt the notations $\mathbf{O}_{n}^+ = \mathbf{O}^+(V,q)$, $\mathbf{\Gamma}_{n}^+ = \mathbf{\Gamma}^+(V,q)$, $\mathbf{\Omega}_{n} = \mathbf{\Omega}(V,q)$, etc.  From the discussion above and in \ref{sec:quadratic-forms} and \ref{sec:gal-com-GO}, we summarise that for all even $n > 0$ there are isomorphisms of functors:
\begin{align*}
H^1(*, \mathbf{O}_n) &\simeq I_{n}(*), & H^1(*, \mathbf{O}^+_n) &\simeq I_{n}^2(*), &  H^1(*, \mathbf{\Gamma}^+_{n}) &\simeq {I}^3_n(*),\\
H^1(*, \mathbf{GO}_n) &\simeq PI_n(*) & H^1(*, \mathbf{GO}^+_n) &\simeq PI_{n}^2(*), & H^1(*,\mathbf{\Omega}_{n}) &\simeq {PI}^3_n(*).
\end{align*}
In Serre's notation from \cite{garibaldi2003cohomological}, $I_n  = \operatorname{Quad}_n$ and $I_{n}^2  = \operatorname{Quad}_{n,(-1)^{n/2}} $.

\subsection{The ideals $J_m(k)$}
Let us define the chain of ideals $J_{1}(k)  \subset J_{2}(k) \subset \dots \subset H(k)$ by 
\begin{align*}
	J_{1}(k) = \{ \lambda \in H(k) \mid h{\cdot}(-1) = 0 \}, &&
	J_{2}(k) = \{ \lambda \in H(k) \mid h{\cdot}(-1){\cdot}(-1) = 0\}, &&
	 \dots
\end{align*}
It is clear that $(-1){\cdot} J_{m+1}(k) \subset J_{m}(k)$ for all $m \ge 1$.
It is a fact that $J_{m}(k) = H(k)$ if and only if  $k$ has length $\le 2^{m-1}$ (in the sense of \cite[\S XI.2]{lam}); on the other extreme, if $k$ is a real-closed field then $J_{m}(k) = \{0\}$ for all $m \ge 1$.
\subsection{Cohomological invariants of $I_{n}^2$}
In \cite[Pt.\ 1]{garibaldi2003cohomological}, Serre classified not only the cohomological invariants of $\mathbf{O}_n$, but also the cohomological invariants of $\mathbf{O}_n^+$. The latter group for $n = 2$ mod $4$ was probably the first known instance of an algebraic group whose ring of mod~2 cohomological invariants is not always a free $H(k)$-module.

Let $L/k$ be any field extension. Given a quadratic form $q = \langle a_1, \dots, a_n \rangle$, $a_i \in L^\times$, the Stiefel--Whitney classes $w_i(q) \in H^i(L, \ZZ/2\ZZ)$ are defined as follows and are independent of the choice of diagonalisation of $q$:
\begin{align*}
	w_0 &= 1, \\
	w_1 &= \sum_i (a_i) = (a_1a_2\dots a_n), \\
	w_2 &= \sum_{i < j} (a_1){\cdot}(a_j), \\
	&\vdots \\
	w_n &= (a_1){\cdot}(a_2){\cdot} \dots {\cdot} (a_n),\\
	w_i &= 0 \text{ for all } i > n.	
\end{align*}
A property of the Stiefel--Whitney classes is that \begin{align} \label{eq:total-sw}w_i(q \perp q') = \sum_{j = 0}^i w_j(q) w_{i-j}(q'); \end{align} see \cite[Pt.~1 \S17.2]{garibaldi2003cohomological} or \cite[(5.7)]{elman2008algebraic}.
Given $h \in J_{1}(k)$ and $q = \langle a_1, \dots, a_n \rangle \in I^2_n(L)$, define \[b^h(q) = h{\cdot}(a_1){\cdot}(a_2){\cdot} \cdots {\cdot} (a_{n-1}). \]
Serre showed in \cite[Proposition~20.1]{garibaldi2003cohomological} that if $n > 2$ and $h \in J_{1}(k)$, the element $b^{h}(q)$ does not depend on the choice of diagonalisation of $q$. Moreover, $h \mapsto b^{h}$ is an injective map $J_{1}(k) \to \Inv(I_{2}^n)$.

\begin{theorem}[Serre {\cite[Pt.\ 1 Theorems~17.3, 20.6]{garibaldi2003cohomological}}] \label{thm:serre}
	\begin{enumerate}[\rm (i)]
		\item[]
		\item $\Inv(I_n)$ is a free $H(k)$-module with basis $\{1, w_1, \dots, w_n\}$.
		\item If $n = 0$ mod $4$,  $\Inv(I_n^2)$ is a free $H(k)$-module with basis $\{1, w_2, \dots, w_{n-2}, b^1 \}$.
		\item If $n = 2$ mod $4$ and $n \ge 4$,  $\Inv(I_n^2)$ is a direct sum of the free $H(k)$-module with basis $\{1, w_2, \dots, w_{n-2}\}$ and the $H(k)$-module $\{b^\lambda \mid \lambda \in J_{1}(k)\} \simeq J_{1}(k)$.
	\end{enumerate}
\end{theorem}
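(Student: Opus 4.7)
The plan is to follow Serre's classifying-variety strategy: reduce the computation of $\Inv(\mathbf{O}_n)$ and $\Inv(\mathbf{O}_n^+)$ to a computation of unramified cohomology of a split torus, where symbols $(t_i)$ play the role of generic ``Stiefel--Whitney data,'' and then extract the symmetric combinations.

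First I would set up a versal torsor. For $\mathbf{O}_n$, the generic diagonal form $q_{\rm gen} = \langle t_1, \dots, t_n\rangle$ over the function field of the torus $T = (\mathbf{G}_m)^n$ is a versal $\mathbf{O}_n$-torsor. By the classifying-variety formalism of \cite[Pt.~1 \S12]{garibaldi2003cohomological}, an invariant $x \in \Inv^d(I_n)$ is determined by its value $x(q_{\rm gen}) \in H^d(k(T),\ZZ/2\ZZ)$, and this value is unramified at all divisors of $T$, hence lies in $A^0(T, H^d)$. An iterated residue analysis along the divisors $t_i = 0$ (a straightforward induction using Milnor's exact sequence for a rational function field) shows that every element of $A^0(T, H^d)$ is uniquely of the form $\sum_{I \subseteq \{1,\dots,n\}} \lambda_I \cdot \prod_{i \in I}(t_i)$ with $\lambda_I \in H^{d-|I|}(k,\ZZ/2\ZZ)$. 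The normaliser condition (the fact that the versal form is only defined up to permutation) translates into $S_n$-invariance, forcing $\lambda_I = \lambda_{|I|}$ to depend only on the cardinality. Since $\sum_{|I|=r}\prod_{i \in I}(t_i) = w_r(q_{\rm gen})$ by definition of the Stiefel--Whitney class, this proves (i). Linear independence of $\{1, w_1, \dots, w_n\}$ follows by evaluating on $r$-fold Pfister forms $\llangle a_1, \dots, a_r \rrangle$, for which $w_r = (a_1){\cdot}\cdots{\cdot}(a_r)$ while $w_j = 0$ for $0 < j < r$.

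For parts (ii) and (iii), I would replace the versal form with one of fixed discriminant. Because every quadratic space in $I_n^2$ has discriminant $(-1)^{n/2}$, the appropriate versal form is $q_0 = \langle t_1, \dots, t_{n-1}, c (t_1 \cdots t_{n-1})^{-1}\rangle$ with $c = (-1)^{n/2}$, defined over the torus $T' = (\mathbf{G}_m)^{n-1}$ with Weyl group $S_{n-1}$. The same unramified-cohomology analysis applies: every value $x(q_0)$ is an $S_{n-1}$-symmetric expression in $(t_1),\dots,(t_{n-1})$ with coefficients in $H(k)$. The only extra ingredient is the symbol of the last coordinate, $(c(t_1\cdots t_{n-1})^{-1}) = (c) + \sum_{i}(t_i)$. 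In the case $n \equiv 0 \pmod 4$ we have $c = 1$, so $(c) = 0$ and the full calculation collapses to symmetric polynomials in $(t_i)_{i<n}$, yielding the basis $\{1, w_2, \dots, w_{n-2}, w_{n-1}|_{I_n^2}\}$; one then identifies $w_{n-1}|_{I_n^2}$ with $b^1$ by direct inspection of the definitions, establishing~(ii).

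The case $n \equiv 2 \pmod 4$ is the main obstacle and explains why (iii) is not a free module. Here $(c) = (-1)$ and the identity $w_n = (\text{disc})\cdot w_{n-1} + \cdots$ forces, on $I_n^2$, the relation $(-1){\cdot} w_{n-1}|_{I_n^2} = 0$; put differently, if one tries to write down a degree-$(n-1)$ invariant via $b^h(q) = h{\cdot}(a_1){\cdot}\cdots{\cdot}(a_{n-1})$, a diagonalisation swap produces the coboundary $h{\cdot}(-1){\cdot}(a_1){\cdot}\cdots{\cdot}(a_{n-2})$, which vanishes precisely when $h{\cdot}(-1)=0$, i.e.\ $h\in J_1(k)$. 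The hard part of the proof is showing that this is the \emph{only} obstruction: one must check that every symmetric unramified expression compatible with the constraint $(c) = (-1)$ is either a polynomial in $w_2,\dots,w_{n-2}$ or lies in the $J_1(k)$-submodule generated by $b^1$, and that the two summands intersect trivially. Spanning is a careful bookkeeping exercise tracking how the $(-1)$-factor enters, while the direct-sum decomposition follows by evaluating a hypothetical relation on a form whose last Stiefel--Whitney class is non-zero (e.g.\ an anisotropic sum of suitably chosen Pfister forms).
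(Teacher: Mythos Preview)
The paper does not supply its own proof of this statement: Theorem~\ref{thm:serre} is quoted verbatim from Serre's lectures \cite[Pt.~1, Theorems~17.3 and~20.6]{garibaldi2003cohomological} and is used as a black box. So there is nothing in the paper to compare against; what you have written is a sketch of Serre's original argument, and on the whole it follows the right blueprint (versal diagonal form over a split torus, unramified cohomology, symmetrisation).

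That said, two points in your sketch are off. First, your linear-independence argument in (i) is incorrect as stated: it is not true that an $r$-fold Pfister form $\llangle a_1,\dots,a_r\rrangle$ has $w_j=0$ for $0<j<r$ and $w_r=(a_1){\cdot}\cdots{\cdot}(a_r)$. The Stiefel--Whitney classes of Pfister forms pick up $(-1)$-terms (it is the invariants $e_j$, not $w_j$, that behave this simply). The clean way to get independence is the one already implicit in your unramified-cohomology step: the values $w_j(q_{\rm gen})$ are the elementary symmetric functions in $(t_1),\dots,(t_n)$, and these are $H(k)$-linearly independent in $H(k(T))$. Second, in (ii)--(iii) the symmetry group to impose is $S_n$, not $S_{n-1}$: although the versal form is parameterised by $n-1$ coordinates, the well-definedness condition is invariance under all permutations of the $n$ diagonal entries, including those that move the constrained last slot. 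Without this extra transposition you will not get the correct relations among the $w_j$ on $I_n^2$, and in particular the appearance of the $(-1)$-obstruction in case (iii) will not be forced. Once you correct these two points, the rest of the outline matches Serre's proof.
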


\subsection{The exterior square} \label{sec:exterior-square}Let $(V,q)$ be an $n$-dimensional quadratic space over $k$, with $n \ge 2$. The exterior square of $(V,q)$ is the $\binom{n}{2}$-dimensional $k$-quadratic space $(\Lambda^2 V, \lambda^2 q)$ defined by $\lambda^2 q(v_1 \wedge v_2) = \det(q(v_i,v_j))$ for all $v_1, v_2 \in V$.  Clearly, if $q = \langle a_1, \dots, a_n \rangle$ is a diagonalisation of $q$, then 
$\lambda^2(q) = \bigperp_{1 \le i < j \le n} \langle a_i a_j \rangle$. It is clear that for all $c \in k^\times$,
\begin{equation} \label{eq:lambda2-sim}
	\lambda^2(\langle c \rangle q) = \lambda^2(q).	
\end{equation}
The $\lambda^2$-operation extends to a unique map $\lambda^2: \widehat{W}(k) \to \widehat{W}(k)$ such that $\lambda^2([q]) = [\lambda^2(q)]$, and the following equation holds for all $x, y \in \widehat{W}(k)$ \cite[(19.5)]{garibaldi2009cohomological}:
\[
	\lambda^2(x-y) = \lambda^2(x) - xy + \dim y + \lambda^2(y).
\]

\subsection{Degree $2n$ invariants of $I^n$} \label{sec:P_n}
 There is a canonical  homomorphism \begin{align*}
 		I(k) \to \widehat{W}(k), && q \mapsto \hat q =  q - \tfrac{\dim q}{2} \mathbb{H} \end{align*}
 		whose image we denote by
 		$
 		\widehat{I}(k) = \{ \hat q \mid q \in I(k) \} \subset \widehat{W}(k).
 		$
The projection $\pi: \widehat{W}(k) \to W(k)$ restricts to an isomorphism $\pi|_{\widehat{I}(k)}: \widehat{I}(k) \overset{\sim}{\to} I(k)$.
 				For $n \ge 1$, there is a map $P_n: I(k) \to W(k)$ defined as follows:
		\begin{equation*} 
		P_n(q) = \pi(\lambda^2(\hat q)) - 2^{n-1}q.
		\end{equation*}
		By \cite[Example~19.7]{garibaldi2009cohomological}, the following equation holds in $W(k)$, for all even-dimensional quadratic forms $q$:
		\begin{equation} \label{eq:actual-Pn}
		P_n(q) = \frac{\dim q}{2} + \lambda^2(q) - 2^{n-1}q.
		\end{equation}
		
		The maps $P_n$ are  neither additive nor multiplicative, but they have the following important properties \cite[p.~57]{garibaldi2009cohomological}: 
		\begin{align}
		&P_n(\llangle a_1, \dots, a_n \rrangle) = 0 & \text{for all } a_1, \dots, a_n \in k^\times, \label{eq:Pn-pfister}\\
		&P_n(\langle c \rangle q) = P_n(q) + 2^{n-1} \llangle c \rrangle q &\text{for all } c \in k^\times, q \in I(k), \label{eq:Pn-sim}\\
		&P_n(x + y) = P_n(x) + xy + P_n(y) & \text{for all } x, y \in I(k), \label{eq:Pn-x+y}	   %P_n(q + I^{n+1}(k)) & \subset P_n(q) + I^{2n+1}(k) &\text{for all } q \in I(k).
		\end{align}
		Using \eqref{eq:Pn-pfister}--\eqref{eq:Pn-x+y}, it is  easy to show that if $\phi_i$ are $n$-Pfister forms and $c_i \in k^\times$, 
		\begin{equation} \label{eq:big-Pn}
		P_n \big(\sum_i \langle c_i \rangle \phi_i \big) = \sum_{i < j} \langle c_i c_j \rangle \phi_i \phi_j +2^{n-1}\sum_i \llangle c_i \rrangle \phi_i.
		\end{equation}
		This gives a concrete way of expressing $P_n(q)$ for any $q \in I^n(k)$, and it also  implies that
		\begin{equation}
		 \label{eq:In-I2n}		P_n(I^n(k)) \subset I^{2n}(k).
		 \end{equation}

For all $n$, the  composition of $P_n: I^n \to I^{2n}$ with $e_{2n}: I^{2n} \to H^{2n}(*, \ZZ/2\ZZ)$ is a cohomological invariant.

\subsection{Known invariants of $I_{12}^3$} 
There are several nontrivial invariants in $\Inv(I_{12}^3)$ whose existence is established in \cite[\S22.3]{garibaldi2009cohomological}. The first nontrivial invariant is \[z_3(q) = e_3(q) \in H^3(k, \ZZ/2\ZZ).\] This is just the restriction of the Arason invariant 
$e_3: I^3 \to H^3(*, \ZZ/2\ZZ)$  to $12$-dimensional forms. The next nontrivial invariant is defined as
\[
	z_5(q) = e_5(\llangle c\rrangle  P_2(r)) = (c){\cdot} e_4(P_2(r)) \in H^5(k,\ZZ/2\ZZ),
\]
where $q = \llangle c \rrangle r$ is a factorisation of $q$ such that $c \in k^\times$ and $r \in I_{6}^2(k)$. This makes sense because Pfister's Theorem (see \ref{sec:I12}) shows that such a factorisation always exists, because \eqref{eq:In-I2n} shows that $P_2(r) \in I^4(k)$, and because \cite[Corollary~20.7]{garibaldi2009cohomological} shows that the class of $\llangle c \rrangle P_2(r) \in I^5(k)$ does not depend on the way of factorising $q$. 
Using the  parameterisation \eqref{eq:q in I12} of quadratic forms in $I^{3}_{12}(k)$, we can write down the values taken by these invariants:

\begin{lemma} \label{lem:I12-invariants}
	If $q =  \langle d \rangle \llangle c \rrangle (\psi_1'  \perp \langle -1 \rangle \psi_2') \in I_{12}^3(k)$ where  $\psi_i = \llangle x_i, y_i \rrangle$, $c, d, x_i, y_i \in k^\times$:
	\begin{align*}
		z_3(q) &=   (c){\cdot}e_2(\psi_1) + (c) {\cdot} e_2(\psi_2) \\&= (c){\cdot} (x_1) {\cdot} (y_1) + (c) {\cdot} (x_2) {\cdot}(y_2).\notag \\
			z_5(q) &=  (c){\cdot} e_2(\psi_1){\cdot}e_2(\psi_2) + (-1){\cdot}(c) {\cdot}(d) {\cdot} e_2(\psi_1) + (-1) {\cdot} (c) {\cdot} (-d) {\cdot} e_2(\psi_2) \\
			&= (c) {\cdot} (x_1) {\cdot}(y_1) {\cdot}(x_2){\cdot}(y_2) + (-1){\cdot}(c){\cdot} (d) {\cdot}(x_1){\cdot} (y_1) + (-1) {\cdot} (c){\cdot} (-d) {\cdot}(x_2){\cdot} (y_2). \notag
	\end{align*}
\end{lemma}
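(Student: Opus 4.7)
The plan is to compute both invariants by direct evaluation, leveraging the formulas \eqref{eq:big-Pn}, the invariance properties \eqref{eq:Pn-sim}, and the fact that $e_n$ vanishes on $I^{n+1}$. All computations take place in the Witt ring or its Arason-image $H(k)$, so hyperbolic summands can be discarded freely.

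For $z_3(q)$, I would first pass to the Witt ring. Since $\psi_i = \langle 1 \rangle \perp \psi_i'$, we have $\psi_1' \perp \langle -1 \rangle \psi_2' = \psi_1 - \psi_2$ in $W(k)$, so $q = \langle d \rangle \llangle c \rrangle (\psi_1 - \psi_2)$. Using $e_3(\langle \alpha \rangle x) = e_3(x)$ and the additivity of $e_3$ on $I^3$, together with $e_3$ being $2$-torsion,
\[z_3(q) = e_3(\llangle c \rrangle \psi_1) + e_3(\llangle c \rrangle \psi_2) = (c){\cdot} e_2(\psi_1) + (c){\cdot} e_2(\psi_2),\]
which gives the formula.

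For $z_5(q)$, I would apply the definition $z_5(q) = (c){\cdot} e_4(P_2(r))$ where $r = \langle d \rangle(\psi_1 - \psi_2) = \langle d \rangle \psi_1 + \langle -d \rangle \psi_2 \in I^2_6(k)$. Formula \eqref{eq:big-Pn} with $n=2$, $c_1 = d$, $c_2 = -d$, $\phi_i = \psi_i$ gives
\[P_2(r) = \langle -d^2 \rangle \psi_1 \psi_2 + 2\llangle d\rrangle \psi_1 + 2\llangle -d\rrangle \psi_2.\]
Then I would use the identity $2\llangle a \rrangle = \llangle -1 \rrangle \llangle a \rrangle$ in $W(k)$ (since $\langle 1,1 \rangle = \llangle -1 \rrangle$), so that each summand is (similar to) a $4$-Pfister form. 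Taking $e_4$ of each summand and using $e_4(\langle \alpha \rangle x) = e_4(x)$ yields
\[e_4(P_2(r)) = e_2(\psi_1){\cdot} e_2(\psi_2) + (-1){\cdot}(d){\cdot} e_2(\psi_1) + (-1){\cdot}(-d){\cdot} e_2(\psi_2).\]
Multiplying by $(c)$ delivers the claimed expression for $z_5(q)$.

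The only genuine subtlety is the manipulation $2\llangle a \rrangle = \llangle -1, a \rrangle$ needed to interpret the ``doubled Pfister'' terms in \eqref{eq:big-Pn} as honest $3$-Pfister forms, so that $e_4$ acts on them as expected; everything else reduces to the additivity of $e_n$ on $I^n$ modulo $I^{n+1}$ and the scaling invariance from \ref{sec:en}. Since $q$ has fixed dimension $12$ and fixed parameterisation, no independence-of-representative argument is required here.
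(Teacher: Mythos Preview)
Your proof is correct and follows essentially the same approach as the paper: both compute $z_3$ by passing to the Witt ring and applying additivity of $e_3$, and compute $z_5$ by applying the definition $z_5(q) = (c){\cdot} e_4(P_2(r))$ with $r = \langle d\rangle\psi_1 + \langle -d\rangle\psi_2$ and then invoking \eqref{eq:big-Pn}. The paper's proof is in fact terser than yours, simply pointing to \eqref{eq:big-Pn} (or \cite[Example~20.9]{garibaldi2009cohomological}) without spelling out the $2\llangle a\rrangle = \llangle -1,a\rrangle$ step that you correctly identify and handle.
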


\begin{proof}
The form $q$ is Witt equivalent to $q \perp \mathbb{H} = \langle d \rangle (\llangle c \rrangle \psi_1 \perp \langle -1 \rangle\llangle c \rrangle  \psi_2)$. The invariant $z_3$ is the restriction of the Arason invariant, so $z_3(q) = e_3(q) = e_3(\llangle c \rrangle \psi_1) + e_3(\llangle c \rrangle \psi_2)$ and  the first formula is clear. The invariant $z_5$ is defined such that $z_5(q) = (c){\cdot} e_4(P_2(\langle d \rangle \psi_1 + \langle -d \rangle \psi_2))$ and one can derive the second formula using either \eqref{eq:big-Pn} or \cite[Example~20.9]{garibaldi2009cohomological}.
\end{proof}

\begin{lemma}
	If $q \in I^{3}_{12}(k)$ is isotropic, $z_5(q)$ is a symbol in $(-1){\cdot}H^4(k, \ZZ/2\ZZ)$.
\end{lemma}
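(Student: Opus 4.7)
The plan is to reduce an isotropic $q \in I^3_{12}(k)$ to two possible shapes by a Witt-decomposition argument combined with a classical dimension restriction, and then to compute $z_5(q)$ directly from a well-chosen factorisation $q \simeq \llangle c \rrangle r$.

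Write $q \simeq q_{\rm an} \perp i\mathbb{H}$ with $i \ge 1$, so $q_{\rm an} \in I^3(k)$ is anisotropic of dimension $< 12$. The Arason--Pfister Hauptsatz excludes dimensions $2, 4, 6$, and a classical theorem of Pfister rules out dimension $10$ for anisotropic forms in $I^3$ (see, e.g., \cite[Ch.~X]{lam}). Hence either $q$ is hyperbolic, or $\dim q_{\rm an} = 8$ so that $q_{\rm an}$ is similar to a $3$-Pfister form, and we may write $q \simeq \langle a \rangle \pi \perp 2\mathbb{H}$ with $a \in k^\times$ and $\pi = \llangle c, e, f\rrangle$.

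In the hyperbolic case, the normalised invariant $z_5$ vanishes at the basepoint $6\mathbb{H}$ (for instance, take $c = 1$, $r = 3\mathbb{H} \in I_6^2(k)$; the class $P_2(r) = P_2(0)$ is $0$ in $W(k)$), so $z_5(q) = 0$, which lies trivially in $(-1){\cdot}H^4(k, \ZZ/2\ZZ)$. In the main case, set $r = \langle a\rangle\llangle e, f\rrangle \perp \mathbb{H}$, a $6$-dimensional form with trivial signed discriminant, so $r \in I_6^2(k)$. A direct computation shows
\[
\llangle c\rrangle r \simeq \langle a\rangle\llangle c, e, f\rrangle \perp \llangle c\rrangle\mathbb{H} \simeq \langle a\rangle \pi \perp 2\mathbb{H} \simeq q,
\]
so this is a valid factorisation. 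Applying the properties~\eqref{eq:Pn-pfister}--\eqref{eq:Pn-x+y} of $P_2$, combined with the identity $2\llangle a\rrangle = \llangle -1, a\rrangle$ and the vanishing of $P_2$ on Pfister forms, one computes
\[
P_2(r) = P_2(\langle a\rangle\llangle e, f\rrangle) = 2\llangle a\rrangle\llangle e, f\rrangle = \llangle -1, a, e, f\rrangle,
\]
a $4$-Pfister form. Therefore
\[
z_5(q) = (c){\cdot} e_4(P_2(r)) = (c){\cdot}(-1){\cdot}(a){\cdot}(e){\cdot}(f) = (-1){\cdot}(c){\cdot}(a){\cdot}(e){\cdot}(f),
\]
which is a symbol lying in $(-1){\cdot}H^4(k, \ZZ/2\ZZ)$, as claimed.

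The main obstacle is invoking Pfister's no-dimension-$10$ theorem for $I^3$; it is classical but not derived in the paper. Once that reduction is in hand, the remainder is a direct Pfister-form manipulation. The one design choice that matters is the $\mathbb{H}$-summand in $r$: it is needed both to make $r$ genuinely $6$-dimensional (so Pfister's $12$-dim factorisation theorem applies) and to ensure $r \in I^2_6(k)$, while contributing nothing to $P_2(r)$ in the Witt ring.
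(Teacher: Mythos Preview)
Your proof is correct and follows essentially the same route as the paper. Both reduce via the classical ``no anisotropic 10-dimensional forms in $I^3$'' fact to $q \simeq \langle a \rangle\pi \perp 2\mathbb{H}$ with $\pi$ a $3$-Pfister form, and then compute $z_5(q)$ explicitly as a symbol of the form $(-1){\cdot}(c){\cdot}(a){\cdot}(e){\cdot}(f)$; the only cosmetic difference is that the paper plugs into the precomputed formula of Lemma~\ref{lem:I12-invariants} (with $\psi_2$ hyperbolic), whereas you evaluate $P_2(r)$ directly from~\eqref{eq:Pn-pfister}--\eqref{eq:Pn-sim}.
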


\begin{proof}
	If $q \in I^3_{12}(k)$ is isotropic, then it is isometric to some $p \perp \mathbb{H}$ where $p \in I^3_{10}(k)$. But, as it is well-known \cite[Theorem~2.1]{hoffmann1998}, every form in $I^3_{10}(k)$ is isotropic and similar to a $3$-Pfister form, so we can write $q = \langle d \rangle \llangle c \rrangle(\llangle x, y \rrangle ' \perp \langle 1,-1,1 \rangle)$ for some $d, x, y, z \in k^\times$. Hence by Lemma \ref{lem:I12-invariants}, $z_5(q) = (-1){\cdot}(c){\cdot}(d){\cdot}(x){\cdot}(y)$.
\end{proof}

Given the lemma above, $h{\cdot}z_5$ vanishes on isotropic forms for all $h \in J_{1}(k)$ so we can use Rost's technique \cite[Proposition~10.2]{garibaldi2009cohomological} to define a set of invariants $\{z^h \mid h \in J_{1}(k) \}$ by 
\[
z^h(q) = h {\cdot} (q(v)) {\cdot} z_5(q)
\]
where $v \in k^{12}$ is any anisotropic vector for $q$. If $q$ is as in Lemma \ref{lem:I12-invariants}, then $q$ represents $-dx_1$, but $(-dx_1){\cdot} (x_1) = (d){\cdot}(x_1)$, hence for all $h \in J_{1}(k)$
\begin{equation} \label{eq:zh}
z^h(q) = h {\cdot} (-dx_1) {\cdot} z_5(q) = h {\cdot} (d) {\cdot} (c) {\cdot} (x_1) {\cdot}(y_1) {\cdot}(x_2){\cdot}(y_2).
\end{equation}

If $J_1(k) = H(k)$ then $z^h = h{\cdot}z^1$ for all $h \in H(k)$. We write $z_6 = z^1$; this is the invariant from \cite[\S20.13]{garibaldi2009cohomological}.

\subsection{Fibrations} \label{sec:fibrations}
Fibrations have been used extensively for determining the essential dimensions of certain algebraic groups, including groups like $\mathbf{Spin}_n$ and $\mathbf{\Gamma}^+_n$ \cite{berhuy2003essential, brosnan2010essential, chernousov2014essential}. The concept turns out to  extremely useful for cohomological invariants.

%In most of the literature, for example \cite{garibaldi2009cohomological, garibaldi2003cohomological, macdonald2008cohomological}, the chief technique for classifying cohomological invariants is to use surjections in Galois cohomology. For example, if $A\subset B$ are algebraic groups such that $H^1(L,A) \to H^1(L,B)$ is a surjection for all fields $L/k$, and if we know $\Inv(A)$, then we can use that knowledge to determine $\Inv(B)$ because it embeds into $\Inv(A)$. However, if we know $\Inv(B)$ then it is less obvious how to use that knowledge to determine $\Inv(A)$. This is exactly the situation where fibrations become important.

\begin{definition*} \cite[Definition 1.12]{berhuy2003essential} Let $F: \mathsf{Fields}_{/k} \to \mathsf{Groups}$ be a functor. For functors $A, B: \mathsf{Fields}_{/k} \to \mathsf{Sets}$, a \emph{fibration} from $A$ to $B$ is a morphism $\pi: A \to B$ and an action of $F$ on $A$ such that for all fields $L/k$:
\begin{enumerate}[(i)]
\item   $\pi_L: A(L) \to B(L)$ is surjective,
\item  $\pi_L$ is $F(L)$-equivariant with respect to the trivial action of $F(L)$  on $B(L)$, and
\item  $F(L)$ acts transitively on each fibre of $\pi_L$.
\end{enumerate}
The meaning of (ii) and (iii) is that the orbits of $F(L)$ are precisely the fibres of $\pi_L$; this is why it is called a fibration. It is standard to denote a fibration by 
\[
\begin{tikzcd}F \ar[r,squiggly]	& A \ar[r,two heads,"\pi"] & B. \end{tikzcd}
\]
\end{definition*}

As a first example, for all $m, n \ge 0$ there is an obvious fibration
\begin{equation}  \label{eq:first-fibration}
\begin{tikzcd}[row sep=small]
H^1(*,\bm{\mu}_2) \ar[r,squiggly]	& I^m_{n} \ar[r,two heads] & PI^m_{n} \end{tikzcd}
\end{equation}
where $H^1(L,\bm{\mu}_2) = L^\times/L^{\times 2}$ acts on $I^m_{n}$ by $cL^{\times 2} \cdot q = \langle c \rangle q$.

\begin{lemma} \label{lem:fibration-from-exact-sequence}
An exact sequence of algebraic groups
\[
\begin{tikzcd}
	1 \ar[r] & A \ar[r] & X \ar[r] & Y \ar[r] & 1,
\end{tikzcd}
\]
where $A$ is an abelian central subgroup of $X$, yields a fibration \begin{equation*} 
\begin{tikzcd}H^1(*,A) \ar[r,squiggly]	& H^1(*,X) \ar[r,two heads,"\pi"] & B \end{tikzcd}
\end{equation*}
where $B$ is the subfunctor of $H^1(*,Y)$ such that $B(L)$ is the image of $H^1(L,X)$ in $H^1(L,Y)$.
\end{lemma}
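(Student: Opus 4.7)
The plan is to introduce the standard ``twisting action'' of $H^1(*,A)$ on $H^1(*,X)$ coming from the central extension, and then to verify in turn that $\pi$ is surjective, that it is equivariant, and that the action is transitive on fibres.

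For the action, I would work at the level of cocycles: for $a \in Z^1(\Gamma_L, A(L^s))$ and $x \in Z^1(\Gamma_L, X(L^s))$, set $(a\cdot x)_\sigma = a_\sigma x_\sigma$. The cocycle identity $(a\cdot x)_{\sigma\tau}=(a\cdot x)_\sigma\,\sigma((a\cdot x)_\tau)$ follows because centrality of $A$ lets $\sigma(a_\tau)$ commute past $x_\sigma$. A routine check shows that this operation descends to an action of the abelian group $H^1(L,A)$ on the pointed set $H^1(L,X)$, functorially in $L$.

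For surjectivity of $\pi_L$ onto $B(L)$, this is tautological from the definition of $B$. For equivariance, the image of $a\cdot x$ in $Z^1(L,Y)$ is identical to the image of $x$, since $A$ sits in the kernel of $X\to Y$; hence $\pi_L$ is constant on each $H^1(L,A)$-orbit, which is precisely equivariance with respect to the trivial action on $B(L)$.

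The main work is in proving transitivity of the action on fibres, which is an instance of the classical twisting principle in nonabelian Galois cohomology \cite[I.\S5.4]{serre1997galois}. Given $[x_1],[x_2] \in H^1(L,X)$ with $\pi_L([x_1])=\pi_L([x_2])$, I would first modify $x_2$ by an $X$-coboundary so that $x_1$ and $x_2$ project to identical cocycles in $Z^1(L,Y)$; this reduction uses only the surjectivity of $X(L^s)\to Y(L^s)$, which is guaranteed by the exactness convention of \ref{sec.exact}. The ratio $a_\sigma = x_1(\sigma)\,x_2(\sigma)^{-1}$ then takes values in $A(L^s)$, and centrality of $A$ yields the cocycle relation, giving $[x_1] = [a]\cdot[x_2]$. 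The chief conceptual obstacle is the careful bookkeeping around centrality, which both makes cocycle multiplication well-defined on cohomology classes and allows the construction of the equalising cocycle $a$; no deeper obstacle arises.
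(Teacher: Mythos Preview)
Your proof is correct and follows exactly the same approach as the paper: the action is pointwise multiplication of cocycles, and the fibration properties come down to the content of \cite[I.\S5~Proposition~42]{serre1997galois}. The paper dispatches the whole lemma in one sentence by citing that proposition, whereas you have unpacked the argument (the coboundary adjustment to equalise images in $Z^1(L,Y)$, and the centrality check that $a_\sigma = x_1(\sigma)x_2(\sigma)^{-1}$ is a cocycle), but there is no substantive difference in method.
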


\begin{proof}
The action of the group $H^1(*,A)$ on $H^1(*,X)$ is just pointwise multiplication on the level of cocycles, and the fact that this is a fibration is just \cite[I.\S5~Proposition~42]{serre1997galois}.
\end{proof}

The following proposition   resembles \cite[Propositions~7.1~\&~7.4]{garrel2020witt}, but we are working more generally here.

\begin{proposition} \label{prop:fibration}
Let 	$\begin{tikzcd} H^1(*,\bm{\mu}_{2m}) \ar[r,squiggly]	& A \ar[r,two heads,"\pi"] & B \end{tikzcd}$ be a fibration.
\begin{enumerate}[\rm (i)]
\item For each $a \in \Inv(A)$, there is a unique invariant $\bar{\partial}a$ in the image of  $\pi^*: \Inv(B) \to \Inv(A)$ such that
\[
a(c\cdot x) - a(x) = (c)\cdot \bar{\partial}a(x)
\]
for all field extensions $L/k$, $c \in L^\times/L^{\times 2m}$, and $x \in A(L)$.
\item Let $\partial a \in \Inv(B)$ be the unique invariant such that $\pi^*(\partial a) = \bar{\partial} a$. Then $\partial: \Inv(A) \to \Inv(B)$ is an $H(k)$-module homomorphism, homogeneous of degree $-1$, and
 the following sequence of $H(k)$-modules is exact:
 \begin{equation*} \label{eq:exactness-fibration}
\begin{tikzcd}
0 \ar[r] & \Inv(B) \ar[r,"\pi^*"] & \Inv(A) \ar[r,"\partial"] & \Inv(B).
\end{tikzcd}
\end{equation*}
\item Suppose $\begin{tikzcd} H^1(*,\bm{\mu}_{2m}) \ar[r,squiggly]	& A' \ar[r,two heads,"\pi'"] & B' \end{tikzcd}$ is another fibration and there are morphisms $f: A \to A'$ and $g: B\to B'$ such that $\pi' \circ f = g \circ \pi$ and $f: A(L) \to A'(L)$ is $L^\times/L^{\times 2m}$-equivariant for all fields $L/k$. Then the following diagram commutes:
\[	\begin{tikzcd}
\Inv(A') \ar[r,"\partial"] \ar[d, "f^*"] & \Inv(B') \ar[d, "g^*"]\\
\Inv(A) \ar[r, "\partial"]  & \Inv(B)
\end{tikzcd}\]
\end{enumerate}
\end{proposition}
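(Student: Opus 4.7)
The plan is to construct $\bar\partial a$ via a ``generic parameter'' residue trick. For $x \in A(L)$, I would introduce an indeterminate $t$, view $x$ in $A(L(t))$, act by the class of $t \in L(t)^\times / L(t)^{\times 2m}$ through the given $H^1(*, \bm{\mu}_{2m})$-action, and define
\[
\bar\partial a(x) \;:=\; \partial_{v_t}\bigl( a(t \cdot x_{L(t)}) \bigr) \in H(L, \ZZ/2\ZZ),
\]
where $\partial_{v_t}$ is the residue at the $t$-adic valuation on $L(t)$. Uniqueness in (i) is then immediate: if $\delta \in \im(\pi^*)$ satisfies $(c) \cdot \delta(x) = 0$ for all $(L, x, c)$, taking $L \to L(t)$ and $c = t$ gives $(t) \cdot \delta(x) = 0$ in $H(L(t), \ZZ/2\ZZ)$, and applying $\partial_{v_t}$ recovers $\delta(x) = 0$.

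The heart of the argument is the identity
\[
a(t \cdot x_{L(t)}) \;=\; a(x)_{L(t)} + (t) \cdot \bar\partial a(x) \qquad \text{in } H(L(t), \ZZ/2\ZZ).
\]
I would prove this by showing that the difference $e$ of the two sides is unramified at every closed point of $\mathbb{P}^1_L$ and vanishes on specialisation at $t = 1$. At $v_t$ this holds by construction; at $v_{t-c}$ for $c \in L^\times$ it holds because $t \cdot x_{L(t)}$ extends across $v_{t-c}$ and specialises to $c \cdot x$ at $t = c$, so the standard unramifiedness principle for cohomological invariants (see \cite[Pt.\ 1 Theorem~11.7 and Pt.\ 2]{garibaldi2003cohomological}) applies to all three summands of $e$; at $v_\infty$ it then follows from Milnor reciprocity on $\mathbb{P}^1_L$. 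Thus $e \in H(L, \ZZ/2\ZZ)$, and since $e|_{t=1} = 0$ we obtain $e = 0$. Specialising $t = c$ for arbitrary $c \in L^\times$ then yields the formula in (i).

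For the descent of $\bar\partial a$ to $B$ and the remaining parts, I would apply the key identity once with parameter $tc$ and once using the cocycle relation $a(tc \cdot x) = a(t \cdot (c \cdot x))$; subtracting gives $(t) \cdot \bar\partial a(x) = (t) \cdot \bar\partial a(c \cdot x)$ in $H(L(t), \ZZ/2\ZZ)$, whose $v_t$-residue is $\bar\partial a(x) = \bar\partial a(c \cdot x)$. Since the fibres of $\pi_L$ are $L^\times/L^{\times 2m}$-orbits, this provides the required $\partial a \in \Inv(B)$, and $\pi^*$ is injective because $\pi$ is pointwise surjective. For (ii), $H(k)$-linearity and the shift of degree by $-1$ are manifest from the defining relation, and exactness amounts to the chain of equivalences ``$a \in \im \pi^*$ iff $a$ is orbit-invariant iff $\bar\partial a = 0$ iff $\partial a = 0$''. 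For (iii), uniqueness in (i) applied to $f^*a'$ immediately yields $\bar\partial(f^*a') = f^* \bar\partial a'$ (using the $L^\times/L^{\times 2m}$-equivariance of $f$), which translates via $\pi' \circ f = g \circ \pi$ and the injectivity of $\pi^*$ to $\partial(f^*a') = g^* \partial a'$.

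The step I expect to require the most care is the key identity above --- specifically, verifying the unramifiedness of $a(t \cdot x_{L(t)})$ at $v_{t-c}$ for $c \in L^\times$. This tacitly uses that $A$ satisfies a mild specialisation property (e.g.\ $A = H^1(*, G)$ for a smooth algebraic group $G$, or $A$ represented by a $k$-scheme, or the quadratic-form functors $I_n^m$ and $PI_n^m$ that appear in the paper), which will need to be either baked into the hypotheses or checked case-by-case in each application.
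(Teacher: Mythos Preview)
Your approach is correct but takes a more hands-on route than the paper's. The paper fixes $x \in A(L)$ and observes that $c \mapsto a(c\cdot x) - a(x)$ is a \emph{normalised} invariant of $\bm{\mu}_{2m}$ over $L$; since the normalised invariants of $\bm{\mu}_{2m}$ form a free $H(L)$-module of rank one generated by $s\colon c \mapsto (c)$ (this is \cite[Proposition~2.6]{garibaldi2009cohomological}), there is a unique $\bar\partial a(x) \in H(L)$ with $a(c\cdot x)-a(x) = (c)\cdot \bar\partial a(x)$. Descent to $B$, exactness, and naturality then follow by short algebraic manipulations exactly as you sketch in your last paragraph.

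What you are doing is essentially re-proving that rank-one statement inside the argument: your generic-parameter construction and the $\mathbb{P}^1$-unramifiedness/reciprocity step are precisely the ingredients behind the classification of $\Inv(\bm{\mu}_{2m})$. The cost of unpacking this is real: as you correctly flag, the unramifiedness of $a(t\cdot x_{L(t)})$ at $v_{t-c}$ needs $A$ to admit specialisations along $\mathbb{A}^1$, a hypothesis not present in the statement (which is for arbitrary set-valued functors $A,B$). The paper's proof avoids this entirely, because the black box $\Inv(\bm{\mu}_{2m}) \simeq H(L)\cdot s$ already lives purely on the $\bm{\mu}_{2m}$-side and requires nothing of $A$. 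So your argument is sound for the applications in the paper, but the paper's version is both shorter and strictly more general.
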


We shall call $\partial$ the \emph{residue map} with respect to the fibration, and we say that $\partial a \in \Inv(B)$ is the residue of $a \in \Inv(A)$. 

\begin{proof}
	(i) Let $a \in \Inv(A)$. For $x \in A(k)$ we have a normalised invariant in $\Inv(\bm{\mu}_{2m})$:	\[
cL^{\times 2m} \mapsto a(cL^{\times 2m}\cdot x) - a(x).
\]
The set of normalised mod 2 cohomological invariants of $\bm{\mu}_{2m}$  is a rank one free $H(k)$-module generated by the invariant $s$ that sends $cL^{\times 2m}\in H^1(L,\bm{\mu}_{2m})$ to $(c) \in H(L)$ \cite[Proposition~2.6]{garibaldi2009cohomological}.  Hence there is a unique element $\bar \partial a(x) \in H(L)$ such that\[
a(cL^{\times 2m} \cdot x) - a(x) = (c)\cdot \bar \partial a(x)
\]
for all field extensions $L/k$, $x \in A(L)$, and $c \in L^\times$. Clearly $\bar \partial a: A \to H$ is a cohomological invariant. Now we claim that $\bar \partial a$ comes from $\Inv(B)$; that is, if $\pi(x) = \pi(x')$ then $\bar \partial a (x) = \bar \partial a(x')$. Since $L^\times/L^{\times 2m}$ acts transitively on the fibres of $\pi$, we have $x' = rL^{\times 2m} \cdot x$ for some $r \in L^\times$. Then
\begin{align*}
(c){\cdot} \bar \partial a (x') &= a(cL^{\times 2m} \cdot x') - a(x') \\ &= a(crL^{\times 2m} \cdot x) - a(rL^{\times 2m} \cdot x) =(cr)\cdot \bar \partial a (x) - (r)\cdot \bar{\partial} a (x) = (c){\cdot} \bar \partial a (x). 
\end{align*}
Therefore $\bar \partial a (x) = \bar \partial a( x')$ by uniqueness.

(ii) It is clear that that $\partial$ is $H(k)$-linear and that  $a \in \Inv^i(A)$ implies $\partial a \in \Inv^{i-1}(B)$. The sequence is exact at $\Inv(B)$ simply because $\pi$ is surjective. For exactness  at $\Inv(A)$, it is clear that $\partial a = 0$ if and only if $a$ is constant on the fibres of~$\pi: A(L) \to B(L)$ for all fields $L/k$, which means $a$ is the image of an invariant in $\Inv(B)$.

(iii) It suffices to show that $\bar \partial \circ f^* = f^* \circ \bar \partial$. Suppose $a = f^*(a') = a'\circ f \in \Inv(A)$. Then for all fields $L/k$, $x \in A(L)$, and $c \in L^\times$:
\begin{align*}
 (c){\cdot}\bar \partial a' (f(x)) &=	a'(cL^{\times 2m} \cdot f(x)) - a'(f(x)) \\& = a'(f(cL^{\times 2m}\cdot x))- a'(f(x)) \\&= a(cL^{\times 2m} \cdot x) - a(x) = (c) {\cdot} \bar \partial a(x).
\end{align*}
By uniqueness, $f^*(\bar \partial a') = \bar \partial a'\circ f = \bar \partial a = \bar \partial f^*(a')$.
\end{proof}

\begin{lemma} \label{lem:d'}
	Let $\partial': \Inv(I_{12}^3) \to \Inv(PI_{12}^3)$ be the residue map associated to the fibration $\begin{tikzcd}[row sep=small]
H^1(*,\bm{\mu}_2) \ar[r,squiggly]	& I^3_{12} \ar[r,two heads] & PI^3_{12} \end{tikzcd}
$ described in \eqref{eq:first-fibration}.  We have \begin{align*}
	\partial' z_3 &= 0\\
	\partial' z_5 &= (-1) {\cdot} z_3   \\
	\partial' z^h &= h{\cdot} z_5  & \text{for all $h \in J_1(k)$.}
\end{align*}
\end{lemma}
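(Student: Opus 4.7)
The plan is to use Pfister's parameterization \eqref{eq:q in I12}, which writes any $q \in I^3_{12}(L)$ as $q = \langle d \rangle \llangle c \rrangle(\psi_1' \perp \langle -1 \rangle \psi_2')$ with $\psi_i = \llangle x_i, y_i \rrangle$. The key observation is that scaling by $e \in L^\times$ amounts simply to replacing the parameter $d$ by $ed$, leaving $c$ and the $\psi_i$ untouched. By the uniqueness clause of Proposition~\ref{prop:fibration}(i), $\bar{\partial}'a$ is determined by the identity $a(\langle e \rangle q) - a(q) = (e)\cdot \bar{\partial}'a(q)$, so for each of the three invariants it suffices to expand the difference and read off the coefficient of $(e)$, then verify that the resulting invariant descends from $I^3_{12}$ to $PI^3_{12}$.

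The case $\partial'z_3 = 0$ is immediate from the formula in Lemma~\ref{lem:I12-invariants}, since the expression for $z_3(q)$ involves no $d$ (equivalently, $e_3$ is similitude invariant). For $\partial'z_5$, only the two terms of $z_5(q)$ containing $(d)$ or $(-d)$ change. Using $(ed) = (e) + (d)$ and $(-ed) = (e) + (-d)$, I compute
\[
z_5(\langle e \rangle q) - z_5(q) = (-1)(c)(e)e_2(\psi_1) + (-1)(c)(e)e_2(\psi_2) = (e) \cdot (-1) \cdot z_3(q),
\]
so $\bar{\partial}' z_5 = (-1)\cdot z_3$. Since $z_3$ is similitude invariant, it descends to an invariant in $\Inv(PI^3_{12})$, and hence $\partial'z_5 = (-1)\cdot z_3$.

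For $\partial'z^h$ with $h \in J_1(k)$, the explicit formula \eqref{eq:zh} gives $z^h(q) = h{\cdot}(d){\cdot}(c){\cdot}(x_1)(y_1)(x_2)(y_2)$, from which
\[
z^h(\langle e \rangle q) - z^h(q) = (e)\cdot h \cdot (c)(x_1)(y_1)(x_2)(y_2).
\]
The main (small) obstacle is to recognize the right-hand coefficient of $(e)$ as $h{\cdot}z_5(q)$ and to check that this is really pulled back from $\Inv(PI^3_{12})$. Both follow from the defining property of $J_1(k)$: since $h{\cdot}(-1) = 0$, multiplying $z_5(q)$ from Lemma~\ref{lem:I12-invariants} by $h$ kills the two terms containing $(-1)$, leaving precisely $h{\cdot}(c)(x_1)(y_1)(x_2)(y_2)$; and the same cancellation applied to $z_5(\langle e \rangle q) - z_5(q) = (e)(-1)z_3(q)$ shows that $h{\cdot}z_5$ is scaling invariant, hence descends to $\Inv(PI^3_{12})$. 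Therefore $\partial'z^h = h\cdot z_5$, completing the proof plan.
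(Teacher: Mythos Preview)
Your proof is correct and follows essentially the same approach as the paper: both use Pfister's parameterization and Lemma~\ref{lem:I12-invariants} to compute the scaling differences directly. The only cosmetic difference is that for $z^h$ the paper works from the definition $z^h(q) = h{\cdot}(q(v)){\cdot}z_5(q)$ with an anisotropic vector rather than the explicit formula \eqref{eq:zh}, but the computations are equivalent.
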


\begin{proof}
	Since $z_3$ is in the image of $\Inv(PI_{12}^3) \hookrightarrow \Inv(I_{12}^3)$,  Proposition~\ref{prop:fibration}~(ii) implies $\partial'z_3 = 0$. If $q = \langle d \rangle \llangle c \rrangle (\psi_1' \perp \langle -1 \rangle \psi_2') \in I_{12}^3(L)$ then we can reconcile using Lemma~\ref{lem:I12-invariants} that for all $b \in L^\times$, 
	\begin{align*}
	z_5(\langle b \rangle q) - z_5(	q) &= (-1){\cdot}(c){\cdot}\big((b d)- (d) \big) {\cdot} e_2(\psi_1) + (-1) {\cdot} (c) {\cdot} \big((-b d ) - (-d) \big) {\cdot} e_2(\psi_2) \\
	&= (-1){\cdot}(c){\cdot} (b){\cdot}e_2(\psi_1) + (-1){\cdot}(c) {\cdot} (b) {\cdot}e_2(\psi_2) \\
	&= (b){\cdot}(-1){\cdot} z_3(q)
	\end{align*}
	If $h \in J_1(k)$ then $h{\cdot}(-1) = 0$ so $h{\cdot} z_5(\langle b \rangle q) = h{\cdot} z_5(q)$. Taking an anisotropic vector $v$,
	\[
	z^h(\langle b \rangle q) - z^h(q) = h{\cdot}(\langle b \rangle q(v)){\cdot} z_5(\langle b \rangle q) - h{\cdot}(q(v)){\cdot}z_5( q) = (b){\cdot} h{\cdot}z_5(q),
	\]
	hence $\partial' z^h = h{\cdot} z_5$.
%	The fact that $\partial' z^h = z_5$ if $\sqrt{-1} \in k$ is just like the calculation from Lemma \ref{lem:d}, only easier. 
\end{proof}

There are general methods for determining the invariants of a direct product, for instance \cite[Lemma~6.7]{garibaldi2009cohomological}, but they require that the factors have free modules of invariants. In the absence of freeness, for instance when the group is $\mathbf{O}_{4\ell + 2}^+$ and $-1 \notin {k^{\times 2}}$, we can use the following lemma.

\begin{lemma} \label{lem:invariants-of-product}
	Let $G$ be an algebraic group and identify $\Inv(G)$ naturally with its image in $\Inv(\bm{\mu}_{2m} \times G)$. Then $\Inv(\bm{\mu}_{2m} \times G) = \Inv(G) \oplus s {\cdot} \Inv(G)$ where $s \in \Inv(\bm{\mu}_{2m} \times G)$ is the invariant: \[(cL^{\times 2m}, \zeta) \mapsto (c) \in H^1(L, \ZZ/2\ZZ)\] for all field extensions $L/k$,  $\zeta \in H^1(L, G)$, and $c \in L^\times$.	
\end{lemma}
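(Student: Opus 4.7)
The plan is to derive the lemma directly from the fibration machinery of Proposition \ref{prop:fibration}. Let $\pi: H^1(*, \bm{\mu}_{2m}\times G) \to H^1(*, G)$ be projection onto the second factor, and let $H^1(L, \bm{\mu}_{2m}) = L^\times/L^{\times 2m}$ act on $H^1(L, \bm{\mu}_{2m} \times G) = H^1(L, \bm{\mu}_{2m}) \times H^1(L, G)$ by multiplication on the first coordinate. Then $\pi$ is surjective (split by $\zeta \mapsto (1,\zeta)$) and its fibre over each $\zeta$ is precisely the $H^1(L, \bm{\mu}_{2m})$-orbit through $(1,\zeta)$. Hence
\[
\begin{tikzcd} H^1(*, \bm{\mu}_{2m}) \ar[r,squiggly] & H^1(*, \bm{\mu}_{2m}\times G) \ar[r,two heads,"\pi"] & H^1(*, G) \end{tikzcd}
\]
is a fibration in the sense of \ref{sec:fibrations}, and Proposition \ref{prop:fibration} produces an exact sequence
\[
\begin{tikzcd} 0 \ar[r] & \Inv(G) \ar[r,"\pi^*"] & \Inv(\bm{\mu}_{2m}\times G) \ar[r,"\partial"] & \Inv(G). \end{tikzcd}
\]

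The only substantive step is to produce an $H(k)$-linear splitting of $\partial$; the plan is to verify that $b \mapsto s{\cdot} b$ does the job. Given $b \in \Inv(G)$, viewed in $\Inv(\bm{\mu}_{2m}\times G)$ via $\pi^*$, and arbitrary $L/k$, $c \in L^\times$, $\xi \in H^1(L, \bm{\mu}_{2m})$, and $\zeta \in H^1(L, G)$, a direct computation gives
\[
(s{\cdot}b)(c\cdot(\xi,\zeta)) - (s{\cdot}b)(\xi,\zeta) = \bigl( (c\xi) - (\xi)\bigr){\cdot} b(\zeta) = (c){\cdot}b(\zeta),
\]
where the last equality uses that the natural map $L^\times/L^{\times 2m} \to H^1(L,\ZZ/2\ZZ)$ kills $2m$-th powers and so satisfies $(c\xi) = (c) + (\xi)$. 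By the uniqueness clause in Proposition \ref{prop:fibration}(i), this forces $\partial(s{\cdot}b) = b$. Consequently the exact sequence above splits, yielding the internal direct sum $\Inv(\bm{\mu}_{2m}\times G) = \pi^*\Inv(G) \oplus s{\cdot}\Inv(G)$ asserted in the lemma. No serious obstacle is anticipated; the only point requiring any care is the behaviour of the cup product under the action, which the short calculation above handles.
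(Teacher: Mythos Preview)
Your proof is correct and follows essentially the same approach as the paper: set up the fibration coming from the split exact sequence $\bm{\mu}_{2m} \to \bm{\mu}_{2m}\times G \to G$, invoke Proposition~\ref{prop:fibration} to obtain the exact sequence $0 \to \Inv(G) \to \Inv(\bm{\mu}_{2m}\times G) \xrightarrow{\partial} \Inv(G)$, and then split $\partial$ via $b \mapsto s{\cdot}\pi^*(b)$. Your explicit verification that $\partial(s{\cdot}b)=b$ is a welcome addition, as the paper merely asserts the splitting without writing out the one-line computation.
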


\begin{proof}
The fibration
	 $\begin{tikzcd} H^1(*,\bm{\mu}_{2m}) \ar[r,squiggly]	& H^1(*, \bm{\mu}_{2m}\times G) \ar[r,two heads] & H^1(*, G) \end{tikzcd}$
	associated to the short exact sequence $\bm{\mu}_{2m} \to \bm{\mu}_{2m} \times G \to G$ induces the exact sequence of $H(k)$-modules
	\[
	\begin{tikzcd}
0 \ar[r] & \Inv(G) \ar[r,"\pi^*"] & \Inv(\bm{\mu}_{2m} \times G) \ar[r] & \Inv(G).
\end{tikzcd}
	\]
	 The residue map $\Inv(\bm{\mu}_{2m} \times G) \to \Inv(G)$ admits a splitting $a \mapsto s{\cdot}j^*(a)$ where $j: G \to \bm{\mu}_{2m} \times G$ is the natural inclusion, so we are done.
\end{proof}

\subsection{A surjection onto $I_{12}^3$} \label{sec:a-surjection} Because of Pfister's Theorem on $12$-dimensional quadratic forms in $I^3$ (see \ref{sec:I12}), we have for all fields $L/k$ a surjective map
\begin{align*}
	H^1(L, \bm{\mu}_2 \times \mathbf{O}^+_6 ) =   (L^\times/L^{\times 2}) \times I^2_{6}(L) &\longrightarrow I^3_{12}(L) = H^1(L, \mathbf{\Gamma}_{12}^+)\\ 
	(c L^{\times 2}, r) &\longmapsto \llangle c \rrangle r.
\end{align*}
Consequently, there is an injective homomorphism
\[
\Inv(I_{12}^3) = \Inv(\mathbf{\Gamma}_{12}^+) \hookrightarrow \Inv(\bm{\mu}_2 \times \mathbf{O}_6^+) = \Inv(\mathbf{O}_6^+)\oplus s{\cdot} \Inv(\mathbf{O}_{6}^+)\]
where $s(cL^{\times 2}, r) = (c)$ for all $c \in L^\times$, $r \in H^1(L, \mathbf{O}_6^+)$.
By Serre's Theorem \ref{thm:serre}, the right-hand side is a direct sum of the free module generated by $\{1, w_2, w_4, s, s{\cdot} w_2, s {\cdot} w_4\}$ and the module $\{ b^h \mid h \in J_{1}(k)\} \oplus \{s{\cdot} b^h \mid h \in J_{1}(k)\}$. To determine $\Inv(I_{12}^3)$, it remains to determine which of these invariants is in the image of $\Inv(I_{12}^3)$. Both compositions $H^1(k, \mathbf{O}_6^+) \to H^1(k, \bm{\mu}_2 \times \mathbf{O}_6^+) \to I_{12}^3(L)$ and $H^1(k, \bm{\mu}_2) \to H^1(k, \bm{\mu}_2 \times \mathbf{O}_6^+) \to I_{12}^3(L)$ are the zero maps, so the image of $\Inv(I_{12}^3)$ is contained in the proper submodule  
\begin{align} \label{eq:submodule}
H(k){\cdot}s{\cdot} w_2 \oplus H(k){\cdot}s{\cdot} w_4 \oplus \{s{\cdot} b^h \mid h \in J_{1}(k) \} \subset \Inv(\bm{\mu}_2 \times \mathbf{O}_6^+).\end{align}
The following technical lemma settles the matter: this submodule is the image of $\Inv(I_{12}^3)$.

\begin{lemma} \label{lem:serre-calculation}
	If $q = \llangle c \rrangle r$ where $r = \langle d \rangle (\psi_1'\perp \langle -1 \rangle \psi_2')$,  $\psi_i = \llangle x_i, y_i \rrangle$, and $c, d, x_i, y_i \in k^\times$, then
	\begin{align*}
	(c){\cdot} w_2(r) &= z_3(q),	\\
	(c){\cdot} w_4(r) &= z_5(q) + (-1){\cdot}(-1){\cdot} z_3(q), 	\\
	(c){\cdot} b^h(r) &= z^h(q) & \text{ for all } h \in J_{1}(k). 
	\end{align*}
\end{lemma}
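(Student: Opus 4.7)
The strategy is direct computation with the explicit diagonalization $r = \langle -dx_1, -dy_1, dx_1y_1, dx_2, dy_2, -dx_2y_2\rangle$ and the standard relations $(a){\cdot}(-a) = 0$ and $(a)^2 = (-1){\cdot}(a)$ in mod $2$ Galois cohomology. Throughout, abbreviate $\alpha = (-1)$, $u = (d)$, $s_i = (x_i)$, $t_i = (y_i)$ in $H(L)$. The third identity is the easiest: by definition $b^h(r) = h{\cdot}(-dx_1){\cdot}(-dy_1){\cdot}(dx_1y_1){\cdot}(dx_2){\cdot}(dy_2)$, and since $h{\cdot}\alpha = 0$, the signs in each factor can be dropped, leaving $b^h(r) = h{\cdot}(u+s_1){\cdot}(u+t_1){\cdot}(u+s_1+t_1){\cdot}(u+s_2){\cdot}(u+t_2)$. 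Any monomial containing a repeated symbol produces a factor of $\alpha$, which then vanishes after multiplication by $h$; the only surviving monomial is $h{\cdot}u{\cdot}s_1{\cdot}t_1{\cdot}s_2{\cdot}t_2$. Multiplying by $(c)$ recovers $z^h(q)$ via \eqref{eq:zh}.

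For the two Stiefel--Whitney identities, the cleanest route is the Whitney sum formula $w(A \perp B) = w(A){\cdot}w(B)$ applied to the decomposition $r = \langle d \rangle \psi_1' \perp \langle -d\rangle \psi_2'$. A direct expansion of $\prod_j(1 + (a_j))$ over the three entries of $\langle d \rangle \psi_i'$, combined with the squaring rule $u^2 = u\alpha$ and its analogues, yields
\[
w(\langle d\rangle \psi_i') = 1 + u + (\alpha^2 + u\alpha + s_i t_i) + u\, s_i t_i,
\]
and replacing $u$ by $u + \alpha$ throughout gives the analogous formula for $w(\langle -d\rangle \psi_j')$. Taking the product and reading off the degree-$2$ and degree-$4$ components produces explicit expressions for $w_2(r)$ and $w_4(r)$ as polynomials in $\alpha, u, s_i, t_j$.

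The last step multiplies these by $(c)$ and matches with the formulas for $z_3(q)$ and $z_5(q)$ recorded in Lemma \ref{lem:I12-invariants}. For $z_3$ the comparison is almost immediate. For $z_5$ one first expands $(-1){\cdot}(-d) = \alpha^2 + (-1){\cdot}(d)$, so that both $(c){\cdot}w_4(r)$ and $z_5(q) + \alpha^2{\cdot}z_3(q)$ become explicit linear combinations of the basic monomials $(c){\cdot}s_i t_i$, $(c){\cdot}u\alpha{\cdot}s_i t_i$, $(c){\cdot}\alpha^2{\cdot}s_i t_i$, and $(c){\cdot}s_1 t_1 s_2 t_2$; term-by-term comparison then finishes the proof. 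The main technical obstacle lies in the bookkeeping for $w_4(r) = w_1(W_1)w_3(W_2) + w_2(W_1)w_2(W_2) + w_3(W_1)w_1(W_2)$, where I expect the outer cross-terms $w_1 w_3$ to collapse to zero via $u^2 = u\alpha$ and the middle term $w_2 w_2$ to expand into a handful of monomials that pairwise cancel modulo $2$, leaving only the contributions predicted by the right-hand side.
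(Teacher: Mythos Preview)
Your approach is the same as the paper's: apply the Whitney product formula to $r = \langle d\rangle\psi_1' \perp \langle -d\rangle\psi_2'$ and compare term by term with the formulas in Lemma~\ref{lem:I12-invariants}. Your total Stiefel--Whitney class $w(\langle d\rangle\psi_i') = 1 + u + (\alpha^2 + u\alpha + s_it_i) + u\,s_it_i$ is correct, and the $w_1w_3$ cross-terms do vanish via $u(u+\alpha)=0$ exactly as you predict. The third identity goes through as you describe.

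A warning, however: when you actually compute $w_2(r)$ you will obtain $w_2(r) = \alpha^2 + s_1t_1 + s_2t_2$, so that $(c){\cdot}w_2(r) = z_3(q) + (c){\cdot}(-1){\cdot}(-1)$ rather than $z_3(q)$. The paper's proof asserts $w_2(r)=e_2(r)$ for $r\in I^2$, but for six-dimensional forms one has $w_2(r)=e_2(r)+(-1){\cdot}(-1)$ (check $r=3\mathbb{H}$: three of the diagonal entries contribute $(-1)$, so $w_2=3(-1)^2=(-1)^2\ne 0$). The paper then makes a compensating error in its formula for $w_2(\langle -d\rangle\psi_2')$, so its second identity also comes out shifted: the computation you outline in fact yields $(c){\cdot}w_4(r)=z_5(q)$, with no $(-1)^2 z_3(q)$ correction. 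None of this affects the downstream theorem, whose point is only that $z_3,z_5,z^h$ hit a spanning set in the target submodule; but do not be alarmed when your ``almost immediate'' comparison for $z_3$ produces an extra $(-1)^2$ term, and when your $w_4$ computation matches $z_5$ exactly.
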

\begin{proof}
	We have $w_2(r) = e_2(r)$ if the Witt class of $r$ is in $I^2(k)$ \cite[p.\ 31]{elman2008algebraic} so $(c){\cdot}w_2(r) = (c){\cdot} e_2(r) = e_3(\llangle c \rrangle d) = z_3(q)$, hence the first identity. Towards the second identity,  \eqref{eq:total-sw} implies
	 \begin{equation} \label{eq:w4} w_4(r) = w_1(\langle d \rangle \psi_1'){\cdot} w_3(\langle -d \rangle \psi_2') + w_2(\langle d \rangle \psi_1'){\cdot} w_2(\langle -d \rangle \psi_2') + w_3(\langle d \rangle \psi_1') {\cdot} w_1(\langle -d \rangle \psi_2'). \end{equation}
	 Now, $w_1(\langle d \rangle \psi_1') = w_1(\langle -dx_1, -dy_1, dx_1y_1\rangle) = (d)$ and $w_1(\langle -d \rangle \psi_2') = (-d)$. Further, \begin{align*}e_2(\langle d \rangle \psi_1) = w_2(\langle d \rangle \perp \langle d \rangle \psi_1') = w_2(\langle d \rangle \psi_1') + w_1(\langle d \rangle){\cdot}w_1(\langle d \rangle \psi_1') = w_2(\langle d \rangle \psi_1') + (d){\cdot}(d)	
 \end{align*}
 	hence $w_2(\langle d \rangle \psi_1') = e_2(\langle d \rangle \psi_1) + (d){\cdot}(d) = e_2(\psi_1) + (-1){\cdot}(d)$. Clearly then $w_2(\langle -d \rangle \psi_2') = e_2(\psi_2) + (-1){\cdot}(-d)$. After some basic manipulations with identities $(ab) = (a) + (b)$ and $(a)(-a) = 0$, 
 	\begin{align*}w_3(\langle d \rangle \psi_1') &= (-dx_1){\cdot}(-dy_1){\cdot}(dx_1y_1) %= [(-d){\cdot}(-x_1x_2) + (x_1){\cdot}(x_2)](dx_1x_2)  \\ &= (x_1){\cdot}(x_2){\cdot}(dx_1 x_2)
 	= (x_1){\cdot}(y_1){\cdot}(d), && w_3(\langle -d \rangle \psi_2') = (x_2){\cdot}(y_2){\cdot}(-d). \end{align*}
	Plugging these calculations into \eqref{eq:w4}, the first and third terms vanish and we are left with
	\begin{align*}
	w_4(r) &= [e_2(\psi_1) + (-1){\cdot}(d)]{\cdot}[e_2(\psi_2) + (-1){\cdot} (-d)]\\ &= e_2(\psi_1){\cdot} e_2(\psi_2) + (-1){\cdot}(-d) {\cdot} e_2(\psi_1) + (-1){\cdot}(d) {\cdot} e_2(\psi_2).
	\end{align*}
	Comparing with Lemma \ref{lem:I12-invariants}, we prove the second identity: 
	\begin{align*}
	(c){\cdot} w_4(r) - z_5(q) &= (-1){\cdot}(c){\cdot}e_2(\psi_1){\cdot}[(-d) + (d)] + (-1){\cdot}(c) {\cdot} e_2(\psi_2){\cdot}[(d) + (-d)] \\ &=(-1){\cdot}(-1){\cdot}(c){\cdot}[e_2(\psi_1) + e_2(\psi_2)] = (-1){\cdot}(-1){\cdot}z_3(q).
	\end{align*}
	We prove the third identity by comparing the following calculation with \eqref{eq:zh}:
	\begin{align*}b^h(r) &= h{\cdot} (-dx_1){\cdot}(-dy_1){\cdot}(dx_1 y_1){\cdot}(d x_2){\cdot}(d y_2)\\
	&= w_3(\langle d \rangle \psi_1') {\cdot}(dx_2){\cdot}(dy_2) = h{\cdot}(x_1){\cdot}(y_1){\cdot}(d){\cdot}[(d){\cdot}(-x_2y_2) + (x_2){\cdot}(y_2)] \\
	&= h{\cdot}(-1){\cdot}(x_1){\cdot}(y_1){\cdot}(d){\cdot}(-x_2y_2) +  h{\cdot}(d){\cdot}(x_1){\cdot}(y_1){\cdot}(x_2){\cdot}(y_2)\\
	&= h{\cdot}(d){\cdot}(x_1){\cdot}(y_1){\cdot}(x_2){\cdot}(y_2). \qedhere
	\end{align*}
\end{proof}

\begin{theorem}\label{thm:I12-Spin12}
	The natural inclusion $\Inv(I_{12}^3) \to \Inv(\mathbf{Spin}_{12})$ is an isomorphism, and $\Inv(I_{12}^3)$	 is a direct sum of the free $H(k)$-module with basis $\{1, z_3, z_5\}$ and the $H(k)$-module $\{z^h \mid h \in J_{1}(k)\} \simeq J_{1}(k)$.
\end{theorem}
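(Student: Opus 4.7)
The plan is to compute $\Inv(I_{12}^3)$ explicitly using the Pfister surjection of \ref{sec:a-surjection}, and then to deduce the identification $\Inv(I_{12}^3) = \Inv(\mathbf{Spin}_{12})$ via a fibration argument.

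First I would exploit the surjection $H^1(L, \bm{\mu}_2 \times \mathbf{O}_6^+) \twoheadrightarrow I_{12}^3(L)$, $(cL^{\times 2}, r) \mapsto \llangle c \rrangle r$, which induces an injective pullback $\Inv(I_{12}^3) \hookrightarrow \Inv(\bm{\mu}_2 \times \mathbf{O}_6^+)$. Lemma \ref{lem:invariants-of-product} together with Serre's Theorem \ref{thm:serre}(iii) identifies the target as the direct sum of the free $H(k)$-module on $\{1, w_2, w_4, s, s\cdot w_2, s\cdot w_4\}$ and the $J_1(k)$-parametrised family $\{b^h, s\cdot b^h\}$. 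Because normalised invariants of $I_{12}^3$ vanish on both trivial Pfister loci ($c \in L^{\times 2}$ or $r$ hyperbolic), the image is contained in the submodule \eqref{eq:submodule} spanned by $1, s\cdot w_2, s\cdot w_4$ and $\{s\cdot b^h\}$. Lemma \ref{lem:serre-calculation} then pins down the pullbacks $z_3 \mapsto s\cdot w_2$, $z_5 \mapsto s\cdot w_4 + (-1)\cdot(-1)\cdot s\cdot w_2$, and $z^h \mapsto s\cdot b^h$; these span exactly the constraint submodule, so $\{1, z_3, z_5\} \cup \{z^h : h \in J_1(k)\}$ generates $\Inv(I_{12}^3)$. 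The linear independence of the free part over $H(k)$ and the isomorphism $\{z^h\} \simeq J_1(k)$ are inherited from Serre's basis on the right-hand side.

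For the identification with $\Inv(\mathbf{Spin}_{12})$, I would apply Lemma \ref{lem:fibration-from-exact-sequence} to the central extension $1 \to \bm{\mu}_2 \to \mathbf{Spin}_{12} \to \mathbf{O}_{12}^+ \to 1$, restricted to the image subfunctor $I_{12}^3 \subset H^1(*, \mathbf{O}_{12}^+)$. Proposition \ref{prop:fibration}(ii) then yields the exact sequence
\[
\begin{tikzcd}
0 \ar[r] & \Inv(I_{12}^3) \ar[r] & \Inv(\mathbf{Spin}_{12}) \ar[r, "\partial"] & \Inv(I_{12}^3),
\end{tikzcd}
\]
and the containment $\Inv(\mathbf{\Gamma}_{12}^+) = \Inv(I_{12}^3) \subset \ker \partial$ is automatic by comparison with the analogous fibration arising from $1 \to \mathbf{G}_m \to \mathbf{\Gamma}_{12}^+ \to \mathbf{O}_{12}^+ \to 1$, whose ``$\mathbf{G}_m$-fibres'' are singletons since $H^1(L,\mathbf{G}_m)=1$ by Hilbert 90. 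Thus the theorem reduces to proving $\partial = 0$.

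The main obstacle will be showing $\partial = 0$ without recourse to the assumption $\sqrt{-1} \in k$ used in \cite{garibaldi2009cohomological}. A putative exotic invariant $\alpha \in \Inv^d(\mathbf{Spin}_{12})$ with nonzero residue $\partial \alpha \in \Inv^{d-1}(I_{12}^3)$ would, by the classification just obtained, be an $H(k)$-combination of $1, z_3, z_5$, and the $z^h$; I plan to rule out each possibility degree-by-degree by applying the defining identity $\alpha(c\cdot x) - \alpha(x) = (c)\cdot(\partial\alpha)(q)$ to Pfister-parametrised Spin-torsors, where the $\bm{\mu}_2$-action is transparently encoded by the square class $c$. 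The delicate point is that this argument must interact correctly with the $J_m(k)$-torsion that arises when $-1$ is not a square, and here I expect to use the residue formula $\partial' z_5 = (-1)\cdot z_3$ from Lemma \ref{lem:d'} to control the $(-1)$-terms and force $\partial\alpha$ to vanish in every degree.
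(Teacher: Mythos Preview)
Your computation of $\Inv(I_{12}^3)$ via the Pfister surjection and Lemma~\ref{lem:serre-calculation} matches the paper's argument exactly and is fine.

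The gap is in your plan for showing $\partial = 0$. You propose to ``rule out each possibility degree-by-degree'' by applying the identity $\alpha(c\cdot x) - \alpha(x) = (c)\cdot\partial\alpha(q)$ to explicit torsors. But $\alpha$ is an \emph{unknown} invariant of $\mathbf{Spin}_{12}$: you cannot evaluate the left-hand side without a concrete parametrisation of $H^1(*,\mathbf{Spin}_{12})$ itself, and your surjection $H^1(*,\bm{\mu}_2\times\mathbf{O}_6^+)\twoheadrightarrow I_{12}^3$ only parametrises the base of the fibration, not the total space. Knowing $\partial\alpha$ lies in the (already computed) module $\Inv(I_{12}^3)$ gives no further constraint on its value; the residue map is not a priori zero, and there is no second residue or obvious relation to exploit. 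Your invocation of $\partial' z_5 = (-1)\cdot z_3$ from Lemma~\ref{lem:d'} concerns a \emph{different} fibration (over $PI_{12}^3$, not $I_{12}^3$) and does not interact with $\partial$ in any way you have specified.

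The missing ingredient, which the paper supplies, is the surjection $H^1(*,\bm{\mu}_4\times\mathbf{O}_6^+)\twoheadrightarrow H^1(*,\mathbf{Spin}_{12})$ coming from an inclusion $\bm{\mu}_4\times\mathbf{O}_6^+\subset\mathbf{Spin}_{12}$ (see \cite[Example~17.12]{garibaldi2009cohomological}). This lifts your Pfister surjection to the level of $\mathbf{Spin}_{12}$-torsors. The paper then observes that the two fibrations
\[
\begin{tikzcd}[column sep=small]
H^1(*,\bm{\mu}_2) \ar[r,squiggly] & H^1(*,\bm{\mu}_4\times\mathbf{O}_6^+) \ar[r,two heads] \ar[d,two heads] & H^1(*,\bm{\mu}_2\times\mathbf{O}_6^+) \ar[d,two heads] \\
H^1(*,\bm{\mu}_2) \ar[r,squiggly] & H^1(*,\mathbf{Spin}_{12}) \ar[r,two heads] & I_{12}^3
\end{tikzcd}
\]
are compatible in the sense of Proposition~\ref{prop:fibration}(iii). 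The residue of the top row vanishes because $\Inv(\bm{\mu}_4\times\mathbf{O}_6^+)\simeq\Inv(\bm{\mu}_2\times\mathbf{O}_6^+)$ (Lemma~\ref{lem:invariants-of-product}), and since the left vertical map induces an injection on invariants, the residue of the bottom row vanishes too. This is a structural argument requiring no case analysis and no hypothesis on $\sqrt{-1}$.
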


\begin{proof}
The classification of invariants of $I_{12}^3$ is already completed by \ref{sec:a-surjection} and \ref{lem:serre-calculation}. So it remains to show that $\Inv(I_{12}^3) \simeq \Inv(\mathbf{Spin}_{12})$.

There is an inclusion $\bm{\mu}_4 \times \mathbf{O}_6^+ \subset \mathbf{Spin}_{12}$ such that $H^1(L, \bm{\mu}_4 \times \mathbf{O}_6^+) \to H^1(L, \mathbf{Spin}_{12})$ is surjective for all fields $L/k$ \cite[Example~17.12]{garibaldi2009cohomological}. The composition of this map with $H^1(L, \mathbf{Spin}_{12}) \to H^1(L, \mathbf{O}_{12}^+)$ is $(cL^{\times 4}, r) \mapsto \llangle c \rrangle r$. In other words, it is just an extension of the surjection from \ref{sec:a-surjection}. There are two fibrations (the horizontal lines) connected by the canonical surjective maps, denoted $S$ and $T$,
\[
\begin{tikzcd}
	H^1(*, \bm{\mu}_2) \ar[r, squiggly] & H^1(*, \bm{\mu}_4 \times \mathbf{O}_6^+) \ar[r, two heads] \ar[d,"S"] & H^1(*, \bm{\mu}_2 \times \mathbf{O}_6^+) \ar[d,"T"] \\
	H^1(*, \bm{\mu}_2) \ar[r, squiggly] & H^1(*, \mathbf{Spin}_{12}) \ar[r, two heads] & I_{12}^3(*)
	\end{tikzcd}
\]
such that $S_L$ is $L^\times/L^{\times 2}$-equivariant for all fields $L/k$, and the square formed by $S, T$, and the horizontal arrows is commutative. By Lemma \ref{lem:invariants-of-product}, the map $\Inv(\bm{\mu}_2 \times \mathbf{O}_6^+) \rightarrow \Inv(\bm{\mu}_4 \times \mathbf{O}_6^+)$ is an isomorphism and by Proposition~\ref{prop:fibration}~(ii) the residue $\partial: \Inv(\bm{\mu}_4 \times \mathbf{O}_6^+) \to \Inv(\bm{\mu}_2 \times \mathbf{O}_6^+)$ associated to the first fibration is zero. Since $S^*$ is injective, Proposition~\ref{prop:fibration}~(iii) implies the residue $\partial: \Inv(\mathbf{Spin}_{12}) \to \Inv(I_{12}^3)$ associated to the second fibration is also zero. In turn, Proposition~\ref{prop:fibration}~(ii) implies the theorem.  \end{proof}

For an alternative  proof that $\Inv(I_{12}^3) \simeq \Inv(\mathbf{Spin}_{12})$, one can work out the image of the embedding $\Inv(\mathbf{Spin}_{12}) \to \Inv(\bm{\mu}_4 \times \mathbf{O}^+_6) \simeq \Inv(\bm{\mu}_2 \times \mathbf{O}^+_6) $. The submodule \eqref{eq:submodule} is a lower bound because we proved that this is the image of $\Inv(I_{12}^3) \to \Inv(\bm{\mu}_2 \times \mathbf{O}^+_6)$. It is also an upper bound, because the compositions $H^1(k,\mathbf{O}_6^+) \to H^1(k,\bm{\mu}_4 \times \mathbf{O}_6^+) \to H^1(k,\mathbf{Spin}_{12})$ and $H^1(k,\bm{\mu}_4) \to H^1(k,\bm{\mu}_4 \times \mathbf{O}_6^+) \to H^1(k,\mathbf{Spin}_{12})$ are the trivial maps.

\begin{corollary} \label{cor:Inv-PI12}
	The image of the homomorphism $\Inv(PI_{12}^3) \hookrightarrow \Inv(I_{12}^3)$ is 
	\[
	H(k){\cdot} 1 \oplus H(k){\cdot} z_3 \oplus \{ h{\cdot} z_5 \mid h \in J_1(k) \}.
	\]
\end{corollary}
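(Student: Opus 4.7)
The plan is to apply Proposition~\ref{prop:fibration}~(ii) to the fibration in \eqref{eq:first-fibration}: the image of $\Inv(PI_{12}^3) \hookrightarrow \Inv(I_{12}^3)$ equals $\ker(\partial')$, where $\partial'$ is the residue map whose action on the generators of $\Inv(I_{12}^3)$ has already been computed in Lemma~\ref{lem:d'}. The corollary will therefore follow once $\ker(\partial')$ is identified explicitly, and this identification is short bookkeeping against the module decomposition of $\Inv(I_{12}^3)$ supplied by Theorem~\ref{thm:I12-Spin12}.

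Concretely, I would write an arbitrary element as
\[
a = a_0 + a_1 z_3 + a_2 z_5 + z^h, \qquad a_0, a_1, a_2 \in H(k), \ h \in J_1(k),
\]
apply Lemma~\ref{lem:d'} termwise to get
\[
\partial' a = a_2 \cdot (-1) \cdot z_3 + h \cdot z_5 \in \Inv(PI_{12}^3),
\]
and then view this relation inside $\Inv(I_{12}^3)$ via the canonical inclusion. Since $z_3$ and $z_5$ are distinct basis elements of the free $H(k)$-summand of $\Inv(I_{12}^3)$ given in Theorem~\ref{thm:I12-Spin12}, the vanishing $\partial' a = 0$ splits into the two independent conditions $a_2 \cdot (-1) = 0$ and $h \cdot z_5 = 0$. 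The first condition is exactly $a_2 \in J_1(k)$, the second gives $h = 0$, and together they identify $\ker(\partial')$ with $H(k){\cdot}1 \oplus H(k){\cdot}z_3 \oplus J_1(k){\cdot}z_5$, matching the displayed submodule.

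I do not anticipate any genuine obstacle: the deep work was already absorbed into Lemma~\ref{lem:d'} and Theorem~\ref{thm:I12-Spin12}, and what remains is just comparing coefficients in a free module. The one conceptual point worth keeping in view is that $h \cdot z_5$ genuinely lies in $\Inv(PI_{12}^3)$ only because the hypothesis $h \cdot (-1) = 0$ kills the similitude defect $z_5(\langle b \rangle q) - z_5(q) = (b){\cdot}(-1){\cdot}z_3(q)$, and it is precisely this mechanism that forces the third summand of the image to be $J_1(k){\cdot}z_5$ rather than all of $H(k){\cdot}z_5$.
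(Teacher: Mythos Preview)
Your proof is correct and follows exactly the paper's approach: the paper's own proof simply says that by Proposition~\ref{prop:fibration}~(ii) the image equals $\ker(\partial')$, which is known from Lemma~\ref{lem:d'}. You have merely spelled out the coefficient comparison that the paper leaves implicit.
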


\begin{proof}
		By Proposition \ref{prop:fibration}~(ii) the image is $\ker(\partial')$, which is known from Lemma \ref{lem:d'}.
\end{proof}

\section{Invariants of $I_{14}^3$ and $\mathbf{Spin}_{14}$}

\subsection{Known invariants of $I_{14}^3$} \label{sec:I14-known-invariants}
There are three nontrivial invariants in $\Inv(I_{14}^3)$ that are known from  \cite[\S22.3]{garibaldi2009cohomological}. These occur in degrees $3$, $6$, and $7$, but the degree~7 invariant is only known to exist for fields $k$ in which $\sqrt{-1} \in k$. The degree~3 invariant, which we denote by $a_3 \in \Inv(I_{14}^3)$, is the restriction of the Arason invariant 
$e_3: I^3(*) \to H^3(*, \ZZ/2\ZZ)$  to $14$-dimensional forms.  The degree 6 invariant is denoted by $a_6 \in \Inv(I_{14}^3)$ and it is the restriction of $e_6 \circ P_3 : I^3(*) \to H^6(*,\ZZ/2\ZZ)$, where $P_3: I^3(*) \to I^6(*)$ is the functor defined in \ref{sec:P_n}. The third invariant (defined as long as $\sqrt{-1} \in k$) is denoted by $a_7 \in \Inv(I_{14}^3)$ and for a quadratic form $Q \in I_{14}^3(k)$ it takes the value
\[
a_7(Q) = (Q(v)){\cdot} a_6(Q)
\]
	where $v \in k^{14}$ is any anisotropic vector for $Q$. When $Q \in I^3_{14}$ is similar to the difference of two $3$-Pfister forms, as in Corollary~\ref{cor:Rost}~(1), it is easy to give an expression for $a_6(Q)$:
\begin{lemma} \label{lem:a6-decomposable}
	If $Q = \langle c \rangle(\phi_1' \perp \langle -1 \rangle \phi_2')$ where $\phi_i$ are $3$-Pfister forms over $k$, then 
	\begin{align*}
	a_6(Q) &= e_3(\phi_1){\cdot} e_3(\phi_2) + (-1){\cdot}(-1){\cdot} (c){\cdot} e_3(\phi_1) + (-1){\cdot}(-1){\cdot}(-c){\cdot} e_3(\phi_2).
	\end{align*}
\end{lemma}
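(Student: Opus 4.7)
The plan is to compute $a_6(Q) = e_6(P_3(Q))$ directly using the explicit identities for $P_n$ collected in \ref{sec:P_n}. I first rewrite $Q$ in the Witt ring: since $\phi_i = \langle 1 \rangle \perp \phi_i'$, the $\langle 1 \rangle$-summands cancel when we form $\phi_1' \perp \langle -1 \rangle \phi_2'$, so in $W(k)$
\[
Q = \langle c \rangle \phi_1 - \langle c \rangle \phi_2 = \langle c \rangle \phi_1 + \langle -c \rangle \phi_2.
\]

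Next I apply the identities \eqref{eq:Pn-pfister}, \eqref{eq:Pn-sim}, and \eqref{eq:Pn-x+y} with $n = 3$. Because $\phi_1$ and $\phi_2$ are $3$-Pfister forms, $P_3(\phi_i) = 0$, hence $P_3(\langle c \rangle \phi_1) = 4\llangle c \rrangle \phi_1$ and $P_3(\langle -c \rangle \phi_2) = 4\llangle -c \rrangle \phi_2$. The cross term is $\langle c \rangle \phi_1 \cdot \langle -c \rangle \phi_2 = \langle -c^2 \rangle \phi_1 \phi_2 = \langle -1 \rangle \phi_1 \phi_2$ in $W(k)$. Putting these together,
\[
P_3(Q) = 4\llangle c \rrangle \phi_1 \; + \; \langle -1 \rangle \phi_1 \phi_2 \; + \; 4\llangle -c \rrangle \phi_2.
\]

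The final step is to recognise each summand as a $6$-Pfister form (up to similitude) and apply $e_6$. The key identity is $\langle 1,1 \rangle = \llangle -1 \rrangle$ in $W(k)$; iterating this gives $4 \llangle c \rrangle = \llangle -1, -1, c \rrangle$ (and likewise with $c$ replaced by $-c$). Therefore $4\llangle c \rrangle \phi_1$ and $4\llangle -c \rrangle \phi_2$ are genuine $6$-Pfister forms, and $\phi_1 \phi_2$ is a $6$-Pfister form as well. Using that $e_6$ is invariant under similitude and sends a Pfister form to the cup product of its slots, we obtain
\begin{align*}
e_6(4\llangle c \rrangle \phi_1) &= (-1){\cdot}(-1){\cdot}(c){\cdot} e_3(\phi_1),\\
e_6(4\llangle -c \rrangle \phi_2) &= (-1){\cdot}(-1){\cdot}(-c){\cdot} e_3(\phi_2),\\
e_6(\langle -1 \rangle \phi_1 \phi_2) &= e_3(\phi_1){\cdot} e_3(\phi_2),
\end{align*}
and summing these three contributions gives the claimed formula.

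The whole argument is formal; there is no substantive obstacle. The only bookkeeping one must do carefully is tracking the $(-1)$ factors produced by integer multiples of Pfister forms via $\langle 1,1 \rangle = \llangle -1 \rrangle$, and it is precisely these factors that account for the two $(-1){\cdot}(-1)$ terms in the statement.
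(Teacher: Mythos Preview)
Your proof is correct and takes essentially the same approach as the paper. The paper simply cites \eqref{eq:big-Pn} (which is itself derived from \eqref{eq:Pn-pfister}, \eqref{eq:Pn-sim}, \eqref{eq:Pn-x+y}) to obtain $P_3(Q) = \langle -1 \rangle \phi_1\phi_2 + 4\llangle c \rrangle \phi_1 + 4\llangle -c \rrangle \phi_2$ and then applies $e_6$; you unpack that citation and carry out the same computation explicitly.
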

\begin{proof}
	By \eqref{eq:big-Pn}, 
	$
	P_3(Q) = \langle -1 \rangle \phi_1\phi_2 + 4\llangle c \rrangle \phi_1 + 4 \llangle -c \rrangle \phi_2
	$
	and the lemma follows by applying $e_6$ to this expression.
\end{proof}

Provided $-1$ is a sum of two squares in $k$, we can also express $a_6(Q)$ quite easily for any $Q \in I^3_{14}(k)$, and this is done in \eqref{eq:a6:concrete} and \eqref{eq:a6:concrete2}, but in full generality it is hard. The following lemma generalises \cite[Proposition~22.2~(2)]{garibaldi2009cohomological}. Unfortunately \cite[Proposition~22.2~(1)]{garibaldi2009cohomological} is not correct:   Lemma \ref{lem:a6-decomposable} implies that the isotropic form $Q = \llangle -1, t_1, t_2 \rrangle' \perp \langle -1\rangle \llangle -1, t_3, t_4 \rrangle'$ over $\mathbb{R}(t_1, t_2, t_3, t_4)$ evaluates to a non-symbol \[a_6(Q) = (-1){\cdot}(-1){\cdot}(t_1){\cdot}(t_2){\cdot}(t_3){\cdot}(t_4) +  (-1){\cdot}(-1){\cdot} (-1){\cdot} (-1){\cdot}(t_3){\cdot}(t_4).\]

\begin{lemma} \label{lem:a6-vanish}
	If $Q\in I_{14}^3(k)$ is isotropic then $a_6(Q) \in (-1){\cdot} H^5(k,\ZZ/2\ZZ)$.	
\end{lemma}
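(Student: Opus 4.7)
The plan is to exploit the isotropy of $Q$ to reduce to a very concrete shape, and then apply Lemma~\ref{lem:a6-decomposable} directly. Since $Q$ is isotropic and fourteen-dimensional, write $Q \simeq Q' \perp \mathbb{H}$ with $Q' \in I_{12}^3(k)$. Pfister's parameterisation (see~\ref{sec:I12}) gives $Q' \simeq \langle d \rangle \llangle c \rrangle (\psi_1' \perp \langle -1 \rangle \psi_2')$ with $\psi_i = \llangle x_i, y_i \rrangle$ for suitable $d, c, x_i, y_i \in k^\times$.

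Set $\phi_i = \llangle c, x_i, y_i \rrangle$. A direct computation in $W(k)$, using $\psi_i = \phi_i - \llangle c \rrangle$ modulo $\langle 1 \rangle$, shows that $Q$ and $\langle d \rangle (\phi_1' \perp \langle -1 \rangle \phi_2')$ both lie in the Witt class $\langle d \rangle(\phi_1 - \phi_2)$. Since each is a $14$-dimensional quadratic form, Witt cancellation yields an isometry
\[
Q \simeq \langle d \rangle (\phi_1' \perp \langle -1 \rangle \phi_2'),
\]
where now, crucially, the two $3$-Pfister forms $\phi_1$ and $\phi_2$ share the common slot $c$. Applying Lemma~\ref{lem:a6-decomposable} gives
\[
a_6(Q) = e_3(\phi_1) \cdot e_3(\phi_2) + (-1)(-1)(d) \cdot e_3(\phi_1) + (-1)(-1)(-d) \cdot e_3(\phi_2).
\]

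The second and third summands already visibly lie in $(-1) \cdot H^5(k, \ZZ/2\ZZ)$. For the first summand, the shared slot combined with the basic identity $(c){\cdot}(c) = (c){\cdot}(-1)$ in $H^2(k,\ZZ/2\ZZ)$ produces
\[
e_3(\phi_1) \cdot e_3(\phi_2) = (c)(x_1)(y_1)(c)(x_2)(y_2) = (-1)(c)(x_1)(y_1)(x_2)(y_2),
\]
which also lies in $(-1) \cdot H^5(k, \ZZ/2\ZZ)$. Summing, $a_6(Q) \in (-1) \cdot H^5(k, \ZZ/2\ZZ)$, as required. The only point requiring care is the dimension-count that promotes the Witt-equivalence to a genuine isometry so that Lemma~\ref{lem:a6-decomposable} applies; the remainder is a formal consequence of the common-slot phenomenon and an already-proved evaluation formula, so no genuine obstacle is expected.
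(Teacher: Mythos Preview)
Your proof is correct and follows essentially the same route as the paper: write the isotropic $Q$ as $\langle d \rangle(\phi_1' \perp \langle -1 \rangle \phi_2')$ with $3$-Pfister forms $\phi_1, \phi_2$ sharing a common slot (which is exactly the content of~\ref{sec:I12}), apply Lemma~\ref{lem:a6-decomposable}, and use $(c){\cdot}(c) = (-1){\cdot}(c)$ on the $e_3(\phi_1){\cdot}e_3(\phi_2)$ term. The paper simply cites~\ref{sec:I12} for the common-slot decomposition, whereas you rederive it via Witt equivalence and dimension count, but the argument is the same.
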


\begin{proof}
	An isotropic $Q \in I_{14}^3(k)$ is of the form $Q = \langle c \rangle(\phi_1' \perp \langle -1 \rangle \phi_2')$ where the $\phi_i = \llangle x, y_i, z_i \rrangle$ have a common slot (\ref{sec:I12}). By Lemma \ref{lem:a6-decomposable}, $a_6(Q) = e_3(\phi_1){\cdot}e_3(\phi_2)$ modulo $(-1){\cdot}H^5(k,\ZZ/2\ZZ)$, and \[e_3(\phi_1){\cdot}e_3(\phi_2) = (x){\cdot}(y_1){\cdot}(z_1){\cdot}(x){\cdot}(y_2){\cdot}(z_2) = (-1){\cdot}(x){\cdot}(y_1){\cdot}(z_1){\cdot}(y_2){\cdot}(z_2). \qedhere \]  \end{proof}

With this lemma at hand, one can use generalise the invariant $a_7$. For $h \in J_1(k)$, let 
\[
a^h(Q) = h{\cdot} Q(v){\cdot} a_6(Q)
\]
where $v$ is an anisotropic vector for $Q$. This defines an invariant because $h{\cdot} a_6$ vanishes on isotropic forms in $I_{14}^3(k)$ and \cite[Proposition~10.2]{garibaldi2009cohomological} implies that the quantity $h{\cdot} Q(v) {\cdot} a_6(Q)$ does not depend on the choice of $v$. If $-1 \in k^{\times 2}$, then $J_1(k) = H(k)$ and $a^h = h{\cdot} a^1$ for all $h \in H(k)$.

\begin{lemma} \label{lem:a6-sim}
	For all $Q \in I_{14}^3(k)$ and $c \in  k^\times$,
	\[
	a_6(\langle c \rangle Q) - a_6(Q) = (-1) {\cdot} (-1){\cdot} (c) {\cdot} a_3(Q).
	\]	
\end{lemma}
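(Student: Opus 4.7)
The plan is to reduce the identity to a straightforward calculation in the Witt ring using the two key properties of the operator $P_3$ from \ref{sec:P_n}, together with the additivity of $e_6$ on $I^6(k)$.

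First I would apply \eqref{eq:Pn-sim} with $n = 3$ and $q = Q$ to obtain
\[
P_3(\langle c\rangle Q) - P_3(Q) = 4\llangle c\rrangle Q \quad \text{in } W(k).
\]
Since $Q \in I^3(k)$, \eqref{eq:In-I2n} gives $P_3(\langle c\rangle Q),\, P_3(Q) \in I^6(k)$, and of course $4\llangle c\rrangle Q \in I^6(k)$ as well. Applying the additive homomorphism $e_6: I^6(k) \to H^6(k,\ZZ/2\ZZ)$ therefore yields
\[
a_6(\langle c\rangle Q) - a_6(Q) = e_6\big(4\llangle c \rrangle Q\big).
\]

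The key observation is then that in $W(k)$ one has $4 = \langle 1,1,1,1\rangle = \llangle -1,-1\rrangle$, so $4\llangle c\rrangle = \llangle -1,-1,c\rrangle$ and the right-hand side becomes $e_6(\llangle -1,-1,c\rrangle Q)$. Writing $Q = \sum_i \langle c_i\rangle \phi_i$ as a sum of scaled $3$-Pfister forms (as in the discussion preceding \eqref{eq:big-Pn}), multiplying by $\llangle -1,-1,c\rrangle$ gives, in $W(k)$,
\[
\llangle -1,-1,c\rrangle Q \;=\; \sum_i \langle c_i\rangle \llangle -1,-1,c\rrangle \phi_i,
\]
where each summand $\llangle -1,-1,c\rrangle\phi_i$ is a $6$-Pfister form.

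The final step is to invoke the property $e_6(\langle c_i\rangle \psi) = e_6(\psi)$ for $\psi \in I^6(k)$ (from \ref{sec:en}), along with the defining symbol formula for $e_6$ on a Pfister form. This gives
\[
e_6(\llangle -1,-1,c\rrangle \phi_i) \;=\; (-1)\cdot (-1)\cdot (c)\cdot e_3(\phi_i),
\]
and summing over $i$ and using additivity of $e_6$ on $I^6(k)$ together with additivity of $e_3$ on $I^3(k)$ yields
\[
e_6\big(\llangle -1,-1,c\rrangle Q\big) \;=\; (-1)\cdot (-1)\cdot (c)\cdot e_3(Q) \;=\; (-1)\cdot (-1)\cdot (c)\cdot a_3(Q),
\]
which is the desired identity. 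There is no real obstacle in this argument; the only subtlety worth verifying is that every manipulation stays inside $I^6(k)$ so that $e_6$ may be legitimately applied termwise, and this is immediate from $Q \in I^3(k)$ and $\llangle -1,-1,c\rrangle \in I^3(k)$.
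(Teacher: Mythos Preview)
Your proof is correct and is precisely the argument the paper has in mind: the paper's own proof consists of the single sentence ``This follows directly from \eqref{eq:Pn-sim},'' and you have simply spelled out that deduction in full detail.
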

\begin{proof}
	This follows directly from \eqref{eq:Pn-sim}.
\end{proof}

	We would like to compare the degree 6 invariant of $H^1(*,(G_2 \times G_2)\rtimes S_2)$ with the degree 6 invariant of $I_{14}^3(*)$, but at first glance it is not clear how to do this because there is no morphism between these two functors. (Recall that the Albert form of a bi-octonion algebra is only defined up to similitude, and $a_6$ is not always compatible with similitudes.) 
		Theorem~\ref{thm:big-calc} is the best we can do: it bounds the difference between the  quadratic trace invariant $b_6$ of  a bi-octonion algebra and the invariant $a_6$ of a quadratic form similar to one of its Albert forms. The main technical device is the following formula which makes a seriously useful connection between the operations of additive transfer (\ref{sec:transfers-and-squares}), multiplicative transfer (\ref{sec:multiplicative-transfer}), and exterior square (\ref{sec:exterior-square}):

\begin{theorem}[Rost, Wittkop {\cite[Satz~2.12]{wittkop}}] \label{thm:lambda2}
	Let $E= k(\sqrt{d})$ be a quadratic field extension, and let $x \in \widehat{W}(E)$. Then
	\[
	\lambda^2(T_{E/k}(x)) = T_{E/k}(\lambda^2(x)) + \langle d \rangle N_{E/k}(x).
	\]
\end{theorem}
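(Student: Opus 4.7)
The plan is to verify the identity by checking it on one-dimensional generators and then showing an additivity property. Since $\widehat W(E)$ is generated as an abelian group by forms $\langle e \rangle$ with $e\in E^\times$, this will suffice.

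First, I would settle the base case. Choose a $k$-basis $\{1,\sqrt d\}$ of $E$ and write $e = a+b\sqrt d$. Direct calculation of the Scharlau transfer of $\langle e\rangle$ gives the $k$-quadratic form $(u,w)\mapsto 2au^2 + 4buwd + 2adw^2$, whose Gram matrix has determinant $4d\cdot N_{E/k}(e)$; hence $\lambda^2(T_{E/k}(\langle e\rangle)) = \langle d\,N_{E/k}(e)\rangle$. The right-hand side evaluates to $T_{E/k}(0) + \langle d\rangle N_{E/k}(\langle e\rangle) = \langle d\rangle\langle N_{E/k}(e)\rangle = \langle d\,N_{E/k}(e)\rangle$ by \eqref{eq:N-one-d}.

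Next, I would do a dimension count. Both sides are of $k$-dimension $\dim x\,(2\dim x-1)$: the LHS is $\binom{2\dim x}{2}$-dimensional, and the RHS has dimension $2\binom{\dim x}{2} + (\dim x)^2$. This reduces the problem to proving equality in the Witt ring $W(k)$, since a class of dimension $0$ in $\widehat W(k)$ which vanishes in $W(k)$ must itself be zero.

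Third, I would set $\Delta(x) = \lambda^2(T_{E/k}(x)) - T_{E/k}(\lambda^2(x)) - \langle d\rangle N_{E/k}(x)$ and show $\Delta(x+y) = \Delta(x) + \Delta(y)$ in $W(k)$. The key tool is an addition formula for the multiplicative transfer,
\[
N_{E/k}(x+y) - N_{E/k}(x) - N_{E/k}(y) = \langle d\rangle\cdot T_{E/k}(x\cdot{^\iota y}),
\]
obtained by decomposing the switch-fixed subspace of ${^\iota(V\oplus W)}\otimes_E(V\oplus W)$ (for representatives $(V,q)$ of $x$ and $(W,p)$ of $y$) as $N_{E/k}(V)\oplus N_{E/k}(W)\oplus F$; writing fixed elements of $F$ as symmetric and $\sqrt d$-antisymmetric combinations of ${^\iota v}\otimes w$ and ${^\iota w}\otimes v$ identifies the form on $F$ with $\langle d\rangle T_{E/k}(q\otimes p)$. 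Combining this with the $\lambda$-identity $\lambda^2(a+b) = \lambda^2(a) + ab + \lambda^2(b)$, additivity of $T_{E/k}$, the projection formula $T_{E/k}(x\cdot z_E) = T_{E/k}(x)\cdot z$ and the basic trace relation $\operatorname{res}_{E/k}\circ T_{E/k}(y) = y+{^\iota y}$ in $W(E)$, we obtain $T_{E/k}(x)\cdot T_{E/k}(y) = T_{E/k}(xy) + T_{E/k}(x\cdot{^\iota y})$, so using $\langle d\rangle^2 = \langle 1\rangle$ the bookkeeping gives
\[
\Delta(x+y) - \Delta(x) - \Delta(y) = T_{E/k}(x)T_{E/k}(y) - T_{E/k}(xy) - T_{E/k}(x\cdot{^\iota y}) = 0.
\]
Combined with $\Delta(\langle e\rangle) = 0$ from step one and the additive generation of $\widehat W(E)$, this gives $\Delta \equiv 0$ in $W(k)$, and the dimension count of step two promotes this to $\widehat W(k)$.

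The main obstacle is the addition formula for $N_{E/k}$: the decomposition of the switch-fixed subspace is elementary, but identifying the induced form on the cross-term summand precisely as $\langle d\rangle T_{E/k}(q\otimes p)$ requires a careful coordinate calculation tracking how the scalar $\langle d\rangle$ enters through the basis choice $\{1,\sqrt d\}$. Once this is in hand, the inductive step is a clean manipulation that relies only on the projection formula and the behaviour of $\lambda^2$ under sums.
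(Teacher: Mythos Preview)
The paper does not prove this theorem; it is quoted from Wittkop's thesis and used as a black box in the proof of Theorem~\ref{thm:big-calc}. So there is no argument in the paper to compare yours against. That said, your strategy---verify on rank-one forms and propagate by an additivity identity---is the natural one and almost certainly matches Wittkop's original proof.

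There is one genuine gap. Your addition formula
\[
N_{E/k}(x+y)-N_{E/k}(x)-N_{E/k}(y)=\langle d\rangle\,T_{E/k}(x\cdot{}^{\iota}y)
\]
is established by a geometric decomposition of the switch-fixed subspace, which only makes sense for \emph{actual} quadratic forms $x,y$. You then invoke ``additive generation of $\widehat W(E)$ by $\langle e\rangle$'s'', which requires formal negatives, and hence implicitly uses the addition formula for virtual $x,y$. But $N_{E/k}$ is extended to $\widehat W(E)$ multiplicatively (via the explicit value $N_{E/k}(-1)=3-\langle 2\rangle\llangle d\rrangle$), not via your addition formula, and a direct check with $x=\langle 1\rangle$, $y=-\langle 1\rangle$ shows the two are not obviously compatible for virtual arguments. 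The fix is easy: your argument already proves $\Delta(q)=0$ for every \emph{actual} form $q$ (diagonalise and use your additivity on the monoid of isometry classes), and the dimension count then gives the identity in $\widehat W(k)$ for actual $q$. Passing to arbitrary $x=q-p\in\widehat W(E)$ then requires the subtraction formulas for $\lambda^2$ and $N_{E/k}$ separately, rather than your additivity of $\Delta$. Since the paper only applies the theorem to the actual form $x=\langle\sqrt d\rangle n$, this gap is harmless for its purposes.
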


\begin{theorem} \label{thm:big-calc}
Suppose $Q \in I_{14}^3(k)$ and $\beta \in H^1(k, (G_2\times G_2)\rtimes S_2)$  have the same image in $PI_{14}^3(k)$. Then
\[
a_6(Q) - b_6(\beta)  \in (-1) {\cdot} (-1) {\cdot} H^4(k, \ZZ/2\ZZ).
\]
\end{theorem}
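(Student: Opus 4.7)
The first step is to reduce to the case where $Q$ is genuinely an Albert form $Q_0$ of the bi-octonion algebra $(A,-)$ corresponding to $\beta$. Since $Q$ and $Q_0$ have the same image in $PI_{14}^3(k)$, there exists $c\in k^\times$ with $Q\simeq \langle c\rangle Q_0$. By Lemma~\ref{lem:a6-sim},
\[
a_6(Q)-a_6(Q_0) = (-1)\cdot(-1)\cdot(c)\cdot a_3(Q_0) \in (-1)\cdot(-1)\cdot H^4(k,\ZZ/2\ZZ),
\]
so it suffices to prove $a_6(Q_0)-b_6(\beta)\in (-1)(-1)H^4(k,\ZZ/2\ZZ)$. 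We then split into two cases according to the value of $b_1(\beta)$.

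In the decomposable case, $(A,-)=C_1\otimes C_2$ with norms $n_1,n_2$ and Albert form $Q_0 = n_1'\perp\langle -1\rangle n_2'$. Applying Lemma~\ref{lem:a6-decomposable} with $c=1$ gives
\[
a_6(Q_0) = e_3(n_1)\cdot e_3(n_2) + (-1)(-1)(1)e_3(n_1) + (-1)(-1)(-1)e_3(n_2),
\]
whose middle summand vanishes and whose last summand lies in $(-1)(-1)H^4(k)$. On the other hand, from the definition in \ref{sec:b6} with $E=k\times k$ and $n_E=\mathbb{H}$, one has $b_6(\beta) = e_6(n_1\otimes n_2) = e_3(n_1)\cdot e_3(n_2)$, settling this case.

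In the non-decomposable case, $(A,-)=\cor_{E/k}(C)$ with $E=k(\sqrt d)$ a quadratic field extension and $C$ an octonion $E$-algebra with norm $n$. Take $Q_0 = T_{E/k}(\langle -\sqrt d\rangle n')$. The plan is to compute $P_3(Q_0)\in I^6(k)$ via Rost--Wittkop (Theorem~\ref{thm:lambda2}) applied to $x=\langle -\sqrt d\rangle n\in\widehat W(E)$, and compare with $N_{E/k}(n)-4\llangle d\rrangle$. Since $n$ is a $3$-Pfister form, $P_3(n)=0$ in $W(E)$, hence $\lambda^2(n)=4n-4$, and by scaling invariance of $\lambda^2$ also $\lambda^2(x)=\lambda^2(n)$ in $W(E)$. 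Furthermore, by \eqref{eq:N-one-d} and multiplicativity of $N_{E/k}$, one has $N_{E/k}(x)=\langle -d\rangle N_{E/k}(n)$, so $\langle d\rangle N_{E/k}(x) = \langle -1\rangle N_{E/k}(n)$. Accounting for $\lambda^2(Q_0+\mathbb{H})=\lambda^2(Q_0)+\langle -1\rangle$, these substitutions yield
\[
\lambda^2(Q_0) \;=\; 4T_{E/k}(n) \;-\; 4T_{E/k}(\langle 1\rangle) \;+\; \langle -1\rangle N_{E/k}(n) \;-\; \langle -1\rangle
\]
in $W(k)$. Plugging into $P_3(Q_0)=7+\lambda^2(Q_0)-4Q_0$, using $4T_{E/k}(n)-4Q_0=4T_{E/k}(\llangle -\sqrt d\rrangle n)$ and collecting terms, the difference $P_3(Q_0)-(N_{E/k}(n)-4\llangle d\rrangle)$ is rewritten as a specific $W(k)$-linear combination of the transfers $T_{E/k}(\langle 1\rangle)$, $T_{E/k}(\llangle -\sqrt d\rrangle n)$, $N_{E/k}(n)$, and multiples of $\llangle -1\rrangle$, each scaled by $4=\llangle -1,-1\rrangle$. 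The claim is that this combination lies in $\llangle -1,-1\rrangle I^4(k) + I^7(k)$, which after $e_6$ maps into $(-1)(-1)H^4(k,\ZZ/2\ZZ)$.

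The main obstacle is this final filtration analysis in the non-decomposable case. The challenge is that each individual transfer term, a priori, only lies in $I^3(k)$ (since $T_{E/k}$ drops the fundamental filtration by one), so at first glance multiplying by $4=\llangle -1,-1\rrangle$ only places each term in $\llangle -1,-1\rrangle I^3$, one degree too low. What saves us is Theorem~\ref{thm:norm}, which gives an explicit $6$-Pfister representative for $N_{E/k}(n)-4\llangle d\rrangle$, and the identity $\langle -1\rangle N_{E/k}(n) = N_{E/k}(n) - \llangle -1\rrangle N_{E/k}(n)$, which shifts one copy of $N_{E/k}(n)$ out and leaves $\llangle -1\rrangle\cdot(N_{E/k}(n)-4\llangle d\rrangle)\in \llangle -1\rrangle I^6\subset I^7$ as a harmless term. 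The residual error term, after these cancellations, is $\llangle -1,-1\rrangle\cdot\big(T_{E/k}(\llangle -\sqrt d\rrangle n') - T_{E/k}(\langle 1\rangle) + \llangle d\rrangle - \llangle -1\rrangle/2\big)$, and the payoff is to verify that the argument in parentheses lies in $I^4(k)$. This step should follow by a direct symbol-level computation comparing Theorem~\ref{thm:norm} for $N_{E/k}(\llangle -\sqrt d\rrangle n)$ with the transfer formula, using $T_{E/k}(\langle\sqrt d\rangle)=0$ and the vanishing of $\llangle d\rrangle\cdot T_{E/k}(-)$ coming from $\llangle d\rrangle_E=0$ in $W(E)$.
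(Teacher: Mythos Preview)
Your overall architecture matches the paper's proof exactly: reduce via Lemma~\ref{lem:a6-sim}, handle the decomposable case with Lemma~\ref{lem:a6-decomposable}, and in the indecomposable case apply Theorem~\ref{thm:lambda2} to $T_{E/k}(\langle \pm\sqrt d\rangle n)$. Case~1 is fine.

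In Case~2 there is a genuine gap in the endgame. Two points:

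\textbf{(a) Work with the sum, not the difference.} Since $e_6$ lands in $\ZZ/2\ZZ$, you may compute $P_3(Q_0)+\bigl(N_{E/k}(n)-4\llangle d\rrangle\bigr)$ in $W(k)$ instead. Then $\langle -1\rangle N_{E/k}(n)+N_{E/k}(n)=(\langle -1\rangle+1)N_{E/k}(n)=0$, and the scalar terms collapse via $8-4\langle 2,2d\rangle-4\llangle d\rrangle=4\llangle 2,-d\rrangle=0$ (since $2\llangle 2\rrangle=0$). You get the \emph{exact} identity
\[
P_3(Q_0)+N_{E/k}(n)-4\llangle d\rrangle \;=\; 4\,T_{E/k}\bigl(\llangle -\sqrt d\rrangle n\bigr)
\]
in $W(k)$, with no error terms. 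Your difference introduces an extra $\llangle -1\rrangle N_{E/k}(n)$ that you then have to peel off modulo $I^7$; your stated ``residual error term'' (with the undefined $\llangle -1\rrangle/2$ and $n'$ in place of $n$) does not match what the computation actually gives.

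\textbf{(b) The transfer does not drop the filtration.} You assume $T_{E/k}$ sends $I^4(E)$ only into $I^3(k)$, which is why you think more work is needed. In fact $T_{E/k}(I^n(E))\subset I^n(k)$ for any finite extension (this is the result the paper cites as \cite[Corollary~34.17]{elman2008algebraic}). Since $\llangle -\sqrt d\rrangle n$ is a $4$-Pfister form over $E$, one has $T_{E/k}(\llangle -\sqrt d\rrangle n)\in I^4(k)$ directly, hence
\[
e_6\bigl(4\,T_{E/k}(\llangle -\sqrt d\rrangle n)\bigr)=(-1){\cdot}(-1){\cdot} e_4\bigl(T_{E/k}(\llangle -\sqrt d\rrangle n)\bigr)\in(-1){\cdot}(-1){\cdot} H^4(k,\ZZ/2\ZZ),
\]
and you are done. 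Your proposed ``symbol-level computation comparing Theorem~\ref{thm:norm} for $N_{E/k}(\llangle -\sqrt d\rrangle n)$'' cannot work: Theorem~\ref{thm:norm} concerns the \emph{multiplicative} transfer $N_{E/k}$, which is a different operation and carries no direct information about $T_{E/k}(\llangle -\sqrt d\rrangle n)$.
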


\begin{proof}
	Case 1: Suppose  $b_1(\beta) = 0$. Then $\beta$ is the isomorphism  class of a decomposable bi-octonion algebra $C_1 \otimes C_2$ where $C_i$ are some octonion algebras with norms $n_i$. And $Q$ is similar to the Albert form of $C_1 \otimes C_2$, say $Q = \langle c \rangle (n_1'\perp\langle -1 \rangle n_2')$ for some $c \in k^\times$. We have $b_6(\beta) = e_6(n_1 \cdot n_2) = e_3(n_1){\cdot} e_3(n_2)$. By Lemma \ref{lem:a6-decomposable}, $a_6(Q) - b_6(Q)  \in (-1){\cdot}(-1){\cdot} H^4(k, \ZZ/2\ZZ)$. 
	
	Case 2: Suppose $b_1(\beta) \ne 0$. The $\beta_1(\beta)$ is the class of a quadratic field extension $E/k$, and $\beta$ is the isomorphism class of an indecomposable bi-octonion algebra $\cor_{E/k}(C)$ where $C$ is some octonion algebra over $E$ with norm $n$. Now $Q$ is similar to the Albert form of $\cor_{E/k}(C)$, say $Q = T_{E/k}(\langle \sqrt{d} \rangle n')$ where $d \in k^\times\setminus k^{\times 2}$ is an element whose square root generates $E$.
	Note that $Q \perp \mathbb{H} \simeq T_{E/k}(\langle \sqrt{d}\rangle n)$, so $Q = T_{E/k}(\langle \sqrt{d} \rangle n)$ in ${W}(k)$. Now, by \eqref{eq:actual-Pn}:
	\begin{equation*}
	P_3(Q) = P_3(T_{E/k}(\langle \sqrt d \rangle n)) = 8 + \lambda^2(T_{E/k}(\langle \sqrt{d}\rangle n))-4T_{E/k}(\langle \sqrt{d} \rangle n).
	\end{equation*}
	Applying Theorem \ref{thm:lambda2} yields	\begin{align*}
	P_3(Q) &= 8 + T_{E/k}(\lambda^2(\langle \sqrt{d} \rangle n))+ \langle d \rangle N_{E/k}(\langle \sqrt{d} \rangle n)-4T_{E/k}(\langle \sqrt{d} \rangle n).
	\end{align*}
	We have $\lambda^2(\langle \sqrt{d} \rangle n) = \lambda^2(n) = 4n'$ by \eqref{eq:lambda2-sim} and \cite[Lemma~19.8]{garibaldi2009cohomological}. And we have $N_{E/k}(\langle \sqrt{d} \rangle n) = N_{E/k}(\langle \sqrt{d}\rangle) N_{E/k}(n) = \langle{-d}\rangle N_{E/k}(n)$ by multiplicativity of $N_{E/k}$ and \eqref{eq:N-one-d}. It follows that 	\begin{equation*}
	P_3(Q) = 8 + T_{E/k}(4n') - N_{E/k}( n)- 4T_{E/k}(\langle \sqrt{d} \rangle n)
	\end{equation*}
Since $T_{E/k}$ is $H(k)$-linear, we may write
\begin{align*}
T_{E/k}(4n') = 4T_{E/k}(n') &= - 4T_{E/k}(\langle 1 \rangle) + 4T_{E/k}(\langle 1 \rangle+n') = -4\langle 2, 2d \rangle + 4T_{E/k}(n).
\end{align*}
Therefore
\begin{align*}
P_3(Q) &= 8 - 4 \langle 2, 2d \rangle + 4T_{E/k}(n) - 4T_{E/k}(\langle \sqrt{d} \rangle n) - N_{E/k}( n) \\
&= 8-4 \langle 2, 2d \rangle + 4T_{E/k}(\langle 1,-\sqrt{d}\rangle n)-N_{E/k}(n) \\
&= 8-4 \langle 2, 2d \rangle + 4T_{E/k}(\llangle \sqrt{d} \rrangle n)-N_{E/k}(n). 
\end{align*}
Now,
\begin{align*}
	P_3(Q) + N_{E/k}(n) - 4 \llangle d \rrangle  &= 8-4\langle 2, 2d \rangle - 4\langle 1,- d \rangle  + 4T_{E/k}(\llangle \sqrt{d} \rrangle n)\\ &= 4\llangle 2,-d\rrangle +4T_{E/k}(\llangle \sqrt{d} \rrangle n) = 4T_{E/k}(\llangle \sqrt{d} \rrangle n)
\end{align*}
since $4 \llangle 2, -d \rrangle \simeq 8\mathbb{H}$ by a straightforward calculation using the usual isometry criterion for binary forms \cite[Proposition~5.1]{lam}. Finally, since $T_{E/k}(\llangle \sqrt{d} \rrangle n) \in I^4(k)$ \cite[Corollary 34.17]{elman2008algebraic}, this implies \begin{align*}
a_6(Q) - b_6(\beta) &= a_6(Q) + b_6(\beta) = e_6(P_3(Q) + N_{E/k}(n) - 4 \llangle d \rrangle)\\
&= e_6(4T_{E/k}(\llangle \sqrt{d} \rrangle n)) = (-1){\cdot} (-1){\cdot} e_4(T_{E/k}(\llangle \sqrt{d} \rrangle n))
\end{align*}
and therefore $a_6(Q)-b_6(\beta) \in (-1){\cdot} (-1){\cdot} H^4(k,\ZZ/2\ZZ)$.
\end{proof}

As a consequence of Theorems \ref{thm:symbols}~(ii) and \ref{thm:big-calc}, we also have a generalisation of \cite[Proposition~22.2~(3)]{garibaldi2009cohomological}:

\begin{corollary}
	If $-1$ is a sum of two squares	in $k$, then $a_6(Q)$ and $a_7(Q)$ are symbols for all $Q \in I_{14}^3(k)$.
\end{corollary}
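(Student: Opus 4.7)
The plan is to combine Corollary \ref{cor:Str-PI} with the two invoked theorems in a fairly direct manner. Given $Q \in I_{14}^3(k)$, I would first use Corollary \ref{cor:Str-PI} to choose a bi-octonion algebra $(A,-)$ whose Albert form is similar to $Q$, and let $\beta \in H^1(k,(G_2\times G_2)\rtimes S_2)$ be its isomorphism class. Then $Q$ and $\beta$ have the same image in $PI_{14}^3(k)$, so Theorem \ref{thm:big-calc} gives
\[
a_6(Q) - b_6(\beta) \in (-1){\cdot}(-1){\cdot}H^4(k,\ZZ/2\ZZ).
\]

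The key observation is that the hypothesis ``$-1$ is a sum of two squares in $k$'' is equivalent to the splitting of the quaternion algebra $(-1,-1)_k$, i.e.\ to $(-1){\cdot}(-1) = 0$ in $H^2(k,\ZZ/2\ZZ)$. Under this hypothesis the error term in the displayed formula vanishes, so $a_6(Q) = b_6(\beta)$. Since Theorem \ref{thm:symbols}~(ii) asserts that $b_6(\beta)$ is a symbol, we conclude that $a_6(Q)$ is a symbol of degree~$6$.

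For the second claim, recall that $a_7$ is only defined when $\sqrt{-1}\in k$, which in particular forces $-1$ to be a sum of two squares. Hence the previous paragraph applies, and since $a_7(Q) = (Q(v)){\cdot}a_6(Q)$ for any anisotropic vector $v$ of $Q$, the product of a $1$-symbol with a $6$-symbol is a $7$-symbol, giving the result.

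I do not anticipate any real obstacle here: the two technical inputs (Theorem \ref{thm:symbols}~(ii), which controls the symbol length of $b_6$, and Theorem \ref{thm:big-calc}, which bounds the discrepancy between $a_6$ and $b_6$) have already done all the work, and the only remaining ingredient is the elementary cohomological translation of the hypothesis on $-1$. The only thing worth double-checking is that an actual bi-octonion algebra $(A,-)$ representing the similitude class of $Q$ exists over $k$ itself and not just over some extension -- but this is precisely the content of Corollary~\ref{cor:Str-PI}.
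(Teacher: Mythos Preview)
Your proposal is correct and matches the paper's own reasoning: the corollary is stated there as an immediate consequence of Theorems~\ref{thm:symbols}~(ii) and~\ref{thm:big-calc}, with exactly the same logic you spell out (the hypothesis kills the $(-1){\cdot}(-1)$ error term, identifying $a_6(Q)$ with the symbol $b_6(\beta)$, and $a_7$ is then a product of symbols). Your remark that the $a_7$ claim only makes sense when $\sqrt{-1}\in k$ is also appropriate and consistent with how the paper defines~$a_7$.
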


In fact, if $-1$ is a sum of two squares then the symbols $a_6(Q)$ and $a_7(Q)$ can be determined explicitly from the proof of Theorem~\ref{thm:symbols}~(ii). Namely, if $Q$ is of the form $Q = \langle c \rangle (\llangle x_1, x_2, x_3 \rrangle' \perp \langle -1 \rangle \llangle y_1, y_2, y_3 \rrangle')$ then
\begin{equation} \label{eq:a6:concrete}
a_6(Q) = (x_1) {\cdot}(x_2) {\cdot} (x_3) {\cdot} (y_1) {\cdot} (y_2) {\cdot} (y_3).
\end{equation}
If $Q =  T_{E/k}(\langle \delta \rangle \llangle z_1, z_2, z_3 \rrangle')$ for some quadratic field extension $E/k$, $z_i \in E$, and $\delta \in \ker(\tr_{E/k})$ then either $a_6(Q) = 0$ (if one of the $z_i$ has zero trace) or
\begin{equation} \label{eq:a6:concrete2}
	a_6(Q) = \prod_{i = 1}^3 (\tr_{E/k}(z_i)){\cdot}(-\delta^2 N_{E/k}(z_i)).
\end{equation}

\subsection{Classifying the invariants of $PI_{14}^3$}

Since the functors $H^1(*,(G_2\times G_2)\rtimes S_2) \to PI_{14}^3$  and $I_{14}^3 \to PI_{14}^3$ are surjective, they induce injective homomorphisms $\Inv(PI^3_{14}) \hookrightarrow \Inv(I_{14}^3)$ and $\Inv(PI^3_{14}) \hookrightarrow \Inv((G_2\times G_2)\rtimes S_2)$.

\begin{proposition} \label{prop:isotopy-invariants}
	Suppose $\beta, \beta' \in H^1(k, (G_2\times G_2)\rtimes S_2)$ have the same image in $PI_{14}^3(k)$. Then $b_3(\beta) = b_3(\beta')$ and $b_6(\beta) - b_6(\beta') \in (-1){\cdot}{(-1)}{\cdot}H^4(k, \ZZ/2ZZ)$.
	
	(Equivalently, if $(A,-)$ and $(A',-)$ are isotopic bi-octonion algebras, then $b_3(A,-) = b_3(A',-)$ and $b_6(A,-) = b_6(A',-)$ modulo $(-1){\cdot}{(-1)}{\cdot}H^4(k, \ZZ/2\ZZ)$.) 
\end{proposition}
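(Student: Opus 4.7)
The plan is to reduce both claims to results already established earlier in the text. The hypothesis says that $\beta$ and $\beta'$ have the same image in $PI^3_{14}(k)$, which by the definition of the map $H^1(k,(G_2 \times G_2) \rtimes S_2) \to PI^3_{14}(k)$ (via Corollary~\ref{cor:Str-PI} and the composition with the natural surjection $H^1(k,(G_2\times G_2)\rtimes S_2) \to H^1(k,\mathbf{Str}(C(8)\otimes C(8)))$) means precisely that the Albert forms $Q$ of $\beta$ and $Q'$ of $\beta'$ are similar, say $Q' \simeq \langle c \rangle Q$ for some $c \in k^\times$.

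For the claim about $b_3$: since $b_3(\beta) = e_3(Q)$ and $e_3$ is invariant under similitudes of forms in $I^3$ (see \ref{sec:en}, using $e_3(\langle c \rangle q) = e_3(q)$ for $q \in I^3$), we have immediately $b_3(\beta') = e_3(\langle c \rangle Q) = e_3(Q) = b_3(\beta)$.

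For the claim about $b_6$: the map $I^3_{14}(k) \to PI^3_{14}(k)$ is surjective by definition, so we may pick any $Q \in I^3_{14}(k)$ representing the common image of $\beta$ and $\beta'$ in $PI^3_{14}(k)$; for instance, take $Q$ to be an Albert form of $\beta$. By Theorem~\ref{thm:big-calc} applied to the pair $(Q,\beta)$ and to the pair $(Q,\beta')$, both differences $a_6(Q) - b_6(\beta)$ and $a_6(Q) - b_6(\beta')$ belong to $(-1){\cdot}(-1){\cdot} H^4(k,\ZZ/2\ZZ)$. Subtracting, we conclude
\[
b_6(\beta) - b_6(\beta') \in (-1){\cdot}(-1){\cdot} H^4(k,\ZZ/2\ZZ),
\]
as desired.

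There is no real obstacle here; the heavy lifting has already been done in Theorem~\ref{thm:big-calc}, where the compatibility between $a_6$ on a quadratic form and $b_6$ on a bi-octonion algebra sharing its similitude class was established (via the Rost--Wittkop formula relating $\lambda^2$, additive, and multiplicative transfers). The present proposition is essentially a formal corollary: Theorem~\ref{thm:big-calc} controls the failure of $b_6$ to factor through $PI^3_{14}$, and two bi-octonion algebras with the same image in $PI^3_{14}(k)$ see their $b_6$-values differ by a comparison through the same anchor~$Q$.
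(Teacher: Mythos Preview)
Your proof is correct and follows essentially the same approach as the paper: both reduce the $b_3$ claim to the similitude-invariance of $e_3$, and both derive the $b_6$ claim from Theorem~\ref{thm:big-calc}. Your argument is in fact slightly cleaner: by anchoring both $\beta$ and $\beta'$ to the \emph{same} representative $Q$, you apply Theorem~\ref{thm:big-calc} twice and subtract directly, whereas the paper uses separate Albert forms $Q$ and $Q'$ for the two algebras and then needs Lemma~\ref{lem:a6-sim} to control $a_6(Q) - a_6(Q')$.
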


\begin{proof}
The equivalence of the two statements follows from Proposition \ref{prop:isotopic-similar}. Assuming that $(A,-)$ and $(A', -)$ are isotopic bi-octonion algebras with similar Albert forms $Q$ and $Q'$ respectively, we have by definition, $b_3(A,-) - b_3(A',-) = e_3(Q) - e_3(Q') = 0$. By Lemma~\ref{lem:a6-sim} and Theorem~\ref{thm:big-calc},
	\[ (b_6(A,-)- a_6(Q)) + (a_6(Q) - a_6(Q')) + (a_6(Q')-b_6(A',-))    \in (-1) {\cdot} (-1) {\cdot} H^4(k,\ZZ/2\ZZ). \qedhere \] 
\end{proof}

\begin{theorem}
	The image of the homomorphism $\Inv(PI_{14}^3) \hookrightarrow \Inv((G_2\times G_2)\rtimes S_2)$ is
	\[
	H(k) {\cdot} 1 \oplus H(k) {\cdot} b_3 \oplus J_2(k) {\cdot} b_6.
	\]
\end{theorem}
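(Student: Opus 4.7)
The plan is to combine the explicit basis from Theorem~\ref{thm:inv-bioctonions} with Proposition~\ref{prop:isotopy-invariants} and a single parametric family of isotopic bi-octonion algebras whose $(b_1, b_6)$-values are controlled by transcendental parameters. Since $H^1(*, (G_2 \times G_2) \rtimes S_2) \twoheadrightarrow PI_{14}^3$ has isotopy classes as fibres (Corollary~\ref{cor:isotopic-similar}), the image of $\Inv(PI_{14}^3) \hookrightarrow \Inv((G_2 \times G_2) \rtimes S_2)$ consists exactly of those $x = h_0 + h_1 b_1 + h_3 b_3 + h_6 b_6$ (unique decomposition from Theorem~\ref{thm:inv-bioctonions}) which are constant on isotopy classes. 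The containment $H(k){\cdot}1 \oplus H(k){\cdot} b_3 \oplus J_2(k){\cdot} b_6 \subseteq \mathrm{Image}$ is immediate from Proposition~\ref{prop:isotopy-invariants}: the first two summands are isotopy-invariant by inspection, and for $h \in J_2(k)$ the isotopy-differences $b_6(A,-) - b_6(A',-)$ lie in $(-1){\cdot}(-1){\cdot} H^4(k)$ and are therefore annihilated by $h$.

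For the reverse inclusion, assume $x$ is isotopy-invariant, and work over $L = k(a, u, v, w)$ with $a, u, v, w$ algebraically independent. Let $E = L(\sqrt{a})$ and let $C$ be the octonion $E$-algebra obtained by base change from the octonion $L$-algebra with norm $\llangle u, v, w\rrangle$. Frobenius reciprocity for the additive transfer, combined with the elementary fact that $T_{E/L}(\langle -\sqrt{a}\rangle) \simeq \mathbb{H}$, gives
\[
T_{E/L}(\langle -\sqrt{a}\rangle n_C) = \llangle u, v, w\rrangle \cdot T_{E/L}(\langle -\sqrt{a}\rangle) = 0 \quad \text{in } W(L),
\]
so the Albert form of $\cor_{E/L}(C)$ is hyperbolic and $\cor_{E/L}(C)$ is isotopic to the split algebra $C(8) \otimes_L C(8)$ (Corollary~\ref{cor:isotopic-similar}). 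These two algebras take the values $(b_1, b_3) = (0,0)$ and $((a),0)$ respectively, and their $b_6$-values are $0$ and, using Theorem~\ref{thm:norm} together with $(u^2) = (v^2) = (w^2) = 0$,
\[
b_6(\cor_{E/L}(C)) = \bigl[(-1)^3 + (-1)^2 (a)\bigr] \cdot \bigl((2)+(u)\bigr)\bigl((2)+(v)\bigr)\bigl((2)+(w)\bigr),
\]
where $(-1)^n$ denotes the $n$-fold cup product in $H^n(k, \ZZ/2\ZZ)$. Isotopy-invariance of $x$ thus translates to the identity
\[
h_1 \cdot (a) + h_6 \cdot \bigl[(-1)^3 + (-1)^2(a)\bigr] \cdot \bigl((2)+(u)\bigr)\bigl((2)+(v)\bigr)\bigl((2)+(w)\bigr) = 0
\]
in $H^*(L, \ZZ/2\ZZ)$.

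The final step is to extract the constraints. Separate the identity along the decomposition $H^*(L) = H^*(k(u,v,w)) \oplus (a) \cdot H^*(k(u,v,w))$, then apply the iterated residue $\partial_u \partial_v \partial_w$, which picks out the top symbol $(u)(v)(w)$ in the expansion of $((2)+(u))((2)+(v))((2)+(w))$ with coefficient $1$. The two resulting equations read $h_6 {\cdot} (-1)^3 = 0$ and $h_6 {\cdot} (-1)^2 = 0$ in $H^*(k, \ZZ/2\ZZ)$; the second is exactly $h_6 \in J_2(k)$. Substituting this back annihilates the $h_6$-term of the original identity, leaving $h_1 \cdot (a) = 0$, and a final application of $\partial_a$ gives $h_1 = 0$, as desired. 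The main technical obstacle is showing that the isotopy is genuine via Frobenius reciprocity (allowing direct comparison of invariants) and carrying out the explicit multiplicative transfer computation using Theorem~\ref{thm:norm}; once a closed form for $b_6(\cor_{E/L}(C))$ is in hand, the residue analysis proceeds routinely.
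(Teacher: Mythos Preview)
Your proof is correct and reaches the same conclusion, but it takes a genuinely different route from the paper's argument.

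The paper splits the reverse inclusion into two independent test cases. First, over $K = k(t)$ it compares the split bi-octonion algebra with $\cor_{K(\sqrt{t})/K}(C)$ where $C$ is the \emph{split} octonion algebra over $K(\sqrt{t})$; here $b_6$ vanishes directly (computed from $N_{K(\sqrt t)/K}(4\mathbb H)$), so the only surviving term is $h_1{\cdot}(t)=0$, giving $h_1=0$. Second, over $K'=k(t_1,\dots,t_4)$ it compares two \emph{decomposable} bi-octonion algebras $C_1\otimes C_2$ and $C_1'\otimes C_2'$ whose Albert forms are arranged to be similar by an explicit Witt-ring manipulation; since $b_1$ vanishes on decomposable algebras, the difference reduces to $\nu{\cdot}(-1){\cdot}(-1){\cdot}(t_1)(t_2)(t_3)(t_4)=0$, forcing $\nu\in J_2(k)$.

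You instead use a single parametric family over $L=k(a,u,v,w)$: the algebra $\cor_{E/L}(C)$ where $C$ is the base change to $E=L(\sqrt a)$ of the octonion $L$-algebra with norm $\llangle u,v,w\rrangle$. Frobenius reciprocity makes the Albert form hyperbolic, so this algebra is isotopic to the split one, and Theorem~\ref{thm:norm} yields a closed formula for $b_6$ carrying both an $(a)$-free and an $(a)$-linear part. Iterated residues then deliver $h_6\in J_2(k)$ and $h_1=0$ simultaneously. Your approach is more unified (one family, one comparison) at the cost of a more intricate multiplicative-transfer computation; the paper's approach isolates each constraint with a tailor-made example in which the irrelevant invariant is arranged to vanish outright, avoiding residue gymnastics.

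One wording issue: the claimed decomposition $H^*(L)=H^*(k(u,v,w))\oplus (a){\cdot}H^*(k(u,v,w))$ is not literally true---$H^*(L)$ is much larger. What you actually use is that all terms of your identity lie in the image of the (injective) map $(\gamma,\delta)\mapsto \res(\gamma)+(a){\cdot}\res(\delta)$, and this image is split by specialization and $\partial_a$. The argument is unaffected, but the phrasing should be adjusted.
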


\begin{proof}
 	Proposition \ref{prop:isotopy-invariants} implies
 	\[
 	H(k) {\cdot} 1 \oplus H(k) {\cdot} b_3 \oplus J_2(k) {\cdot} b_6 \subset \im\big(\Inv(PI_{14}^3) \to \Inv((G_2\times G_2)\rtimes S_2)\big).
 	\]
 	For the reverse inclusion, recall from Theorem \ref{thm:inv-bioctonions} that $\Inv((G_2\times G_2)\rtimes S_2)$ is the free $H(k)$-module with basis $\{1, b_1, b_3, b_6 \}$. Suppose $\lambda, \mu, \nu \in H(k)$ are such that
 	\[b = \lambda {\cdot} b_1 + \mu {\cdot} b_3 + \nu {\cdot} b_6\]
 	is in the image of  $\Inv(PI_{14}^3)$. This assumption means that $b(A,-) = b(A',-)$ for all pairs of bi-octonion algebras $(A,-)$ and $(A',-)$ over any field extension $L/k$, as long as they have similar Albert forms (equivalently, are isotopic).
 	
 	Consider the field $K = k(t)$ and the algebras: \begin{align*}  (B,-) &=  \text{the split bi-octonion algebra over }K, \\
 	 (B',-) &=  \cor_{K(\sqrt{t})/K}(C) \text{ where } C \text{ is the split octonion algebra over } K(\sqrt{t}) 
 \end{align*}
 The Albert form of $(B',-)$ is hyperbolic because it is the additive transfer of the hyperbolic form $\llangle 1,1,1 \rrangle$ over $K(\sqrt{t})$. Hence $0 = b(B,-) = b(B',-)$. Clearly $b_3(B',-) = 0$. By \cite[Lemma~2.13]{wittkop} together with basic fact that $4\langle 2 \rangle = 4$ in $W(k)$ for any field $k$, \[b_6(B',-) = e_6(N_{K(\sqrt{t})/K}(4\mathbb{H})-4 \llangle t \rrangle) = e_6(4\langle 2 \rangle \llangle t \rrangle - 4 \llangle t \rrangle) = e_6(0) = 0.\]
 	 This all implies
	\[
	0 = b(B',-) = b_1(B',-) =  \lambda {\cdot} (t),
	\]
	so $\lambda = 0$. Now let $K' = k(t_1, t_2, t_3, t_4)$ and consider the decomposable bi-octonion algebras \begin{align*} (D,-) = C_1\otimes C_2 && (D',-) = C_1'\otimes C_2',\end{align*} where the $C$'s are octonion algebras over $K'$ with the following norms:	\begin{align*}
	n_{C_1} &= \llangle t_1, t_2, t_3 \rrangle, & n_{C_2} &= \llangle t_1, t_2, t_4 \rrangle, \\
	n_{C_1'} &= \llangle t_1, t_2, t_3^{-1}t_4 \rrangle,	& n_{C_2'} &= \llangle 1,1,1\rrangle \text{ (i.e., hyperbolic).}
	\end{align*}
	The Albert form of $(D,-)$ is similar to the Albert form of $(D',-)$, because in $W(K')$:
	\begin{align*}
		\llangle t_1, t_2,t_3 \rrangle - \llangle t_1, t_2, t_4 \rrangle &= \llangle t_1, t_2 \rrangle (\llangle t_3 \rrangle - \llangle t_4 \rrangle)= \llangle t_1, t_2 \rrangle \langle -t_3, t_4 \rangle \\ &= \langle -t_3 \rangle \llangle t_1, t_2,t_3^{-1}t_4 \rrangle = \langle -t_3\rangle (\llangle t_1, t_2, t_3^{-1}t_4 \rrangle - \llangle 1,1,1 \rrangle).
	\end{align*}
	In particular, $b_3(D,-) = b_3(D',-) = (t_1){\cdot} (t_2){\cdot}(t_3^{-1}t_4)$. It is also clear that $b_1(D,-) = b_1(D',-) = 0$ because both algebras are decomposable. Since we assumed that $b$ is an isotopy invariant, we have $0 = b(D,-) - b(D',-) = \nu {\cdot} b_6(D,-) - \nu {\cdot} b_6(D',-)$. The $b_6$'s are straightforward to calculate:
	\begin{align*}
	b_6(D,-) &= (t_1){\cdot}(t_2){\cdot}(t_3){\cdot}(t_1){\cdot}(t_2) {\cdot}(t_4) = (-1){\cdot}(-1){\cdot}(t_1){\cdot}(t_2){\cdot}(t_3) {\cdot}(t_4),\\
		b_6(D',-) &= (1){\cdot}(1){\cdot}(1) {\cdot}(t_1){\cdot}(t_2){\cdot}(t_3^{-1}t_4) = 0.
	\end{align*}
	This implies \[0 = b(D,-) - b(D',-) = \nu {\cdot} (-1){\cdot} (-1) {\cdot} (t_1){\cdot}(t_2){\cdot} (t_3) {\cdot} (t_4).\] Symbols of the form $(t_{i_1}){\cdot \cdots \cdot} (t_{i_j})$ in $H(K')$ are  $H(k)$-linearly independent \cite[Lemma~1.1]{berhuy2011cohomological}, so we conclude that 
	 $\nu {\cdot}(-1){\cdot}(-1)   = 0$.
\end{proof}

\begin{corollary} \label{cor-InvPI in InvI}
	The image of the homomorphism $\Inv(PI_{14}^3) \hookrightarrow \Inv(I_{14}^3)$ is 
	\[
	H(k) {\cdot} 1 \oplus H(k) {\cdot} a_3 \oplus J_2(k) {\cdot} a_6.
	\]
\end{corollary}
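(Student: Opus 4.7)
The plan is to leverage the preceding theorem, which describes the image of $\Inv(PI_{14}^3)$ inside $\Inv((G_2\times G_2)\rtimes S_2)$, and transport the description across the two injections
\[
\Inv(I_{14}^3) \;\hookleftarrow\; \Inv(PI_{14}^3) \;\hookrightarrow\; \Inv((G_2\times G_2)\rtimes S_2)
\]
coming from the respective surjections $I_{14}^3 \twoheadrightarrow PI_{14}^3$ and $H^1(*,(G_2\times G_2)\rtimes S_2) \twoheadrightarrow PI_{14}^3$ (the latter surjectivity coming from Corollary~\ref{cor:Str-PI}). Because both maps are injective $H(k)$-module homomorphisms and the right-hand image is already exhibited as a direct sum $H(k)\cdot 1 \oplus H(k)\cdot b_3 \oplus J_2(k)\cdot b_6$, it is enough to identify, one generator at a time, the images in $\Inv(I_{14}^3)$ of the unique preimages in $\Inv(PI_{14}^3)$ of the basis elements $1$, $b_3$ and $h\cdot b_6$ (for $h\in J_2(k)$).

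The degree $3$ piece is immediate: the invariant $b_3$ sends a bi-octonion algebra to $e_3$ of its Albert form, a similitude invariant, so it factors as $\tilde b_3 \in \Inv(PI_{14}^3)$, $\tilde b_3([Q])=e_3(Q)$. Pulled back to $\Inv(I_{14}^3)$ this is, by definition, $a_3$. Hence $H(k)\cdot b_3$ corresponds to $H(k)\cdot a_3$.

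The interesting piece is degree $\geq 6$. For $h \in J_2(k)$, Theorem~\ref{thm:big-calc} says that whenever a $14$-dimensional form $Q$ and a bi-octonion algebra $\beta$ have the same image in $PI_{14}^3$ one has $a_6(Q)-b_6(\beta) \in (-1){\cdot}(-1){\cdot}H^4(k,\ZZ/2\ZZ)$; by definition of $J_2(k)$, multiplying by $h$ annihilates this difference. Therefore $h\cdot a_6$ is well-defined as an invariant of $PI_{14}^3$, and its two pullbacks are $h\cdot a_6 \in \Inv(I_{14}^3)$ on the left and $h\cdot b_6 \in \Inv((G_2\times G_2)\rtimes S_2)$ on the right. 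This identifies $J_2(k)\cdot b_6$ with $J_2(k)\cdot a_6$.

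Assembling these identifications, the image of $\Inv(PI_{14}^3) \hookrightarrow \Inv(I_{14}^3)$ is $H(k)\cdot 1 + H(k)\cdot a_3 + J_2(k)\cdot a_6$, and the sum is direct because the isomorphic copy $H(k)\cdot 1 \oplus H(k)\cdot b_3 \oplus J_2(k)\cdot b_6$ in $\Inv((G_2\times G_2)\rtimes S_2)$ is direct by the preceding theorem. I do not expect an obstacle here: all the substantive work (identifying that $a_6$ and $b_6$ agree modulo $(-1){\cdot}(-1){\cdot}H^4$, and that nothing more is in the image) has already been carried out in the preceding theorem and in Theorem~\ref{thm:big-calc}.
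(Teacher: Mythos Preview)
Your proposal is correct and follows essentially the same approach as the paper: use the preceding theorem to identify $\Inv(PI_{14}^3)$ with $H(k)\cdot 1 \oplus H(k)\cdot b_3 \oplus J_2(k)\cdot b_6$ inside $\Inv((G_2\times G_2)\rtimes S_2)$, then transport each generator across to $\Inv(I_{14}^3)$ by comparing definitions for $b_3 \leftrightarrow a_3$ and invoking Theorem~\ref{thm:big-calc} for $h\cdot b_6 \leftrightarrow h\cdot a_6$ when $h\in J_2(k)$. The paper's proof is just a terse version of exactly this argument.
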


\begin{proof}
	Let $\overline {a_3}$ and $\overline {\nu {\cdot} a_6}$, for $\nu \in J_2(k)$, be the unique elements of $\Inv(PI_{14}^3)$ whose images in $\Inv(G^2\rtimes S_2)$ are $b_3$ and $\nu {\cdot} b_6$ respectively. Then $\Inv(PI_{14}^3)$ is generated by $\overline{a_3}$ and $\{\overline{\nu {\cdot} a_6} \mid \nu \in J \}$. The image of $\overline{a_3}$ in $\Inv(I_{14}^3)$ is $a_3$ (simply by comparing the definitions), while the image of $\overline{\nu {\cdot} a_6}$ in $\Inv(I_{14}^3)$ is $\nu {\cdot} a_6$ (because of Theorem \ref{thm:big-calc}).
\end{proof}

\subsection{An important fibration} \label{sec:important-fibration}
Let $n$ be even and let  $C$ be the centre of $\mathbf{Spin}_n$, so  $C \simeq \bm{\mu}_2 \times \bm{\mu}_2$ or $C \simeq \bm{\mu}_4$ according as $n = 0$  mod $4$ or $n = 2$ mod $4$ \cite[p.~517]{milne}. We have a short exact sequence $\begin{tikzcd} C \ar[r] & \mathbf{Spin}_{n} \ar[r] & \mathbf{PGO}^+_n \end{tikzcd}$, and we know that $\im(H^1(*,\mathbf{Spin}_n)\to H^1(*,\mathbf{PGO}_n^+)) \simeq H^1(*,\mathbf{\Omega}_n) \simeq PI^3_n(*)$, so by Lemma \ref{lem:fibration-from-exact-sequence} there is a fibration \begin{equation*}
\begin{tikzcd}
H^1(*,C) \ar[r,squiggly]	& H^1(*,\mathbf{Spin}_{n}) \ar[r,two heads] &  PI_{n}^3. 
\end{tikzcd}
\end{equation*}
We shall describe in concrete detail the action of $H^1(*,C)$ on $H^1(*,\mathbf{Spin}_n)$ in terms of quadratic forms.
If $e_1, e_2$ are the two nontrivial orthogonal central idempotents in the even Clifford algebra of $\frac{n}{2}\mathbb{H}$,   then the points of $C$ are:
\begin{align*}
C(R) &=	 	\{\zeta e_1 + \zeta^{-1} e_2 \mid \zeta \in \bm{\mu}_4(R) \} & \text{if } n = 2 \mod 4,\\
C(R) &= \{ \xi_1 e_1 + \xi_2 e_2 \mid \xi_i \in \bm{\mu}_2(R) \} & \text{if } n = 0 \mod 4.
\end{align*}
This fixes an isomorphism $C \simeq \bm{\mu}_4$ or $C \simeq \bm{\mu}_2 \times \bm{\mu}_2$. 
The kernel $J \simeq \bm{\mu}_2$ of the map $\mathbf{Spin}_n \to \mathbf{O}^+_n$ has  points $J(R) = \{\xi 1 \mid \xi \in \bm{\mu}_2(R) \} \subset C(R)$. If $n = 2$~mod~$4$, then $C/J$ can be identified with $\bm{\mu}_2$ by $(\zeta e_1 + \zeta^{-1} e_2)J \mapsto \zeta^2$. Now we have identifications $H^1(k, C) = k^{\times}/k^{\times 4}$ and $H^1(k, C/J) = k^{\times}/k^{\times 2}$. The map $H^1(k, C) \to H^1(k, C/J)$ sends $xk^{\times 4} \mapsto xk^{\times 2}$. If $n = 0$~mod~$4$ then $C/J$ can be identified with $\bm{\mu}_2$ by $(\xi_1 e_1 + \xi_2 e_2 )J \mapsto \xi_1 \xi_2$. In this case, the set $H^1(k, C)$ is identified with $k^{\times}/k^{\times 2} \times k^{\times}/k^{\times 2}$, and the map $H^1(k, C) \mapsto H^1(k, C/J)$ sends $(xk^{\times 2}, yk^{\times 2}) \mapsto xyk^{\times 2}$. 

For $\eta \in H^1(L, \mathbf{Spin}_{n})$, let $q_\eta \in I^3_{n}(L)$ be the corresponding quadratic form. Galois cohomology produces the following commutative diagram with exact rows:
\begin{equation*}
\begin{tikzcd}
	\mathbf{PGO}_n^+(*) \ar[r, "\delta"] & H^1(*,C) \ar[r] \ar[d] & H^1(*,\mathbf{Spin}_{n}) \ar[r] \ar[d, "\eta \mapsto q_\eta"] & PI_{n}^3 \ar[d] \\
	\mathbf{PGO}_n^+(*) \ar[r, "\delta"] & H^1(*,C/J) \ar[r] & H^1(*,\mathbf{O}^+_{n}) \ar[r] & PI_{n}^2
\end{tikzcd}
\end{equation*}
The group $H^1(k, C/J) = k^{\times}/k^{\times 2}$ acts on $H^1(k,\mathbf{O}_n^+) = I^2_n(k)$ by $ck^{\times 2} \cdot q = \langle c \rangle q$. (To prove this, rather than dealing with cocycles, one can work out that stabiliser of $q$ is $G(q)/k^{\times 2}$, using \cite[I.\S5~Proposition~39~(iii)]{serre1997galois} and the calculation of the first connecting map $\delta$ from  \cite[Proposition~13.33]{knus1998book}.) Consequently, the commutativity of the diagram implies that the action of $H^1(k, C)$ on $H^1(k, \mathbf{Spin}_n)$ has the following effect, for all $\eta \in H^1(k, \mathbf{Spin}_{n})$:
\begin{align}
	\label{eq:q-c.eta} q_{xk^{\times 4} \cdot \eta} &= \langle x \rangle q_\eta  &  \text{if } n = 2 \mod 4,\\ \notag
	q_{(xk^{\times 2}, y k^{\times 2})\cdot \eta} &= \langle xy \rangle q_\eta & \text{if } n = 0 \mod 4.
\end{align}

For the next lemma, let $a_3$, $a_6$, and $a^h$, $h \in J_2(k)$, be the images of the identically named invariants of $I_{14}^3$ under the map $\Inv(I_{14}^3) \hookrightarrow \Inv(\mathbf{Spin}_{14})$; i.e.\ $a_i(\eta) = a_i(q_\eta)$ and $a^h(\eta) = a^h(q_\eta)$ for all $\eta \in H^1(L, \mathbf{Spin}_{14})$. (Doing this avoids having to introduce yet more notation.) The mod 2 part of  $\mathbf{Spin}_{14}$'s Rost invariant is therefore called~$a_3$.

\begin{lemma} \label{lem:d}Let $\partial: \Inv(\mathbf{Spin}_{14}) \to \Inv(PI_{14}^3)$ be the residue map associated to the fibration $\begin{tikzcd}
H^1(*,\bm{\mu}_4) \ar[r,squiggly]	& H^1(*,\mathbf{Spin}_{14}) \ar[r,two heads] &  PI_{14}^3 \end{tikzcd}$. We have \begin{align*}
	\partial a_3 &= 0\\
	\partial a_6 &= (-1) {\cdot} (-1)  {\cdot} a_3\\
	\partial a^h &= h{\cdot} a_6 & \text{ for all $h \in J_1(k)$.}
\end{align*}
\end{lemma}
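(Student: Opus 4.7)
The plan is to apply Proposition~\ref{prop:fibration}~(i) directly: the residue $\bar\partial a$ of any invariant $a \in \Inv(\mathbf{Spin}_{14})$ is characterised by
\[
a(xk^{\times 4} \cdot \eta) - a(\eta) = (x) \cdot \bar\partial a(\eta)
\]
for all $\eta \in H^1(L,\mathbf{Spin}_{14})$ and all $x \in L^\times$, and $\partial a$ is then the unique invariant on $PI_{14}^3$ whose pullback is~$\bar\partial a$. Since $14 \equiv 2 \bmod 4$, the centre of $\mathbf{Spin}_{14}$ is $\bm\mu_4$, and the key concrete ingredient is the first case of \eqref{eq:q-c.eta}: the action of $xk^{\times 4} \in H^1(L,\bm\mu_4)$ on $\eta \in H^1(L,\mathbf{Spin}_{14})$ replaces the associated quadratic form $q_\eta \in I_{14}^3(L)$ by $\langle x \rangle q_\eta$. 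Thus all three computations reduce to measuring how $a_3$, $a_6$, and $a^h$ change when one scales a $14$-dimensional form in $I^3$ by $\langle x \rangle$.

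First, $a_3(\eta) = e_3(q_\eta)$, and $e_3$ is a similitude invariant (as recorded in~\ref{sec:en}), so $a_3(xk^{\times 4}\cdot \eta) - a_3(\eta) = 0$, giving $\bar\partial a_3 = 0$ and hence $\partial a_3 = 0$. For $a_6$, Lemma~\ref{lem:a6-sim} gives
\[
a_6(\langle x \rangle q_\eta) - a_6(q_\eta) = (-1) \cdot (-1) \cdot (x) \cdot a_3(q_\eta) = (x) \cdot \big((-1) \cdot (-1) \cdot a_3(\eta)\big),
\]
so $\bar\partial a_6 = (-1) \cdot (-1) \cdot a_3$ in $\Inv(\mathbf{Spin}_{14})$. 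Since $a_3$ itself comes from $\Inv(PI_{14}^3)$, this gives $\partial a_6 = (-1)\cdot (-1) \cdot a_3$.

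The only computation requiring care is $\partial a^h$ for $h \in J_1(k)$. By definition $a^h(\eta) = h \cdot (q_\eta(v)) \cdot a_6(\eta)$ for any vector $v$ anisotropic for $q_\eta$; such a $v$ remains anisotropic for $\langle x \rangle q_\eta$ with $(\langle x \rangle q_\eta)(v) = x q_\eta(v)$. Combining this with Lemma~\ref{lem:a6-sim} yields
\[
a^h(xk^{\times 4} \cdot \eta) - a^h(\eta) = h \cdot (x) \cdot a_6(\langle x \rangle q_\eta) + h \cdot (q_\eta(v)) \cdot (-1) \cdot (-1) \cdot (x) \cdot a_3(\eta).
\]
The second summand vanishes because $h \in J_1(k)$ annihilates $(-1)$. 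In the first summand, applying Lemma~\ref{lem:a6-sim} once more to rewrite $a_6(\langle x \rangle q_\eta)$ produces an extra term $h \cdot (x) \cdot (-1) \cdot (-1) \cdot (x) \cdot a_3(\eta)$ which again vanishes, leaving $(x) \cdot h \cdot a_6(\eta)$. This identifies $\bar\partial a^h = h \cdot a_6$, and since $a_6 \in \Inv(PI_{14}^3)$ via the identification from Corollary~\ref{cor-InvPI in InvI} (noting $h \in J_1(k) \subset J_2(k)$ guarantees $h \cdot a_6$ actually lies in the image of $\Inv(PI_{14}^3)$), we conclude $\partial a^h = h \cdot a_6$. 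The main potential obstacle is simply bookkeeping the two applications of Lemma~\ref{lem:a6-sim} and correctly using $h \cdot (-1) = 0$; there is no deeper difficulty.
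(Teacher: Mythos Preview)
Your proposal is correct and follows essentially the same approach as the paper's proof: both use \eqref{eq:q-c.eta} to reduce the action of $H^1(*,\bm{\mu}_4)$ to scaling $q_\eta$, invoke Lemma~\ref{lem:a6-sim} for $\partial a_6$, and exploit $h\cdot(-1)=0$ for $\partial a^h$. The only cosmetic difference is that for $\partial a^h$ the paper first simplifies $h\cdot a_6(\langle x\rangle q_\eta)=h\cdot a_6(q_\eta)$ and then subtracts, whereas you subtract first and kill two $(-1)$-terms separately; and for $\partial a_3$ the paper cites Proposition~\ref{prop:fibration}~(ii) rather than similitude-invariance of $e_3$, but these amount to the same thing.
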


\begin{proof}
	Since $a_3$ is in the image of $\Inv(PI_{14}^3) \hookrightarrow \Inv(\mathbf{Spin}_{14})$, Proposition~\ref{prop:fibration}~(ii) implies $\partial a_3 = 0$. Now suppose $L/k$ is any field extension, $\eta \in H^1(L, \mathbf{Spin}_{14})$, and $[c] \in H^1(L, \bm{\mu}_4) = L^{\times}/L^{\times 4}$. If $q_\eta \in I^{3}_{14}(L)$ is the quadratic form corresponding to $\eta$, then $q_{[c]\cdot \eta} = \langle c \rangle q_{\eta}$ by \eqref{eq:q-c.eta}. By Lemma~\ref{lem:a6-sim}, we have for all $\eta \in H^1(L, \mathbf{Spin}_n)$,
	\[
	a_6([c]\cdot \eta)- a_6(\eta) = (-1){\cdot} (-1) {\cdot} (c) {\cdot} a_3(\eta).
	\]
	Hence $\partial a_6 = (-1){\cdot}(-1){\cdot}(c){\cdot}a_3$. Now $h \in J_1(k)$ means $h{\cdot} (-1) = 0$ and hence $h{\cdot}a_6([c] \cdot \eta) = h{\cdot} a_6(\eta)$ for all $\eta \in H^1(L, \mathbf{Spin}_{14})$. If $v \in k^{14}$ is an anisotropic vector for $q_\eta$,
	\begin{align*}
	a^h([c]\cdot \eta) - a^h(\eta) &= h{\cdot} (q_{[c]\cdot \eta}(v)){\cdot} a_6([c]\cdot\eta) - h{\cdot}(q_\eta(v)) {\cdot} a_6(q_\eta)\\ &= h{\cdot} (\langle c\rangle  q_\eta(v)){\cdot} a_6(\eta) - h{\cdot} (q_\eta (v)){\cdot} a_6(q_\eta) = (c){\cdot} h{\cdot} a_6(\eta). \qedhere
	\end{align*}
\end{proof}

The subform $6\mathbb{H} \subset 7\mathbb{H}$ induces natural inclusions \begin{align*}
 	f: \mathbf{Spin}_{12} &\to \mathbf{Spin}_{14}\\
 	g: \mathbf{\Gamma}^+_{12} &\to \mathbf{\Gamma}^+_{14} \\
 	h: \mathbf{\Omega}_{12}&\to \mathbf{\Omega}_{14},
 \end{align*}
each of which is an extension of the former. Recalling from \ref{sec:cohomological-In} that $H^1(*, \mathbf{\Omega}_n) = PI_{n}^3$ and $H^1(*, \mathbf{\Gamma}^+_n) = I_{n}^3$, these maps induce (injective) morphisms \begin{align*} &h^*: PI_{12}^3 \to PI_{14}^3, & [q] &\mapsto [q \perp \mathbb{H}], \\
	&g^*: I_{12}^3 \to I_{14}^3, & q &\mapsto q \perp \mathbb{H}\end{align*} 
	and corresponding ring homomorphisms $H: \Inv(PI_{14}^3) \to \Inv(PI_{12}^3)$, $G: \Inv(I_{14}^3) \to \Inv(I_{12}^3)$, and $F: \Inv(\mathbf{Spin}_{14}) \to \Inv(\mathbf{Spin}_{12})$.

\begin{lemma} \label{lem:I14-I12}
	We have
\begin{align*}
G(a_3) &= z_3 \\
G(a_6) &= (-1){\cdot} z_5 \\
G(a^h) &= 0 & \text{for all $h \in J_{1}(k)$.}	
%G(a_7) &= (-1){\cdot} z_6 & \text{(if $\sqrt{-1} \in k$).}	
\end{align*}
%In particular, if $\sqrt{-1} \in k$ then $a_6$ and $a_7$ vanish on isotropic forms in $I_{14}^3$.
\end{lemma}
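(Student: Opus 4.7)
The plan is to evaluate each invariant on a generic form $q \perp \mathbb{H} \in I_{14}^3(L)$, where $q \in I_{12}^3(L)$, using Pfister's parameterisation $q = \langle d \rangle \llangle c \rrangle (\psi_1' \perp \langle -1 \rangle \psi_2')$ with $\psi_i = \llangle x_i, y_i \rrangle$ from \ref{sec:I12}. The identity $G(a_3) = z_3$ is immediate: $a_3$ is the restriction of the Arason invariant $e_3$, which is additive on $W$ and kills $\mathbb{H}$, so $a_3(q \perp \mathbb{H}) = e_3(q) = z_3(q)$ directly from the definition of $z_3$.

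The central calculation is $G(a_6) = (-1) \cdot z_5$. The key observation is that $a_6 = e_6 \circ P_3$ factors through $W(L)$, so $a_6(q \perp \mathbb{H})$ depends only on the Witt class of $q$. Setting $\phi_i = \llangle c, x_i, y_i \rrangle = \llangle c \rrangle \psi_i$, a direct computation in $W(L)$ (using $\llangle c \rrangle \psi_i' = \phi_i - \llangle c \rrangle$ and $\mathbb{H} \cdot \llangle c \rrangle = 0$) shows that $q$ and the $14$-dimensional form $Q := \langle d \rangle (\phi_1' \perp \langle -1 \rangle \phi_2')$ are Witt-equivalent, hence $a_6(q \perp \mathbb{H}) = a_6(Q)$. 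Since $Q$ is now in the shape of Rost's decomposable parameterisation, Lemma \ref{lem:a6-decomposable} yields
\[
a_6(Q) = e_3(\phi_1) {\cdot} e_3(\phi_2) + (-1){\cdot}(-1){\cdot}(d){\cdot} e_3(\phi_1) + (-1){\cdot}(-1){\cdot}(-d){\cdot} e_3(\phi_2).
\]
Substituting $e_3(\phi_i) = (c){\cdot}(x_i){\cdot}(y_i)$ and applying the relation $(c){\cdot}(c) = (-1){\cdot}(c)$ (from $(c){\cdot}(-c) = 0$) together with graded commutativity of cup products mod~$2$, the expression matches the formula for $(-1){\cdot} z_5(q)$ from Lemma \ref{lem:I12-invariants} term by term.

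The vanishing $G(a^h) = 0$ for $h \in J_1(k)$ is easiest. By definition $a^h(Q) = h {\cdot} (Q(v)) {\cdot} a_6(Q)$ for \emph{any} anisotropic vector $v$ of $Q$ (the invariance under the choice of $v$ uses that $h {\cdot} a_6$ vanishes on isotropic forms, which in turn follows from Lemma \ref{lem:a6-vanish} and $h {\cdot} (-1) = 0$). For $Q = q \perp \mathbb{H}$, the hyperbolic summand represents $1$, so we may choose $v$ with $Q(v) = 1$; then $(Q(v)) = 0 \in H^1(L, \ZZ/2\ZZ)$ and $a^h(q \perp \mathbb{H}) = 0$.

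The only real obstacle is the bookkeeping in the $a_6$ calculation, in particular verifying the Witt-equivalence $q \sim Q$ in $W(L)$ and tracking the symbol identities carefully; once this is set up, everything collapses to the matching of Lemmas \ref{lem:a6-decomposable} and \ref{lem:I12-invariants}.
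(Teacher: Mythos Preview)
Your proof is correct and follows essentially the same approach as the paper: both use Pfister's parameterisation of $q$, reduce $a_6(q\perp\mathbb{H})$ via Lemma~\ref{lem:a6-decomposable} applied to $\phi_i = \llangle c\rrangle\psi_i$, and then simplify using $(c){\cdot}(c)=(-1){\cdot}(c)$ to match the formula for $z_5$ in Lemma~\ref{lem:I12-invariants}. For $G(a^h)=0$ the paper argues that $h{\cdot}a_6$ vanishes on the isotropic form $q\perp\mathbb{H}$, whereas you instead pick $v$ with $Q(v)=1$ so that $(Q(v))=0$; both are immediate and equivalent.
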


\begin{proof}
	Suppose $q = \langle d \rangle \llangle c \rrangle (\psi_1' \perp \langle -1 \rangle \psi_2') \in I_{12}^3(k)$ where $\psi_i = \llangle x_i, y_i \rrangle$, using the parameterisation \eqref{eq:q in I12}. By definition $a_3(q \perp \mathbb{H}) = z_3(q) = e_3(q)$.
	By Lemmas~\ref{lem:a6-decomposable} and~\ref{lem:I12-invariants}, 
	\begin{align*}
	a_6(q \perp \mathbb{H}) 
	&= e_3(\llangle c \rrangle \psi_1){\cdot} e_3(\llangle c \rrangle \psi_2) + (-1){\cdot} (-1) {\cdot} (d){\cdot} e_3(\llangle c \rrangle \psi_1) + (-1){\cdot}(-1){\cdot}(-d){\cdot}e_3(\llangle c \rrangle \psi_2) \\
	&= (c){\cdot}e_2(\psi_1) {\cdot} (c) {\cdot} e_2(\psi_2) + (-1){\cdot}(-1){\cdot}(d){\cdot}(c){\cdot}e_2(\psi_1) + (-1){\cdot} (-1){\cdot}(-d){\cdot} (c) {\cdot} e_2(\psi_2)\\
	&= (-1){\cdot}z_5(q).
	\end{align*}	
	If $h \in J_{1}(k)$ then $a^h(q \perp \mathbb{H}) = 0$ because $h{\cdot}a_6$ vanishes on isotropic forms.
\end{proof}

Recall from Theorem \ref{thm:I12-Spin12} that the natural inclusion $\Inv(I_{12}^3) \hookrightarrow \Inv(\mathbf{Spin}_{12})$ is an isomorphism. And recall that we denote by $q_\eta \in I^3_n(L)$ the (isometry class of the) quadratic form corresponding to a cohomology class $\eta \in H^1(L, \mathbf{Spin}_{n})$

\begin{lemma} \label{lem:commutes}
The following diagram  of $H(k)$-modules commutes:
\begin{equation*}
	\begin{tikzcd}
		\Inv(I_{14}^3) \ar[d,"G"] \ar[r, hookrightarrow] & \Inv(\mathbf{Spin}_{14}) \ar[r,"\partial"] \ar[d, "F"]  & \Inv_{\rm norm}(PI^3_{14}) \ar[d,"H"] \\
		\Inv(I_{12}^3) \ar[rr, "\partial'", bend right] \ar[r, hookrightarrow] & \Inv(\mathbf{Spin}_{12}) & \Inv_{\rm norm}(PI^3_{12})
	\end{tikzcd}
\end{equation*}
\end{lemma}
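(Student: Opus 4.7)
My plan is to verify the two commutativities encoded in the diagram separately: the left square $F\circ i_{14}=i_{12}\circ G$, where $i_n\colon\Inv(I_n^3)\hookrightarrow\Inv(\mathbf{Spin}_n)$ denotes the natural inclusion; and the outer compatibility $H\circ\partial\circ i_{14}=\partial'\circ G$.

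For the left square, I would simply unwind the definitions. The inclusion $i_{14}$ sends $a\in\Inv(I_{14}^3)$ to the invariant $\eta\mapsto a(q_\eta)$ of $\mathbf{Spin}_{14}$. The morphism $f\colon\mathbf{Spin}_{12}\to\mathbf{Spin}_{14}$ arises in \S\ref{sec:important-fibration} from the subform inclusion $6\mathbb{H}\subset 7\mathbb{H}$, so at the level of torsors $q_{f_*(\eta)}=q_\eta\perp\mathbb{H}$. Hence for every $\eta\in H^1(L,\mathbf{Spin}_{12})$,
\[F(i_{14}(a))(\eta)=a(q_\eta\perp\mathbb{H})=G(a)(q_\eta)=i_{12}(G(a))(\eta),\]
which gives the left square.

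For the outer compatibility, the strategy is to compare the two sides against the defining identity of Proposition \ref{prop:fibration}~(i). Unwinding the definition of $\partial$ applied to $i_{14}(a)$, and using the formula \eqref{eq:q-c.eta} for the $\bm{\mu}_4$-action on $\mathbf{Spin}_{14}$-torsors, gives
\[a(\langle c\rangle q_\eta)-a(q_\eta)=(c)\cdot\partial(i_{14}(a))([q_\eta])\]
for all $\eta\in H^1(L,\mathbf{Spin}_{14})$ and $c\in L^\times$. Unwinding the definition of $\partial'$ applied to $G(a)$ gives
\[a(\langle c\rangle q\perp\mathbb{H})-a(q\perp\mathbb{H})=(c)\cdot\partial'(G(a))([q])\]
for all $q\in I_{12}^3(L)$ and $c\in L^\times$. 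Given any such $q$, the surjectivity of $H^1(L,\mathbf{Spin}_{14})\to I_{14}^3(L)$ (recorded in \S\ref{sec:cohomological-In}) lets me pick $\eta$ with $q_\eta=q\perp\mathbb{H}$, and the isometry $\langle c\rangle(q\perp\mathbb{H})\simeq\langle c\rangle q\perp\mathbb{H}$—valid because $\langle c\rangle\mathbb{H}\simeq\mathbb{H}$—identifies the two left-hand sides, yielding
\[(c)\cdot H(\partial(i_{14}(a)))([q])=(c)\cdot\partial'(G(a))([q])\quad\text{in }H(L)\text{ for every }c\in L^\times.\]

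To conclude, I would specialise to the generic parameter $c=t$ in the extension $L(t)/L$. Since multiplication by $(t)\in H^1(L(t),\ZZ/2\ZZ)$ is injective on the constant subring $H(L)\subset H(L(t))$—as one sees by taking the residue at $t$—the identity above forces $H(\partial(i_{14}(a)))([q])=\partial'(G(a))([q])$. This holds for every field extension $L/k$ and every $q\in I_{12}^3(L)$, completing the proof. The only nontrivial technical input is the isometry $\langle c\rangle\mathbb{H}\simeq\mathbb{H}$, which aligns the $\bm{\mu}_4$-action on $\mathbf{Spin}_{14}$-torsors with the $\bm{\mu}_2$-action on $I_{12}^3$ after adding the hyperbolic plane; with this alignment in hand, the rest of the argument is purely formal manipulation of residues.
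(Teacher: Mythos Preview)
Your proof is correct and follows essentially the same approach as the paper: unwind the left square by functoriality, then compare the two defining identities for the residues after identifying $\langle c\rangle(q\perp\mathbb{H})$ with $\langle c\rangle q\perp\mathbb{H}$. The only cosmetic difference is in the final cancellation: where you specialise to $c=t$ over $L(t)$ and take a residue to strip off the factor $(c)$, the paper simply invokes the uniqueness clause of Proposition~\ref{prop:fibration}(i), which already encodes that cancellation.
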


\begin{proof}
The small square on the left obviously commutes because the maps in it arise from the (contravariant) functoriality of $\Inv$. The rest of the proof is really about unpacking the definitions. Let $a \in \Inv(I_{14}^3)$ and denote by $\tilde a$ its image in $\Inv(\mathbf{Spin}_{14})$. 
By definition, $\partial \tilde a \in \Inv(PI_{14}^3)$ is the unique invariant satisfying
	\[
	\tilde a(cL^{\times 4} \cdot \eta) - \tilde a(\eta) = (c){\cdot} \partial \tilde a([q_\eta])
	\]
	for all field extensions $L/k$, $c \in L^\times$, and $\eta \in H^1(L, \mathbf{Spin}_{14})$. Similarly, for all $z \in \Inv(I_{12}^3)$, $\partial' z \in \Inv_{\rm norm}(PI_{12}^3)$ is the unique invariant satisfying 
	\[
	z(\langle c \rangle q) - z(q) = (c) {\cdot} \partial'z([q])
	\]
	for all field extensions $L/k$, $c \in L^\times$, and $q \in I_{12}^3(L)$. Suppose $G(a) = z$. Then for all $L/k$ and  $q \in I_{12}^3(L)$, we have
	\begin{equation} \label{eq:za}
	z(\langle c \rangle  q) - z(q) = a(	\langle c \rangle q \perp \mathbb{H}) - a(q \perp \mathbb{H}) = a(\langle c \rangle(q \perp \mathbb{H})) - a(q \perp \mathbb{H}).
	\end{equation}
	If $\eta \in H^1(L, \mathbf{Spin}_{14})$ is a preimage of $q \perp \mathbb{H}$ (i.e.\ $q_\eta = q \perp \mathbb{H}$) then \eqref{eq:q-c.eta} implies that  the right-hand side of \eqref{eq:za} is equal to 
	\[
	\tilde a (c L^{\times 4} \cdot \eta) - \tilde a (\eta) = (c) {\cdot} \partial \tilde a([q_{\eta}]).
	\]
	By definition, $\partial \tilde a([q_\eta]) = \partial \tilde a([q \perp \mathbb{H}]) = H(\partial \tilde a)([q])$. Therefore we have shown that $\partial 'z = \partial'G(a) = H(\partial \tilde a)$, which was the goal.
\end{proof}

\begin{theorem} \label{main}
	Assume $\sqrt{-1} \in k$. The map $\Inv(I_{14}^3) \to \Inv(\mathbf{Spin}_{14})$ is an isomorphism and $\Inv(I_{14}^3)$ is a free $H(k)$-module generated by the invariants $\{1, a_3, a_6, a_7\}$.  
\end{theorem}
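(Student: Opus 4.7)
The plan is to exploit the commutative diagram of Lemma~\ref{lem:commutes} to bootstrap the already-known classifications of $\Inv(PI_{14}^3)$ (Corollary~\ref{cor-InvPI in InvI}), $\Inv(\mathbf{Spin}_{12}) \simeq \Inv(I_{12}^3)$ (Theorem~\ref{thm:I12-Spin12}), and $\Inv(PI_{12}^3)$ (Corollary~\ref{cor:Inv-PI12}) into a classification of $\Inv(\mathbf{Spin}_{14})$. The hypothesis $\sqrt{-1} \in k$ means $(-1) = 0$ in $H(k)$, so $J_1(k) = J_2(k) = H(k)$; consequently $\Inv(PI_{14}^3)$ is a free $H(k)$-module on $\{1, a_3, a_6\}$, $\Inv(I_{12}^3)$ is free on $\{1, z_3, z_5, z_6\}$, $\Inv(PI_{12}^3)$ is free on $\{1, z_3, z_5\}$, and all the $(-1)$- and $(-1){\cdot}(-1)$-coefficients appearing in Lemmas~\ref{lem:d'}, \ref{lem:d}, and~\ref{lem:I14-I12} vanish. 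In particular, $\partial a_6 = 0$, $\partial a_7 = a_6$, $F(a_6) = G(a_6) = 0$, $H(a_6) = 0$, and $\partial' z_5 = 0$.

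The core step is surjectivity of the natural inclusion $\Inv(I_{14}^3) \hookrightarrow \Inv(\mathbf{Spin}_{14})$. Fix an arbitrary $b \in \Inv(\mathbf{Spin}_{14})$ and expand $F(b) = \alpha + \beta z_3 + \gamma z_5 + \delta z_6$ with $\alpha, \beta, \gamma, \delta \in H(k)$ in the free module $\Inv(\mathbf{Spin}_{12}) = \Inv(I_{12}^3)$; applying $\partial'$ and using the simplified form of Lemma~\ref{lem:d'} yields $\partial' F(b) = \delta \cdot z_5 \in \Inv(PI_{12}^3)$. Independently, expand $\partial b = p + q a_3 + r a_6$ with $p, q, r \in H(k)$ in the free module $\Inv(PI_{14}^3)$; pushing forward via $H$ and using $H(a_6) = 0$ gives $H(\partial b) = p + q z_3$. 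The commutativity of Lemma~\ref{lem:commutes} equates these: $p + q z_3 = \delta z_5$ in the free module $\Inv(PI_{12}^3) = H(k) \oplus H(k) z_3 \oplus H(k) z_5$, forcing $p = q = \delta = 0$ and hence $\partial b = r a_6$. Because $\partial a_7 = a_6$, the element $b - r a_7$ has vanishing residue, so by the exactness in Proposition~\ref{prop:fibration}~(ii) it lies in the image of $\Inv(PI_{14}^3) \hookrightarrow \Inv(\mathbf{Spin}_{14})$, yielding $b$ as an $H(k)$-linear combination of $1, a_3, a_6, a_7$.

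For linear independence of $\{1, a_3, a_6, a_7\}$ in $\Inv(\mathbf{Spin}_{14})$, take an alleged relation $\lambda + \mu a_3 + \nu a_6 + \rho a_7 = 0$ and eliminate in succession: applying $\partial$ gives $\rho a_6 = 0$ in the free module $\Inv(PI_{14}^3)$, forcing $\rho = 0$; applying $F$ to the reduced relation $\lambda + \mu a_3 + \nu a_6 = 0$ gives $\lambda + \mu z_3 = 0$ in the free module $\Inv(I_{12}^3)$, forcing $\lambda = \mu = 0$; and finally the injectivity of $\Inv(PI_{14}^3) \hookrightarrow \Inv(\mathbf{Spin}_{14})$ (which comes from the surjectivity of $H^1(*, \mathbf{Spin}_{14}) \twoheadrightarrow PI_{14}^3$) pulls the remaining relation $\nu a_6 = 0$ back into $\Inv(PI_{14}^3)$, forcing $\nu = 0$. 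Since the injective inclusion $\Inv(I_{14}^3) \hookrightarrow \Inv(\mathbf{Spin}_{14})$ has image containing this $H(k)$-basis, it is an isomorphism, and $\Inv(I_{14}^3)$ is free on $\{1, a_3, a_6, a_7\}$. The hardest part, and indeed the main obstacle to dropping the hypothesis $\sqrt{-1} \in k$, is precisely where I rely on $(-1) = 0$: without it the $(-1)$- and $(-1){\cdot}(-1)$-terms in $H(a_6)$, $\partial a_6$, and $\partial' z_5$ no longer vanish, the ``free'' modules $\Inv(PI_{14}^3)$ and $\Inv(I_{12}^3)$ gain $J_1$- and $J_2$-ideal summands, and the coefficient-matching in $\Inv(PI_{12}^3)$ is contaminated by unresolvable error terms.
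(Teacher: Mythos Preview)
Your proof is correct and the surjectivity argument follows the same strategy as the paper's: write out $F(b)$ and $\partial b$ in the known bases, use the commutativity $\partial' F = H\partial$ from Lemma~\ref{lem:commutes} to force $\partial b = r\,a_6$, and then apply the exactness of Proposition~\ref{prop:fibration}~(ii) to conclude that $b - r\,a_7$ lies in the image of $\Inv(PI_{14}^3)$. Your version is in fact a bit more streamlined: the paper first defines $a' = \alpha + \beta a_3 + \omega a_7$ with $\alpha,\beta$ chosen to match $F(a)$, and then re-applies $F$ to the difference $a - a'$ to squeeze out the $z_5$- and $a_3$-coefficients; you bypass this by subtracting only $r\,a_7$ and letting the remainder fall into $\Inv(PI_{14}^3)$ directly.

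The linear-independence argument genuinely differs. The paper proves it by evaluating $a_3, a_6, a_7$ at the explicit form $Q = \langle t_7\rangle(\llangle t_1,t_2,t_3\rrangle' \perp \langle -1\rangle\llangle t_4,t_5,t_6\rrangle')$ over $k(t_1,\dots,t_7)$ and invoking the $H(k)$-linear independence of monomials in the $(t_i)$. You instead argue structurally: apply $\partial$ to kill $\rho$ (using freeness of $\Inv(PI_{14}^3)$), apply $F$ to kill $\lambda,\mu$ (using freeness of $\Inv(I_{12}^3)$), and use injectivity of $\Inv(PI_{14}^3)\hookrightarrow\Inv(\mathbf{Spin}_{14})$ to kill $\nu$. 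Both approaches are valid; yours stays entirely within the module-theoretic framework already built up, while the paper's is self-contained and makes the nontriviality of the invariants visible at a concrete point.
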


\begin{proof}
	By Theorem \ref{thm:I12-Spin12}, the map $\Inv(I_{12}^3) \hookrightarrow \Inv(\mathbf{Spin}_{12})$ is an isomorphism and by Lemma~\ref{lem:commutes} the following square commutes:
	\[
	\begin{tikzcd}
		\Inv(\mathbf{Spin}_{14}) \ar[r,"\partial"] \ar[d,"F"] & \Inv(PI^3_{14}) \ar[d,"H"] \\
		\Inv(\mathbf{Spin}_{12}) \simeq \Inv(I_{12}^3) \ar[r, "\partial'"] & \Inv(PI^3_{12})
	\end{tikzcd}
	\]
	Let $a \in \Inv(\mathbf{Spin}_{14})$.  By Theorem \ref{thm:I12-Spin12}, there exist unique $\alpha, \beta, \gamma, \varepsilon \in H(k)$ such that  \[F(a) = \alpha{\cdot}1 + \beta{\cdot} z_3 + \gamma{\cdot} z_5 + \varepsilon{\cdot}z_6.\] By Corollary~\ref{cor-InvPI in InvI}, there exist unique $\lambda, \kappa, \omega \in H(k)$ such that \[\partial a = \lambda{\cdot}1 + \kappa {\cdot} a_3 + \omega {\cdot} a_6.\]
	Then according to Lemmas \ref{lem:d'} and \ref{lem:I14-I12},
	\[
\varepsilon{\cdot}{z_5} = \partial'F(a) = H(\partial a) = \lambda{\cdot}1 + \kappa{\cdot}z_3
	\]
But $1, z_3, z_5$ are $H(k)$-linearly independent, so this implies $\varepsilon = \lambda= \kappa = 0$. Therefore 
\begin{align*}
F(a) &= \alpha{\cdot}1 + \beta{\cdot}z_3 + \gamma{\cdot}z_5\\
\partial a &= \omega {\cdot} a_6
\end{align*}
Now let $a' = \alpha{\cdot} 1 + \beta{\cdot}a_3 + \omega{\cdot}a_7 \in \Inv(\mathbf{Spin}_{14})$. By Lemma \ref{lem:d}, $\partial  a- \partial a' = 0$ so Proposition~\ref{prop:fibration}~(ii) yields that $a = a' + y$ for some $y = \nu {\cdot} a_3 + \mu{\cdot} a_6$ in the image of $\Inv(PI_{14}^3) \hookrightarrow \Inv(\mathbf{Spin}_{14})$. By Lemma \ref{lem:I14-I12},
\[
	\gamma{\cdot} z_5 = F(a)-F(a') =  F(y) = \nu{\cdot} z_3,
\] 
which implies that $\gamma = \nu = 0$. Therefore 
\[
a = a' + y = \alpha{\cdot}1 + \beta{\cdot} a_3 + \omega{\cdot} a_7 + \mu{\cdot} a_6.
\]
To prove that $\{1, a_3, a_7, a_6 \}$ is $H(k)$-linearly independent, consider the quadratic form 
\[
Q = \langle t_7 \rangle (\llangle t_1, t_2, t_3 \rrangle' \perp \langle -1 \rangle \llangle t_4, t_5, t_6 \rrangle
\]
over the field $k(t_1, \dots, t_7)$. Then	$a_3(Q) = (t_1){\cdot} (t_2) {\cdot} (t_3) + (t_4){\cdot} (t_5){\cdot} (t_6)$, while
	$a_6(Q) =   (t_1){\cdot} (t_2) {\cdot} (t_3) {\cdot} (t_4){\cdot} (t_5){\cdot} (t_6)$ (see  
\eqref{eq:a6:concrete}) and $a_7(Q) =  (t_1){\cdot} (t_2) {\cdot} (t_3) {\cdot} (t_4){\cdot} (t_5){\cdot} (t_6){\cdot} (t_7)$. These values are $H(k)$-linearly independent in $H(k(t_1, \dots, t_7))$ \cite[Lemma 1.1]{berhuy2011cohomological}.
\end{proof}

\subsection{The mod $2$ invariants  fail to classify anisotropic quadratic forms in~$I_{14}^3$}
In answer to a question of Lam, it was shown by Izhboldin  \cite[Theorem~4.4]{izhboldin1998motivic} that there exists a field extension $F/k$ with a pair of quadratic forms $q_1, q_2 \in I^{3}_{14}(F)$ such that $q_1 - q_2 \in I^4(F)$ but $q_1$ and $q_2$ are not similar. (Although the statement of \cite[Theorem~4.4]{izhboldin1998motivic} only asserts the existence of a field $F$ with this property,  the proof rests on Hoffmann's  example \cite[Theorem~4.3]{hoffmann1998similarity} of a pair of non-similar anisotropic $8$-dimensional forms which are Witt-equivalent modulo $I^4(F)$ and whose Clifford algebras have index 4. It is clear from \cite[Lemma~4.2]{hoffmann1998similarity} that one can arrange for $F$ to be an extension of the original $k$.) Moreover, Izhboldin's $q_1$ and $q_2$ are constructed in such a way that they have a common $6$-dimensional anisotropic subform  \cite[p.\ 348]{izhboldin1998motivic}. In particular, $q_1$ and $q_2$ represent a common element $c \in F^\times$.

Clearly, $q_1 - q_2 \in I^4(F)$ implies $a_3(q_1) = e_3(q_1) = e_3(q_2) = a_3(q_2)$. We may assume that  $\sqrt{-1} \in F$, and then $q_1 - q_2 \in I^4(F)$ also implies that $P_3(q_1) = P_3(q_2)$ if \cite[Proposition~19.12~(3)]{garibaldi2009cohomological}, hence $a_6(q_1) = a_6(q_2)$. The fact that $q_1$ and $q_2$ represent a common element $c$ further implies $a_7(q_1) = (c){\cdot} a_6(q_1) = (c){\cdot}a_6(q_2) = a_7(q_2)$. As a consequence:

\begin{corollary}
	Assume $\sqrt{-1} \in k$. There exists a field extension $F/k$ and a pair of quadratic forms $q_1, q_2 \in I^{3}_{14}(F)$ such that $a(q_1) = a(q_2)$ for all cohomological invariants $a \in \Inv(I^{3}_{14})$ but $q_1$ is not similar to $q_2$.
\end{corollary}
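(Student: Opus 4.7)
The plan is to produce a counterexample drawn from Izhboldin's work, and then verify that our four generating invariants cannot distinguish the two forms. Concretely, I would start from Izhboldin's theorem \cite[Theorem~4.4]{izhboldin1998motivic}, which, as noted in the discussion preceding the corollary, rests on Hoffmann's explicit construction \cite[Theorem~4.3]{hoffmann1998similarity} of a pair of non-similar anisotropic $8$-dimensional quadratic forms over some extension $F/k$ that are Witt equivalent modulo $I^4(F)$ and whose Clifford invariants have index $4$. Following the recipe in \cite[p.~348]{izhboldin1998motivic}, this yields non-similar $q_1, q_2 \in I^3_{14}(F)$ satisfying $q_1 - q_2 \in I^4(F)$ and sharing a common $6$-dimensional anisotropic subform; in particular they represent a common scalar $c \in F^\times$. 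We may enlarge $F$ to guarantee $\sqrt{-1} \in F$ without losing any of these properties, since both Hoffmann's index-$4$ condition and Izhboldin's non-similarity argument are preserved by passing to a purely transcendental extension.

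Next, by Theorem~\ref{main} it suffices to check that $a(q_1) = a(q_2)$ for each element of the $H(F)$-basis $\{1, a_3, a_6, a_7\}$ of $\Inv(I_{14}^3)$, because an arbitrary invariant is an $H(F)$-linear combination of these four. The trivial invariant gives nothing to prove. For $a_3$, the equality is immediate from $a_3 = e_3|_{I_{14}^3}$ together with the fact that $e_n$ vanishes on $I^{n+1}$, so $q_1 - q_2 \in I^4(F)$ yields $a_3(q_1) = a_3(q_2)$.

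For $a_6 = e_6 \circ P_3$, I would invoke the multiplicative-type identity $P_3(q_1) - P_3(q_2) \in I^7(F)$ that holds when $\sqrt{-1} \in F$ and $q_1 \equiv q_2 \pmod{I^4(F)}$ (a special case of \cite[Proposition~19.12~(3)]{garibaldi2009cohomological}, which is the very reason the assumption $\sqrt{-1}\in k$ enters the classification theorem in the first place); applying $e_6$ then gives $a_6(q_1) = a_6(q_2)$. For $a_7$, since both $q_1$ and $q_2$ represent the common scalar $c \in F^\times$, the definition $a_7(q) = (q(v)){\cdot} a_6(q)$ combined with the choice of a vector $v$ realising $c$ in each form yields $a_7(q_i) = (c){\cdot} a_6(q_i)$, and these agree by the previous step. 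Non-similarity of $q_1$ and $q_2$ is precisely Izhboldin's output, so the corollary is established.

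The hard part is purely input: it is the existence of Izhboldin's pair, which we cite rather than reprove. The only residual obstacle is ensuring that the identity $P_3(q_1) = P_3(q_2)$ modulo $I^7$ holds with the assumption $\sqrt{-1}\in F$, which is exactly where the hypothesis $\sqrt{-1}\in k$ of the ambient theorem is used; beyond this all verifications are mechanical applications of the definitions of $a_3$, $a_6$, and $a_7$.
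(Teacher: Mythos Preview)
Your proof is correct and essentially identical to the paper's: invoke Izhboldin's pair $q_1,q_2$ with $q_1-q_2\in I^4(F)$ sharing a common represented value, then verify that $a_3$, $a_6$, $a_7$ agree using respectively $\ker e_3 = I^4$, \cite[Proposition~19.12~(3)]{garibaldi2009cohomological}, and the common value~$c$. Two minor slips worth correcting: the generators form an $H(k)$-basis (not $H(F)$), and there is no need to enlarge $F$ to contain $\sqrt{-1}$ since the corollary already assumes $\sqrt{-1}\in k$.
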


The quadratic forms $q_i \in I_{14}^3(F)$ constructed in \cite[Theorem~4.4]{izhboldin1998motivic} are each similar to a difference of two $3$-Pfister forms (i.e.,  of the form (1) in Corollary~\ref{cor:Rost}). This means (by Corollary~\ref{cor:Str-PI}) that there exist decomposable bi-octonion algebras $(A_1, -)$ and $(A_2, -)$ over $F$ such that the Albert form of $A_i$ is similar to $q_i$. The invariants of these algebras agree, despite the algebras  not being isomorphic (or even isotopic,  by Corollary \ref{cor:isotopic-similar}):  \begin{align*}
&b_1(A_1, -) = b_1(A_2,-) = 0\\
&b_3(A_1, -) = a_3(q_1) = a_3(q_2) = b_3(A_2,-)	\\
&b_6(A_1, -) = a_6(q_1) = a_6(q_2) = b_6(A_2,-)
\end{align*}

\begin{corollary} \label{cor:non-classification_bi-octonion}
	There exists a field extension $F/k$ and bi-octonion algebras $(A_1, -)$ and $(A_2,-)$ such that $b(A_1,-) = b(A_2,-)$ for all cohomological invariants $b \in \Inv((G_2 \times G_2) \rtimes S_2)$ but $(A_1,-)$ and $(A_2,-)$ are not isotopic.
\end{corollary}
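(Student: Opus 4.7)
The plan is to read off the corollary directly from the discussion immediately preceding it. First I would invoke Izhboldin's construction \cite[Theorem~4.4]{izhboldin1998motivic} to obtain, after possibly enlarging $k$, a field extension $F/k$ containing $\sqrt{-1}$ and a pair of non-similar anisotropic quadratic forms $q_1, q_2 \in I_{14}^3(F)$ such that $q_1 - q_2 \in I^4(F)$ and $q_1,q_2$ share a common $6$-dimensional anisotropic subform (so in particular they represent a common element $c \in F^\times$). By the standard dimension/isotropy argument these forms fit case~(1) of Corollary~\ref{cor:Rost}, so each $q_i$ is similar to a difference of two $3$-Pfister forms. By Corollary~\ref{cor:Str-PI} (or equivalently by Theorem~\ref{thm.equiv1} applied to the resulting decomposition data), there exist decomposable bi-octonion algebras $(A_1,-)$ and $(A_2,-)$ over $F$ whose Albert forms are similar to $q_1$ and $q_2$ respectively.

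Next I would verify that every element of $\Inv((G_2\times G_2)\rtimes S_2)$ takes the same value on $(A_1,-)$ and $(A_2,-)$. By Theorem~\ref{thm:inv-bioctonions}, it is enough to check this for the four basis invariants $1, b_1, b_3, b_6$. The value of $1$ is automatic; $b_1$ vanishes on both because both algebras are decomposable; $b_3(A_i,-) = e_3(q_i)$ and the equality $b_3(A_1,-) = b_3(A_2,-)$ follows from $q_1 - q_2 \in I^4(F)$; and for $b_6$ I would apply Theorem~\ref{thm:big-calc}, which gives
\[
b_6(A_i,-) - a_6(q_i) \in (-1){\cdot}(-1){\cdot}H^4(F,\ZZ/2\ZZ),
\]
a group that is zero because $\sqrt{-1}\in F$. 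Since $q_1 - q_2 \in I^4(F)$ and $\sqrt{-1} \in F$ imply $P_3(q_1) = P_3(q_2)$ (by \cite[Proposition~19.12~(3)]{garibaldi2009cohomological}), we get $a_6(q_1) = a_6(q_2)$, and therefore $b_6(A_1,-) = b_6(A_2,-)$.

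Finally, I would invoke Corollary~\ref{cor:isotopic-similar}: two bi-octonion algebras are isotopic if and only if their Albert forms are similar. Since $q_1$ and $q_2$ are not similar by Izhboldin's construction, $(A_1,-)$ and $(A_2,-)$ cannot be isotopic, completing the proof. The only potentially delicate step is the $b_6$ comparison, but the assumption $\sqrt{-1}\in F$ simultaneously kills the obstruction class $(-1){\cdot}(-1){\cdot}H^4$ and forces the equality $a_6(q_1)=a_6(q_2)$, so no additional input is needed.
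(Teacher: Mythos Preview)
Your proof follows the same route as the paper, which derives the corollary from the discussion immediately preceding it. One justification needs correcting: you write that $q_1, q_2$ fit case~(1) of Corollary~\ref{cor:Rost} ``by the standard dimension/isotropy argument'', but the forms are anisotropic, and anisotropic forms in $I_{14}^3$ need not be similar to a difference of two $3$-Pfister forms (the paper explicitly discusses examples satisfying (2) but not (1)). The paper instead observes that Izhboldin's specific construction in \cite[Theorem~4.4]{izhboldin1998motivic} already produces forms of type~(1). This point matters: you need decomposability of the $(A_i,-)$ to force $b_1(A_1,-)=b_1(A_2,-)=0$, and $b_1$ is not an isotopy invariant (cf.\ Example~\ref{sec:reals}), so you cannot deduce agreement of $b_1$ from similarity of the Albert forms alone. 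With that citation in place of the vague argument, every remaining step---the use of Theorem~\ref{thm:inv-bioctonions} to reduce to $\{1,b_1,b_3,b_6\}$, the equality of $b_3$ via $q_1-q_2\in I^4(F)$, the equality of $b_6$ via Theorem~\ref{thm:big-calc} and \cite[Proposition~19.12~(3)]{garibaldi2009cohomological} under $\sqrt{-1}\in F$, and the non-isotopy via Corollary~\ref{cor:isotopic-similar}---matches the paper exactly.
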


In contrast to these two corollaries, the invariant $a_3 \in (I_{14}^3)$ does separate \emph{isotropic} quadratic forms in $I_{14}^3$ up to similarity (see \cite[Corollary]{hoffmann1999conjecture}). Consequently the invariant $b_3 \in \Inv((G_2\times G_2)\rtimes S_2)$ does separate \emph{non-division} bi-octonion algebras up to isotopy.

\subsection{Summary of invariant classification results}
The table below gives a summary of our results on the classification of mod 2 cohomological invariants for various functors.

\begin{table}[hbt]
\begin{center} {\scriptsize 
\begin{tabular}{x{3cm}|x{2.0cm}x{3.5cm}x{4cm}x{0.8cm}} 
$F$ & Restrictions & $\Inv(F)$ & Generators &  Ref.   \\ \hline \\
$\mathbf{PGO}_4$-torsors & none & $H(k)^{\oplus 4}$ & $\{1, y_1, y_2, y_4 \}$ & \ref{thm:inv-bioctonions}
\\ &  \\
$(G_2 \times G_2) \rtimes S_2$-torsors& none & $H(k)^{\oplus 4}$ &   $\{1, b_1, b_3, b_6\}$ & \ref{thm:inv-bioctonions} \\ & \\ 
$PI_{12}^3$ & none & $H(k)^{\oplus 2} \oplus J_1(k)$ & $\{1, z_3\}\cup \{h{\cdot} z_5 \mid h \in J_1(k)\}$ & \ref{cor:Inv-PI12}
\\ & \\
$I_{12}^3$ & none & $H(k)^{\oplus 3} \oplus J_1(k)$ &  $\{1, z_3, z_5\}\cup \{z^h \mid h \in J_1(k)\}$ & \ref{thm:I12-Spin12}
\\ &  \\
$\mathbf{Spin}_{12}$-torsors & none & isomorphic to $\Inv(I_{12}^3)$  & & \ref{thm:I12-Spin12}\\ & \\
$PI_{14}^3$ & none & $H(k)^{\oplus 2} \oplus J_2(k)$ & $\{1, a_3\}\cup \{h{\cdot} a_6 \mid h \in J_2(k)\}$ & \ref{cor-InvPI in InvI}\\ & \\
$I_{14}^3$ & $\sqrt{-1} \in k$ & $H(k)^{\oplus 4}$ & $\{1, a_3, a_6, a_7\}$& \ref{main} \\ & \\
$\mathbf{Spin}_{14}$-torsors & $\sqrt{-1} \in k$ & isomorphic to $\Inv(I_{14}^3)$  & & \ref{main}\\ \
\end{tabular} }
\caption{Structure of various $H(k)$-modules $\Inv(F)$ where $k$ is a field of characteristic not 2 or 3 and $F$ is a functor $\mathsf{Fields}_{/k} \to \mathsf{Sets}$.} \label{table.summary-of-invariants}
\end{center}
\end{table}

\let\oldaddcontentsline\addcontentsline% Store \addcontentsline
\renewcommand{\addcontentsline}[3]{}% Make \addcontentsline a no-op
\bibliographystyle{acm}
\bibliography{mybib}
\let\addcontentsline\oldaddcontentsline% Restore \addcontentsline

\end{document}